\documentclass{amsart}

\usepackage{graphicx} % Required for inserting images
\usepackage{xypic}
\usepackage{xcolor}
\usepackage[pagebackref]{hyperref}
\usepackage{amsthm}
\usepackage{amssymb}
\usepackage{comment}  % \begin{comment} \end{comment}
\usepackage[normalem]{ulem}  % \sout{} strikeout text
\usepackage{stmaryrd}

\DeclareMathOperator{\supp}{supp}

\DeclareMathOperator{\Img}{Im}

\def\pt{{\mathrm{pt}}}
\def\id{{\mathrm{id}}}

\def\ZZ{\mathbb{Z}}
\def\QQ{\mathbb{Q}}

\def\CC{\mathbb{C}}
\def\HH{\mathbb{H}}

\def\Zg{\ZZ_{\geq 0}}
\def\Zm{\Zg^m}

\def\Z{\mathcal{Z}}
\def\K{\mathcal{K}}
\def\L{\mathcal{L}}
\def\Kf{\K^{f}}
\def\ZK{\Z_\K}
\def\DJ{\mathop{\mathit{DJ}}}
\def\k{\mathbf{k}}
\def\b{\widetilde{b}}

\def\incl{\mathrm{incl}}
\def\BC{\mathop{\mathit{BC}}}
\def\FL{\mathop{\mathit{FL}}\nolimits}
\def\FQL{\mathop{\mathit{FQL}}}

\def\lbr{(\![}%{\llparenthesis}%{(\!(}
\def\rbr{]\!)
}%{\rrparenthesis}%{)\!)}
\def\X{{\underline{X}}}
\def\A{{\underline{A}}}
\def\Y{{\underline{Y}}}

\def\mdeg{\mathop{\mathrm{mdeg}}}
\def\ev{\mathop{\mathrm{ev}}\nolimits}
\def\incl{\mathop{\mathrm{incl}}\nolimits}
\def\pr{\mathop{\mathrm{pr}}\nolimits}

\newtheorem{thm}{Theorem}[section]
\newtheorem{lmm}[thm]{Lemma}

\newtheorem{prp}[thm]{Proposition}
\newtheorem{crl}[thm]{Corollary}

\theoremstyle{definition}

\newtheorem{dfn}[thm]{Definition}
\newtheorem{rmk}[thm]{Remark}

\newtheorem{question}[thm]{Question}
\newtheorem{prb}[thm]{Problem}
\newtheorem{exm}[thm]{Example}

\numberwithin{equation}{section}

\hypersetup{
    colorlinks=true,
    linkcolor=blue,
    citecolor=blue,
    urlcolor=blue,
    pdftitle={Iterated Whitehead products in DJ(K)},
    pdfauthor={Panov, Theriault, Vylegzhanin},
}

\title[Iterated Whitehead products]{Iterated Whitehead products in the homotopy groups of polyhedral products}
\author{Taras Panov}
\address{
Steklov Mathematical Institute of Russian Academy of Sciences, Moscow, Russia;
\newline
Department of Mathematics and Mechanics, Moscow
State University, Russia;\newline
Institute for Information Transmission
Problems, Russian Academy of Sciences,
Moscow, Russia;\newline
National Research University Higher School of Economics, Moscow, Russia}
\email{tpanov@mi-ras.ru}
\thanks{The work of T. Panov and F. Vylegzhanin was performed at the Steklov International Mathematical Center and supported by the Ministry of Science and Higher Education of the Russian Federation (agreement no. 075-15-2025-303).}

\author{Stephen Theriault}
\address{Mathematical Sciences, University of Southampton, Southampton, United Kingdom}
\email{S.D.Theriault@soton.ac.uk}

\author{Fedor Vylegzhanin}
\address{
Steklov Mathematical Institute of Russian Academy of Sciences, Moscow, Russia;
\newline
National Research University Higher School of Economics, Moscow, Russia}
\email{vylegf@gmail.com}

\keywords{Whitehead product, polyhedral product, moment-angle complex, flag complex, quasi-Lie algebra}

\subjclass[2020]{
55Q15, %Whitehead products and generalizations
55Q20, %Homotopy groups of wedges, joins, and simple spaces
55P35, %Loop spaces
57S12  %Toric topology
}

\begin{document}

\begin{abstract} 
We study structure within the homotopy groups of the Davis-Januszkiewicz space $\DJ(\K)$ associated with a simplicial complex $\K$. The inclusion of each vertex in $\K$ induces a map from the two-sphere into $\DJ(\K)$. 
These maps generate a quasi-Lie subalgebra $QL(\K)$ via the Whitehead product and a $\Pi$-subalgebra $S(\K)$ via the Whitehead product and composition. We describe the quasi-Lie subalgebra $QL(\K)$, and show that the $\Pi$-subalgebra $S(\K)$ coincides with the whole of $\pi_*(\DJ(\K))$ if and only if $\K$ is a flag complex.
Extensions to more general polyhedral products are also considered. 
\end{abstract}  

\maketitle

\section{Introduction}

Let $\K$ be a simplicial complex on the vertex set $[m]=\{1,\dots,m\}$. The corresponding \emph{Davis--Januszkiewicz space} 
\[
  \DJ(\K):=(\CC P^\infty,\ast)^\K=\bigcup_{I\in\K}(\CC P^\infty)^{\times I}
\]  
is a CW-subcomplex in $(\CC P^\infty)^m$ and a particular case of the polyhedral product construction. These spaces appear in toric topology~\cite{toric_topology} as the Borel constructions for torus actions on quasitoric manifolds and smooth toric varieties. The cohomology ring of $\DJ(\K)$ is identified with the Stanley--Reisner ring of $\K$, providing a link to combinatorial commutative algebra.

The second homotopy group $\pi_2(\DJ(\K))\cong\mathbb Z^m$ has $m$ \emph{canonical} generators represented by the maps
\[
  t_i\colon S^2 \longrightarrow \CC P^\infty \longrightarrow
  (\CC P^\infty)^{\vee m}=\DJ(\L) \longrightarrow
  \DJ(\K),\quad 1\le i\le m,
\]
where $\L$ is the simplicial complex consisting of the $m$ disjoint vertices, the left map is the inclusion of the bottom cell, the middle map is the inclusion of the $i$th wedge summand, and the right map is the map of polyhedral products induced by the simplicial inclusion $\mathcal L\to\K$.

The homotopy groups $\pi_*(\DJ(\K))$ form a quasi-Lie algebra~\cite{hilton_quasilie} with respect to the Whitehead product, and form  a $\Pi$-algebra~\cite{percy} with respect to the Whitehead product and compositions with elements of the homotopy groups of spheres. 

Rationally, the only relations satisfied by the canonical elements $t_1,\ldots,t_m\in\pi_2(\DJ(\K))\otimes\QQ$ are $[t_i,t_i]=0$ and $[t_i,t_j]=0$ for $\{i,j\}\in\K$ (see~\cite[Theorem~9.3]{pr} and also~\cite[Theorem 1.1(b)]{vylegzhanin22}). Integrally, the part of the homotopy groups $\pi_*(\DJ(\K))$ generated by the canonical elements is much more subtle, and can be studied in two formats.

\begin{prb}\label{prb:main1}
Describe the quasi-Lie subalgebra $QL(\K)$ of $\pi_*(\DJ(\K))$ generated by $t_1,\ldots,t_m\in\pi_2(\DJ(\K))$, that is, the part of $\pi_*(\DJ(\K))$ generated by iterated Whitehead products formed from the canonical elements $t_1,\dots,t_m$. 
\end{prb}

\begin{prb}\label{prb:main2}
Describe the $\Pi$-subalgebra $S(\K)$ generated by 
$t_1,\ldots,t_m$, that is, the part of $\pi_*(\DJ(\K))$ generated 
by iterated Whitehead products formed from the canonical elements  and compositions with the elements of the homotopy groups of spheres.
\end{prb}

These questions are interesting already in the simplest case, where $\K=\L$ is the disjoint union of $m$ points and $\DJ(\L)=(\CC P^\infty)^{\vee m}$ is the $m$-fold wedge.

%The Hilton--Milnor Theorem (see \S \ref{subsec:hilton-milnor}) implies that $S(\K)$ is identified with the image of the homomorphism $\pi_*((S^2)^{\vee m})\to\pi_*(\DJ(\K))$ induced by the inclusion of the wedge $(S^2)^{\vee m}$ into $\DJ(\K)$.

It is helpful to consider the homotopy fibration
\begin{equation}
\label{eq:zk-dj fibration}         
  \ZK\overset{i}\longrightarrow\DJ(\K)\overset{p}
  \longrightarrow(\CC P^\infty)^m,
\end{equation}
where $p$ is the standard inclusion and $\ZK=(D^2,S^1)^\K$ is the \emph{moment-angle complex} corresponding to $\K$. Since the looped map $\Omega p$ has a right homotopy inverse, we obtain an isomorphism of graded abelian groups  
\begin{equation}
\label{eq:pi dj = pi bt + pi zk}    
  \pi_*(\DJ(\K))\cong\pi_*(\CC P^\infty)^{\oplus m}\oplus\pi_*(\ZK).
\end{equation}
However, this isomorphism does not preserve the Whitehead product, and therefore does not give a splitting of $\pi_*(\DJ(\K))$ as a quasi-Lie algebra. In view of Problems~\ref{prb:main1} and~\ref{prb:main2}, it is therefore important to describe the Whitehead products of the canonical elements $t_1,\ldots,t_m\in\pi_2(\DJ(\K))$ with elements from~$\pi_*(\ZK)$.

Although in general the homotopy type of $\ZK$ might be complicated, in many cases $\ZK$ is homotopy equivalent to a wedge of simply connected spheres, and the fibre inclusion $i\colon\ZK\to\DJ(\K)$ can be described as a wedge of iterated (higher) Whitehead products of the canonical elements $t_1,\dots,t_m\in\pi_2(\DJ(\K))$. This holds, for example, for the following classes of simplicial complexes:
\begin{enumerate}
    \item $\K$ is a flag complex with chordal $1$-skeleton \cite[Theorem 4.6]{gptw};
    \item $\K$ is totally homology fillable \cite[Corollary 7.12]{ik_whitehead};
    \item $\K$ is a substitution complex corresponding to an iterated higher Whitehead product \cite[Theorem 5.3]{abramyan}, \cite[Example 7.2]{abramyan_panov}.
\end{enumerate}

If $\ZK\simeq\bigvee_{x\in X}S^{n_x}$ is a wedge of spheres, the Hilton--Milnor theorem~\cite{hilton} provides an additive description of the homotopy groups of~$\ZK$. In this case, the Whitehead product of any two elements of $\pi_*(\DJ(\K))$ can be calculated once we know the elements $[t_i,x]\in\pi_{n_x+1}(\ZK)$, as described in \S\ref{subsec:mac-wedge-of-spheres}.

We also note that in some cases $\Omega\ZK$ is homotopy equivalent to a finite-type product of loops on spheres (by \cite{stanton, st-pseudomanifold, st-poincare, eldridge}, see \cite[Lemma 6.1]{vylegzhanin}). In particular, $\pi_*(\ZK)$ can be described in terms of the homotopy groups of spheres for any flag complex $\K$ by \cite[Theorem 1.2]{vylegzhanin}. After localisation away from a finite number of primes, this holds for any complex $\K$, see \cite{stanton-vylegzhanin}.

\medskip

The case of flag simplicial complexes is of particular importance ($\K$ is \emph{flag} if any set of pairwise connected vertices spans a simplex). This is precisely the case when no higher Whitehead products appear in $\pi_*(\DJ(\K))$. In the flag case, $\ZK$ is homotopy equivalent to a wedge of spheres precisely when the 1-skeleton of $\K$ is a \emph{chordal} graph \cite[Theorem 4.6]{gptw}.
In this case, we have 
\[
  \ZK\simeq\bigvee_{x\in GPTW}S^{|x|}
  %=\bigvee_{J\subset[m]}(S^{|J|+1})^{\vee \dim\H_0(\K_J)}.
\] 
and the inclusion $\ZK\to\DJ(\K)$ is identified with the wedge sum of maps 
\[
  \{S^{|x|}\to\DJ(\K),\, x\in GPTW\}
\]
representing iterated Whitehead products of $t_1,\dots,t_m$ indexed by the \emph{Grbi\'c--Panov--Theriault--Wu elements}, which are certain iterated commutators without repeating indices (see~\S\ref{GPTWpar} for details). 
 
In the chordal flag case, by the Hilton--Milnor Theorem applied to the wedge decomposition of $\ZK$ and \eqref{eq:pi dj = pi bt + pi zk}, $\pi_*(\DJ(\K))$ is a direct sum of homotopy groups of spheres, namely,
\[
  \pi_*(\DJ(\K))\cong
  \ZZ^m\oplus\pi_*\Bigl(\bigvee_{x\in GPTW}S^{|x|}\Bigr)
  \cong\ZZ^m\oplus\bigoplus_{b\in\BC(GPTW)}\pi_*(S^{|b|}).
\]
Here $\BC(GPTW)$ are the \emph{basic commutators} forming the standard basis of the free Lie algebra on the set $GPTW$. In more detail,
\begin{equation}
\label{eq:homotopy groups of dj for wedges}
  \pi_*(\DJ(\K))\cong\ZZ\langle t_1,\dots,t_m\rangle
  \oplus\bigoplus_{b\in\BC(GPTW)} w_b\circ \pi_*(S^{|b|})
\end{equation}
where, if $b\in\BC(GPTW)$, then $w_b\in \pi_{|b|}(\DJ(\K))$ is the corresponding iterated Whitehead product of the composite maps $S^{|x|}\hookrightarrow\ZK\to\DJ(\K)$, $x\in GPTW$.

In particular, for flag complexes $\K$ with chordal $1$-skeleton the GPTW elements in $\pi_*(\DJ(\K))$ do not satisfy any algebraic relations, except for the universal relations that hold in the homotopy groups of any topological space. For general $\K$, the GPTW elements are subject to Lie relations corresponding to chordless cycles, i.\,e., to elements of the first homology groups $H_1(\K^f_J;\k)$ of the flagifications of full subcomplexes $\K_J$. These relations are described in Section~\ref{sec:identities}.

\medskip

To approach Problems~\ref{prb:main1} and~\ref{prb:main2}
for general $\K$, we develop the commutator calculus of iterated Whitehead products. In Section~\ref{sec:computations of brackets} 
we express any iterated Whitehead product~$x$ of the canonical elements $t_1,\ldots,t_m\in\pi_2(\DJ(\K))$ through GPTW elements. If no $t_i$ appears in $x$ twice (there are \emph{no repeating indices}), then $x$ can be written as a Lie polynomial in GPTW elements according to~~\cite[Theorem~4.3]{gptw}. When there are repeating indices, one needs to compose GPTW elements with the Hopf elements $\eta^k\in\pi_{n+k}(S^n)$. In the most basic case, this is expressed by the key identity 
\[
  [t_i,[t_i,t_j]]=[t_i,t_j]\circ\eta\in\pi_4(\DJ(\K))
\] 
(Proposition \ref{prp:iij}). The general case is summarised in the following result.

\begin{thm}[Theorem~\ref{thm:iterated whitehead product to GPTW}]
Let $x\in\pi_{|x|}(\DJ(\K))$ be an iterated Whitehead product of $t_1,\dots,t_m$, $|x|>2$. 
\begin{enumerate}
  \item[(a)] If no $t_i$ appears in $x$ twice, then $x$ is a Lie polynomial in GPTW elements.
  \item[(b)] In general, $x$ is a linear combination of elements of the form $y\circ\eta^k$, where $y$ is an iterated Whitehead product of GPTW elements, and $0\leq k\leq 3$.
\end{enumerate}
\end{thm}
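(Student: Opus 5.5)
We argue by induction on the length of the iterated Whitehead product $x$, using the decomposition \eqref{eq:pi dj = pi bt + pi zk} and the fact that $\pi_*(\CC P^\infty)$ vanishes above degree~$2$. Since $|x|>2$, $x$ lies in the image of $i_*\colon\pi_*(\ZK)\to\pi_*(\DJ(\K))$. It therefore suffices to know how the operation $[t_i,-]$ acts on elements of the form $y\circ\eta^k$, where $y$ is an iterated Whitehead product of GPTW elements. Every $x$ of length $\ge 2$ can be written (after Jacobi rearrangements, or simply by its bracket structure $x=[x',x'']$) as a bracket of shorter products. We reduce to the case $x=[t_i,z]$ by expanding $[x',x'']$ with $x'=[t_j,x_1]$ via the Jacobi identity into brackets of the form $[t_j,[x_1,x'']]$ and $[x_1,[t_j,x'']]$. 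Bilinearity, and the formula $[a\circ\alpha,b\circ\beta]=[a,b]\circ(\alpha\wedge\beta)$ for suspensions, reduce brackets of two elements $y\circ\eta^k$, $y'\circ\eta^l$ to $[y,y']\circ\eta^{k+l}$ (up to the usual signs and Hopf-invariant correction terms). Since $\eta^4=0$ stably, exponents $k\ge4$ vanish; this gives the bound $0\le k\le3$.

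For part (a), with no repeating indices, we invoke \cite[Theorem~4.3]{gptw}. The argument there is purely Lie-algebraic: it rewrites $x$ through the GPTW basis using antisymmetry, Jacobi, and the relations $[t_i,t_j]=0$ for $\{i,j\}\in\K$. All of these identities hold in the quasi-Lie algebra $\pi_*(\DJ(\K))$, because the problematic quasi-Lie terms $[a,[a,a]]$ and $[a,a]$ for even $a$ only arise with repeated indices. The same rewriting therefore holds integrally for Whitehead products.

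For part (b), the key step is computing $[t_i,w]$ for a GPTW element $w$. If $i$ does not occur in $w$, then $[t_i,w]$ has no repeated indices and part (a) applies. If $i$ does occur in $w$, we use Proposition~\ref{prp:iij}, $[t_i,[t_i,t_j]]=[t_j,t_i]\circ\eta$ up to sign, generalised by induction on the nesting. We expand $w=[t_{j_1},[t_{j_2},\dots]]$ with Jacobi, so that $t_i$ is moved next to the occurrence of $t_i$ inside $w$. This produces terms $[\dots[t_i,[t_i,u]]\dots]$. We then use naturality: $[t_i,[t_i,u]]$ is the image of a universal element in the wedge $\CC P^\infty\vee S^{|u|}$, with $u$ pulled back along $S^{|u|}\to\DJ(\K)$. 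In $\pi_*(\CC P^\infty\vee S^n)$ one computes $[\iota_2,[\iota_2,\iota_n]]=[\iota_2,\iota_n]\circ\eta$, which holds because $[\iota_2,\iota_2]=0$ in $\CC P^\infty$ and the Jacobi identity. Composition with $\eta$ then commutes with the remaining outer brackets, up to further $\eta$ factors. This turns the problem into GPTW elements composed with powers of $\eta$.

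The main obstacle will be controlling these distributive formulas. The Whitehead bracket does not distribute naively over composition with $\eta$: $[a\circ\eta,b]$ equals $[a,b]\circ\Sigma^{?}\eta$ only up to terms involving $[[a,a],b]\circ H(\eta)$ or similar. We have to check that these correction terms are again of the required form, or that they vanish because the relevant GPTW-type elements $[a,a]$ reduce via the Hilton--Milnor splitting of \S\ref{subsec:mac-wedge-of-spheres}. We would first handle this in the universal example, a wedge of spheres with $\CC P^\infty$ summands, where Hilton--Milnor makes every computation explicit.
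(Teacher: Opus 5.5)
Your skeleton matches the paper's proof. You use Jacobi reductions to brackets $[t_i,w]$ with $w$ a GPTW element, and handle the repeat-free case via \cite[Theorem~4.3]{gptw}. You move $t_i$ inward until it meets its own occurrence, which the paper makes explicit in Lemma~\ref{prp:comm_with_gptw_formula} and the lemmas before it. (This expansion also produces terms such as $[[t_i,t_{j_1}],v]$, brackets of two repeat-free pieces, which part (a) covers.) Finally you pull $\eta$ out of brackets.

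The gap is at the one step where topology rather than Lie algebra enters. You justify $[\iota_2,[\iota_2,\iota_n]]=[\iota_2,\iota_n]\circ\eta$ in $\pi_*(\CC P^\infty\vee S^n)$ by $[\iota_2,\iota_2]=0$ and the Jacobi identity. These only give $2[[\iota_2,\iota_n],\iota_2]=-[[\iota_2,\iota_2],\iota_n]=0$; no bracket identity can produce a composite with $\eta$. The missing ingredient is the distributivity law, Proposition~\ref{prp:barcus_barratt} with $\zeta=\eta_2$, where $\gamma_2(\eta_2)=\pm\iota_3$ and $\gamma_n(\eta_2)=0$ for $n\ge3$. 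For any $u$ with $|u|\ge2$ it gives
\[
  0=[u,t_i\circ\eta_2]=[u,t_i]\circ\eta\pm[[u,t_i],t_i].
\]
The left side vanishes because $t_i\circ\eta_2$ factors through $\pi_3(\CC P^\infty)=0$. Since $[u,t_i]\circ\eta$ has order $2$, this yields $[t_i,[t_i,u]]=[t_i,u]\circ\eta$ for every $u$, directly in $\DJ(\K)$, with no universal example needed. Justified this way, your route is shorter than the paper's. It subsumes Proposition~\ref{prp:iij} and Lemma~\ref{lmm:adding_letter}, which the paper derives from the case $u=t_j$ by a Jacobi induction. That induction is the ``induction on the nesting'' you mention but do not carry out.

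Second, you leave what you call the main obstacle unresolved, but it disappears once you track degrees. Hopf-invariant correction terms arise only when $\eta_2$ is composed with a class of degree $2$ (Proposition~\ref{prp:properties of hopf}($1'$),($2'$)). In your algorithm $\eta$ is only ever composed with GPTW elements and their brackets, all of degree $\ge3$. There $\eta$ and its iterates are suspensions, so Proposition~\ref{prp:composition_properties}(4),(6) give $[z,y\circ\eta^k]=[z,y]\circ\eta^k$ and $(y+y')\circ\eta^k=y\circ\eta^k+y'\circ\eta^k$ exactly. No correction terms appear, and no appeal to Hilton--Milnor is needed. Finally, the bound $k\le3$ needs the unstable vanishing $\eta_n^4=0$ for $n\ge3$, which comes from $\eta_3^4=0$; stable vanishing alone does not imply it.
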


This is proved in Section~\ref{sec:computations of brackets} through a sequence of lemmas, allowing for an algorithmic expression of $x$ in terms of GPTW and Hopf elements.

\medskip

To address Problem~\ref{prb:main2}, in Section \ref{sec:homotopy groups of dj} we identify $S(\mathcal K)$ with the image of the homomorphism induced by the inclusion of the canonical elements and the map $  g_\K\colon\bigvee_{x\in GPTW}S^{|x|}\to \DJ(\K)$
given by the iterated Whitehead products corresponding to GPTW elements. The part of $\pi_*(\DJ(\K))$ generated by iterated Whitehead products is identified in the following result.

\begin{thm}[Proposition~\ref{SKequiv} and Theorem \ref{thm:Phi inj surj criterion}]
\label{thm:Phi inj surj criterion intro}
Consider the homomorphism
\[
  \varPhi\colon \ZZ\langle t_1,\dots,t_m\rangle\oplus \pi_*\Bigl(\bigvee_{x\in GPTW}S^{|x|}\Bigr) \to \pi_*(\DJ(\K))
\] 
given by iterated Whitehead products corresponding to GPTW elements. Then
\begin{enumerate}
 \item $S(\K)=\Img\varPhi$.
 \item $\varPhi$ is surjective if and only if $\K$ is flag;
 \item $\varPhi$ is injective if and only if $\K^1$ is chordal.
\end{enumerate}
\end{thm}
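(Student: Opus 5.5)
For part (1), I will show the two inclusions separately. First, $\Img\varPhi\subseteq S(\K)$. Each GPTW element is by definition an iterated Whitehead product of the $t_i$, so the map $g_\K$ on each sphere $S^{|x|}$ represents an element of $S(\K)$. By the Hilton--Milnor theorem, $\pi_*(\bigvee_x S^{|x|})$ is spanned by compositions $w_b\circ\alpha$, where $b\in\BC(GPTW)$ and $\alpha\in\pi_*(S^{|b|})$. These are mapped by $g_\K$ to iterated Whitehead products of GPTW elements composed with elements of homotopy groups of spheres, which lie in $S(\K)$. Conversely, I will show $\Img\varPhi$ is a $\Pi$-subalgebra containing $t_1,\dots,t_m$.

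Closure under composition with $\pi_*(S^n)$ is immediate for the wedge summand, because $\Img\varPhi$ restricted there is the image of the homomorphism $(g_\K)_*$, which commutes with precomposition. For $t_i\circ\alpha$ with $\alpha\in\pi_*(S^2)$ and $*>2$, the element $\alpha$ is a multiple of $\eta\circ\beta$, and $t_i\circ\eta$ is null in $\pi_3(\CC P^\infty)$. For Whitehead products, the wedge part is handled by naturality: $[g\circ u,g\circ v]=g\circ[u,v]$. The remaining case is brackets $[t_i,y]$ with $y=g_\K\circ w_b\circ\alpha$. Here I will use Theorem~\ref{thm:iterated whitehead product to GPTW} together with the expression of $[t_i,x]$ for $x\in GPTW$ as a combination of $y\circ\eta^k$. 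To push $\alpha$ out, I will use $[t_i,w\circ\alpha]=[t_i,w]\circ\Sigma\alpha'$, where $\alpha'$ is a suspension-type element. This holds because $t_i$ lives on $S^2$ and the relevant Whitehead product $[\iota_2,\iota_n]$ on $S^2\vee S^n$ composes well after using a smash/join formula. Together with the Jacobi identity, this shows $[t_i,-]$ preserves $\Img\varPhi$.

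For part (2), the direction ``$\K$ flag $\Rightarrow$ surjective'' I would deduce from the loop space decomposition in the flag case. By \cite{vylegzhanin}, $\Omega\ZK$ is a product of loops on spheres whose inclusions are given by iterated Whitehead products of GPTW elements; equivalently, the map $\Omega g_\K\colon\Omega(\bigvee S^{|x|})\to\Omega\ZK$ has a right homotopy inverse. Then $\pi_*(\ZK)$ lies in the image of $g_\K$, and by \eqref{eq:pi dj = pi bt + pi zk} together with $t_i$ hitting $\pi_2(\CC P^\infty)$, $\varPhi$ is onto. For the converse, suppose $\K$ is not flag, and let $J$ be a minimal non-face of size $\ge3$. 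I will retract onto the full subcomplex $\K_J=\partial\Delta^J$; the polyhedral product retraction is compatible with $\varPhi$ and sends GPTW elements to GPTW elements or zero. For $\K_J=\partial\Delta^J$, the complex $\ZK$ is $S^{2|J|-1}$, and the GPTW set for $\K_J$ is empty because every pair of vertices is an edge of $\K_J$. So $\Img\varPhi$ is just $\ZZ^m$, while $\pi_{2|J|-1}(\ZK)\neq0$, and surjectivity fails.

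For part (3), if $\K^1$ is chordal, then $g_\K$ is a homotopy equivalence onto $\ZK$ when $\K$ is flag by \cite[Theorem~4.6]{gptw}. For non-flag $\K$, I would compare with the flagification $\Kf$, which has the same $1$-skeleton and hence the same GPTW elements, using the map $\DJ(\K)\to\DJ(\Kf)$. The composite $\bigvee S^{|x|}\to\ZK\to\Z_{\Kf}$ is an equivalence, so $g_\K$ is injective on $\pi_*$, and injectivity of $\varPhi$ follows from \eqref{eq:pi dj = pi bt + pi zk}. Conversely, if $\K^1$ has a chordless cycle on $J$ with $|J|\ge4$, the relation from $H_1(\K^f_J)$ described in Section~\ref{sec:identities} gives a nontrivial Lie relation among GPTW elements. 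That relation is a nonzero element of the free Lie algebra, hence a nonzero element of $\pi_*(\bigvee S^{|x|})$ by Hilton--Milnor, which is killed by $\varPhi$.

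The main obstacle I expect is the closure of $\Img\varPhi$ under $[t_i,-]$ applied to compositions $w\circ\alpha$ with arbitrary $\alpha$. The Whitehead product does not commute with composition in general, so a Barcus--Barratt-type formula or Hopf-invariant correction terms must be controlled. I would try first to exploit that $t_i$ factors through $\CC P^\infty$, reducing the question to the universal case $\CC P^\infty\vee S^n$.
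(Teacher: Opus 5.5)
Your arguments for the ``only if'' half of (2) (retract onto a minimal non-face $\K_J=\partial\Delta^J$) and the ``if'' half of (3) (compare with $\Kf$) are essentially the paper's. The ``only if'' half of (3), however, has a real gap.

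The claim that the relation~\eqref{minrel} for a chordless cycle ``is a nonzero element of the free Lie algebra'' carries the whole weight of this direction, and you do not justify it. It is also not formal. Theorem~\ref{thmminrel} produces a relation for \emph{every} $1$-cycle $\kappa$, including cycles that bound in $\K^f_J$. When $\K^1$ is chordal, injectivity of $\varPhi$ forces all of these relations to vanish in $\pi_*(\bigvee_{x\in GPTW}S^{|x|})$, so nonvanishing cannot be read off from the shape of~\eqref{minrel}. Moreover, \eqref{minrel} only becomes an element of $\pi_*(\bigvee_{x\in GPTW}S^{|x|})$ after the non-GPTW factors $c(A,t_i)$ are rewritten as Lie polynomials in GPTW elements (\cite[Algorithm~5.4]{vylegzhanin}). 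Cancellation after this rewriting is not excluded a priori. What you need is the nonvanishing contained in the minimality theorem \cite[Theorem~1.1]{vylegzhanin}, detected through the Hurewicz map to $H_{*-1}(\Omega\bigvee_{x\in GPTW}S^{|x|})$, or an explicit check for every chordless cycle. The paper avoids this computation. After retracting onto $\K_J$, which is flag, $\varPhi$ is surjective by (2). Injectivity would then make $\widehat g_{\K_J}$ a homotopy equivalence from a wedge of spheres to $\mathcal Z_{\K_J}$, contradicting \cite{MacGavran}, since a connected sum of sphere products has nontrivial cup products.

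In (1), the identity $[t_i,w\circ\alpha]=[t_i,w]\circ\Sigma\alpha'$ is false when $\alpha$ is not a suspension. Proposition~\ref{prp:barcus_barratt} produces additional terms $[\dots[[t_i,w],w]\dots,w]\circ E\gamma_n(\alpha)$. For example, if $|w|$ is even and $\alpha=[\iota,\iota]$, the Jacobi identity gives $[t_i,w\circ\alpha]=[t_i,[w,w]]=-2[[t_i,w],w]$, which is in general nonzero, whereas $[t_i,w]\circ\Sigma[\iota,\iota]=0$. Your closure argument can be rescued with Proposition~\ref{prp:barcus_barratt}: $[t_i,w]\in\Img(g_\K)_*$ by Theorem~\ref{thm:iterated whitehead product to GPTW}, and $\Img(g_\K)_*$ is closed under Whitehead products and composition with elements of $\pi_*(S^n)$. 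You would also need Proposition~\ref{prp:properties of hopf}($2'$) to handle $(\sum a_it_i)\circ\alpha$, not just $t_i\circ\alpha$.

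In fact no interchange of brackets and compositions is needed. $S(\K)$ is the image of $\pi_*((S^2)^{\vee m})$, which by Hilton--Milnor is additively spanned by the elements $w\circ\alpha$. Theorem~\ref{thm:iterated whitehead product to GPTW} writes $w=g_\K\circ\bar w$ for $|w|>2$, so $w\circ\alpha=g_\K\circ(\bar w\circ\alpha)$.

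Similarly, your ``if'' half of (2) hinges on the factor maps of the cited splitting of $\Omega\ZK$ lying in $\Img(g_\K)_*$. The paper gets surjectivity directly from three ingredients: the Panov--Theriault section (Lemma~\ref{lmm:pt_imply_surjectivity}), the equality $S(\L)=\pi_*(\DJ(\L))$ (Proposition~\ref{prp:chordal+flag=homotopy type of ZK} applied to $\L$), and naturality of $S$.
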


Restating this in terms of $\ZK$ through decomposition~\eqref{eq:pi dj = pi bt + pi zk}, we obtain that $\pi_*(\ZK)$ is generated as a $\Pi$-algebra by GPTW elements if and only if $\K$ is flag  and there are no relations
between the GPTW elements if and only if $\K^1$ is chordal.
Elaborating on Theorem~\ref{thm:Phi inj surj criterion}, we show that $\K$ is flag if and only if $\pi_*(\DJ(\K))$ is generated by $t_1,\dots,t_m$ as a $\Pi$-algebra.

\begin{thm}[Theorem \ref{thm:flagness is surjectivity}]
\label{thm:flag case, full generation}
Let $\K$ be a simplicial complex on $[m]$. The following conditions are equivalent:
\begin{itemize}
\item[(a)] $\K$ is a flag complex;
\item[(b)] the inclusion $(S^2)^{\vee m}\to\DJ(\K)$ induces a surjection on homotopy groups;
\item[(c)] the group $\pi_*(\DJ(\K))$ is generated by elements of the form $x\circ\alpha$, where $x\in\pi_{|x|}(\DJ(\K))$ is an iterated Whitehead product of $t_1,\dots,t_m$, and $\alpha\in\pi_*(S^{|x|})$.
\end{itemize}
\end{thm}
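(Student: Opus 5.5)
The plan is to prove (b)$\Leftrightarrow$(c) first, then get (a)$\Leftrightarrow$(b) from Theorem~\ref{thm:Phi inj surj criterion intro}.

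\emph{(b)$\Leftrightarrow$(c).} By the Hilton--Milnor theorem, $\pi_*((S^2)^{\vee m})$ is spanned by the compositions $w\circ\alpha$. Here $w$ runs over iterated Whitehead products of the canonical inclusions $\iota_i\colon S^2\to(S^2)^{\vee m}$ indexed by basic commutators, and $\alpha\in\pi_*(S^{|w|})$. Pushing forward along the inclusion $(S^2)^{\vee m}\to\DJ(\K)$ sends $\iota_i\mapsto t_i$, and this map commutes with Whitehead products and with precomposition. Hence the image of $\pi_*((S^2)^{\vee m})\to\pi_*(\DJ(\K))$ is exactly the subgroup spanned by elements $x\circ\alpha$ with $x$ an iterated Whitehead product of $t_1,\dots,t_m$. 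Conversely, every such $x\circ\alpha$ is the image of $\tilde x\circ\alpha$, where $\tilde x$ is the same bracket of the $\iota_i$. So (b) and (c) both say that this subgroup is all of $\pi_*(\DJ(\K))$.

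\emph{(a)$\Rightarrow$(b).} This is where Theorem~\ref{thm:Phi inj surj criterion intro}(2) enters. Each GPTW element is an iterated Whitehead product of the $t_i$, so $g_\K$ is homotopic to a composite $\bigvee_{x\in GPTW}S^{|x|}\to(S^2)^{\vee m}\to\DJ(\K)$. The first map is the wedge of the corresponding brackets of the $\iota_i$. Likewise the summand $\ZZ\langle t_1,\dots,t_m\rangle$ comes from $\pi_2((S^2)^{\vee m})$. Therefore $\Img\varPhi$ is contained in the image of $\pi_*((S^2)^{\vee m})$. When $\K$ is flag, $\varPhi$ is surjective, which gives (b).

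\emph{(b)$\Rightarrow$(a).} The cleanest route is to show that the image of $\pi_*((S^2)^{\vee m})$ lies in $S(\K)=\Img\varPhi$. The image is spanned by the $x\circ\alpha$ from the first step, and each $x$ lies in the $\Pi$-subalgebra generated by $t_1,\dots,t_m$, which is $S(\K)$ by definition. So $S(\K)$ equals the image of $\pi_*((S^2)^{\vee m})$, and (b) forces $\varPhi$ to be surjective. Theorem~\ref{thm:Phi inj surj criterion intro}(2) then gives that $\K$ is flag.

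The main obstacle sits inside Theorem~\ref{thm:Phi inj surj criterion intro}, which we are allowed to assume. It is the identification $S(\K)=\Img\varPhi$: one needs Theorem~\ref{thm:iterated whitehead product to GPTW} to rewrite brackets with repeated indices as $y\circ\eta^k$ with $y$ a bracket of GPTW elements, and then the Hilton--Milnor theorem on the wedge. There is also the non-flag obstruction. If $\K$ is not flag, take a minimal missing face $J$ with $|J|\ge3$. Then $\K_J=\partial\Delta^J$, $\mathcal Z_{\K_J}\simeq S^{2|J|-1}$, and $\DJ(\K_J)$ is a retract of $\DJ(\K)$. The class of the higher Whitehead product is not a composite of ordinary brackets, which can be detected in $H_{2|J|-1}$ of the moment-angle complex, where ordinary brackets contribute nothing.
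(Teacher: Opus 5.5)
Your proposal is correct and takes essentially the paper's route. The paper obtains the theorem by combining Proposition~\ref{SKequiv} with the criterion that $\varPhi$ is surjective if and only if $\K$ is flag. That proposition says three subgroups coincide: the one spanned by the $x\circ\alpha$, the image of $\pi_*((S^2)^{\vee m})$, and $\Img\varPhi$; its first equality comes from Hilton--Milnor exactly as in your first step. Your closing homological remark on the non-flag case is not needed, and the paper's argument there is simpler: after retracting onto the fat wedge $\DJ(\K_J)$, all ordinary brackets of the $t_j$ vanish, so nothing in degree $2|J|-1$ is hit.
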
 

For $\K=\L$, Problem \ref{prb:main2} can be thought of as determining the image of the standard inclusion $(S^2)^{\vee m}\to (\CC P^\infty)^{\vee m}$ on the level of homotopy groups. More generally, we consider the map of polyhedral products $t_\K\colon(S^2,\ast)^\K\to(\CC P^\infty,\ast)^\K=\DJ(\K)$ induced by the inclusions $S^2\to\CC P^\infty$. Then the homotopy fibration~\eqref{eq:zk-dj fibration} can be included in a diagram of homotopy fibrations:
\begin{equation}
\label{eq:main diagram for dj}
\xymatrix{
(C\Omega S^2,\Omega S^2)^\K
\ar[d]^-{r_{\K}}
\ar[r]^-{}
&
(S^2,\ast)^\K
\ar[r]
\ar[d]^-{t_\K}
&
(S^2)^{\times m}
\ar[d]\\
(CS^1,S^1)^\K
\ar@<1ex>@/^/[u]^-{q_{\K}}
\ar[r]^-{}
&
(\CC P^\infty,\ast)^\K
\ar[r]
&
(\CC P^\infty)^{\times m},
}
\end{equation} 
where the map $r_{\K}$ has a right homotopy inverse $q_{\K}$ with both induced by the natural retraction $S^1\to\Omega S^2\to S^1$. 

When  $\K=\L$, both $\ZK$ and $(C\Omega S^2,\Omega S^2)^\K$ are finite type wedges of spheres, and \eqref{eq:main diagram for dj} takes the following form:
\[
\xymatrix{
\bigvee\limits_{\alpha\in\Zm}(S^{|\alpha|+1})^{\vee (|\supp\alpha|-1)}
\ar[d]^-{r_{\L}}
\ar[rr]^-{f_\L}
&&
(S^2,\ast)^{\vee m}
\ar[r]
\ar[d]
&
(S^2)^{\times m}
\ar[d]\\
\bigvee\limits_{J\subset[m]}(S^{|J|+1})^{\vee (|J|-1)}
\ar@<1ex>@/^/[u]^-{q_{\L}}
\ar[rr]^-{g_\L}
&&
(\CC P^\infty,\ast)^{\vee m}
\ar[r]
&
(\CC P^\infty)^{\times m}.
}
\]
Here, $g_\L$ is a wedge of iterated Whitehead products corresponding to GPTW elements for $\mathcal Z_\L=(CS^{1},S^{1})^{\L}$ (Proposition~\ref{prp:chordal+flag=homotopy type of ZK}).
The other maps in this diagram are described in Section~\ref{sec:homotopy groups of dj}. 

\begin{prp}[see Proposition \ref{diagDJL}]
In the diagram above:
\begin{enumerate}
\item $q_\L$ is the inclusion of wedge summands corresponding to squarefree multi-indices;

\item if $\alpha=J$ is a squarefree multi-index, then the restriction of the map $r_\L$ to $(S^{|J|+1})^{\vee (|J|-1)}$ is the identity map;

\item each restriction $S^{|\alpha|+1}\to\bigvee S^{|J|+1}$ of $r_\L$ is an explicit linear combination of elements of the form $s\circ\eta^k$ and $[s',s'']\circ\eta^k$, where $s,s',s''$ are inclusions of wedge summands $S^{|J|+1}$ and $\eta^k\colon  S^{|w|+k}\to S^{|w|}$ is the iterated Hopf map;%, $k\geq 0$;

\item $r_\L$ is nontrivial only on a finite number of wedge summands.
\end{enumerate}
\end{prp}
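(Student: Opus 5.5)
We will identify all the spaces and maps explicitly through the Hilton--Milnor and Ganea-type descriptions of fibres of wedges. First, the fibre of $(S^2)^{\vee m}\to(S^2)^{\times m}$ is $(C\Omega S^2,\Omega S^2)^\L$, which by the Porter/Gray description is $\bigvee_{k\ge2}\bigvee_{i_1<\dots<i_k}(\Sigma^{k-1}\Omega S^2_{i_1}\wedge\dots\wedge\Omega S^2_{i_k})$. Using the James splitting $\Sigma\Omega S^2\simeq\bigvee_{n\ge1}S^{n+1}$, each smash $\Sigma^{k-1}\Omega S^2\wedge\dots\wedge\Omega S^2$ becomes a wedge of spheres indexed by multi-indices $\alpha$ with $\supp\alpha=\{i_1,\dots,i_k\}$, and the number of summands of dimension $|\alpha|+1$ with fixed $\alpha$ is $|\supp\alpha|-1$ (from the standard count of the factors $\Sigma^{k-1}$ reorganised via iterated commutators $[\iota_{j},[\iota_{i_1},\dots]]$ with repeated letters). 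Concretely, I would realise $f_\L$ on each summand as an iterated Whitehead product of the $\iota_i\in\pi_2((S^2)^{\vee m})$ of the form $[\ldots[[\iota_{j},\iota_{i_1}],\iota_{i_2}],\ldots]$ with letter multiplicities $\alpha$ (a basis of the relevant homotopy summands, as in the description of $\Omega$ of the fibre). The same construction with $\Omega S^2$ replaced by $S^1$ gives $\ZK$ for $\K=\L$ as $\bigvee_J (S^{|J|+1})^{\vee(|J|-1)}$, with $g_\L$ the GPTW Whitehead products (Proposition~\ref{prp:chordal+flag=homotopy type of ZK}).

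For (1) and (2): the maps $q_\L$ and $r_\L$ are polyhedral products of $S^1\to\Omega S^2\to S^1$, so they are compatible with the smash decompositions; $S^1\to\Omega S^2$ is the inclusion of the bottom cell, which under the James splitting is the inclusion of the summand $S^2$ of $\Sigma\Omega S^2$. Hence $q_\L$ includes precisely the summands with all $\alpha_i\le1$, i.e.\ squarefree $\alpha=J$, and since $r_\L\circ q_\L\simeq\id$ and the choice of GPTW basis on both sides can be matched (both given by the same brackets mapping $\iota_i\mapsto t_i$, so $t_\K\circ f_\L=g_\L\circ r_\L$ on these summands), the restriction of $r_\L$ to these summands is the identity.

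For (3), the key step, I would compute $r_\L$ on a summand $S^{|\alpha|+1}$ by commutativity of the diagram: $g_\L\circ r_\L\simeq t_\L\circ f_\L$, and $t_\L\circ f_\L$ is the iterated Whitehead product of $t_i$ with repeated indices. By Theorem~\ref{thm:iterated whitehead product to GPTW}(b), and more precisely the lemmas of Section~\ref{sec:computations of brackets} starting with $[t_i,[t_i,t_j]]=[t_i,t_j]\circ\eta$, this equals a combination of $y\circ\eta^k$ with $y$ Whitehead products of GPTW elements. Since $g_\L$ induces a split injection on homotopy groups (loop of $p$ has a section, so $\pi_*(\ZK)\to\pi_*(\DJ)$ is injective), we can lift uniquely to $\pi_*(\ZK)$, giving $r_\L$. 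To get only $s\circ\eta^k$ and $[s',s'']\circ\eta^k$ (brackets of length at most two), I would use a degree/connectivity argument: $r_\L$ on a summand is determined by its image in $\pi_{|\alpha|+1}$ of a wedge of spheres of dimension $\ge3$, and Hilton--Milnor components of bracket length $\ge3$ land in spheres of too large a dimension relative to $\eta^k$ with $k\le3$, and most vanish because $\pi_{n+k}(S^n)$ for the relevant stem is killed by the Whitehead product calculus; the main obstacle is bookkeeping to show that longer brackets never appear and to get the explicit coefficients.

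For (4): $r_\L$ is nontrivial on a summand only if the above expression is nonzero, which requires $k=|\alpha|-|J|$ (or $|\alpha|-|J'|-|J''|+1$) to be at most $3$ where $J\subset\supp\alpha$; since $|J|\le m$ and $|J'|+|J''|\le 2m$ in some bounded fashion, only finitely many $\alpha$ (those with $|\alpha|\le 2m+3$, say) contribute.
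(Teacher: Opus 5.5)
You have the right framework, but part (3) — the actual content of the proposition — is not proved, and (4) depends on it.

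\textbf{What matches the paper.} Your treatment of (1) and (2) is in substance the paper's. Lifts through the fibration are unique because $\Omega p$ has a section. The identities $t_\L\circ s_i=t_i$ and $\ev\circ\Sigma E=\id$ then give $r_\L\circ\widehat c(J\setminus j,j;\underline s)\simeq\widehat c(J\setminus j,j;\underline t)$ and $q_\L\circ\widehat c(J\setminus j,j;\underline t)\simeq\widehat c(J\setminus j,j;\underline s)$ (Proposition~\ref{prp:f and s commute with lifts}). Your plan for (3) is also the paper's: lift $t_\L\circ f_\L=c(\alpha-j,j;\underline t)$ through $\widehat g_\L$.

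\textbf{The gap in (3).} Theorem~\ref{thm:iterated whitehead product to GPTW}(b) only gives $c(\alpha-j,j;\underline t)=\sum_{k\le 3}p_k\circ\eta^k$, where the $p_k$ are \emph{arbitrary} Lie polynomials in GPTW elements. Your degree/connectivity argument cannot reduce these to brackets of length at most two. The polynomial $p_k$ lives in degree $|\alpha|+1-k$, which grows with $|\alpha|$. So long basic commutators $b$ in the wedge summands have exactly the right dimension. By Hilton--Milnor, each $w_b\circ\eta^k$ with $|b|\ge 3$, $k\le 3$ is a nonzero element of a direct summand. No dimension count excludes such terms; only a specific identity does.

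\textbf{The missing idea.} What you need is Lemma~\ref{lmm:computing c(alpha-j,j)}.
\begin{itemize}
\item By Lemma~\ref{lmm:adding_letter}, $[t_i,x]=x\circ\eta$ whenever $x$ is a nested commutator beginning with $t_i$. Also $[t_k,y\circ\eta]=[t_k,y]\circ\eta$ for $|y|\ge 3$ (Proposition~\ref{prp:properties of hopf}(1)).
\item Hence each repeated block of a letter in the prefix of $c(\alpha-j,t_j)$ collapses to a single occurrence, at the cost of one factor $\eta$ per repetition. This gives $c(\alpha-j,t_j)=c(\supp(\alpha-j),t_j)\circ\eta^{|\alpha|-1-|\supp(\alpha-j)|}$.
\item If $\alpha_j=1$, this is a single GPTW element composed with $\eta^{|\alpha|-|J|}$.
\item If $\alpha_j>1$, you are left with $c(J,t_j)$, in which $j$ occurs twice. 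Lemma~\ref{lmm:repeats in a nested commutator} (built from Lemmas~\ref{lmm:[j,c(Q,j)]} and~\ref{lmm:c(A,[x,y])}) rewrites it as $c(J\setminus j,t_j)\circ\eta$ minus a signed sum of quadratic brackets $[c(A,t_j),c(B,t_j)]$, with $\max(A)>j$ and $\max(J)\in B$. For $\K=\L$ both factors are GPTW elements.
\end{itemize}
This is exactly where the restriction to terms $s\circ\eta^k$ and $[s',s'']\circ\eta^k$, with explicit indices, comes from.

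\textbf{Consequence for (4).} The same identity proves (4): the entire excess $|\alpha|-|\supp\alpha|$ is absorbed into the exponent of $\eta$, and $\eta^4=0$ on $S^n$ for $n\ge 3$. Your finiteness argument presupposes this form. From Theorem~\ref{thm:iterated whitehead product to GPTW}(b) alone, the exponents are bounded by $3$ but the polynomials $p_k$ are not, so no finiteness follows.

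\textbf{A smaller error.} The fibre of $(S^2)^{\vee m}\to(S^2)^{\times m}$ is $\bigvee_{|I|\ge 2}\bigl(\Sigma\bigwedge_{i\in I}\Omega S^2\bigr)^{\vee(|I|-1)}$, with a single suspension and multiplicity $|I|-1$. Your $\Sigma^{|I|-1}$ would give spheres of dimension $|\alpha|+|I|-1$ instead of $|\alpha|+1$. In any case, the fact that the wedge of the Whitehead products $c(\alpha-j,j;\underline s)$ is an equivalence onto this fibre is \cite[Theorem~7.4]{theriault_dual}. The James splitting alone does not give it.
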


Problem~\ref{prb:main1} is further addressed in Section~\ref{sec:ql(K)}. For $n\geq 2$, let $A_n\subset\pi_*(S^n)$ be the quasi-Lie subalgebra generated by the identity map $\iota\colon S^n\to S^n$, compositions with suspended Hopf elements, and Whitehead products. The additive structure of $A_n$ is known, see Appendix~\ref{sec:groups A_n}. In particular, $A_n$ is a finite direct sum of copies of $\ZZ$, $\ZZ_2$ and $\ZZ_3$.
Let $Q(\K)$ be the subgroup
\[
  Q(\K)=\ZZ\langle t_1,\ldots,t_m\rangle
  \oplus\bigoplus_{b\in\BC(GPTW)}A_{|b|}
\]  
of the group
\[
  \ZZ\langle t_1,\ldots,t_m\rangle
  \oplus\bigoplus_{b\in\BC(GPTW)}\pi_*(S^{|b|})\cong \ZZ^m\oplus\pi_*\Big(\bigvee_{x\in GPTW}S^{|x|}\Big),
\]
and let 
\[
  \phi\colon Q(\K)\to\pi_*(\DJ(\K))
\]
be the restriction of the map $\varPhi\colon \ZZ^m\oplus\pi_*(\bigvee_{x\in GPTW} S^{|x|})\to\pi_*(\DJ(\K))$.

The quasi-Lie algebra $QL(\K)$ is described as follows.

\begin{thm}[Theorem \ref{thm:QL(K) description}]
\label{thm:QL(K) description intro}
Let $\K$ be a simplicial complex. Then:
\begin{enumerate}
  \item $QL(\K)=\Img\phi$, i.\,e. $QL(\K)\subset\pi_*(\DJ(\K))$ is additively generated by the elements $t_1,\dots,t_m$ and $w_b\circ\alpha$, where $b\in\BC(GPTW)$ and $\alpha\in A_{|b|}$;
  
  \item the natural homomorphism of quasi-Lie algebras $QL(\K)\to QL(\K^f)$ induced by the flagification $\K\to\K^f$ is surjective;
  
  \item if $\K^1$ is a chordal graph, then $Q(\K)\cong QL(\K)\cong QL(\K^f)$. In particular, in this case $QL(\K)$ has no $p$-torsion for $p>3$.
\end{enumerate}
\end{thm}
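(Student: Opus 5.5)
The plan is to prove the three parts in order, relying on the commutator calculus of the iterated-Whitehead-product theorem stated above (Theorem~\ref{thm:iterated whitehead product to GPTW}) and on the description of $S(\K)$ as $\Img\varPhi$ (Theorem~\ref{thm:Phi inj surj criterion intro}).

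For (1), the inclusion $\Img\phi\subset QL(\K)$ comes first. Each $w_b$ is an iterated Whitehead product of GPTW elements, and each GPTW element is an iterated Whitehead product of the $t_i$, so $w_b\in QL(\K)$. Every $\alpha\in A_{|b|}$ is built from $\iota$ by Whitehead products and compositions with suspended Hopf maps $\eta^k$. The naturality identities $w\circ[\alpha,\beta]=[w\circ\alpha,w\circ\beta]$ and $(w\circ\alpha)\circ\eta^k$ then show that $w_b\circ\alpha$ is obtained from $w_b$ by quasi-Lie operations. Here one must check that precomposition with suspensions $\eta^k$ is among the quasi-Lie operations; it is, since $\eta$ on $S^n$ for $n\ge3$ and the identity $[\iota_2,\iota_2]=2\eta$ arise in this way.

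The reverse inclusion $QL(\K)\subset\Img\phi$ requires $\Img\phi$ to contain the $t_i$ and to be closed under Whitehead products. By part~(b) of the iterated-Whitehead-product theorem, every generator of $QL(\K)$ of degree $>2$ is a sum of terms $y\circ\eta^k$ with $0\le k\le 3$, where $y$ is a Whitehead product of GPTW elements. By Hilton--Milnor, $y$ is a $\ZZ$-combination of terms $w_b\circ\beta$ with $\beta\in A_{|b|}$. These $\beta$ are Whitehead products and Hopf-type compositions of identity classes, and the closure of the $A_n$ under such operations is exactly what the appendix records. The main obstacle is the Hilton--Milnor step. Rewriting a non-basic bracket of basic commutators into basic form in $\pi_*(\bigvee S^{|x|})$ must stay inside $\bigoplus_b w_b\circ A_{|b|}$. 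The issue is that $[w_b,w_b]$ and brackets involving $w_b\circ\eta^k$ produce terms such as $w_{[b,b]}$ or $w_b\circ[\iota,\iota]$, and one must verify that these land in the $A$'s. I would handle this first universally, in the wedge of spheres: the quasi-Lie subalgebra of $\pi_*(\bigvee_x S^{|x|})$ generated by the fundamental classes equals $\bigoplus_b w_b\circ A_{|b|}$. This follows by induction on the length of basic commutators, using the distributivity $[\alpha\circ\eta,\beta]=[\alpha,\beta]\circ\eta$ for suspended $\eta$. I would then push this forward along $g_\K$.

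For (2), surjectivity of $QL(\K)\to QL(\K^f)$ follows from the fact that $QL(\K^f)$ is generated by the images of the $t_i$. The flagification map sends canonical elements to canonical elements, and a morphism of quasi-Lie algebras sending generators onto generators is surjective.

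For (3), suppose $\K^1$ is chordal. Then $\K^f$ is flag with chordal $1$-skeleton, so $\varPhi$ for $\K^f$ is injective by Theorem~\ref{thm:Phi inj surj criterion intro}, and hence $\phi$ is injective for $\K^f$, giving $Q(\K^f)\cong QL(\K^f)$. Since $\K$ and $\K^f$ have the same $1$-skeleton, they have the same GPTW elements and basic commutators, so $Q(\K)=Q(\K^f)$. The composite $Q(\K)\to QL(\K)\to QL(\K^f)$ equals $\phi_{\K^f}$, which is an isomorphism, and the first map is surjective by (1). It follows that both maps are isomorphisms. The torsion claim then follows because the $A_n$ have only $\ZZ$, $\ZZ_2$ and $\ZZ_3$ summands.
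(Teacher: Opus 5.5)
\textbf{There is a genuine gap in the inclusion} $\Img\phi\subset QL(\K)$, precisely where you need $w_b\circ\eta^k\in QL(\K)$. Your justification is that precomposition with $\eta^k$ ``is among the quasi-Lie operations''. That is false. $QL(\K)$ is closed only under sums and Whitehead products. For example, all Whitehead products in $\pi_*(S^3)$ vanish, so the quasi-Lie subring generated by $\iota_3$ is $\ZZ\langle\iota_3\rangle$, yet $\iota_3\circ\eta=\eta_3\neq 0$. The identity $[\iota_2,\iota_2]=2\eta_2$ only yields $2\eta$, which is killed by one suspension. It is therefore useless for the classes $w_b$, all of degree $\ge 3$. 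Naturality of $w\circ[\alpha,\beta]$ gives you $w_b\circ\iota$, $w_b\circ[\iota,\iota]$ and $w_b\circ[\iota,[\iota,\iota]]$, but nothing about $w_b\circ\eta^k$.

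The same confusion appears in your ``universal'' claim. The quasi-Lie subring of $\pi_*(\bigvee_x S^{|x|})$ generated by the fundamental classes is spanned by $w_b$, $[w_b,w_b]$ and $[w_b,[w_b,w_b]]$. It contains no $w_b\circ\eta$, so it is not $\bigoplus_b w_b\circ A_{|b|}$. In a wedge of spheres of dimension $\ge3$ nothing turns brackets into $\eta$-compositions.

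\textbf{The missing idea is specific to} $\DJ(\K)$. Since $\pi_3(\CC P^\infty)=0$, we have $t_j\circ\eta=0$, and the Barratt--Hilton formula then gives $[t_i,[t_i,t_j]]=[t_i,t_j]\circ\eta$. This bootstraps to Lemma~\ref{lmm:adding_letter}: $[t_{j_1},x]=x\circ\eta$ for every nested commutator $x=[t_{j_1},[\dots[t_{j_k},t_i]\dots]]$. With it the inclusion follows in three steps:
\begin{enumerate}
\item For $b=[t_{j_1},c]\in GPTW$, $b\circ\eta^k$ is the bracket of $b$ with $k$ further copies of $t_{j_1}$, so it lies in $QL(\K)$.
\item For basic $b=[b',b'']$, use $b\circ\eta^k=[b'\circ\eta^k,b'']$ (Proposition~\ref{prp:properties of hopf}(1)) and induct.
\item Finally, $[b,b]\circ\eta^k=[b\circ\eta^k,b]$ and $[b,[b,b]]\circ\eta^k=[b\circ\eta^k,[b,b]]$.
\end{enumerate}
Without this step (1) is only half proved. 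In (3) you would then obtain only an injection $QL(\K)\hookrightarrow Q(\K)$: that suffices for the torsion statement, but not for $Q(\K)\cong QL(\K)$.

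Your reverse inclusion is fine and simpler than you make it. Theorem~\ref{thm:iterated whitehead product to GPTW}, the normal form of Corollary~\ref{crl:lie_polynomial_normal_form}, and additivity of $-\circ\eta$ in degrees $\ge3$ suffice; the universal statement is not needed.

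Your argument for (2) is correct and more direct than the paper's route through $\phi$ and $Q(\K)\cong Q(\Kf)$. A quasi-Lie homomorphism sending each $t_i$ to $t_i$ maps the subring generated by the $t_i$ onto the subring generated by their images.
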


We also describe an algorithm that allows us to express any element of $QL(\K)$ as an element in $\Img\phi$ and to compute the bracket in $QL(\K)$ in these terms.

\medskip

Problems~\ref{prb:main1} and~\ref{prb:main2} can be generalised. The homotopy fibration \eqref{eq:zk-dj fibration} is a special case of the homotopy fibration of polyhedral products
\[
  (C\Omega\X,\Omega\X)^\K\to(\X,\ast)^\K\to\prod X_i.
\]  
The canonical generators $t_{i}$ generalise to the composites
\[
  t_{\X,i}\colon \Sigma\Omega X_i\xrightarrow{\ev_{X_i}} X_i\hookrightarrow (\X,\ast)^\K 
\]  
where $\ev_X\colon \Sigma\Omega X\to X$ is the standard evaluation map and the right map is induced by including the vertex $i$ into $\K$. In Section \ref{sec:spheres and dj} we study the iterated generalised Whitehead products of the maps $t_{\X,i}$. 

Let $E_Y\colon Y\to\Omega\Sigma Y$ be the suspension map, defined as the adjoint of the identity map on $\Sigma Y$.
Diagram~\eqref{eq:main diagram for dj} generalises to the commutative diagram of fibrations
\[
\xymatrix{
(C\Omega\Sigma\Omega\X,\Omega\Sigma\Omega\X)^\K
\ar[d]^-{r_{\K}=(C\Omega\ev_\X,\Omega\ev_\X)^\K}
\ar[rr]
&&
(\Sigma\Omega\X,\ast)^\K
\ar[d]^-{(\ev_\X,\ast)^\K}
\ar[r]
&
\prod_{i=1}^m\Sigma\Omega X_i
\ar[d]^-{\prod \ev_\X}
\\
(C\Omega\X,\Omega\X)^\K
\ar@<1ex>@/^/[u]^-{q_{\K}=(CE,E)^\K}
\ar[rr]
&&
(\X,\ast)^\K
\ar[r]
&
\prod_{i=1}^m X_i.
}
\]

When $\K=\L$, the fibre inclusions in the diagram above are identified with wedges of generalised Whitehead products:
\[
  \xymatrix
  {
  \bigvee_{\alpha\in\Zm}
  %\bigvee_{j\in \Theta_\L(\alpha)}
  \bigl(\Sigma(\Omega\X)^{\wedge \alpha}\bigr)^{\vee (|\supp\alpha|-1)}
  \ar[r]%^-{\vee c(\alpha-j,j;\incl)}
  \ar[d]^{r'_\L}
  &
  \bigvee_{i=1}^m \Sigma\Omega X_i
  \ar[r]
  \ar[d] 
  &
  \prod_{i=1}^m\Sigma\Omega X_i\ar[d]\\
  \bigvee_{J\subset[m]}
  %\bigvee_{j\in\Theta_\L(J)}
  \bigl(\Sigma(\Omega\X)^{\wedge J}\bigr)^{\vee (|J|-1)}
  \ar[r]%^-{\vee c(J\setminus j,j;t)}
  \ar@<1ex>@/^/[u]^-{q'_{\L}} 
  %\ar@/_1pc/[u]_-{q_\L}
  &
  \bigvee_{i=1}^m X_i\ar[r]
  &
  \prod_{i=1}^m X_i,
  }
\]
as described in Theorem~\ref{thm:fiber of X^L} and Proposition~\ref{prp:f and s for X^L}.

Generalising the identity $[[t_i,t_j],t_j]=[t_i,t_j]\circ\eta$, we obtain
\[               
  [[[t_{\X,i},t_{\X,j}],t_{\X,j}]\simeq
  [t_{\X,i},t_{\X,j}]\circ(\id_{  \Omega X_i}\wedge H\mu_{X_j})
  \colon \Sigma(\Omega X_i\wedge\Omega X_j\wedge\Omega X_j)\to (\X,\ast)^\K,
\]
i.e. the role of the Hopf element $\eta\in\pi_3(S^2)$ is played by the Hopf construction
\[
  H\mu_X\colon \Sigma(\Omega X\wedge\Omega X)\to\Sigma\Omega X
\]  
of the multiplication $\mu_X\colon \Omega X\times\Omega X\to\Omega X$. This formula was mentioned by Baues~\cite[(A.1.23)]{baues_homotopy}; we give a complete proof in Appendix~\ref{sec:baues_formula}. However, the general case is much less approachable: Davis--Januszkiewicz spaces are simpler because $\Omega X$ is a co-H-space when $X=\CC P^\infty$. On the other hand, our results easily generalise from the $\CC P^\infty$ case to the $\HH P^\infty$ case.

\section{Preliminaries} 
This section collects together many of the tools that will be used subsequently and describes their relevant properties.

\subsection{Polyhedral products}
Let $\K$ be a simplicial complex on the set
$[m]=\{1,2,\ldots,m\}$. Let $(\underline X,\underline A)=\{(X_{i},A_{i})\}_{i=1}^{m}$ be the sequence of pairs of pointed $CW$-complexes, $A_{i}\subset X_{i}$.
For each simplex $I=(i_1,\ldots,i_k)\in\K$, let $(\underline X,\underline A)^I$ be the
subspace of $\prod_{i=1}^{m} X_{i}$ defined by
\[
(\underline X,\underline A)^I=\prod_{i=1}^{m} Y_{i}\qquad
       \mbox{where}\qquad Y_{i}=\left\{\begin{array}{ll}
                                             X_{i} & \mbox{if $i\in I$} \\
                                             A_{i} & \mbox{if $i\notin I$}.
                                       \end{array}\right.
\]
The \emph{polyhedral product} determined by $(\underline X,\underline A)$ and $\K$ is
\[
  (\underline X,\underline A)^\K=\bigcup_{I\in\K}
  (\underline X,\underline A)^I \subset\prod_{i=1}^{m} X_{i}.
\]
For example, suppose each $A_{i}$ is a point. If $\K$ is a disjoint
union of $m$ points then $(\underline{X},\ast)^{\K}$ is
the wedge $X_{1}\vee\cdots\vee X_{m}$, and if $\K$ is the standard
$(m-1)$-simplex then $(\underline{X},\ast)^{\K}$ is the
product $X_{1}\times\cdots\times X_{m}$.

The \emph{moment-angle complex} is the polyhedral product $\ZK=(D^2,S^1)^\K$ and the \emph{Davis--Januszkiewicz space} is $\DJ(\K)=(\CC P^\infty,\ast)^K$.

\begin{prp}\label{loopsec}
There is a homotopy fibration
\[
  (C\Omega\X,\Omega\X)^\K \to (\X,\ast)^\K \to \prod_{i=1}^m X_i.
\]
The associated principal homotopy fibration of loop spaces
\[
  \Omega(C\Omega\X,\Omega\X)^\K \to \Omega(\X,\ast)^\K \to \prod_{i=1}^m \Omega X_i
\]
admits a section $\prod_{i=1}^m \Omega X_i\to\Omega(\X,\ast)^\K$ and is therefore trivial.
\end{prp}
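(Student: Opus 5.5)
The plan is the standard Denham--Suciu/Porter argument, in three steps: identify the homotopy fibre via a colimit argument, construct the section on loops by hand, and then use the group structure to split the loop fibration.

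\emph{Step 1: the fibration.} Convert the inclusion $(\X,\ast)^\K\to\prod_{i=1}^m X_i$ into a fibration by pulling back the path-loop fibrations. For each $i$, let $PX_i\to X_i$ be the path fibration, and form the product fibration $\prod_i PX_i\to\prod_i X_i$, whose fibre is $\prod_i\Omega X_i$. Restrict it over the subspace $(\X,\ast)^\K=\bigcup_{I\in\K}(\X,\ast)^I$.
\begin{itemize}
\item Over $X_i$ the restricted space is $PX_i\simeq C\Omega X_i$.
\item Over the base point the restricted space is $\Omega X_i$.
\end{itemize}
So the total space of the restriction is the union over $I\in\K$ of the products $\prod_{i\in I}PX_i\times\prod_{i\notin I}\Omega X_i$. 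This union is $(P\X,\Omega\X)^\K$.

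Since $(P X_i,\Omega X_i)\simeq (C\Omega X_i,\Omega X_i)$ as pairs, and polyhedral products are homotopy invariant for pairs of NDR pairs (they are homotopy colimits over the face category of $\K$), this total space is homotopy equivalent to $(C\Omega\X,\Omega\X)^\K$. It is the homotopy fibre of the inclusion, because restricting a fibration to a subspace yields a fibration with the homotopy fibre of that inclusion.

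\emph{Step 2: the section.} Looping gives a principal fibration sequence
\[
\Omega(C\Omega\X,\Omega\X)^\K\to\Omega(\X,\ast)^\K\xrightarrow{\Omega p}\prod_{i=1}^m\Omega X_i.
\]
Every vertex $\{i\}$ lies in $\K$, so each $X_i\subset(\X,\ast)^\K$ via the inclusion $j_i$. Define
\[
s\colon\prod_{i=1}^m\Omega X_i\to\Omega(\X,\ast)^\K,\qquad s(\omega_1,\dots,\omega_m)=\Omega j_1(\omega_1)\cdot\ldots\cdot\Omega j_m(\omega_m),
\]
where the product is loop multiplication.

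Because $\Omega p$ is an H-map, $\Omega p\circ s$ sends $(\omega_i)$ to the product of the elements $\Omega(p\circ j_i)(\omega_i)$. Each $\Omega(p\circ j_i)(\omega_i)$ is $\omega_i$ placed in the $i$th factor with the other coordinates constant. Since loop multiplication in a product is coordinatewise, the resulting product is homotopic to $(\omega_1,\dots,\omega_m)$. Hence $\Omega p\circ s\simeq\id$.

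\emph{Step 3: trivialising.} Let $F$ denote $\Omega(C\Omega\X,\Omega\X)^\K$, with fibre inclusion $\Omega i\colon F\to\Omega(\X,\ast)^\K$. Consider the map
\[
F\times\prod_i\Omega X_i\to\Omega(\X,\ast)^\K,\qquad(f,\omega)\mapsto \Omega i(f)\cdot s(\omega).
\]
This is a map of fibrations over $\prod_i\Omega X_i$ that induces the identity on fibres. By the long exact sequences and the five lemma it is a weak equivalence, and hence a homotopy equivalence between CW-type spaces. Equivalently, the long exact homotopy sequence splits into short exact sequences, and the section gives the splitting.

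I expect no real obstacle. The only delicate point is Step 1: one must justify that the restricted path fibration computes the homotopy fibre and that the polyhedral product can be replaced up to homotopy equivalence. Both follow from the colimit description together with cofibrancy of the pairs, which we may assume since the $X_i$ are CW complexes.
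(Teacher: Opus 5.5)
Your proposal is correct and follows the paper's route. Step 1 is the argument behind Denham--Suciu's Lemma~2.3.1, which the paper simply cites. Your section $s$ is Porter's right homotopy inverse for $\Omega\bigl(\bigvee X_i\bigr)\to\prod\Omega X_i$ composed with the looped vertex inclusion $\bigvee X_i\to(\X,\ast)^\K$, which is exactly how the paper obtains its section. You additionally write out the splitting $\Omega(\X,\ast)^\K\simeq\Omega(C\Omega\X,\Omega\X)^\K\times\prod\Omega X_i$, which the paper leaves implicit. Your map in Step 3 lies over $\prod\Omega X_i$ only up to homotopy, but this is harmless because the long exact sequences split.
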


\begin{proof} 
The existence of the homotopy fibration is a special case of~\cite[Lemma 2.3.1]{denham_suciu}. The inclusion of the vertex set into $\K$ induces a map of polyhedral products $\bigvee_{i=1}^{m} X_{i}\longrightarrow (\X,\ast)^{\K}$ with the property that the composite 
$\alpha\colon\bigvee_{i=1}^{m} X_{i}\longrightarrow (\X,\ast)^{\K}\longrightarrow\prod_{i=1}^{m} X_{i}$ 
is the inclusion of the wedge into the product. Porter~\cite{porter} showed that $\Omega\alpha$ has a right homotopy inverse. Therefore the map 
$\Omega(\X,\ast)^{\K}\longrightarrow\prod_{i=1}^{m} \Omega X_{i}$ has a right homotopy inverse.
\end{proof}

As an example, taking each $X_{i}=\mathbb{C}P^{\infty}$ and noting that there are homotopy equivalences of pairs $(C\Omega\mathbb{C}P^{\infty},\mathbb{C}P^{\infty})\simeq (CS^{1},S^{1})\simeq (D^{2},S^{1})$, we obtain the homotopy fibration~\eqref{eq:zk-dj fibration}.

\subsection{Whitehead products}
Given two maps $f\colon S^{k+1}\to Z$ and $g\colon S^{l+1}\to Z$, their \emph{Whitehead product} is the composite
\[
  [f,g]\colon S^{k+l+1}\longrightarrow S^{k+1}\vee S^{l+1}\xrightarrow{f\vee g} Z, 
\]
where the first map is the attaching map of the top cell in the product $S^{k+1}\times S^{l+1}$.

The adjoint $[f,g]'\colon S^{k+l}\to\Omega Z$ is homotopic up to sign to the composite
\[
  S^{k+l}\cong S^k\wedge S^l\xrightarrow{f'\wedge g'}
  \Omega Z\wedge\Omega Z\xrightarrow{c}\Omega Z,
\]
where $f'\colon S^k\to\Omega Z$ and $g'\colon S^l\to\Omega Z$ are the adjoints of $f$ and $g$ respectively, and $c\colon \Omega Z\wedge\Omega Z\to\Omega Z$ is the commutator, $(x,y)\mapsto x^{-1}y^{-1}xy$. The sign depends on the identification of $S^{k+l}$ with $S^k\wedge S^l$; we use the sign convention from~\cite{whitehead}.

The Whitehead product of maps defines an operation on homotopy groups,
\[
  \alpha\in\pi_{k+1}(X),~\beta\in\pi_{\ell+1}(X)~\leadsto~[\alpha,\beta]\in\pi_{k+\ell+1}(X),
\]
which is also referred to as the Whitehead product. We denote by $|\alpha|$ the topological degree: $|\alpha|=i$ if $\alpha\in\pi_i(X)$.

\begin{prp}[{\cite[X.7.5,12,13,14]{whitehead}}]
\label{prp:whitehead_properties}
The Whitehead product has the following properties:
\begin{enumerate}
\item $[\alpha,\beta]=(-1)^{|\alpha|\cdot|\beta|}[\beta,\alpha];$

\item $[\alpha+\alpha',\beta]=[\alpha,\beta]+[\alpha',\beta];$

\item $(-1)^{|\alpha|\cdot|\gamma|}[[\alpha,\beta],\gamma]+(-1)^{|\alpha|\cdot|\beta|}[[\beta,\gamma],\alpha]+(-1)^{|\beta|\cdot|\gamma|}[[\gamma,\alpha],\beta]=0.$
\qed
\end{enumerate}
\end{prp}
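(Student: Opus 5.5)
This proposition is quoted from G.\,W.~Whitehead's book, so we verify each identity from the description of the adjoint $[f,g]'$ as a composite of $f'\wedge g'$ with the commutator map $c\colon\Omega Z\wedge\Omega Z\to\Omega Z$. The alternative is to work with the attaching map of the top cell of $S^{k+1}\times S^{l+1}$. We prefer the commutator model because it turns the three identities into group-theoretic identities in the homotopy-associative H-group $\Omega Z$, followed by a careful sign count.

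\emph{Graded antisymmetry (1).} Write $c(x,y)=x^{-1}y^{-1}xy$. Then $c(y,x)=c(x,y)^{-1}$, so $c\circ\tau\simeq \nu\circ c$, where $\tau$ is the twist of $\Omega Z\wedge\Omega Z$ and $\nu$ is the loop inverse. Precomposing with $f'\wedge g'$ and using $\tau\circ(f'\wedge g')=(g'\wedge f')\circ\tau_{S^k,S^l}$, the twist of $S^k\wedge S^l$ contributes the sign $(-1)^{kl}$. The inverse contributes a further $-1$. In terms of $|\alpha|=k+1$ and $|\beta|=l+1$, the total sign is $(-1)^{kl+1}$. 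We then check that this agrees with $(-1)^{|\alpha||\beta|}$ up to the convention-dependent sign fixed in the adjoint formula, which is where Whitehead's convention enters.

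\emph{Bilinearity (2).} For $k\ge1$ we have $(f_1+f_2)'=f_1'\cdot f_2'$, the pointwise loop product. The commutator identity $c(xx',y)=x'^{-1}c(x,y)x'\cdot c(x',y)$ then shows that $[\alpha+\alpha',\beta]'$ is a product of a conjugate of $[\alpha,\beta]'$ with $[\alpha',\beta]'$. Conjugation by the map $x'$, which depends only on the $S^k$ coordinate, is homotopic to the identity on the restriction to $S^{k+l}$, since the conjugate factor is null on the wedge axes. Hence, on homotopy classes, we obtain $[\alpha,\beta]+[\alpha',\beta]$. The degenerate case $k=0$ (when $\alpha\in\pi_1$) needs a separate but analogous treatment, or can be excluded since only $\pi_{\ge2}$ is used in this paper.

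\emph{Jacobi identity (3).} This is the main obstacle. We use the Hall--Witt identity
\[
  [[x,y^{-1}],z]^{y}\,[[y,z^{-1}],x]^{z}\,[[z,x^{-1}],y]^{x}=1,
\]
which holds in any group and hence, up to homotopy, in $\Omega Z$ applied to maps out of $S^k\wedge S^l\wedge S^n$. In the homotopy category the conjugations become trivial, the inverses become $-1$ signs, and the Samelson products are additive by (2). This leaves a relation among the three double Samelson products in which the signs come from the cyclic permutation of $S^k\wedge S^l\wedge S^n$. Translating back to Whitehead products by the adjunction shift $k=|\alpha|-1$ gives the graded signs $(-1)^{|\alpha||\gamma|}$, and so on. I expect the bookkeeping of these signs to be the hardest step, that is, matching the Samelson-product signs to the stated Whitehead-product signs under the chosen convention. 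Alternatively, one may cite the universal example $S^{p}\vee S^q\vee S^r$ and Nakaoka--Toda's proof.
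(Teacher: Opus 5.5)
The paper gives no argument of its own here: it quotes Whitehead's book. There $[\alpha,\beta]$ is the geometric product (the attaching map of the top cell of $S^{k+1}\times S^{l+1}$), and the paper only asserts that the adjoint $[f,g]'$ agrees with $c\circ(f'\wedge g')$ \emph{up to sign}. The group-theoretic part of your Samelson-product strategy is fine. Hall--Witt holds in the group $[S^k\times S^l\times S^n,\Omega Z]$, the terms factor through the smash product, and (2) transfers to Whitehead products under any sign that depends only on degrees.

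One correction to your conjugation step. Take $w$ factoring through $S^k\wedge S^l$ and $u=x'\circ\mathrm{pr}_1$. The conjugate $u^{-1}wu$ has the class of $w$ not because $w$ is null on the axes. The reason is that $u^{-1}wu=w\cdot c(w,u)$, and $c(w,u)$ factors through the reduced diagonal $S^k\to S^k\wedge S^k$, which is null for $k\ge 1$. For $k=0$, $w$ is still null on the axes, yet conjugation is the nontrivial $\pi_1$-action.

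The genuine gap is the comparison sign, which you explicitly defer. Write $[\alpha,\beta]'=\epsilon(k,l)\langle\alpha',\beta'\rangle$, where $\langle\alpha',\beta'\rangle=c\circ(\alpha'\wedge\beta')$, $k=|\alpha|-1$ and $l=|\beta|-1$. You showed $\langle\alpha',\beta'\rangle=(-1)^{kl+1}\langle\beta',\alpha'\rangle$, and $(-1)^{|\alpha||\beta|}=(-1)^{kl+1}(-1)^{k+l}$. So (1) forces $\epsilon(k,l)\epsilon(l,k)=(-1)^{k+l}$. No global or symmetric convention works: a specific asymmetric sign has to be derived from the attaching map, and your proposal never does this.

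This is not mere bookkeeping for (3) either. Put $n=|\gamma|-1$ and
$J_1=(-1)^{kn}\langle\langle\alpha',\beta'\rangle,\gamma'\rangle$, $J_2=(-1)^{kl}\langle\langle\beta',\gamma'\rangle,\alpha'\rangle$, $J_3=(-1)^{ln}\langle\langle\gamma',\alpha'\rangle,\beta'\rangle$.
Your Hall--Witt argument yields exactly $J_1+J_2+J_3=0$.
\begin{itemize}
\item With $\epsilon=(-1)^k$, the adjoint of the left side of (3) equals $(-1)^{k+l+n+1}(J_1+J_2+J_3)$, so (3) follows.
\item With $\epsilon=(-1)^l$, the constraint from (1) is also satisfied. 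But the adjoint of the left side of (3) becomes $(-1)^{k+l+1}J_1+(-1)^{l+n+1}J_2+(-1)^{n+k+1}J_3$. For $k,n$ even and $l$ odd this is $-2J_3$, which is nonzero rationally in the universal example $S^{k+1}\vee S^{l+1}\vee S^{n+1}$.
\end{itemize}

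So what is missing is a proof, for the geometrically defined product, of a comparison compatible with both identities, such as $[\alpha,\beta]'=\pm(-1)^{|\alpha|-1}\langle\alpha',\beta'\rangle$ with a degree-independent $\pm$. Without it, your argument establishes (2) but not (1) or (3) in the stated form.
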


The (generalised) \emph{Whitehead product} of based maps $f\colon\Sigma X\to Z$ and $g\colon\Sigma Y\to Z$ is the based map $[f,g]\colon \Sigma(X\wedge Y)\to Z$ defined as the adjoint of the composite
\[
  X\wedge Y\xrightarrow{f'\wedge g'}
  \Omega Z\wedge\Omega Z\xrightarrow{c}\Omega Z.
\]
It coincides with the classical Whitehead product when $X=S^k$ and $Y=S^l$, with the appropriate sign convention.

\subsection{GPTW elements}\label{GPTWpar}
Consider the canonical elements $t_i\colon S^2\to\DJ(\K)=(\CC P^\infty,\ast)^\K$ for $i=1,\ldots,m$.

A \emph{GPTW element} is an iterated Whitehead product of the form 
\[
  [t_{i_1},[t_{i_2},\dots[t_{i_k},t_j]\dots]]\colon S^{k+2}\to\DJ(\K),
\]  
where $k>0$, $i_1<\cdots<i_k>j$, $j\notin \{i_1,\ldots,i_k\}$ and $j$ is the smallest vertex in a connected component not containing~$i_k$ of the subcomplex~$\mathcal K_{\{i_1,\ldots,i_k,j\}}$. For a given $\K$, denote the set of all GPTW elements by $GPTW$. Then
\[
  |GPTW|=\sum_{J\subset[m]}\widetilde b_0(\K_J),
\]  
where $\widetilde b_0(\K_J)$ is one less the number of connected components of~$\K_J$ and $\widetilde b_0(\varnothing)=0$.

A GPTW element lifts uniquely to a map $S^{k+2}\to\ZK$ through the homotopy fibration~\eqref{eq:zk-dj fibration}. The GPTW elements are viewed as elements in both $\pi_{k+2}(\DJ(\K))$ and $\pi_{k+2}(\ZK)$ with $k>0$. 

\begin{thm}[{\cite[Theorem~4.3, 4.6]{gptw},
\cite[Theorem~5.1]{vylegzhanin}}] 
\label{gptwvylegzhanin}
Let $\mathcal K$ be a flag simplicial complex on the vertex set~$[m]$ and $\mathbf k$ a commutative ring with unit.

\smallskip

\noindent{\rm(1)} There is an isomorphism of algebras
\[
  H_*(\Omega\DJ(\K);\mathbf k) 
  \cong T(u_1,\dots,u_m)/(u_i^2=0,\;u_iu_j+u_ju_i=0
  \text{ for }\{i,j\}\in\K),
\]
where $T(u_1,\ldots,u_m)$ denotes a free associative algebra on the generators $u_1,\ldots,u_m$, and $u_i\in H_1(\Omega\DJ(\K);\k)$ is the adjoint of $t_i\colon S^2\to\DJ(\K)$. 

The rational homotopy Lie algebra of $\DJ(\K)$ is given by
\[
  \pi_*(\Omega\DJ(\K))\otimes_{\ZZ}\QQ\cong
  \FL_\QQ\langle u_1,\ldots,u_m\rangle
  ([u_i,u_i]=0,\;[u_i,u_j]=0 \text{ for }\{i,j\}\in\K).
\]

\noindent{\rm (2)} The loop homology algebra $H_*(\Omega\ZK;\mathbf k)$ is minimally generated by the adjoints of the GPTW elements.

\smallskip

\noindent{\rm (3)} If the $1$-skeleton of $\K$ is a chordal graph, then there is a homotopy equivalence
\[
  \ZK\simeq \bigvee_{x\in GPTW}S^{|x|}
  =\bigvee_{J\subset[m]}
  (S^{|J|+1})^{\vee\widetilde b_0(\K_J)}. 
\] 
\end{thm}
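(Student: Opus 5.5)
The plan is to deduce all three parts from the loop space splitting of Proposition~\ref{loopsec} together with the Koszul property of flag face rings. Throughout, write $\Omega\DJ(\K)\simeq\Omega\ZK\times T^m$, which follows from Proposition~\ref{loopsec} with $X_i=\CC P^\infty$ since $\Omega\CC P^\infty\simeq S^1$.

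For part (1), I would first work over a field $\k$. By a theorem of Buchstaber and Panov (following Davis--Januszkiewicz), $H^*(\DJ(\K);\k)$ is the Stanley--Reisner ring $\k[\K]$. This ring is formal and, for a flag complex $\K$, it is quadratic with a quadratic Gr\"obner basis, hence Koszul.

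The Eilenberg--Moore spectral sequence for the path-loop fibration then computes $H_*(\Omega\DJ(\K);\k)$ as $\mathrm{Ext}_{\k[\K]}(\k,\k)$. By Koszulness this Ext algebra is the quadratic dual $\k[\K]^!$, which is exactly the algebra $T(u_1,\dots,u_m)/(u_i^2,\;u_iu_j+u_ju_i \text{ for } \{i,j\}\in\K)$, with $u_i$ dual to the generators $v_i\in H^2$. Hence $u_i$ is the Hurewicz image of the adjoint of $t_i$.

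The spectral sequence collapses for degree reasons: the Ext algebra is generated in total degree $1$ on the diagonal. Multiplicative extension problems are excluded because the $u_i$ are genuine Pontryagin classes whose products satisfy the relations topologically, since $[t_i,t_j]=0$ for $\{i,j\}\in\K$. Both sides are free $\ZZ$-modules of the same rank in each degree (a Hilbert series comparison), so the isomorphism holds over $\ZZ$ and therefore over any $\k$.

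The rational statement then follows from the Milnor--Moore theorem. $H_*(\Omega\DJ(\K);\QQ)$ is the universal enveloping algebra of the graded Lie algebra presented by the same generators and relations, and $[u_i,u_i]=2u_i^2=0$.

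For part (2), I would use the splitting above. It identifies $H_*(\Omega\ZK;\k)$ with the kernel of the algebra map $H_*(\Omega\DJ(\K))\to\Lambda[u_1,\dots,u_m]$. Algebraically, this kernel is the commutator ideal viewed as a subalgebra, which is the universal enveloping algebra of the commutator subalgebra.

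I would show this subalgebra is generated by the iterated commutators $[u_{i_1},[u_{i_2},\dots[u_{i_k},u_j]\dots]]$ in the GPTW range. Minimality would be checked by computing indecomposables. The module $\mathrm{Tor}^{H_*(\Omega\ZK)}_1(\k,\k)$ equals $\widetilde H_*(\ZK)$ in the relevant bottom range, and by Hochster's formula its multidegree $J$ component in degree $|J|+1$ is $\widetilde H^0(\K_J)$. The rank of that component is $\widetilde b_0(\K_J)$, which matches the count of GPTW elements of multidegree $J$.

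I expect the main obstacle to be the spanning step. Here one must rewrite every commutator with repeated indices, or with a non-GPTW ordering, as a product of GPTW commutators. I would do this by induction on word length, using the Jacobi identity, $u_i^2=0$ and the flag relations. This is a Gr\"obner-type normal form argument, and the connected-component condition on $j$ is exactly what makes the rewriting terminate.

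For part (3), I would argue by induction on $m$ using a perfect elimination ordering of the chordal $1$-skeleton. Since $\K$ is flag with chordal $1$-skeleton, $\K=\K_1\cup_\sigma\K_2$ is a gluing of smaller complexes of the same type along a simplex $\sigma$. The Grbi\'c--Theriault gluing result then gives $\ZK$ as a wedge built from $\Z_{\K_1}$, $\Z_{\K_2}$ and joins of their loop spaces, and by induction this is a wedge of spheres.

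Next I would compare homology. $H_*(\ZK)$ is given by Hochster's formula and, because $\K$ is flag and chordal, it is concentrated in the groups coming from $\widetilde H^0(\K_J)$; this uses Fr\"oberg's theorem that $\k[\K]$ has a linear resolution. Therefore the wedge is $\bigvee_J(S^{|J|+1})^{\vee\widetilde b_0(\K_J)}$.

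Finally, to identify the inclusion with the GPTW maps, I would take the wedge of lifts of the GPTW elements, $\bigvee S^{|x|}\to\ZK$. By part (2) its Hurewicz images form a basis of $H_*(\ZK)$, so it is a homology isomorphism between simply connected spaces and hence a homotopy equivalence by Whitehead's theorem.
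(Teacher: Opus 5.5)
Your argument for (1) is the standard Koszul-duality/Eilenberg--Moore argument and is fine. Note that the paper does not prove (1) or (2) at all; it quotes them from \cite{gptw,vylegzhanin}.

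The genuine gap is in your (2), and your (3) inherits it because its last step invokes (2). There are three problems.

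First, the splitting $\Omega\DJ(\K)\simeq\Omega\ZK\times T^m$ does not identify $H_*(\Omega\ZK)$ with the kernel of the algebra map $H_*(\Omega\DJ(\K))\to\Lambda[u_1,\dots,u_m]$. That kernel is a two-sided ideal, isomorphic as a module to $\widetilde H_*(\Omega\ZK)\otimes\Lambda[u_1,\dots,u_m]$. What the splitting gives is the Hopf-algebra kernel $\{x:(1\otimes(\Omega p)_*)\Delta x=x\otimes 1\}$. Your phrase ``the commutator ideal viewed as a subalgebra'' conflates the two, and away from characteristic $0$ ``universal enveloping algebra'' needs a restricted version.

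Second, the spanning step is the entire content of \cite[Theorem~4.3]{gptw}, and you only announce it. Nor is the mechanism one of termination:
in loop homology, repeated letters are eliminated up to decomposables because $[u_i,[u_i,x]]=[u_i^2,x]=0$. For an edge $\{a,b\}$ of $\K_J$ with $\max(J)\notin\{a,b\}$, the identities of Lemma~\ref{lmm:cJx2}, combined as in the proof of Theorem~\ref{thmminrel}, give $c(J\setminus a,t_a)\equiv\pm c(J\setminus b,t_b)$ modulo decomposables on passing to loop homology. Also $c(J\setminus a,t_a)=0$ if $\{a,\max(J)\}\in\K$. This is why one $j$ per component not containing $\max(J)$ suffices (cf.\ \cite[Algorithm~5.4]{vylegzhanin}). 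You would also still have to show that $H_*(\Omega\ZK)$ is generated by commutators at all, e.g.\ by a Hilbert series comparison using $H_*(\Omega\DJ(\K))\cong H_*(\Omega\ZK)\otimes\Lambda[u_1,\dots,u_m]$.

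Third, the claim that $\mathrm{Tor}_1$ equals $\widetilde H_*(\ZK)$ in the bottom range is unjustified. In general the indecomposables of $H_*(\Omega X)$ only surject onto the image of the homology suspension. What minimality really needs is that the homology suspensions of the GPTW adjoints, i.e.\ the Hurewicz images of the GPTW elements, are linearly independent in $H_*(\ZK)$.

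For (3), the paper (Proposition~\ref{prp:chordal+flag=homotopy type of ZK}) uses neither loop homology nor a prior wedge decomposition. By \cite[Lemma~3.2]{abramyan_panov}, the Hurewicz image of each lifted GPTW element is the cellular chain $S_{i_1}\cdots S_{i_{k-1}}(S_{i_k}D_j+D_{i_k}S_j)$. These chains span the $\widetilde H^0(\K_J)$-summands, which exhaust $H_*(\ZK)$ when $\K$ is flag with chordal $\K^1$, and Whitehead's theorem finishes.

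Granting (2) (say, by citing it as the paper does), your route is a valid alternative. Gluing along the star of a simplicial vertex shows that $\ZK$ is a wedge of spheres. By Bott--Samelson, the homology suspension then identifies the indecomposables of $H_*(\Omega\ZK;\k)$ with $\widetilde H_{*+1}(\ZK;\k)$ and sends the adjoint of $x$ to its Hurewicz image. Minimal generation over $\QQ$ and every $\ZZ/p$ then makes $\bigvee S^{|x|}\to\ZK$ an integral homology isomorphism; over $\ZZ$, ``minimal'' need not mean ``basis'', so you do need the field versions.

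Observe, however, that your Hochster--Fr\"oberg count already shows that in each degree the rank of $H_*(\ZK)$ equals the number of GPTW elements. So all you need is linear independence of the Hurewicz images. The paper gets this directly from the explicit cellular representatives, which makes both the Grbi\'c--Theriault gluing step and the appeal to (2) superfluous, and keeps (3) independent of the loop-homology results. Your approach instead follows the original architecture of \cite{gptw} and exhibits (3) as a consequence of the loop-homology description.
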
 

A proof of Theorem~\ref{gptwvylegzhanin}~(3) is included as a consequence of the stronger Proposition~\ref{prp:chordal+flag=homotopy type of ZK}, which also describes the map $\ZK\longrightarrow\DJ(\K)$.

\subsection{Basic commutators}
Let $X=\{x_1>\dots>x_m\}$ be an ordered set. Choose any linear ordering on the set of all iterated commutators on $X$ so that the following condition holds:
\begin{itemize}
\item If $c_1$ is longer than $c_2,$ then $c_1>c_2.$
\end{itemize}
Now define the set $\BC(X)$ of \emph{basic commutators}~\cite{hilton, hilton_quasilie} on $X$ as follows:
\begin{itemize}
\item $x_1,\dots,x_m\in\BC(X);$
\item $c=[c',c'']\in\BC(X)$ if and only if the following conditions hold:
\begin{enumerate}
\item $c',c''\in\BC(X);$
\item $c'<c'';$
\item if $c''=[c_1,c_2],$ then $c'\geq c_1.$
\end{enumerate}
\end{itemize}

For example, $[[x_i,x_j],x_k]$ is never a basic commutator, since $[x_i,x_j]>x_k$. On the other hand, for nested commutators we have
$$[x_{i_1},[\dots[x_{i_{k-2}},[x_{i_{k-1}},x_{i_k}]]\dots]]\in\BC(X)$$
if and only if $i_1\leq\dots\leq i_{k-1}>i_k$. It follows that $GPTW\subset\BC(t_1,\dots,t_m)$.

It is well known (see~\cite[Theorem 3.2]{hilton}, for example) that basic commutators form a basis of the free Lie ring $\FL(x_1,\dots,x_m)$ generated by $X=\{x_1,\dots,x_m\}$, so that $\FL(X)\cong\ZZ\langle\BC(X)\rangle$ as free abelian groups.

\begin{rmk}
Our convention $x_1>\dots>x_m$ follows Hilton's papers~\cite{hilton,hilton_quasilie}. Note that G.\,W.~Whitehead's book \cite[XI.\S 6]{whitehead} uses the opposite ordering.
\end{rmk}

\subsection{Notation for iterated Whitehead products}\label{notiwp}
Let $\{e_i\}_{i=1}^{m}$ be the standard basis of $\mathbb{Z}^m$. 
We consider multi-indices $\alpha=\sum_{i=1}^m\alpha_ie_i\in\Zm$ and denote 
\[
  |\alpha|:=\sum\alpha_i,\quad \supp\alpha:=\{i\in [m]:\alpha_i\neq 0\}.
\]
A subset $I\subset[m]$ is identified with the multi-index $\sum_{i\in I}e_i$, so $I=\supp I$. We also write $\alpha-i:=\alpha-e_i$ for $i\in\supp\alpha$.

For $\X=(X_1,\dots,X_m)$, we denote $\X^{\wedge\alpha}:=\bigwedge_{i=1}^m X_i^{\wedge\alpha_i}$, so $\X^{\wedge I}=\bigwedge_{i\in I}X_i$ and $\X^{\wedge\alpha}\wedge\X^{\wedge\beta}=\X^{\wedge(\alpha+\beta)}$ for any $\alpha,\beta\in\Zm$, where we formally set $\X^{\wedge 0}:=S^0$.

Let $f_i\colon\Sigma X_i\to Z$, $i=1,\dots,m$, be based maps. For a multi-index $\alpha\in\Zm$ and an index $j\in[m]$, we consider the iterated generalised Whitehead product
\[
  c(\alpha,j;\underline{f}):=[\underbrace{f_1,[\dots,[f_1}_{\alpha_1\text{ times}},[f_2,\dots,[\underbrace{f_m,[\dots[f_m}_{\alpha_m\text{ times}},f_j]\dots]]\dots]]\dots]]
  \colon \Sigma\X^{\wedge\alpha}\wedge X_j\to Z.
\]  
For example, $c(3e_2+e_5,4;\underline{f})=[f_2,[f_2,[f_2,[f_5,f_4]]]].$

For $i\in[m]$ and $\X=(X_1,\dots,X_m)$, we consider the maps
\[
  \incl_{\X,i}\colon X_i\hookrightarrow (\X,\ast)^\K,\quad t_{\X,i}:=\incl_{\X,i}\circ \ev_{X_i}\colon \Sigma\Omega X_i\to X_i\to (\X,\ast)^\K
\]  
and their iterated Whitehead products
\begin{equation}
\label{cprod}
\begin{gathered}
  c(\alpha,j;t_{\X})\colon
  \Sigma(\Omega\X)^{\wedge\alpha}\wedge\Omega X_j\to (\X,\ast)^\K,\\
  c(\alpha,j;\incl_{\Sigma\Y})\colon 
  \Sigma\Y^{\wedge\alpha}\wedge Y_j\to (\Sigma\Y,\ast)^\K.
\end{gathered}
\end{equation}
In the case $X_i=\CC P^\infty$ we omit the subscript $\X$ and obtain the maps
\[
  t_i\colon S^2\cong\Sigma\Omega\CC P^\infty\to\DJ(\K),\quad c(\alpha,j;\underline t)\colon S^{|\alpha|+2}\cong\Sigma S^{|\alpha|}\wedge S^1\to\DJ(\K).
\]  
Similarly, for $Y_i=S^1$ we denote $s_i=\incl_{\Sigma S^1,i}$ and obtain the maps
\begin{equation}\label{cscom}
  s_i\colon S^2\to (S^2,\ast)^\K,\quad c(\alpha,j;\underline{s})\colon S^{|\alpha|+2}\cong\Sigma S^{|\alpha|}\wedge S^1\to (S^2,\ast)^\K.
\end{equation}

For a simplicial complex $\K$ on $[m]$ and a subset $J\subset[m]$, denote by $\Theta_\K(J)$ the $\widetilde b_0(\K_J)$-element subset of $J$ which is used in the definition of GPTW elements: $j\in \Theta_\K(J)$ if and only if $j$ and $\max(J)$ are in different connected components of $\K_J$ and $j$ is the smallest element in its connected component of~$\K_J$. A GPTW element in $\DJ(\K)$ is therefore $c(J\setminus j,j;\underline t)\colon S^{|J|+1}\to\DJ(\K)$ with $J\subset[m]$ and $j\in\Theta_\K(J)$. We define the set of GPTW elements in the polyhedral product $(\X,\ast)^\K$ as
\begin{equation}\label{GPTWTheta}
  GPTW=\{c(J\setminus j,j;t_\X)\colon  J\subset[m], j\in\Theta_\K(J)\}.
\end{equation}
Clearly, $\Theta_\K(\varnothing)=\varnothing$ and $|\Theta_\K(J)|=\widetilde b_0(\K_J)$ for $J\neq\varnothing$. For the discrete $m$ point complex $\L$ we have $\Theta_\L(J)=J\setminus\{\max(J)\}$.

We also denote $\Theta_\K(\alpha):=\Theta_\K(\supp\alpha)$ and $\K_\alpha:=\K_{\supp\alpha}$ for $\alpha\in\Zm$.

\subsection{Quasi-Lie rings} 
For a graded module $A$ and an element $a\in A$, write $\deg(a)$ for the degree of $a$ and $A_{even}, A_{odd}$ for the submodules of $A$ consisting of elements of even or odd degree respectively.

\begin{dfn}\label{dfn:quasi-Lie ring}
A \emph{graded quasi-Lie ring} is a graded abelian group $L=\bigoplus_{n\geq 0} L_n$ with a bilinear bracket $[-,-]:L_i\otimes L_j\to L_{i+j}$ satisfying the identities
\begin{gather*}
  [a,b]=(-1)^{\deg(a)\deg(b)+1}[b,a];\\
  (-1)^{\deg(a)\deg(c)}[a,[b,c]]+(-1)^{\deg(b)\deg(a)}[b,[c,a]]+(-1)^{\deg(c)\deg(b)}[c,[a,b]]=0,
\end{gather*}
where $\deg(a)=n$ if $a\in L_n$.

Graded quasi-Lie algebras over a commutative ring $\k$ are defined similarly.
\end{dfn}

\begin{rmk}
The first identity in Definition~\ref{dfn:quasi-Lie ring} implies that $2[a,a]=0$ if $a\in L_{even}.$ It follows that if $2\in\k$ is invertible, then any graded quasi-Lie algebra is a graded Lie (super)algebra. This is not true in general.
\end{rmk}

The next lemma follows from the identities in Definition~\ref{dfn:quasi-Lie ring}.

\begin{lmm}
\label{lmm:quasilie_torsion}
Let $Q$ be a graded quasi-Lie ring.
\begin{enumerate}
\item If $a\in Q_{even},$ then $2[a,a]=0$ and $[a,[a,a]]=0.$
\item If $a\in Q_{odd},$ then $3[a,[a,a]]=0.$\qed
\end{enumerate}
\end{lmm}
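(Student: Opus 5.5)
Both parts follow directly from the two identities in Definition~\ref{dfn:quasi-Lie ring}: graded antisymmetry and the graded Jacobi identity. Throughout, write $n=\deg(a)$.

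\emph{Part (1).} Let $a\in Q_{even}$. Taking $b=a$ in the antisymmetry identity gives $[a,a]=(-1)^{n^2+1}[a,a]=-[a,a]$, because $n^2$ is even. Hence $2[a,a]=0$.

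For the second claim, put $a=b=c$ in the Jacobi identity. Every sign is $(-1)^{n^2}=+1$, so we obtain $3[a,[a,a]]=0$.

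To get $[a,[a,a]]=0$ itself we also need $2[a,[a,a]]=0$. Since $2[a,a]=0$ and the bracket is bilinear,
\[
2[a,[a,a]]=[a,2[a,a]]=[a,0]=0.
\]
Combining $2[a,[a,a]]=0$ with $3[a,[a,a]]=0$ gives $[a,[a,a]]=3[a,[a,a]]-2[a,[a,a]]=0$.

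\emph{Part (2).} Let $a\in Q_{odd}$. Put $a=b=c$ in the Jacobi identity again. Now each sign is $(-1)^{n^2}=-1$, so the identity reads $-3[a,[a,a]]=0$, that is, $3[a,[a,a]]=0$.

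\emph{Where care is needed.} The argument is routine. The only step that is not immediate is the even case, where Jacobi alone yields only $3[a,[a,a]]=0$. It must be combined with $2[a,a]=0$, pushed through bilinearity, to obtain the vanishing of $[a,[a,a]]$.
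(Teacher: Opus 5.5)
Your proof is correct and is what the paper intends: the paper gives no separate argument, only the remark that the lemma follows from the antisymmetry and Jacobi identities of Definition~\ref{dfn:quasi-Lie ring}. Your derivation fills this in, including the one non-immediate step in the even case, where you combine $3[a,[a,a]]=0$ with $2[a,[a,a]]=[a,2[a,a]]=0$. You implicitly take $a$ homogeneous, which is the intended reading; for non-homogeneous $a\in Q_{even}$ or $a\in Q_{odd}$ the same argument works, because the signs are constant on each of these submodules and both identities extend by multilinearity.
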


The following result of Hilton describes the additive structure of free quasi-Lie rings.

\begin{thm}[{\cite[Theorem 3]{hilton_quasilie}}]
\label{thm:free_quasilie_basis}
Let $\FQL(X)=\FQL(x_1,\dots,x_m)$ be the free graded quasi-Lie ring generated by a set of homogeneous elements $X=\{x_1,\dots,x_m\}.$ Then there is an isomorphism of abelian groups
\[
  \FQL(X)\cong
  \bigoplus_{b\in\BC(X)_{odd}}\bigl(\ZZ\langle b\rangle\oplus 
  \ZZ\langle [b,b]\rangle \oplus 
  \ZZ_3\langle[b,[b,b]]\rangle\bigr)
  \oplus
  \bigoplus_{b\in\BC(X)_{even}}\bigl(\ZZ\langle b\rangle
  \oplus \ZZ_2\langle[b,b]\rangle\bigr).
\]
\end{thm}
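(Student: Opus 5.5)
Since this is Hilton's theorem on the additive structure of free quasi-Lie rings, I would follow the standard route: first produce a spanning set, then prove linear independence by realizing $\FQL(X)$ inside a concrete object where the candidate basis elements are visibly independent.

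\emph{Spanning.} Let $\FL(X)$ denote the free graded Lie ring, and grade $\FQL(X)$ by word length, which the bracket respects. The Hall--Hilton collection process expresses any iterated bracket as a $\ZZ$-combination of basic commutators. The only inputs this process uses are antisymmetry, the Jacobi identity and bilinearity, so it works in $\FQL(X)$ too, with one exception: in a quasi-Lie ring $[b,b]$ need not vanish, and $[b,[b,b]]$ need not vanish when $b$ is odd. Collecting a bracket $[c',c'']$ with $c'=c''$ therefore leaves residual terms $[b,b]$, and applying Jacobi to $[b,[b,b]]$ leaves the terms $[b,[b,b]]$. I would check by induction on length that every other bracket involving these residual terms reduces further. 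For $[c,[b,b]]$ with $c\neq b$, Jacobi gives $\pm 2[b,[b,c]]$, which is collected normally. For even $b$, $[b,[b,b]]=0$ by Lemma~\ref{lmm:quasilie_torsion}. Higher brackets of $[b,[b,b]]$ reduce by Jacobi to brackets of basic commutators. Lemma~\ref{lmm:quasilie_torsion} also gives the torsion orders: $2[b,b]=0$ for even $b$ and $3[b,[b,b]]=0$ for odd $b$. This yields a surjection from the claimed direct sum onto $\FQL(X)$.

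\emph{Independence.} I would use a concrete model. The tensor algebra $T(X)$ over $\ZZ$, with the graded commutator, is a quasi-Lie ring, but in it $[b,b]=2b^2$ for odd $b$ and $[b,b]=0$ for even $b$. It therefore detects the $\ZZ$ summands but misses the $\ZZ_2\langle[b,b]\rangle$ summands. Two better options:
\begin{enumerate}
\item Take the free graded Lie algebra over a ring extended by a divided-square operation. The quasi-Lie structure of the homotopy of a wedge of spheres is the natural model.
\item Use topology directly. Realize $x_i$ as the inclusion $S^{n_i}\to\bigvee S^{n_j}$, with $n_i$ chosen so the degree parities match. The Hilton--Milnor theorem then gives $\pi_*(\bigvee S^{n_i})\cong\bigoplus_{b\in\BC}\pi_*(S^{|b|})$, with $b\mapsto w_b\circ\iota$. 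Under the canonical quasi-Lie map $\FQL(X)\to\pi_*(\bigvee S^{n_i})$, the element $b$ goes to $w_b$, $[b,b]$ goes to $w_b\circ[\iota,\iota]$, and $[b,[b,b]]$ goes to $w_b\circ[\iota,[\iota,\iota]]$.
\end{enumerate}
Independence then reduces to known facts about spheres. For $|b|$ even (so $b$ odd in the paper's shifted grading), $[\iota,\iota]$ has infinite order and $[\iota,[\iota,\iota]]$ has order exactly $3$. For $|b|$ odd with $|b|\ne1,3,7$, $[\iota,\iota]$ has order exactly $2$. Since these elements lie in distinct Hilton--Milnor summands, the images are independent.

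\emph{Main obstacle.} The hardest step is independence. The Hopf-invariant-one dimensions, where $[\iota,\iota]=0$ on $S^1,S^3,S^7$, can be avoided by choosing all $n_i$ large, since multiplying every dimension by a fixed shift preserves parities. One must also confirm that $[\iota_{2n},[\iota_{2n},\iota_{2n}]]$ is nonzero of order $3$. I would get this from the Hopf invariant and the James–Toda $3$-primary computation, or algebraically via the $\ZZ_3$-homology of $\Omega S^{2n}$, where the bracket is detected by a divided power. The spanning step is routine but bookkeeping-heavy, and I expect it to be no more than careful induction.
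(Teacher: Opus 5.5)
The paper does not prove this statement; it quotes it from Hilton. Your route is therefore a genuinely different, self-contained argument, and its overall structure is sound. You get spanning from a Hall-type collection, and independence by mapping $\FQL(X)$ to $\pi_*(\bigvee S^{n_i})$ and reading off orders in the Hilton--Milnor summands. The independence half is the argument behind the theorem on $I_n$ in Appendix~\ref{sec:groups A_n}, run in reverse. There, Hilton's theorem is combined with Adams' theorem and Liulevicius' theorem to compute $I_n$; you combine Hilton--Milnor with the same two facts to obtain Hilton's theorem. This is not circular as long as you cite those two facts directly, not that appendix theorem, whose proof uses the statement. The cost is that a purely algebraic result now depends on deep homotopy theory.

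The step you flag as the main obstacle does not go through as written. Since $|w_b|=1+\sum_i\mdeg(b)_i(n_i-1)\ge 1+\ell(b)$, where $\ell(b)$ is the length of $b$, you need $[\iota_N,[\iota_N,\iota_N]]\neq 0$ for arbitrarily large even $N$. That is exactly Liulevicius' theorem, which the paper cites. Neither justification you offer supplies it:
\begin{itemize}
\item Homology of $\Omega S^N$ cannot detect the class. The Hurewicz image of its adjoint is, up to sign, the Pontryagin commutator $[u,[u,u]]=[u,2u^2]=0$. In any case the class is torsion in degree $3N-3$, where $H_{3N-3}(\Omega S^N;\ZZ)\cong\ZZ$, so it dies in homology with every coefficient group.
\item Explicit $3$-primary computations from Toda's tables cover only finitely many $N$.
\end{itemize}
Similarly, $[\iota_N,\iota_N]\neq 0$ for all large odd $N$ is Adams' theorem. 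If you instead choose the $n_i$ separately for each multidegree, you can avoid $N+1=2^k$, and Adem's Steenrod-square argument suffices.

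Two smaller points. First, state why you may choose the $n_i$ freely. The defining relations of $\FQL(X)$ are homogeneous in the $\ZZ^m$-multidegree, and their signs depend only on $\deg x_i \bmod 2$, so each multigraded component depends only on these parities. This also covers generators of degree $0$, which would otherwise correspond to $S^1$. Second, in the spanning step, $[c,[b,b]]=-2[b,[b,c]]$ holds only for odd $b$; for even $b$ one gets $[c,[b,b]]=0$. Moreover $[b,[b,b]]$ is central for odd $b$. These reductions preserve length, so termination of the collection needs Hall's induction on the ordering within a fixed weight, not induction on length alone.
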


\begin{crl}
\label{crl:lie_polynomial_normal_form}
Any element of a graded quasi-Lie ring $L$ generated by homogeneous elements $x_1,\dots,x_m$ is a linear combination of elements of the form $b$, $[b,b]$ and $[b,[b,b]]$, where $b\in\BC(x_1,\dots,x_m).$
\end{crl}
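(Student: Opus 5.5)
The statement to prove is Corollary~\ref{crl:lie_polynomial_normal_form}. I would deduce it from Hilton's Theorem~\ref{thm:free_quasilie_basis} using the universal property of the free quasi-Lie ring.

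Let $L$ be a graded quasi-Lie ring generated by homogeneous elements $x_1,\dots,x_m$. Form the free graded quasi-Lie ring $\FQL(X)$ on abstract generators $X=\{x_1,\dots,x_m\}$, with the same degrees as the given elements. By freeness there is a unique homomorphism of graded quasi-Lie rings $\psi\colon\FQL(X)\to L$ sending each generator to the corresponding element of $L$.

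\textbf{Step 1: $\psi$ is surjective.} The image of $\psi$ is a quasi-Lie subring of $L$, since it is closed under addition and brackets. It contains $x_1,\dots,x_m$, and $L$ is generated by these elements, so the image is all of $L$.

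\textbf{Step 2: spanning set for $\FQL(X)$.} By Theorem~\ref{thm:free_quasilie_basis}, $\FQL(X)$ is additively generated by the following elements:
\begin{itemize}
\item $b$ for every basic commutator $b\in\BC(X)$;
\item $[b,b]$ for every $b\in\BC(X)$ (this holds for both parities);
\item $[b,[b,b]]$ for odd $b$.
\end{itemize}

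\textbf{Step 3: push forward.} The map $\psi$ preserves brackets and sends iterated commutators in the abstract generators to the same iterated commutators in the elements $x_i\in L$. Hence every element of $L=\psi(\FQL(X))$ is an integer linear combination of elements $b$, $[b,b]$ and $[b,[b,b]]$ with $b\in\BC(x_1,\dots,x_m)$.

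Note that for even $b$ the element $[b,[b,b]]$ is not needed. This is consistent with Lemma~\ref{lmm:quasilie_torsion}, which shows it vanishes. Including it in the list is harmless.

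\textbf{Main obstacle.} The only point requiring care is that the free object $\FQL(X)$ exists with the universal property, with generators in prescribed degrees. This follows from the standard construction: take the free graded nonassociative ring on $X$ and quotient by the two-sided ideal generated by the identities of Definition~\ref{dfn:quasi-Lie ring}. This is the object Hilton describes, so nothing further is required.
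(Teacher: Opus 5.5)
Your proposal is correct and is exactly the argument the paper intends: the corollary is stated without proof as an immediate consequence of Hilton's Theorem~\ref{thm:free_quasilie_basis}. The implicit argument is that $L$ is a quotient of $\FQL(x_1,\dots,x_m)$, and the additive generators $b$, $[b,b]$, $[b,[b,b]]$ of the free object push forward to additive generators of $L$.
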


\subsection{The Hilton--Milnor Theorem}
\label{subsec:hilton-milnor}
For based maps $f\colon\Sigma X\to Z$, $g\colon\Sigma Y\to Z$, let $[f,g]\colon\Sigma(X\wedge Y)\to Z$ be their generalised Whitehead product.

Given maps $f_i\colon\Sigma X_i\to Y$, $i=1,\dots,m$, we can associate a map with each iterated commutator of elements $X=\{x_1,\dots,x_m\}$. For example, the commutator $b=[x_2,[x_5,x_1]]$ corresponds to the map 
\[
  b(\underline{f}):=[f_2,[f_5,f_1]]\colon\Sigma X_2\wedge X_5\wedge X_1\to Y.
\]  
In particular, for any $b\in\BC(X)$ we obtain a map $b(\underline{f})\colon\Sigma\underline{X}^{\wedge \mathrm{mdeg}(b)}\to Y,$ where $\underline{X}^{\wedge\alpha}:=\bigwedge_{i=1}^mX_i^{\wedge \alpha_i}$ for $\alpha\in\Zm$, and $\mathrm{mdeg}(b)\in\Zm$ counts the number of occurrences of $x_i$ in $b$.

\begin{thm}[Hilton--Milnor]
\label{thm:hilton-milnor}
Let $X_1,\dots,X_m$ be connected pointed spaces, and let $\incl_k\colon\Sigma X_k\to \bigvee_{i=1}^m \Sigma X_i$ be the canonical inclusions. Then the weak product
\[
  \prod_{b\in\BC(X)}\Omega\Sigma\underline{X}^{\wedge |b|}\to \Omega(\Sigma X_1\vee\dots\vee\Sigma X_m)
\]
of maps $\Omega b(\underline{\incl})\colon\Omega\Sigma\underline{X}^{\wedge |b|}\to \Omega(\Sigma X_1\vee\cdots\vee\Sigma X_m)$
is a weak homotopy equivalence.
\end{thm}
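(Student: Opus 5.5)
This is the classical Hilton--Milnor theorem, and I would follow Milnor's inductive argument, which rests on the Ganea/James splitting of fibres of wedges of suspensions. The basic input is this: for connected $A,B$ there is a homotopy fibration
\[
  \Sigma A\rtimes\Omega\Sigma B\longrightarrow\Sigma A\vee\Sigma B\longrightarrow\Sigma B,
\]
obtained as the homotopy fibre of the pinch map. Here $\Sigma A\rtimes\Omega\Sigma B\simeq\Sigma A\wedge(\Omega\Sigma B)_+$. The pinch map has a section, namely the wedge inclusion, so after looping we get a splitting
\[
  \Omega(\Sigma A\vee\Sigma B)\simeq\Omega\Sigma B\times\Omega\bigl(\Sigma A\wedge(\Omega\Sigma B)_+\bigr).
\]
By the James splitting, $\Sigma(\Omega\Sigma B)_+\simeq\bigvee_{k\ge0}\Sigma B^{\wedge k}\vee S^1$ after smashing appropriately. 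This gives
\[
  \Sigma A\wedge(\Omega\Sigma B)_+\simeq\bigvee_{k\ge0}\Sigma A\wedge B^{\wedge k}.
\]

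The key identification is that the inclusion of the summand $\Sigma A\wedge B^{\wedge k}$, followed by the fibre map into $\Sigma A\vee\Sigma B$, is homotopic to the iterated Whitehead product $[\ldots[[\incl_A,\incl_B],\incl_B],\ldots,\incl_B]$. I would check this by induction on $k$, using the adjoint description of the generalised Whitehead product as a commutator in $\Omega Z$. The James filtration pieces map to $\Omega(\Sigma A\vee\Sigma B)$ by conjugation-type maps $(a,b)\mapsto b^{-1}ab$, and the commutator differs from this by a multiplicative correction. I expect this to be the main technical obstacle. The splitting is only canonical up to homotopy, and the various identifications have to be chosen so that they are compatible with the Whitehead products.

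For the induction, take $m$ spaces and set $B=X_m$ and $\Sigma A=\Sigma X_1\vee\dots\vee\Sigma X_{m-1}$; here the summand to split off should be the one that is smallest in the ordering. This gives a weak equivalence
\[
  \Omega(\Sigma X_1\vee\cdots\vee\Sigma X_m)\simeq\Omega\Sigma X_m\times\Omega\Bigl(\bigvee_{i<m,\,k\ge0}\Sigma X_i\wedge X_m^{\wedge k}\Bigr).
\]
The second factor is the loop space of a wedge of infinitely many suspensions, but these are increasingly connected. I would therefore apply the theorem inductively in a filtered way. The statement is about weak equivalence, so it suffices to check isomorphisms on $\pi_n$ for each fixed $n$. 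In a range of degrees only finitely many summands contribute, so the finite case can be applied to a finite subwedge, together with connectivity estimates for the remaining factors.

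Iterating produces a weak product of the spaces $\Omega\Sigma X^{\wedge\mathrm{mdeg}(b)}$, indexed by the commutators generated by this elimination process. This is exactly Hall's (Lazard elimination) recursive description of basic commutators: $\BC(X)$ consists of $x_m$ together with the basic commutators on the new alphabet $\{[\ldots[x_i,x_m],\ldots,x_m]\}$. The last step is to check that this recursion matches the definition of $\BC(X)$ given above, with Hilton's ordering conventions.

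The product map is assembled from loop multiplication. At each stage it agrees up to homotopy with the composite of the splitting equivalences, because the fibre inclusion composed with the wedge map is the Whitehead product, as identified above. Hence the total map is a weak equivalence. The order of the factors in the product does not affect this, since each factor map is a weak equivalence onto its piece of $\pi_*$.
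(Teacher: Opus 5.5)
The paper gives no proof of this statement. It quotes the classical theorem of Hilton and Milnor (cf.~\cite{hilton}) and uses it only through \eqref{piwesp}, so there is no internal argument to compare with. Your outline is the standard Milnor--Gray elimination argument and its architecture is right, but as written the induction does not close.

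You induct on $m$. After one step, however, the second factor is $\Omega$ of $\bigvee_{i<m,\,k\ge0}\Sigma X_i\wedge X_m^{\wedge k}$. The finite subwedge $\bigvee_{i<m,\,k\le N}\Sigma X_i\wedge X_m^{\wedge k}$ needed in a given range of degrees has $(m-1)(N+1)$ summands, which is more than one already for $m=2$. So the ``finite case'' you invoke is not the inductive hypothesis.

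Use a different well-founded measure instead. Fix $n$ and prove the isomorphism on $\pi_j$, $j\le n$, for all finite wedges of spaces $\Sigma\X^{\wedge\alpha}$. Argue by induction on $(c_1,\dots,c_n)$ ordered lexicographically, where $c_w$ is the number of summands of weight $|\alpha|=w$. Since the $X_i$ are connected, $\Sigma\X^{\wedge\alpha}$ is $|\alpha|$-connected. Hence summands and factors of weight $>n$ are invisible in this range, because the fibre $W\rtimes\Omega Z$ of $Z\vee W\to Z$ is as connected as $W$. Splitting off a summand of minimal weight $w$ lowers $c_w$ by one and creates only summands of weight $>w$.

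For the same reason the combinatorics must be set up on a weighted alphabet. The order inherited by the new letters is compatible with total weight but not with length in the new letters: the letter $[x_m,[x_m,[x_m,x_1]]]$ is longer than the non-letter $[x_1,[x_m,x_1]]$. So the length-based definition of $\BC$ in the paper does not reproduce itself under elimination. If basic commutators are defined via a weight-compatible order, one checks directly that $\BC(X)\setminus\{x_m\}$ is exactly the set of basic commutators on the letters $[x_m,[x_m,\ldots,[x_m,x_i]]]$. In the paper's convention these right-nested brackets, not your left-nested ones, are the basic ones; the difference costs only a sign and a twist of smash factors.

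Second, the step you flag as the main obstacle is the real content of the theorem, and your sketch does not carry it out. That step is showing that the summand $\Sigma A\wedge B^{\wedge k}$ of the fibre maps to $\Sigma A\vee\Sigma B$ by the iterated Whitehead product. Making the James splitting compatible with Whitehead products by hand is the hard way; it is cleaner to reverse the logic.

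The wedge of iterated Whitehead products $\bigvee_{k\ge0}\Sigma A\wedge B^{\wedge k}\to\Sigma A\vee\Sigma B$ dies under the pinch map by naturality. So it lifts to a map $\phi$ into the fibre $\Sigma A\rtimes\Omega\Sigma B$, and the lift is unique by Lemma~\ref{exfib2} since the pinch map has a section. Then show that $\Omega\phi$ is a homology isomorphism with coefficients in $\QQ$ and in every prime field:
\begin{itemize}
\item by Bott--Samelson, $H_*(\Omega(\Sigma A\vee\Sigma B);\k)\cong T(\widetilde H_*(A;\k)\oplus\widetilde H_*(B;\k))$;
\item the adjoints of the Whitehead products send $a\otimes b_1\otimes\cdots\otimes b_k$ to $[\ldots[a,b_1],\ldots,b_k]$ modulo longer words;
\item algebraic Lazard elimination $T(V\oplus W)\cong T\bigl(\bigoplus_{k\ge0}V\otimes W^{\otimes k}\bigr)\otimes T(W)$, compared with the splitting $\Omega(\Sigma A\vee\Sigma B)\simeq\Omega(\Sigma A\rtimes\Omega\Sigma B)\times\Omega\Sigma B$, then gives the claim.
\end{itemize}
Hence $\phi$ is a weak equivalence of simply connected spaces.

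Finally, the assembly needs no homotopy-level compatibility between stages. Loop multiplication induces addition on $\pi_*$, so the weak product induces $(\beta_b)_b\mapsto\sum_b(\Omega b(\underline{\incl}))_*\beta_b$ whatever the order of the factors, and likewise for each splitting equivalence. That, rather than each factor being ``a weak equivalence onto its piece of $\pi_*$'', is why the order is irrelevant. It also lets you compare the stages purely on homotopy groups via naturality of Whitehead products.
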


Less precisely, the factors correspond to a basis in a free Lie algebra with generators corresponding to the wedge summands.

In the case $X_i=S^{n_i}$, we obtain a homotopy equivalence
\[
  \Omega(S^{n_1+1}\vee\dots\vee S^{n_m+1})\simeq\prod_{b\in\BC(X)}\Omega S^{1+\sum_i \mdeg(b)_i\cdot n_i},
\]  
implying there is an isomorphism of homotopy groups 
\begin{equation}\label{piwesp}
  \pi_*(S^{n_1+1}\vee\dots\vee S^{n_m+1})\cong\bigoplus_{b\in\BC(X)}\pi_*\bigl(S^{1+\sum_i \mdeg(b)_i\cdot n_i}\bigr)
\end{equation}
where the right side is generated by elements of the form $w_b\circ\alpha$, where $w_b\colon S^{|w_b|}\to S^{n_1+1}\vee\dots\vee S^{n_m+1}$ is the Whitehead product corresponding to a basic commutator $b\in\BC(X)$ and $\alpha\in\pi_*(S^{|w_b|})$.

\subsection{Composition products in homotopy groups}
We always assume that the following formulas only involve $\pi_n$ for $n\geq 2$. Our main references are the books by G.\,W.~Whitehead~\cite{whitehead} and Baues~\cite{baues}.

We consider three homotopy operations:
\begin{enumerate}
\item the suspension homomorphism $$E^k:\pi_*(X)\to\pi_{*+k}(\Sigma^k X),~k\geq 0;$$
\item the Whitehead product
$$\alpha\in\pi_{k+1}(X),~\beta\in\pi_{\ell+1}(X)~\leadsto~[\alpha,\beta]\in\pi_{k+\ell+1}(X);$$
\item the composition product
$$\alpha\in\pi_k(X),~\beta\in\pi_\ell(S^k)
~\leadsto~\alpha\circ\beta\in\pi_\ell(X).$$
\end{enumerate}

Comparing the relations in Proposition~\ref{prp:whitehead_properties} and Definition~\ref{dfn:quasi-Lie ring}, we obtain

\begin{prp}
\label{crl:homotopy_quasilie_ring}
For any simply connected space $X$, the abelian group $\pi_*(X)=\bigoplus_{n\geq 2}\pi_n(X)$ is a quasi-Lie ring with respect to the bracket $\alpha\otimes\beta\mapsto (-1)^{|\alpha|}[\alpha,\beta]$ and the grading $\deg(\alpha):=|\alpha|-1.$\qed
\end{prp}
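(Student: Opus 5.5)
The plan is to verify the two quasi-Lie identities directly from Proposition~\ref{prp:whitehead_properties}, using the shifted grading $\deg(\alpha)=|\alpha|-1$ and the twisted bracket $\{\alpha,\beta\}:=(-1)^{|\alpha|}[\alpha,\beta]$. Bilinearity of $\{-,-\}$ follows from property (2) together with (1): property (1) transfers additivity from the first slot to the second. The bracket lands in the correct degree because
\[
\deg\{\alpha,\beta\}=|\alpha|+|\beta|-2=\deg\alpha+\deg\beta.
\]
Simple connectivity ensures that everything lives in $\pi_n$ with $n\ge2$, so all groups are abelian and the bracket is defined.

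For graded antisymmetry, write $a=|\alpha|$ and $b=|\beta|$. Then
\[
\{\alpha,\beta\}=(-1)^a[\alpha,\beta]=(-1)^{a+ab}[\beta,\alpha]=(-1)^{a+ab+b}\{\beta,\alpha\}.
\]
The required sign is $(-1)^{(a-1)(b-1)+1}=(-1)^{ab+a+b}$, so the two agree.

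For the Jacobi identity, write $c=|\gamma|$. First expand $\{\alpha,\{\beta,\gamma\}\}=(-1)^{a+b}[\alpha,[\beta,\gamma]]$. Use (1) to rewrite this as $\pm[[\beta,\gamma],\alpha]$, with sign $(-1)^{a(b+c-1)}$; do the same for the other two cyclic terms. Then multiply by the prescribed signs $(-1)^{(a-1)(c-1)}$, $(-1)^{(b-1)(a-1)}$ and $(-1)^{(c-1)(b-1)}$.

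The claim is that all three resulting coefficients equal a common overall sign $\varepsilon(a,b,c)$ times the corresponding coefficients in property (3), namely $(-1)^{ac}$, $(-1)^{ab}$ and $(-1)^{bc}$. For the first term the exponent is
\[
(a-1)(c-1)+a+b+a(b+c-1)\equiv ab+b+c+1 \pmod 2.
\]
Dividing by $(-1)^{bc}$, the coefficient of $[[\beta,\gamma],\alpha]$ in (3), gives $ab+bc+b+c+1$. By cyclic symmetry the other two terms give the corresponding cyclic shifts of this exponent. So the remaining step is to check that $ab+bc+b+c$ is cyclically invariant mod $2$.

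This last check is the only real obstacle: it is pure sign bookkeeping, and nothing conceptual is involved. I would carry it out by comparing the exponents for the three terms modulo $2$. If the direct comparison does not match, I would instead test all eight parity cases of $(a,b,c)$. Once the common factor $\varepsilon$ is established, the twisted Jacobi sum equals $\varepsilon$ times the left side of (3), which vanishes. Hence $\pi_*(X)$ is a graded quasi-Lie ring.
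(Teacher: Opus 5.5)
Your strategy is the same as the paper's, which just says the identities follow by comparing Proposition~\ref{prp:whitehead_properties} with Definition~\ref{dfn:quasi-Lie ring}. Your degree, bilinearity and antisymmetry checks are correct. However, the Jacobi sign bookkeeping, which is the whole content of the proof, contains two errors, and the step you reduce everything to is false as stated.

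First, the exponent for the first term is
\[
(a-1)(c-1)+(a+b)+a(b+c-1)\equiv ab+a+b+c+1 \pmod 2,
\]
not $ab+b+c+1$. The three contributions $-a$, $+a$, $-a$ leave a net $a$, which you dropped. Second, in property (3) the coefficient of $[[\beta,\gamma],\alpha]$ is $(-1)^{|\alpha||\beta|}=(-1)^{ab}$, not $(-1)^{bc}$; the latter multiplies $[[\gamma,\alpha],\beta]$. As a result, your claim that $ab+bc+b+c$ is cyclically invariant mod $2$ is false. For parities $(a,b,c)=(0,1,0)$ it equals $1$, but for the cyclic shift $(1,0,0)$ it equals $0$. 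So your three ratios would come out inconsistent, and checking eight parity cases of your expression would only confirm the mismatch; the signs themselves must be redone.

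With both corrections the argument closes immediately. The ratio of the first twisted term to its counterpart in (3) is $(-1)^{(ab+a+b+c+1)-ab}=(-1)^{a+b+c+1}$, which is symmetric in $a,b,c$. Both (3) and the quasi-Lie Jacobi identity are cyclically symmetric, so the other two terms give the same ratio. Hence the twisted Jacobi sum equals $(-1)^{a+b+c+1}$ times the left side of (3), and therefore vanishes. As a sanity check, for $a=b=c=2$ the twisted sum is $-\bigl([[\beta,\gamma],\alpha]+[[\gamma,\alpha],\beta]+[[\alpha,\beta],\gamma]\bigr)$, i.e.\ $-1$ times (3).
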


\begin{rmk}
In our sign convention the Hurewicz homomorphism $h\colon\pi_*(X)\to H_{*-1}(\Omega X;\ZZ)$ preserves the bracket only up to sign. Sometimes in the literature a different sign convention is used, for which $h$ is a homomorphism of quasi-Lie rings.
\end{rmk}

We use the same Greek letter to denote iterated suspensions of the same element in the homotopy groups of spheres; the subscript indicates the codomain. For example, $\eta_2\in\pi_3(S^2)$ is the Hopf element; we have $\pi_3(S^2)=\ZZ\langle\eta_2\rangle$ and $\pi_{k+1}(S^k)=\ZZ_2\langle\eta_k\rangle$ for $k\geq 3$, where $\eta_k:=E^{k-2}\eta_2.$ We omit the subscript when the codomain is clear from the context. The identity map $\id_{S^n}\colon S^n\to S^n$ corresponds to the element $\iota_n\in\pi_n(S^n)$.

For $\alpha\in\pi_{k+i}(S^k)$ we write $\alpha^n:=\underbrace{\alpha\circ\dots\circ\alpha}_{n\text{ times}}\in\pi_{k+in}(S^k).$ For example, $\eta_2^3=\eta_2\circ\eta_3\circ\eta_4\in\pi_5(S^2).$

\begin{prp}[{\cite[X.8.1,2,18]{whitehead}}]
\label{prp:composition_properties}
The composition product has the following properties:
\begin{enumerate}
\item $\alpha\circ\iota_{|\alpha|}=\alpha;$
\item $E(\alpha\circ\beta)=E\alpha\circ E\beta;$
\item $\alpha\circ (\beta+\beta')=\alpha\circ\beta+\alpha\circ\beta';$
\item $(\alpha+\alpha')\circ\beta = \alpha\circ\beta + \alpha'\circ\beta$ if $\beta$ is a suspension;
\item $\alpha\circ[\beta,\beta']=[\alpha\circ\beta,\alpha\circ\beta'];$
\item $[\alpha,\beta\circ\varphi]=[\alpha,\beta]\circ E^{|\alpha|-1}\varphi$ if $\varphi$ is a suspension.\qed
\end{enumerate}
\end{prp}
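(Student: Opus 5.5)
The six properties are classical, so the plan is to verify each from the definitions rather than from deeper machinery. Throughout, fix representatives and work with the adjoint formulation of the Whitehead product via the commutator map $c\colon\Omega Z\wedge\Omega Z\to\Omega Z$ whenever that is convenient.

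\emph{Properties (1)--(4).} These are formal facts about composition of maps.
\begin{itemize}
\item[(1)] This is immediate from $\alpha\circ\id=\alpha$.
\item[(2)] This holds because $\Sigma$ is a functor, so $\Sigma(\alpha\circ\beta)=\Sigma\alpha\circ\Sigma\beta$.
\item[(3)] Addition $\beta+\beta'$ is defined using the pinch $S^\ell\to S^\ell\vee S^\ell$ in the source. Postcomposing with $\alpha$ commutes with the pinch, since $\alpha\circ(\beta\vee\beta')=(\alpha\circ\beta)\vee(\alpha\circ\beta')$.
\item[(4)] If $\beta=\Sigma\beta_0$, then $\beta$ is a co-H map: it commutes up to homotopy with the pinch maps. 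Hence $(\alpha+\alpha')\circ\beta=\nabla\circ(\alpha\vee\alpha')\circ\text{pinch}\circ\Sigma\beta_0\simeq\nabla\circ(\alpha\vee\alpha')\circ(\beta\vee\beta)\circ\text{pinch}$, and this is $\alpha\circ\beta+\alpha'\circ\beta$.
\end{itemize}

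\emph{Property (5).} This is naturality of the Whitehead product under a map of targets. Read with $\alpha\colon X\to Y$ a map (as in Whitehead's X.8 convention), we have $[\beta,\beta']=\nabla\circ(\beta\vee\beta')\circ w$, where $w$ is the universal attaching map. Postcomposing with $\alpha$ gives $\nabla\circ(\alpha\beta\vee\alpha\beta')\circ w$, which is $[\alpha\circ\beta,\alpha\circ\beta']$.

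\emph{Property (6).} This is the only step with content, and I expect it to be the main obstacle. Write $\varphi=\Sigma\varphi_0$ with $\varphi_0\colon S^{p}\to S^{q}$, where $|\beta|=q+1$. Pass to adjoints. The adjoint of $\beta\circ\Sigma\varphi_0$ is $\beta'\circ\varphi_0$. The adjoint of the generalised product $[\alpha,\beta\circ\varphi]$ is therefore
\[
c\circ(\alpha'\wedge\beta'\varphi_0)=c\circ(\alpha'\wedge\beta')\circ(\id_{S^{|\alpha|-1}}\wedge\varphi_0).
\]
Taking adjoints back, this is $[\alpha,\beta]\circ\Sigma(\id_{S^{|\alpha|-1}}\wedge\varphi_0)$.

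It then remains to identify $\Sigma(\id_{S^{k}}\wedge\varphi_0)$ with $E^{k}\varphi=E^{k+1}\varphi_0$ under the standard identifications $S^1\wedge S^k\wedge S^p\cong S^{k+p+1}$. This is where the sign and ordering issues live. One must check that the coordinate permutations used to identify $S^k\wedge S^{p}$ with $S^{k+p}$ on the source and on the target sides match. They do: the same shuffle of $S^k$ past the suspension coordinate occurs on both sides, so the signs cancel.

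The hypothesis that $\varphi$ is a suspension is exactly what makes $\varphi_0$ exist. I would cite \cite[X.8]{whitehead} for the careful sign bookkeeping rather than redo it.
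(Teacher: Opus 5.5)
Your proposal is correct. Properties (1)--(5) are the formal properties of composition and the naturality of the Whitehead product under maps of the target. For (6), the adjoint computation $[\alpha,\beta\circ\Sigma\varphi_0]=[\alpha,\beta]\circ\Sigma(\id_{S^{|\alpha|-1}}\wedge\varphi_0)$ via the commutator map is the standard argument, and the coordinate shuffles do cancel when you identify this with $E^{|\alpha|-1}\varphi$.

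The paper gives no proof of its own; it simply cites Whitehead's book for these classical facts. Like you, it leaves the sign relating the classical and commutator forms of the Whitehead product to Whitehead's conventions.
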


\subsection{James--Hopf invariants}
\begin{dfn}[{\cite[\S II.2]{baues}}]
Let $X,A$ be pointed spaces, and let $r\geq 1.$ Recall that $J(A)\simeq\Omega\Sigma A$ is a free topological monoid on $A$, hence points of $J(A)$ correspond to ordered tuples $a_1a_2\cdots a_n,$ for $a_j\in A\setminus\{\ast\}$ and $n\geq 0$. Define maps
\[
  g_r\colon J(A)\to J(A^{\wedge r}),~a_1\cdots a_n\mapsto
  \prod_{\{i_1<\dots<i_r\}\subset\{1,\dots,n\}}
  (a_{i_1}\wedge\dots\wedge a_{i_r}),
\]
where the product on the right is taken in lexicographically ascending order. It follows that $g_r(a_1\cdots a_n)=\ast$ if $r>n.$

For example, $g_1=\id\colon J(A)\to J(A),$ and
\[
  g_2(a_1\cdots a_n)=(a_1\wedge a_2)\cdot (a_1\wedge a_3)\cdots (a_{n-1}\wedge a_n)\in J(A\wedge A).
\]  

The \emph{generalised James--Hopf invariants} $\gamma_r\colon[\Sigma X,\Sigma A]\to[\Sigma X,\Sigma A^{\wedge r}]$ are defined as the compositions
\[
  [\Sigma X,\Sigma A]\cong[X,\Omega\Sigma A]\cong [X,J(A)]\overset{(g_r)_*}\longrightarrow[X,J(A^{\wedge r})]\cong [\Sigma X,\Sigma A^{\wedge r}].
\]

In particular, for $X=S^k$ and $A=S^\ell$ we obtain ordinary James--Hopf invariants
\[
  \gamma_r\colon\pi_{k+1}(S^{\ell+1})\to\pi_{k+1}(S^{r\ell+1}),~r\geq 1.
\]
\end{dfn}

\begin{prp}
\label{prp:james_properties}
The James--Hopf invariants $\gamma_r\colon \pi_{k+1}(S^{\ell+1})\to\pi_{k+1}(S^{r\ell+1})$ have the following properties:
\begin{enumerate}
\item $\gamma_1(\alpha)=\alpha;$

\item $\gamma_{r}(\alpha)=0$ if $\alpha$ is a suspension and $r\geq 2$;

\item $\gamma_r(\alpha)=0$ for $r>\lfloor k/\ell\rfloor;$

\item if $k=2\ell,$ then $\gamma_2\colon\pi_{2\ell+1}(S^{\ell+1})\to\pi_{2\ell+1}(S^{2\ell+1})\cong\ZZ$ is (up to sign) the classical Hopf invariant $H_0\colon \pi_{2\ell+1}(S^{\ell+1})\to\ZZ$.
\end{enumerate}
\end{prp}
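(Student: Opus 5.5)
We verify the four properties in turn. Each follows directly from the combinatorial definition of $g_r\colon J(A)\to J(A^{\wedge r})$ together with the adjunction $[\Sigma X,\Sigma A]\cong[X,J(A)]$, applied with $X=S^k$ and $A=S^\ell$.

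Property (1) is immediate. Since $g_1=\id$, the composite defining $\gamma_1$ is the identity on $[\Sigma X,\Sigma A]$.

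For (2), suppose $\alpha=E\beta$ with $\beta\colon S^k\to S^\ell$. Then the adjoint $\alpha'\colon S^k\to J(S^\ell)$ factors as $S^k\xrightarrow{\beta}S^\ell\hookrightarrow J(S^\ell)$, where the second map is the inclusion of words of length one. By definition, $g_r$ sends every word $a_1\cdots a_n$ with $n<r$ to the basepoint. Hence for $r\geq 2$ the composite $g_r\circ\alpha'$ is constant, and $\gamma_r(\alpha)=0$.

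For (3), the target $J(S^{r\ell})\simeq\Omega S^{r\ell+1}$ is $(r\ell-1)$-connected. If $r>\lfloor k/\ell\rfloor$, then $r\ell>k$, so every map $S^k\to J(S^{r\ell})$ is null-homotopic. Therefore $\gamma_r(\alpha)\in\pi_{k+1}(S^{r\ell+1})\cong\pi_k(J(S^{r\ell}))$ vanishes.

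Property (4) is the substantive point. Here I would compare $g_2$ with the cup-product description of the Hopf invariant. There are two steps.

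First, I would compute $(g_2)_*$ in degree $2\ell$. The group $H_{2\ell}(J(S^\ell))\cong\ZZ$ is generated by the class $x_2$ of the cell of words $a_1a_2$. By the formula $g_2(a_1a_2)=a_1\wedge a_2$, the map $g_2$ carries this cell homeomorphically, modulo lower skeleta, onto the bottom cell of $J(S^{2\ell})$. Hence $(g_2)_*x_2=\pm y$, where $y$ generates $H_{2\ell}(J(S^{2\ell}))$.

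It follows that $\gamma_2(\alpha)$, viewed as an integer via the Hurewicz isomorphism $\pi_{2\ell}(J(S^{2\ell}))\cong\ZZ$, is $\pm$ the coefficient of $x_2$ in $h(\alpha')$. Equivalently, it is the degree of $\alpha'$ onto the $2\ell$-cell of $J(S^\ell)$ after projecting to $J_2(S^\ell)/S^\ell=S^{2\ell}$. This is James' original definition of the Hopf invariant.

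Second, I would invoke James' theorem, also given in Baues~\cite{baues} and by Boardman--Steer, that this James--Hopf invariant agrees up to sign with the cup-product Hopf invariant $H_0$. To sketch it directly, consider the mapping cone $C_\alpha=S^{\ell+1}\cup_\alpha e^{2\ell+2}$. Let $u$ be the generator of $H^{\ell+1}$ and $v$ the generator of $H^{2\ell+2}$, so that $u^2=H_0(\alpha)v$. The map $C_\alpha\to\Sigma J(S^\ell)\simeq\Sigma\Omega S^{\ell+1}\to\ldots$ relates the relevant cells, and the product in $J(S^\ell)$ corresponds to the cup square through the Hopf construction. Concretely, $x_1^2=(1+(-1)^\ell)x_2$ in the Pontryagin ring matches $u^2$ on $J_2(S^\ell)$ suspended.

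The main obstacle is this identification of the $x_2$-coefficient with the cup product, including the sign conventions. My plan is to cite James' theorem for it rather than reprove it.
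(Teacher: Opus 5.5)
Your arguments for (1)--(3) are the same as the paper's. In (1), $g_1=\id$. In (2), the adjoint of a suspension factors through the length-one words $S^\ell\subset J(S^\ell)$, on which $g_r$ is constant for $r\ge 2$. In (3), $r\ell>k$ forces $\pi_{k+1}(S^{r\ell+1})=0$.

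For (4) your route is more hands-on than the paper's. The paper quotes Boardman--Steer: $E\gamma_2(\alpha)=\lambda_2(\alpha)=\pm E H(\alpha)$, where $H$ is the generalised Hopf invariant. For $k=2\ell$ this is the classical invariant, since $E$ is an isomorphism on $\pi_{2\ell+1}(S^{2\ell+1})$.

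You instead work with cells. The map $g_2$ collapses $J_1(S^\ell)=S^\ell$ and sends the open cell $\{a_1a_2\}$ homeomorphically onto the bottom cell $S^{2\ell}\subset J(S^{2\ell})$. The Hurewicz map is natural and is an isomorphism for $J(S^{2\ell})$ in degree $2\ell$. Hence $\gamma_2(\alpha)$ is $\pm$ the coefficient of $x_2$ in $h(\alpha')$, which is James' original Hopf invariant. You then cite only the comparison of James' invariant with the cup-product invariant $H_0$.

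This is correct. It makes the identification of $\gamma_2$ with James' invariant explicit and avoids the $\lambda_n$-machinery. The cost is that it only covers $k=2\ell$, whereas the Boardman--Steer citation gives $E\gamma_2=\pm E H$ in every degree. In both versions the link with the cup product is taken from the literature, so the two proofs are equally complete.

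One correction to the heuristic sketch you attach; drop it, since you cite James anyway. In the Pontryagin ring $H_*(J(S^\ell);\ZZ)=T(x_1)$ one has $x_1^2=x_2$, not $(1+(-1)^\ell)x_2$. This is because the product $J_1(S^\ell)\times J_1(S^\ell)\to J_2(S^\ell)$ is the identification map, which is a homeomorphism on the open top cell. The factor $1+(-1)^\ell$ belongs to cohomology: it appears in $u_1^2=\pm(1+(-1)^\ell)u_2$ for the dual classes, equivalently as the coefficient of $x_1\otimes x_1$ in the coproduct of $x_2$.
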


\begin{proof}
Property (1) follows by definition.

For (2), note that if $\alpha:\Sigma X\to \Sigma A$ is a suspension map, then the corresponding map $X\to J(A)$ factors through the inclusion $A\hookrightarrow J(A)$. Now if $r\ge2$ then $g_r$ is trivial when restricted to the image of $A\hookrightarrow J(A)$ (the $1$-tuples in $J(A)$).

Property (3) is true for dimension reasons.

For (4), see the discussion in Boardman and Steer~\cite[\S7]{boardman_steer}. In their notation, $E^{n-1}\gamma_n(\alpha)=\lambda_n(\alpha)$ \cite[Theorem 3.15]{boardman_steer}, hence $E\gamma_2(\alpha)=\lambda_2(\alpha),$ which (up to sign) is equal to $E\circ H\colon \pi_{k+1}(S^{\ell+1})\to\pi_{k+2}(S^{2\ell+2})$, the suspension of the generalised Hopf invariant $H\colon\pi_{k+1}(S^{\ell+1})\to\pi_{k+1}(S^{2\ell+1})$. This recovers the classical Hopf invariant for $k=2\ell$.
\end{proof}

\subsection{Distributivity laws}
The following important result is originally due to Barcus and Barratt \cite[Corollary~7.4]{barcus_barratt}, who stated it in terms of Hilton--Hopf invariants instead of James--Hopf invariants. We use a restatement from the book of Baues \cite{baues}. Since his sign convention may be different from Whitehead \cite{whitehead}, we do not specify the signs.

\begin{prp}[{\cite[Proposition II.3.4]{baues}}]
\label{prp:barcus_barratt}
For homogeneous elements $\alpha,\beta\in\pi_*(X)$ and $\zeta\in\pi_*(S^{|\beta|})$,
\[
  [\alpha,\beta\circ\zeta]=\sum_{n\geq 1}\pm[[\dots[\alpha,\underbrace{\beta],
  \dots],\beta}_{n\text{ times}}]\circ E^{|\alpha|-1}\gamma_n(\zeta).
\]  
\end{prp}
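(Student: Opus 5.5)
This is Baues's Proposition II.3.4, cited as a restatement of Barcus--Barratt, so the most economical proof is by reference. Still, I would organise a self-contained argument as follows. Write $\beta\in\pi_{q+1}(X)$ and $\zeta\in\pi_{k+1}(S^{q+1})$, so $\zeta$ is adjoint to a map $\zeta'\colon S^k\to J(S^q)\simeq\Omega S^{q+1}$. The key input is the James splitting $\Sigma J(S^q)\simeq\bigvee_{n\ge1}S^{nq+1}$. Under this splitting, the suspension of $\zeta'$ has components $E\gamma_n(\zeta)$; this follows from the definition of $g_n$ and the standard fact that $\Sigma g_n$ projects onto the $n$th wedge summand.

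Next I reduce to a universal example. By naturality of the Whitehead product, it suffices to treat $X=S^{p+1}\vee S^{q+1}$ with $\alpha=\iota_1$ and $\beta=\iota_2$ the two inclusions. There, $[\iota_1,\iota_2\circ\zeta]$ is the Whitehead product $[\iota_1,-]$ applied to the composite $S^{k+1}\to S^{q+1}\hookrightarrow X$.

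The heart of the argument is the generalised Whitehead product $[\iota_1,\ev]\colon\Sigma S^p\wedge J(S^q)\to X$, where $\ev\colon\Sigma J(S^q)\to S^{q+1}$ is the evaluation map. I will show that its restriction to the $n$th James filtration quotient $\Sigma S^p\wedge S^{nq}$ is the iterated product $[\dots[\iota_1,\iota_2],\dots,\iota_2]$ with $n$ copies of $\iota_2$. Precomposing with $\Sigma S^p\wedge\zeta'$ and using the splitting above then gives
\[
  [\iota_1,\iota_2\circ\zeta]=\sum_n\pm[\dots[\iota_1,\iota_2],\dots,\iota_2]\circ\bigl(\Sigma S^p\wedge\gamma_n(\zeta)\bigr).
\]
The smash factor $\Sigma S^p\wedge\gamma_n(\zeta)$ is exactly $E^{p}\gamma_n(\zeta)$, and $p=|\alpha|-1$ as required.

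To identify the restriction, I pass to loops: the adjoint of $[\iota_1,\ev]$ sends $x\wedge(y_1\cdots y_n)$ to the group commutator $c(x,y_1\cdots y_n)$ in $\Omega X$. Expanding this with the commutator identity $c(x,yz)=c(x,z)\,c(x,y)^{z}$ and conjugation rewrites it as a product of iterated commutators $c(\dots c(x,y_{i_1}),\dots,y_{i_r})$ indexed by subsets $i_1<\dots<i_r$. This is the same combinatorics as in $g_r$. Each term factors through the projection $J(S^q)\to J(S^{rq})$ given by $g_r$, and applying Hilton--Milnor identifies it with the $r$-fold iterated bracket.

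I expect the main obstacle to be making this last step precise: controlling the order of the factors, the conjugations, and the signs. The product decomposition holds only up to homotopy. One must also check that after suspension the cross-terms coming from different $r$ add rather than interact; this is where suspending once, so that the target is a co-H-space and the group structure is abelian in the relevant range, is used. Since the statement deliberately leaves the signs unspecified, I only need the terms up to sign, which removes the most delicate bookkeeping.
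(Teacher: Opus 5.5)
The paper gives no argument of its own for this statement. It simply quotes Baues, Proposition II.3.4 (a restatement of Barcus--Barratt), so your remark that the economical proof is by reference is exactly the paper's route.

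Your self-contained sketch is therefore a genuinely different argument, essentially the commutator-calculus proof behind Baues's statement, and its skeleton is sound. It rests on three things:
\begin{itemize}
\item the universal example $X=S^{p+1}\vee S^{q+1}$;
\item the identity $[\iota_1,\iota_2\circ\zeta]=[\iota_1,\iota_2\circ\ev]\circ\Sigma(S^p\wedge\zeta')$;
\item the exact group identity writing $c(x,y_1\cdots y_n)$ as an ordered product of the left-normed commutators $c(\dots c(x,y_{i_1}),\dots,y_{i_r})$ over all nonempty $\{i_1<\dots<i_r\}\subset[n]$.
\end{itemize}
However, the step you flag as the main obstacle is not settled by the mechanism you propose.

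First, abelianness has nothing to do with the target being a co-H-space; it comes from the source. Since $p=|\alpha|-1\ge 1$, every $S^p\wedge W$ is a suspension, so $[S^p\wedge W,\Omega X]$ is abelian and pointwise products of maps into $\Omega X$ may be reordered up to homotopy.

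Second, and more seriously, the individual terms $c(\dots c(x,y_{i_1}),\dots,y_{i_r})$ are not well-defined maps on $S^p\wedge J(S^q)$. They are indexed by positions in a word and are not compatible with deleting base-point letters. Only the lexicographically ordered product $\Pi_r$ of all terms with $r$ letters is well defined, and that is what factors through $g_r$. So the regrouping of the interleaved product into $\prod_r\Pi_r$ has to be done on $S^p\wedge(S^q)^{\times n}$, where every term is a genuine map and the group is abelian. You then push it down to $S^p\wedge J_n(S^q)$ as follows. The quotient $\rho_n\colon(S^q)^{\times n}\to J_n(S^q)$ becomes, after one suspension, a homology epimorphism between wedges of simply connected spheres. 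Hence $\Sigma\rho_n$ has a right homotopy inverse, and precomposition with $S^p\wedge\rho_n$ is injective on homotopy classes.

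Once this is in place, neither the James splitting nor Hilton--Milnor is needed. Let $h_r=\ev\circ\Sigma g_r\colon\Sigma J(S^q)\to S^{rq+1}$, and let $W_r$ be the $r$-fold left-normed generalised Whitehead product of $\iota_1$ with copies of $\iota_2$. Unwinding adjoints, $\Pi_r$ is exactly the adjoint of $W_r\circ(S^p\wedge h_r)$. Hence $[\iota_1,\iota_2\circ\ev]=\sum_r W_r\circ(S^p\wedge h_r)$. Precomposing with the suspension $\Sigma(S^p\wedge\zeta')$, which is additive, gives the formula, because $h_r\circ\Sigma\zeta'=\gamma_r(\zeta)$ by definition.

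Watch the index slips:
\begin{itemize}
\item In the paper's convention $\gamma_n(\zeta)$ already lies in $\pi_{k+1}(S^{nq+1})$, so the components of $\Sigma\zeta'$ are $\gamma_n(\zeta)$, not $E\gamma_n(\zeta)$.
\item The smash factor is $S^p\wedge\gamma_n(\zeta)=E^p\gamma_n(\zeta)$, not $\Sigma S^p\wedge\gamma_n(\zeta)$.
\item You write $[\iota_1,\ev]$ where you mean $[\iota_1,\iota_2\circ\ev]$.
\item One cannot restrict a map to a filtration quotient, only compose with a section of the splitting.
\end{itemize}

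Compared with the bare citation, your route explains why the lexicographically defined James--Hopf invariants are the right ones. It also gives an exact identity at the level of generalised Whitehead products, so the signs the paper leaves unspecified reduce to a choice of identifications of spheres.
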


Note that the sum is finite since $\gamma_n(\zeta)=0$ for $n> |\zeta|$. A special case of this formula was obtained by Barratt and Hilton as an application of Blakers--Massey relative Whitehead products \cite[\S3]{blakers_massey}.

\begin{crl}[{\cite[Theorem 3.24]{barratt_hilton}}]
\label{crl:BB-corollary}
Let $\alpha,\beta\in\pi_2(X).$ Then we have
\[
  [\alpha,\beta\circ\eta_2]=[\alpha,\beta]\circ\eta_3-[[\alpha,\beta],\beta]\in\pi_4(X).
\]  
\end{crl}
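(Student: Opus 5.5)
This is a special case of Proposition~\ref{prp:barcus_barratt} with $\zeta=\eta_2\in\pi_3(S^2)$, so the plan is to specialise that formula and then fix the signs by a separate check. With $|\beta|=2$ the distributivity law reads
\[
  [\alpha,\beta\circ\eta_2]=\sum_{n\ge1}\pm[[\dots[\alpha,\beta],\dots],\beta]\circ E^{|\alpha|-1}\gamma_n(\eta_2),
\]
and here $|\alpha|=2$, so $E^{|\alpha|-1}=E$. We first determine which James--Hopf invariants $\gamma_n(\eta_2)$ survive. By Proposition~\ref{prp:james_properties}(3) with $k=2$, $\ell=1$, we have $\gamma_n(\eta_2)=0$ for $n>2$. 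By (1), $\gamma_1(\eta_2)=\eta_2$, and hence $E\gamma_1(\eta_2)=\eta_3$. By (4), $\gamma_2(\eta_2)\in\pi_3(S^3)\cong\ZZ$ is, up to sign, the classical Hopf invariant of $\eta_2$, namely $\pm\iota_3$; its suspension is $\pm\iota_4$. The formula therefore collapses to
\[
  [\alpha,\beta\circ\eta_2]=\pm[\alpha,\beta]\circ\eta_3\pm[[\alpha,\beta],\beta].
\]

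The remaining, and main, obstacle is fixing the two signs, since Proposition~\ref{prp:barcus_barratt} is stated only up to sign. The term $[\alpha,\beta]\circ\eta_3$ has order dividing $2$, because $\pi_4(S^3)\cong\ZZ_2$, so its sign is irrelevant. Only the sign of $[[\alpha,\beta],\beta]$ needs to be determined.

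By naturality it suffices to work in the universal example $X=S^2\vee S^2$ with $\alpha=\iota^{(1)}$ and $\beta=\iota^{(2)}$. There the Hilton--Milnor theorem (Theorem~\ref{thm:hilton-milnor}) gives
\[
  \pi_4(S^2\vee S^2)\cong\pi_4(S^2)\oplus\pi_4(S^2)\oplus\pi_4(S^3)\oplus\pi_4(S^4)\oplus\pi_4(S^4).
\]
The $\pi_4(S^4)$-summands are generated by the two triple brackets, and the sign of the triple-bracket coefficient can be read off there.

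To detect that coefficient we would apply the map $S^2\vee S^2\to S^2$ given by $\iota_2$ on both summands, or use the James--Hopf/Hurewicz invariant. Applying the Hurewicz map $\pi_4\to H_3(\Omega X)$ kills $[\alpha,\beta]\circ\eta_3$ and sends $\beta\circ\eta_2$ to $u_2^2$ up to the sign of $H(\eta_2)$. Comparing with the image of $[[\alpha,\beta],\beta]$ in the tensor algebra $T(u_1,u_2)$ then fixes the sign, under the conventions of \cite{whitehead} and the choice of generator $\eta_2$. This is a direct check in $T(u_1,u_2)$; alternatively one cites \cite[Theorem~3.24]{barratt_hilton}, where this sign is established.
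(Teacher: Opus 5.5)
Your specialisation of Proposition~\ref{prp:barcus_barratt} is exactly the paper's first step and is correct: only $n=1,2$ contribute, $E\gamma_2(\eta_2)=\pm\iota_4$, and the sign of $[\alpha,\beta]\circ\eta_3$ does not matter. The gap is that you never determine the sign of $[[\alpha,\beta],\beta]$, which is the only content beyond that specialisation.

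This sign is not intrinsic. Replacing $\eta_2$ by $-\eta_2$ negates $[\alpha,\beta\circ\eta_2]$ but leaves $[\alpha,\beta]\circ\eta_3$ unchanged, so it flips the coefficient of $[[\alpha,\beta],\beta]$. That element has infinite order in $\pi_4(S^2\vee S^2)$. So the identity holds only for the generator normalised by $[\iota_2,\iota_2]=2\eta_2$, and any proof must use this normalisation; your proposal never does.

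Your Hurewicz comparison in $T(u_1,u_2)$ is left with undetermined signs: that of $h(\eta_2')=\pm u_2^2$, and those relating Whitehead products to Samelson products and commutators, which in the paper's convention agree only up to sign. So calling it ``a direct check'' settles nothing. Citing \cite[Theorem~3.24]{barratt_hilton} instead is citing the statement being proved. The fold map $S^2\vee S^2\to S^2$ cannot help either: $\pi_4(S^2)\cong\ZZ_2$ detects no signs, and it sends the triple bracket to $[[\iota_2,\iota_2],\iota_2]=2[\eta_2,\iota_2]=0$.

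The missing step is short. Use $2(\beta\circ\eta_2)=\beta\circ[\iota_2,\iota_2]=[\beta,\beta]$ and the Jacobi identity, noting that $|\alpha|=|\beta|=2$ gives $[\beta,\alpha]=[\alpha,\beta]$ and $[[\beta,\beta],\alpha]=[\alpha,[\beta,\beta]]$. Then
\[
  2[\alpha,\beta\circ\eta_2]=[\alpha,[\beta,\beta]]=-2[[\alpha,\beta],\beta].
\]
Doubling the candidate identity $[\alpha,\beta\circ\eta_2]=[\alpha,\beta]\circ\eta_3+\epsilon[[\alpha,\beta],\beta]$ kills the $\eta_3$-term, leaving $2\epsilon[[\alpha,\beta],\beta]=-2[[\alpha,\beta],\beta]$. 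In your universal example the triple bracket has infinite order, which forces $\epsilon=-1$, and naturality gives the general case. This is the paper's argument.

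A Hurewicz computation could be made rigorous only by fixing the sign of $h(\eta_2')$ through the same normalisation. At that point it is just this computation carried out inside $T(u_1,u_2)$.
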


\begin{proof}
Proposition \ref{prp:barcus_barratt} gives
\[
  [\alpha,\beta\circ\eta_2]=[\alpha,\beta]\circ\eta_3\pm[[\alpha,\beta],\beta]\circ E\gamma_2(\eta_2),
\]  
since $E\gamma_n(\eta_2)=0$ for $n\ge 3$. Furthermore, $E\gamma_2(\eta_2)=\pm H_0(\eta_2)\iota_4=\pm\iota_4\in\pi_4(S^4)$ by Proposition~\ref{prp:james_properties}~(4).
We therefore obtain the required identity up to a sign in the last summand.
To identify the sign, we observe that
\[
  2[\alpha,\beta\circ\eta_2]=[\alpha,\beta\circ[\iota_2,\iota_2]]=[\alpha,[\beta,\beta]]=-2[[\alpha,\beta],\beta],
\]
where the first identity uses the well-known formula $[\iota_2,\iota_2]=2\eta_2$, the second identity follows by 
Proposition~\ref{prp:composition_properties}~(5), and the third is the Jacobi identity (Proposition~\ref{prp:whitehead_properties}~(3)). The identity above must be obtained from the required identity by multiplication by~$2$ (as $\eta_3$ has order~$2$), so the sign is minus.
\end{proof}

\subsection{Composition with iterated Hopf maps} In what follows, we denote by $\eta$ the suspended Hopf map $S^{n+1}\to S^n$, $n\ge 2$, as well as the corresponding element in $\pi_{n+1}(S^n)$. We also consider the $k$-fold iteration $\eta^k\colon S^{n+k}\to S^n$, $k\ge0$. The properties of the composition operations $(-)\circ\eta^k\colon\pi_n(X)\to \pi_{n+k}(X)$ are summarised next.

\begin{prp}
\label{prp:properties of hopf}
Let $X$ be a pointed space and let $\alpha,\beta\in\pi_*(X)$ be homogeneous elements.

If $|\alpha|\geq 3$, then:
\begin{enumerate}
    \item $[\beta,\alpha\circ\eta]=[\beta,\alpha]\circ\eta$;
    \item $(\alpha+\alpha')\circ\eta=\alpha\circ\eta+\alpha'\circ\eta$;
    \item $(n\alpha)\circ\eta =n(\alpha\circ\eta)$ for $n\in\ZZ$;
    \item $2(\alpha\circ\eta)=0$;
    \item $\alpha\circ\eta^4=0$.
\end{enumerate}
If $|\alpha|=2$, then:
\begin{enumerate}
    \item[($1'$)] $[\beta,\alpha\circ\eta]=[\beta,\alpha]\circ\eta\pm [[\beta,\alpha],\alpha]$;
    \item[($2'$)] $(\alpha+\alpha')\circ\eta=\alpha\circ\eta+\alpha'\circ\eta+[\alpha,\alpha']$;
    \item[($3'$)] $(n\alpha)\circ\eta=n(\alpha\circ\eta)+\binom{n}{2}[\alpha,\alpha]$ for $n\in\ZZ$,
    
    in particular, $(-\alpha)\circ\eta=-(\alpha\circ\eta)+[\alpha,\alpha]$;
    \item[($4'$)] $2(\alpha\circ\eta)=[\alpha,\alpha]$;
    \item[($5'$)] $\alpha\circ\eta^5=0$.
\end{enumerate}
\end{prp}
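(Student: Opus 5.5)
The plan is to reduce every item to standard facts about composition and Whitehead products: Proposition~\ref{prp:composition_properties}, Corollary~\ref{crl:BB-corollary}, and the known structure of $\pi_{n+k}(S^n)$ for $k\le 5$. The two cases differ in one respect. For $|\alpha|\ge 3$ the map $\eta\colon S^{|\alpha|+1}\to S^{|\alpha|}$ is a suspension (namely $E^{|\alpha|-2}\eta_2$). For $|\alpha|=2$ it is the Hopf map $\eta_2$ itself, which is not a suspension.

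\emph{Case $|\alpha|\ge3$.} Item (1) is Proposition~\ref{prp:composition_properties}(6) with $\varphi=\eta$ a suspension; we use the symmetry of the Whitehead product to put $\alpha\circ\eta$ in the second slot, and note that $E^{|\beta|-1}\eta=\eta$. Item (2) is Proposition~\ref{prp:composition_properties}(4), and (3) follows from (2) by induction, with the negative case coming from $0=(\alpha-\alpha)\circ\eta$. For (4) we write $2(\alpha\circ\eta)=\alpha\circ(2\eta)$, using Proposition~\ref{prp:composition_properties}(3). Since $2\eta_n=0$ in $\pi_{n+1}(S^n)\cong\ZZ_2$ for $n\ge3$, this vanishes. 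For (5) we write $\alpha\circ\eta^4=\alpha\circ(\eta_n^4)$ with $n=|\alpha|\ge3$, and invoke the classical fact that $\eta_n^4=0$ in $\pi_{n+4}(S^n)$ for $n\ge3$. Indeed, $\eta^3=4\nu$ in the relevant range and $\nu\circ\eta=0$, or alternatively $\pi_7(S^3)\cong\ZZ_2$ is generated by $\nu'\circ\eta_6$ while $\eta_3^4=0$ (Toda). I would cite Toda for this.

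\emph{Case $|\alpha|=2$.} Here we represent $\alpha$ by a map $a\colon S^2\to X$, so that $\alpha\circ\eta=a_*(\eta_2)$ and everything reduces to universal formulas. Item ($1'$) is exactly Corollary~\ref{crl:BB-corollary} when $|\beta|=2$. For general $|\beta|$ we use Proposition~\ref{prp:barcus_barratt} with $\zeta=\eta_2$, together with $\gamma_2(\eta_2)=\pm\iota_3$ and $\gamma_n(\eta_2)=0$ for $n\ge3$ (Proposition~\ref{prp:james_properties}). This gives $[\beta,\alpha]\circ E^{|\beta|-1}\eta_2\pm[[\beta,\alpha],\alpha]$.

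For ($2'$) we use the universal example $S^2\vee S^2$, writing $\alpha+\alpha'$ as the pinch composed with $\alpha\vee\alpha'$. By Hilton's formula for the composite of a sum with $\eta_2$, we get $(\iota_1+\iota_2)\circ\eta_2=\iota_1\circ\eta_2+\iota_2\circ\eta_2+H_0(\eta_2)[\iota_1,\iota_2]$, and the Hopf invariant is $H_0(\eta_2)=1$. This is the Hilton–Hopf distributivity formula, which can also be read off from Proposition~\ref{prp:barcus_barratt} dually. The coefficient is detected by the cup product in the cofibre, and with the sign conventions used here it can be fixed by doubling, as in the proof of Corollary~\ref{crl:BB-corollary}. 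Item ($3'$) follows from ($2'$) by induction on $n$, using bilinearity and $[\alpha,\alpha]$ symmetric, which gives the binomial coefficient; for negative $n$ we solve $0=(\alpha-\alpha)\circ\eta$ and get $(-\alpha)\circ\eta=-\alpha\circ\eta+[\alpha,\alpha]$, extending the same way. Item ($4'$) is $\alpha\circ[\iota_2,\iota_2]=[\alpha,\alpha]$ by Proposition~\ref{prp:composition_properties}(5), combined with $[\iota_2,\iota_2]=2\eta_2$ and left distributivity, Proposition~\ref{prp:composition_properties}(3). For ($5'$) we write $\eta_2^5=\eta_2\circ\eta_3^4=0$ by (5) on $S^3$.

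\emph{Main obstacle.} The delicate points are the sign and coefficient in ($2'$), and the citation of $\eta_3^4=0$ in (5). I would settle ($2'$) by checking it against ($4'$) with $\alpha=\alpha'$: from ($2'$) with $\alpha'=\alpha$ we get $(2\alpha)\circ\eta=2(\alpha\circ\eta)+[\alpha,\alpha]$, and comparing with $2\alpha\circ\eta=\alpha\circ(2\iota_2)\circ\eta$ together with $(2\iota_2)\circ\eta_2=4\eta_2$ (degree squares under the Hopf invariant) makes the coefficient $+1$ consistent.
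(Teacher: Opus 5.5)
Your proposal is correct and follows essentially the paper's route. For $|\alpha|\ge 3$ it uses the suspension properties of $\eta$ together with Toda's $2\eta_3=0$ and $\eta_3^4=0$; it then uses the Barratt--Hilton/Barcus--Barratt formula for ($1'$), Hilton's distributivity formula for ($2'$), and $[\iota_2,\iota_2]=2\eta_2$ for ($4'$). Your use of Proposition~\ref{prp:barcus_barratt} for arbitrary $|\beta|$ in ($1'$) is slightly more complete than the paper's bare citation of the $|\beta|=2$ corollary. One side remark in (5) is inaccurate: $\nu\circ\eta$ need not vanish (for instance $\nu_5\circ\eta_8$ generates $\pi_9(S^5)\cong\ZZ_2$), but $4(\nu\circ\eta)=0$ regardless, and your fallback citation of $\eta_3^4=0$ is exactly what the paper uses.
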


\begin{proof}
If $|\alpha|\geq 3$, then $\eta=E^{|\alpha|-2}\eta_2\in\pi_{|\alpha|+1}(S^{|\alpha|})$ is a suspension element; hence, properties (1) and (2) follow from Proposition~\ref{prp:composition_properties}. Property (3) follows from (2) by induction. Also, $2\eta_3=0$ and $\eta^4_3=0$ by~\cite[(5.3) and Proposition 5.8]{toda}; hence, $2(\alpha\circ\eta)=\alpha\circ(2\eta)=0$ and $\alpha\circ\eta^4=0$, which proves (4) and (5).

Property ($1'$) is Proposition~\ref{crl:BB-corollary}. For ($2'$), see, for example, \cite[Ch. XI, (1.16) and (4.4)]{whitehead}.
Property ($3'$) follows from ($2'$) by induction, and ($4'$) holds since $2(\alpha\circ\eta)=\alpha\circ(2\eta)=\alpha\circ[\id,\id]=[\alpha,\alpha]$. Finally, ($5'$) follows from (5), since $\alpha\circ\eta^5=(\alpha\circ\eta)\circ\eta^4=0$.
\end{proof}

\subsection{The case when $\ZK$ is a wedge of spheres}
\label{subsec:mac-wedge-of-spheres}

Let $\K$ be a simplicial complex on $[m]$ such that $\ZK\simeq\bigvee_{x\in X}S^{n_x}$ is homotopy equivalent to a wedge of simply connected spheres, $x\in\pi_{n_x}(\ZK)$. Then there is an additive isomorphism
\[
  \pi_*(\ZK)\cong\bigoplus_{b\in\BC(X)}w_b\circ\pi_*(S^{|b|})
\]  
by the Hilton--Milnor theorem.

The homotopy fibration~\eqref{eq:zk-dj fibration} gives an extension of quasi-Lie algebras
\[
  0\to \pi_*(\ZK)\to\pi_*(\DJ(\K))\to\ZZ\langle t_1,\dots,t_m\rangle\to 0
\]
and an isomorphism of abelian groups
\begin{equation}\label{piDJdec}
  \pi_*(\DJ(\K))\cong\ZZ\langle t_1,\dots,t_m\rangle\oplus\bigoplus_{b\in\BC(X)}w_b\circ\pi_*(S^{|b|}).
\end{equation}
The nontriviality of the extension above is expressed in the Whitehead products $[t_i,x]$ for $x\in X$. (Note that $n_x\geq 2$, so $[t_i,x]\in\pi_{n_x+1}(\DJ(\K))\cong\pi_{n_x+1}(\ZK)$.) Identifying the elements $[t_i,x]$ is the most important part of the description of the quasi-Lie algebra structure of $\pi_*(\DJ(\K))$. 

If $\K$ is a flag complex then $\ZK$ is a wedge of spheres if and only if $\K^1$ is a chordal graph. In this case, the wedge summands correspond to GPTW elements, i.\,e., $X=GPTW$, and the description of the elements $[t_i,x]$, $x\in GPTW$ is given in Section~\ref{sec:computations of brackets} as part of the proof of~Theorem \ref{thm:iterated whitehead product to GPTW}. 

Once the elements $[t_i,x]\in\pi_{n_x+1}(\ZK)$ are known, there is the following algorithmic description of the Whitehead product of any two elements in $\pi_*(\DJ(\K))$. There are Whitehead products of five types in~\eqref{piDJdec}:

\begin{itemize}
\item[$(1)$] $[t_i,t_j]$, $i,j\in [m]$;
\item[$(2)$] $[w_b,w_{b'}]$, $b,b'\in\BC(X)$;
\item[$(2')$] $[w_b\circ\alpha,w_{b'}\circ\alpha']$,   
  $\alpha\in\pi_*(S^{|b|})$, $\alpha'\in\pi_*(S^{|b'|});$
\item[$(3)$] $[t_i,w_b]$, $i\in[m]$, $b\in\BC(X)$;
\item[$(3')$] $[t_i,w_b\circ\alpha]$, $\alpha\in\pi_*(S^{|b|})$.
\end{itemize}

Type (1): $[t_i,t_j]$ is zero if $\{i,j\}\in\K$ and a GPTW element if $\{i>j\}\in\K$.

Types (2) and $(2')$ are computed in the homotopy groups of the wedge of spheres $\ZK\simeq\bigvee_{x\in X}S^{n_x}$, which is a classical computation. More explicitly, by Corollary~\ref{crl:lie_polynomial_normal_form}, an element of type (2) is a linear combination of elements of the form $w_{b''}$, $[w_{b''},w_{b''}]=w_{b''}\circ [\id,\id]$ and $[w_{b''},[w_{b''},w_{b''}]]=w_{b''}\circ[\id,[\id,\id]]$ for $b''\in\BC(X)$. Type $(2')$ is reduced to type (2) by repeated use of Propositions \ref{prp:barcus_barratt} and \ref{prp:whitehead_properties}.

Type (3): induction on $|b|$. For the base case, $b=x\in X$, and $[t_i,x]$ is known by assumption. For the inductive step, we have $b=[b',b'']$, $b',b''\in\BC(X)$. Then $[t_i,w_b]=[t_i,[w_{b'},w_{b''}]]=\pm [[t_i,w_{b'}],w_{b''}]\pm [[t_i,w_{b''}],w_{b'}]$. A decomposition $[t_i,w_{b'}]=\sum_sw_{b_s}\circ \alpha_{s}$ is known by inductive assumption, and then $[w_{b_s}\circ\alpha_{s},w_{b''}]$ is an expression of type $(2')$. The computation of $[[t_i,w_{b''}],w_{b'}]$ is similar.

Type $(3')$: by Proposition \ref{prp:barcus_barratt}, 
\[
  [t_i,w_b\circ\alpha]=\sum_{k\geq 1}[\dots[[t_i,\underbrace{w_b],w_b]\dots,w_b}_{k\text{ times}}]\circ E\gamma_k(\alpha).
\]  
The summand with $k=1$ is of type (3), while the other summands  are computed by induction on $k$, where the inductive step is a computation of type $(2')$.

\section{Composing iterated Whitehead products with Hopf elements}
\label{sec:computations of brackets}
In this section we develop the commutator calculus needed to approach Problem~\ref{prb:main1}. Namely, we prove that any bracket formed from $t_1,\ldots,t_m\in\pi_2(\DJ(\K))$ can be expressed via GPTW and Hopf elements.

\begin{thm}
\label{thm:iterated whitehead product to GPTW}
Let $w\in\pi_{|w|}(\DJ(\K))$ be an iterated Whitehead product of $t_1,\dots,t_m$ for $|w|>2$. Then
\[
  w = p_0+p_1\circ\eta+p_2\circ\eta^2+p_3\circ\eta^3,
\]
where $p_i$ is a Lie polynomial in GPTW elements, $0\le i\le 3$. 

Further, if no $t_i$ appears in $w$ twice, then $w=p_{0}$ is a Lie polynomial in GPTW elements.
\end{thm}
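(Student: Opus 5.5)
We argue by induction on the length of $w$. Write $L\subset\pi_*(\DJ(\K))$ for the subgroup of all elements $p_0+p_1\circ\eta+p_2\circ\eta^2+p_3\circ\eta^3$ with $p_i$ Lie polynomials in GPTW elements. The claim is that every iterated Whitehead product of length $\ge 2$ lies in $L$. Using the Jacobi identity (Proposition~\ref{prp:whitehead_properties}) and antisymmetry, every iterated bracket can be rewritten as a $\ZZ$-combination of left-normed brackets $[t_{i_1},[t_{i_2},\dots[t_{i_{k}},t_j]\dots]]$. It therefore suffices to show that $[t_i,y]\in L$ whenever $y\in L$ is concentrated in degrees $\ge 3$, together with the base case $[t_i,t_j]\in L$. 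For the base case, $[t_i,t_i]=0$ since $t_i$ factors through $\CC P^\infty$ and $[\iota_2,\iota_2]=2\eta_2$ becomes null in $\CC P^\infty$. Also $[t_i,t_j]=0$ if $\{i,j\}\in\K$, and otherwise $[t_i,t_j]$ is $\pm$ a GPTW element.

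For the inductive step we need a closure argument. Proposition~\ref{prp:properties of hopf}(1) gives $[t_i,p\circ\eta^k]=[t_i,p]\circ\eta^k$ for $|p|\ge3$. By Jacobi, $[t_i,[x,x']]$ reduces to brackets of $[t_i,x]$ with $x'$. So everything reduces to two computations. The first is the key one: expressing $[t_i,x]$ for a GPTW element $x=c(J\setminus j,j;\underline t)$. The second is showing that a bracket $[a\circ\eta^k,b\circ\eta^l]$ of elements of $L$ stays in $L$. The second follows from Proposition~\ref{prp:composition_properties}(6), since $\eta$ is a suspension in degrees $\ge3$. It also uses Proposition~\ref{prp:properties of hopf}(5), so that only $\eta^k$ with $k\le 3$ survive.

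For $[t_i,x]$ with $x$ GPTW on the set $J$, there are three cases. If $i\notin J$, the product has no repeated index. Then \cite[Theorem~4.3]{gptw}, which expresses products without repeated indices as Lie polynomials in GPTW elements (case (a)), applies, or one argues directly by reordering via Jacobi to bring the indices into GPTW form. This uses the vanishing $[t_a,t_b]=0$ for edges and the connectivity of $\K_{J\cup i}$. If $i\in J$, we must handle repetitions. The model is the identity $[t_i,[t_i,t_j]]=[t_i,t_j]\circ\eta$ (Proposition~\ref{prp:iij}). It follows from Corollary~\ref{crl:BB-corollary} applied with $\alpha=t_j$, $\beta=t_i$, combined with $t_i\circ\eta_2=0$, since $\eta_2$ dies in $\pi_3(\CC P^\infty)=0$. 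We generalise this by moving the repeated $t_i$ inward with Jacobi until it is adjacent to the other copy. That produces terms $[\dots,[t_i,[t_i,z]]]$, and I would then establish a general formula $[t_i,[t_i,z]]=[t_i,z]\circ\eta$ plus terms with fewer repetitions. This would come from Corollary~\ref{crl:BB-corollary} or Proposition~\ref{prp:barcus_barratt}, applied to $\beta=t_i$ with $t_i\circ\eta=0$ and expanding $[z,0]$. Each repetition thus converts into a composition with $\eta$, and the total length strictly drops. So a secondary induction on the number of repeated indices closes the argument, and the bound $k\le3$ is automatic from $\eta^4=0$ in degrees $\ge3$.

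The main obstacle is this repetition step. Commuting $t_i$ past intermediate brackets produces many terms, and one must verify that terms where $\eta$ appears at the degree-2 level do not occur. Where they could occur, Proposition~\ref{prp:properties of hopf}($1'$)–($4'$) introduces extra brackets, and one has to track these carefully with a well-founded ordering: first by length, then by number of repetitions. Case (b) is the only place this is needed. Case (a) follows from the same induction with no $\eta$ terms ever produced.
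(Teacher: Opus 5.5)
Your overall scaffolding matches the paper's and is sound: induction on length, the subgroup $L$ closed under brackets (via Proposition~\ref{prp:composition_properties}(6), additivity and $\eta^4=0$), reduction to $[t_i,x]$ with $x$ a GPTW element, and case (a) by citation. Your route to the key identity also works, and is slicker than the paper's Lemma~\ref{lmm:adding_letter}. Apply Proposition~\ref{prp:barcus_barratt} with $\alpha=z$, $\beta=t_i$, $\zeta=\eta_2$. The left side vanishes since $t_i\circ\eta_2=0$, and $\gamma_2(\eta_2)=\pm\iota_3$, $\gamma_n(\eta_2)=0$ for $n\ge3$. Hence $[t_i,[t_i,z]]=[t_i,z]\circ\eta$ holds \emph{exactly} for every $z$ with $|z|\ge2$. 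There are no correction terms, and signs are irrelevant because $[t_i,z]\circ\eta$ has order $2$. Note that Corollary~\ref{crl:BB-corollary} alone only covers $|z|=2$.

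The gap is in how you dispose of the other terms produced when you push the repeated $t_i$ inward. Take $x=c(I,t_j)$ with $i=i_s\in I$. Each step $[t_i,[t_{i_r},y_r]]=\pm[[t_i,t_{i_r}],y_r]\pm[t_{i_r},[t_i,y_r]]$ leaves a side term $c(\{i_1,\dots,i_{r-1}\},[[t_i,t_{i_r}],y_r])$. This term has exactly the same letters as $[t_i,x]$, so the same length and the same number of repetitions. Your ordering (length, then repetitions) therefore does not decrease on it. Re-expanding it into right-normed brackets just returns you to some $[t_a,x']$ with $|x'|=|x|$. As stated, the secondary induction does not close.

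The missing step is the Leibniz expansion of Lemma~\ref{lmm:c(A,[x,y])}: since $[t,-]$ is a derivation, $c(P,[u,v])=\sum_{P=A\sqcup B}\pm[c(A,u),c(B,v)]$. Applied to a side term, both factors $c(A,[t_i,t_{i_r}])$ and $c(B,y_r)$ have length $\ge2$ and are repetition-free. So they are Lie polynomials in GPTW elements by case (a), and their bracket lies in $L$. The surviving term is $c(\{i_1,\dots,i_{s-1}\},[t_i,[t_i,z]])=x\circ\eta$, by your identity and Proposition~\ref{prp:properties of hopf}(1).

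When $i=j$, first rewrite $[t_{i_k},t_j]=-[t_j,t_{i_k}]$, so that $t_j$ is pushed toward a copy that is not the innermost letter. Otherwise the process ends at $[t_j,t_j]=0$, and the $\eta$-term hides in a side term with a length-one factor.

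Carried out, this gives precisely the paper's Lemmas~\ref{lmm:[j,c(Q,j)]} and~\ref{prp:comm_with_gptw_formula}: $[t_i,x]=x\circ\eta$ plus brackets of two repetition-free products. In particular, $\eta$ is never composed with a degree-$2$ class, so $(1')$--$(4')$ of Proposition~\ref{prp:properties of hopf} never enter.
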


The proof will provide an algorithmic expression of commutators. It will depend on a series of lemmas that give explicit expressions for iterated Whitehead products of the canonical elements $t_1,\ldots,t_m\in\pi_2(\DJ(\K))$ with repeating indices. The most basic of commutator expressions is of special interest.

Let $\K=\{\varnothing,\{1\},\{2\}\}$ be a two point complex. Then the homotopy fibration~\eqref{eq:zk-dj fibration} becomes
\[
  S^3\to \CC P^\infty\vee\CC P^\infty\to\CC P^\infty\times\CC P^\infty
\]  
and the inclusion $\ZK\to\DJ(\K)$ identifies with the Whitehead product $[t_1,t_2]\colon S^3\to\CC P^\infty\vee\CC P^\infty$. We therefore have 
$\pi_*(\CC P^\infty\vee\CC P^\infty)\cong \ZZ\langle t_1, t_2\rangle
  \oplus[t_1,t_2]\circ\pi_*(S^3)$. 

\begin{prp}
\label{prp:iij}
Let $t_1,t_2\in\pi_2(\CC P^\infty\vee\CC P^\infty)$ be the standard generators. Then 
\[
  [t_1,[t_1,t_2]]=[t_1,t_2]\circ\eta\qquad\text{in}\;\;
  \pi_4(\CC P^\infty\vee\CC P^\infty)\cong\mathbb Z_2.
\]
For an arbitrary simplicial complex $\K$ on $[m]$, we have 
\[
  [t_i,[t_i,t_j]]=[t_i,t_j]\circ\eta\qquad\text{in}\;\;\pi_4(\DJ(\K))
\]
for the standard generators $t_1,\dots,t_m\in\pi_2(\DJ(\K))$ and any $i,j\in[m]$.
\end{prp}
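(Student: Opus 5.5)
The plan is to reduce to the universal case of two vertices and there use the fact that $t_1$ is pulled back from $\CC P^\infty$, where $\pi_2$ and $\pi_3$ behave very simply. For general $\K$ and $i\ne j$, the map $\CC P^\infty\vee\CC P^\infty\to\DJ(\K)$ sending the summands to the $i$th and $j$th coordinates carries $t_1,t_2$ to $t_i,t_j$. It is induced by the simplicial inclusion of the two disjoint vertices $\{i\},\{j\}$ into $\K$, so the general identity follows by naturality of Whitehead products and composition. If $i=j$ both sides vanish, since $[t_i,t_i]$ comes from $\pi_3(\CC P^\infty)=0$. So it suffices to treat $\CC P^\infty\vee\CC P^\infty$.

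In that case we have $\pi_4(\CC P^\infty\vee\CC P^\infty)\cong[t_1,t_2]\circ\pi_4(S^3)\cong\ZZ_2$, generated by $[t_1,t_2]\circ\eta$; this uses the decomposition recorded just before the proposition and $\pi_4(\CC P^\infty)=0$. Consequently $[t_1,[t_1,t_2]]$ is either $0$ or $[t_1,t_2]\circ\eta$. To compute it, I would apply Corollary~\ref{crl:BB-corollary} with $\alpha=t_2$, $\beta=t_1$:
\[
  [t_2,t_1\circ\eta_2]=[t_2,t_1]\circ\eta_3-[[t_2,t_1],t_1].
\]
The element $t_1\circ\eta_2$ factors through $\pi_3$ of the first wedge summand $\CC P^\infty$, and $\pi_3(\CC P^\infty)=0$, so the left side is $0$. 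Hence $[[t_2,t_1],t_1]=[t_2,t_1]\circ\eta_3$.

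It remains to convert this into the stated form using the symmetry in Proposition~\ref{prp:whitehead_properties}(1). First, $[t_2,t_1]=\pm[t_1,t_2]$. Second, $[[t_2,t_1],t_1]=\pm[t_1,[t_2,t_1]]$. Composition with $\eta_3$ is additive because $\eta_3$ is a suspension, so by Proposition~\ref{prp:properties of hopf}(3) we get $(-x)\circ\eta_3=-(x\circ\eta_3)$. All elements involved lie in the group $\ZZ_2$, so every sign disappears and $[t_1,[t_1,t_2]]=[t_1,t_2]\circ\eta$.

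The step needing the most care is the identification $\pi_4(\CC P^\infty\vee\CC P^\infty)\cong\ZZ_2$, i.e.\ checking that the Hilton--Milnor summands beyond $[t_1,t_2]\circ\pi_*(S^3)$ do not contribute in degree $4$. Since $\ZK\simeq S^3$ here and $\pi_4(\CC P^\infty)^{\oplus 2}=0$, this follows from \eqref{eq:pi dj = pi bt + pi zk}. With that in hand, the group has order two, signs are irrelevant, and the argument is essentially the one-line application of Barratt--Hilton above.
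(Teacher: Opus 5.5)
Your proposal is correct and follows the paper's proof. The paper also applies the Barratt--Hilton formula (Corollary~\ref{crl:BB-corollary}), uses $t_1\circ\eta=0$ because $\pi_3(\CC P^\infty)=0$, and passes to $\DJ(\K)$ by naturality along the map $\CC P^\infty\vee\CC P^\infty\to\DJ(\K)$ induced by the vertices $i,j$. Your extra bookkeeping (the sign check in $\ZZ_2$ and the separate $i=j$ case) only makes explicit what the paper leaves implicit.
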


\begin{proof} 
Since $\pi_{3}(\CC P^{\infty})=0$, we obtain 
$t_1\circ\eta=0\in\pi_3(\CC P^\infty\vee \CC P^\infty)$, hence $[t_1,[t_1,t_2]]=[t_1,t_2]\circ\eta$ by the Barratt--Hilton formula (Corollary~\ref{crl:BB-corollary}). 
In the more general case for $\pi_{4}(\DJ(\K))$, the result follows by considering the map of polyhedral products 
$\CC P^{\infty}\vee\CC P^{\infty}\longrightarrow\DJ(\K)$ induced by including the vertices $i$ and $j$ into $\K$.
\end{proof}

\begin{rmk}
The nontriviality of $[t_1,[t_1,t_2]]$ was established by Kallel~\cite[Lemma 8.7]{kallel} using the Postnikov tower of $\CC P^\infty\vee\CC P^\infty$. 
Since $[t_1,t_2]\circ\eta$ is a generator of $\pi_4(\CC P^\infty\vee\CC P^\infty)=\pi_4(S^3)\cong\ZZ_2$, Kallel's result gives an alternative proof of Proposition~\ref{prp:iij}.
\end{rmk}

For the two point complex $\K$, the only GPTW element is $[t_2,t_1]$, so Proposition~\ref{prp:iij} together with the algorithm in~\S\ref{subsec:mac-wedge-of-spheres} can be used to describe the Whitehead product in $\DJ(\K)=\CC P^\infty\vee\CC P^\infty$ completely.

\begin{prp}\label{qla2p}
The Whitehead product in 
\[
  \pi_*(\CC P^\infty\vee\CC P^\infty)\cong \ZZ\langle t_1, t_2\rangle
  \oplus[t_2,t_1]\circ\pi_*(S^3)
\]
is described by the relations
\[
  [t_i,t_i]=0,\quad
  \bigl[[t_2,t_1]\circ\alpha,[t_2,t_1]\circ\alpha'\bigr]=0,\quad
  \bigl[t_i,[t_2,t_1]\circ\alpha\bigr]=[t_2,t_1]\circ(\eta\circ\Sigma\alpha)
\]
for $i=1,2$ and $\alpha,\alpha'\in\pi_*(S^3)$.
\end{prp}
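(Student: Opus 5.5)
The three relations come from three separate mechanisms, and I will treat them in that order.

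\emph{First relation.} $[t_i,t_i]=0$ holds because $t_i$ factors through $\CC P^\infty$, and $\pi_3(\CC P^\infty)=0$. Hence $[t_i,t_i]$, which is the image of $[\iota,\iota]\in\pi_3(\CC P^\infty)$, vanishes.

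\emph{Second relation.} The elements $[t_2,t_1]\circ\alpha$ all lie in the image of $i_*\colon\pi_*(\ZK)\to\pi_*(\DJ(\K))$, where here $\ZK=S^3$. Since $i_*$ commutes with Whitehead products, I get
\[
\bigl[[t_2,t_1]\circ\alpha,[t_2,t_1]\circ\alpha'\bigr]=i_*[\alpha,\alpha'],
\]
where the inner bracket is taken in $\pi_*(S^3)$. Because $S^3$ is an H-space, all Whitehead products in $\pi_*(S^3)$ vanish, so the bracket is $0$.

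\emph{Third relation, base case.} First I settle $\alpha=\iota_3$. For $i=1$, Proposition~\ref{prp:iij} together with antisymmetry gives
\[
[t_1,[t_2,t_1]]=-[t_1,[t_1,t_2]]=-[t_2,t_1]\circ\eta .
\]
Here $[t_2,t_1]\circ\eta$ has order $2$ by Proposition~\ref{prp:properties of hopf}(4), since $|[t_2,t_1]|=3$, so the sign is irrelevant and the result is $[t_2,t_1]\circ\eta$. For $i=2$, Proposition~\ref{prp:iij} applied with the roles of the indices swapped gives $[t_2,[t_2,t_1]]=[t_2,t_1]\circ\eta$ directly.

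\emph{Third relation, general $\alpha$.} Next I use the distributivity law, Proposition~\ref{prp:barcus_barratt}:
\[
[t_i,[t_2,t_1]\circ\alpha]=\sum_{n\ge1}\pm\,[\dots[t_i,\underbrace{w],\dots,w}_{n}]\circ E\gamma_n(\alpha),\qquad w=[t_2,t_1].
\]
\begin{itemize}
\item The $n=1$ term is $[t_i,w]\circ E\alpha$, since $\gamma_1=\id$. By the base case this equals $w\circ\eta\circ E\alpha$.
\item For $n\ge2$, the terms contain $[[t_i,w],w]=\pm[w\circ\eta,w]$. This is a Whitehead product of two elements of $i_*\pi_*(S^3)$, so it vanishes by the second relation. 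All higher iterates vanish as well.
\end{itemize}
The signs disappear because they only affect the vanishing terms and the $n=1$ term, which has order at most $2$. (Indeed $\eta\circ\Sigma\alpha$ is the composite of $\eta_3$ with a suspension, so it is $2$-torsion.) This yields $[t_i,w\circ\alpha]=w\circ(\eta\circ\Sigma\alpha)$.

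\emph{Completeness.} Finally, the three relations determine the whole bracket by bilinearity and graded antisymmetry on the decomposition \eqref{piDJdec}. Bilinearity in the first argument $t_i$ holds because $|t_i|=2$ and the bracket is linear in each variable (Proposition~\ref{prp:whitehead_properties}). This recovers exactly the algorithm of \S\ref{subsec:mac-wedge-of-spheres}. The $n=1$ term with $\gamma_1=\id$ and $E\alpha$ identified with $\Sigma\alpha$ is routine; the main step is the vanishing of the higher Barcus--Barratt terms, which rests on the H-space property of $S^3$.
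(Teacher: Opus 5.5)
Your proof is correct and follows essentially the same route as the paper. The first two relations come from the vanishing of Whitehead products in $\CC P^\infty$ (equivalently $\pi_3(\CC P^\infty)=0$) and in the H-space $S^3$, and the third comes from Proposition~\ref{prp:iij} together with the Barcus--Barratt expansion, whose terms with $n\ge 2$ vanish by the second relation; one harmless slip is that $|t_1|=|t_2|=2$ gives $[t_2,t_1]=[t_1,t_2]$ with no sign change, which does not matter since, as you note, $[t_2,t_1]\circ\eta$ has order~$2$.
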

\begin{proof}
Both $\CC P^\infty$ and $S^3$ are H-spaces, so Whitehead products in their homotopy groups are trivial. This proves the first two relations. To prove the third one we denote $x=[t_2,t_1]$ and use Proposition~\ref{prp:barcus_barratt} to write the expansion,
\[
  [t_i,x\circ\alpha]=[t_i,x]\circ\Sigma\alpha+[[t_i,x],x]\circ \beta_2+[[[t_i,x],x],x]\circ\beta_3+\cdots,
\]  
where $\beta_k$ are some elements in the homotopy groups of spheres. On the other hand, $[t_i,x]=x\circ\eta$ by Proposition~\ref{prp:iij}, so $[[t_i,x],x]=[x\circ\eta,x\circ\id]=0$ by the second relation. It follows that all summands in the expansion above vanish except for the first one, and hence  $[t_i,x\circ\alpha]=[t_i,x]\circ\Sigma\alpha=x\circ\eta\circ\Sigma\alpha$.
\end{proof}

Now we give a sequence of lemmas leading to the proof of Theorem~\ref{thm:iterated whitehead product to GPTW}.

\begin{lmm}[{cf. \cite[Lemma 3.8]{vv}}]
\label{lmm:adding_letter}
Consider a nested commutator $$x=[t_{j_1},[\dots[t_{j_k},t_i]\dots]]\in\pi_*(\DJ(\K)),$$ where $j_1,\dots,j_k,i\in[m]$ and $k\geq 1.$ Then $[t_{j_1},x]=x\circ\eta.$
\end{lmm}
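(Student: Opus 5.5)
We need to prove that $[t_{j_1},x]=x\circ\eta$, where $x=[t_{j_1},y]$ and $y=[t_{j_2},[\dots[t_{j_k},t_i]\dots]]$; here $y=t_i$ when $k=1$.

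The plan is to write $x=[t_{j_1},y]$ and apply the Jacobi identity (Proposition~\ref{prp:whitehead_properties}) to $[t_{j_1},[t_{j_1},y]]$. For $\alpha=t_{j_1}$ of degree $2$, the Jacobi identity with $\alpha=\gamma=t_{j_1}$ and $\beta=y$ reduces to the well-known formula
\[
  [t_{j_1},[t_{j_1},y]]=\pm\tfrac12\bigl[[t_{j_1},t_{j_1}],y\bigr],
\]
but division by $2$ is illegitimate integrally. So I would instead use the Barratt--Hilton formula (Corollary~\ref{crl:BB-corollary}) together with $t_{j_1}\circ\eta=0$, which holds because $\pi_3(\CC P^\infty)=0$ and $t_{j_1}$ factors through $\CC P^\infty$. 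That corollary only handles $|\beta|=2$, so for general $|y|$ I would use its generalisation, Proposition~\ref{prp:barcus_barratt}, with $\alpha=y$, $\beta=t_{j_1}$ and $\zeta=\eta_2$:
\[
  0=[y,t_{j_1}\circ\eta_2]=[y,t_{j_1}]\circ E^{|y|-1}\eta_2\pm[[y,t_{j_1}],t_{j_1}]\circ E^{|y|-1}\gamma_2(\eta_2).
\]
Since $\gamma_n(\eta_2)=0$ for $n\ge3$ and $\gamma_2(\eta_2)=\pm\iota_3$ (Proposition~\ref{prp:james_properties}~(4)), this gives
\[
  [[y,t_{j_1}],t_{j_1}]=\pm[y,t_{j_1}]\circ\eta.
\]
By graded antisymmetry, $[[y,t_{j_1}],t_{j_1}]=\pm[t_{j_1},[t_{j_1},y]]=\pm[t_{j_1},x]$ and $[y,t_{j_1}]=\pm x$. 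Hence $[t_{j_1},x]=\pm x\circ\eta$ holds up to signs.

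Next I would eliminate the signs. Since $|x|=k+2\ge3$, Proposition~\ref{prp:properties of hopf}~(3),(4) gives $(\pm x)\circ\eta=\pm(x\circ\eta)$ and $2(x\circ\eta)=0$, so $-(x\circ\eta)=x\circ\eta$. Every sign ambiguity therefore disappears, and $[t_{j_1},x]=x\circ\eta$ exactly. The one exception is $|x|=2$, but that would require $k=0$, which is excluded.

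The main obstacle is checking that Proposition~\ref{prp:barcus_barratt} applies with the roles as chosen, namely $\beta=t_{j_1}$ of degree $2$ with $\zeta=\eta_2\in\pi_3(S^2)$. I also need the suspension $E^{|y|-1}\gamma_2(\eta_2)$ to be a generator (up to sign) of $\pi_{|y|+2}(S^{|y|+2})$, so that the second summand is exactly $\pm[[y,t_{j_1}],t_{j_1}]$; this is the same input already used in Corollary~\ref{crl:BB-corollary}. As a fallback, I could instead use Proposition~\ref{prp:iij} and naturality: the universal example is $\DJ$ of the complex on $\{j_1,\dots,j_k,i\}$ with the vertex $j_1$ separated. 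That route is less direct, so I would commit to the Barcus--Barratt argument above.
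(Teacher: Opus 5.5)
Your argument is correct, but it takes a different route from the paper's. You apply the general distributivity law (Proposition~\ref{prp:barcus_barratt}) once, with $\alpha=y$ of arbitrary degree, $\beta=t_{j_1}$ and $\zeta=\eta_2$. The input is $t_{j_1}\circ\eta_2=0$, $\gamma_2(\eta_2)=\pm\iota_3$, and $\gamma_n(\eta_2)=0$ for $n\ge 3$. This gives $[t_{j_1},x]=\pm x\circ\eta$ in one step, and the $2$-torsion of $x\circ\eta$ (as $|x|\ge 3$) removes the unspecified signs. In effect this is property $(1')$ of Proposition~\ref{prp:properties of hopf} with $\alpha=t_{j_1}$, $\beta=y$. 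The paper cites only Corollary~\ref{crl:BB-corollary} for $(1')$, which covers $|\beta|=2$, so for $|y|\ge 3$ you do need the general formula, as you use it.

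The paper instead inducts on $k$. Its base case is Proposition~\ref{prp:iij}, i.e.\ the degree-two Barratt--Hilton formula. In the inductive step it writes $x=[t_{j_1},[t_{j_2},y]]$ and uses a chain of Jacobi identities to rewrite $[t_{j_1},x]$ as $[[t_{j_1},[t_{j_1},t_{j_2}]],y]+[t_{j_2},[t_{j_1},[t_{j_1},y]]]$. The inductive hypothesis and Proposition~\ref{prp:iij} then apply, and $\eta$ is pulled out by the suspension rule, property (1) of Proposition~\ref{prp:properties of hopf}, valid for classes of degree $\ge 3$.

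The paper's route needs the Barcus--Barratt correction term only for two classes in $\pi_2$, where the sign has been pinned down. Everything else is formal Jacobi manipulation plus distributivity over a suspension.

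Your route is shorter and more general. It never uses the nested shape of $y$, so it proves $[t,[t,y]]=[t,y]\circ\eta$ for every $y\in\pi_n(X)$ with $n\ge 2$ and every $t\in\pi_2(X)$ with $t\circ\eta=0$. The price is relying on the full distributivity law, including its James--Hopf correction term, for $\alpha$ of arbitrary degree.
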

\begin{proof}
In this proof, we write $t_1:=t_{j_1}$ and $t_2:=t_{j_2}$ and use induction on~$k$. The base case $k=1$ is Proposition~\ref{prp:iij}. For the inductive step, we write $x=[t_{1},[t_{2},y]]$, where $y=t_i$ for $k=2$ and $y$ is a nested commutator for $k>2$.

We have $[t_{1},[t_{1},y]]=[t_{1},y]\circ\eta$ by the inductive assumption, and $[t_1,[t_1,t_2]]=[t_1,t_2]\circ\eta$ by Proposition \ref{prp:iij}. Using the identities for the Whitehead product (Proposition~\ref{prp:whitehead_properties}) iteratively, we calculate
\[
\begin{split}
  [t_1,x] & =[t_1,[t_1,[t_2,y]]]=[t_1,[[t_2,y],t_1]]  =-[t_1,[[t_1,t_2],y]]-[t_1,[[y,t_1],t_2]] \\
  & =-[[[t_1,t_2],y],t_1]-[[[y,t_1],t_2],t_1] \\
  & =[[t_1,[t_1,t_2]],y]+(-1)^{|y|}[[y,t_1],[t_1,t_2]]+
  [[t_2,t_1],[y,t_1]]+[[t_1,[y,t_1]],t_2]\\
  & =[[t_1,[t_1,t_2]],y]+[t_2,[t_1,[t_1,y]]]  =[[t_1,t_2]\circ\eta,y]+[t_2,[t_1,y]\circ\eta]\\
  & =\bigl([[t_1,t_2],y]+[t_2,[t_1,y]]\bigr)\circ\eta=x\circ\eta 
\end{split}
\]
where the second last identity uses Proposition~\ref{prp:properties of hopf}~(1) and the sign in the last two identities is irrelevant as $\eta$ is a suspension and has order $2$.
\end{proof}

Given an ordered subset $J=\{j_1<\dots<j_k\}\subset[m]$ and an element $y\in\pi_N(\DJ(\K))$, we write 
\[
  c(J,y):=[t_{j_1},[t_{j_2},\dots[t_{j_k},y]\dots]]\in\pi_{N+k}(\DJ(\K)).
\]  
Note that $c(J,t_j)=c(J,j;\underline t)$ in the notation of \S\ref{notiwp}.

For $A,B\subset[m]$, consider the Koszul sign 
\[
  \theta(A,B):=\#\{(i,j)\in A\times B:i>j\}.
\]

\begin{lmm}\label{lmm:c(A,[x,y])}
Given $P\subset[m]$ and $x,y\in\pi_*(\DJ(\K))$, we have 
\[
  (-1)^{|P|}c(P,[x,y])=\sum_{P=A\sqcup B}(-1)^{\theta(A,B)+|x|\cdot|B|}\bigl[c(A,x),c(B,y)\bigr].
\]  
In particular,
$
  (-1)^{|P|}c(P,[t_i,y])=\sum_{P=A\sqcup B}(-1)^{\theta(A,B)}\bigl[c(A,t_i),c(B,y)\bigr].
$
\end{lmm}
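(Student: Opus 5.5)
The plan is to prove the formula by induction on $|P|$, using only the graded Leibniz rule for the Whitehead product. The first step is to extract from Proposition~\ref{prp:whitehead_properties} the derivation identity
\[
  [t_j,[u,v]]=[[t_j,u],v]+(-1)^{|u|}[u,[t_j,v]]
\]
for homogeneous $u,v$. To get it, rewrite the Jacobi identity (3) with $\alpha=t_j$, $|t_j|=2$, and use the symmetry (1), $[\alpha,\beta]=(-1)^{|\alpha||\beta|}[\beta,\alpha]$, to move $t_j$ to the front of each term. I will check the sign $(-1)^{|u|}$ (rather than $(-1)^{|u|+1}$) directly against the Jacobi identity with $|t_j|=2$ even, before using it.

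The base case $P=\varnothing$ is trivial: both sides equal $[x,y]$, since $\theta(\varnothing,\varnothing)=0$ and $c(\varnothing,z)=z$.

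For the inductive step, write $P=\{j\}\sqcup P'$ with $j=\min P$. Then $c(P,[x,y])=[t_j,c(P',[x,y])]$, so
\[
  (-1)^{|P|}c(P,[x,y])=-[t_j,(-1)^{|P'|}c(P',[x,y])].
\]
I will substitute the inductive formula for $P'$ and apply the derivation identity to each summand $[c(A',x),c(B',y)]$, with $P'=A'\sqcup B'$. This produces two terms:
\begin{itemize}
  \item $[[t_j,c(A',x)],c(B',y)]$. Because $j<\min A'$, we have $[t_j,c(A',x)]=c(A'\cup j,x)$, so this is the summand for $A=A'\cup\{j\}$, $B=B'$.
  \item $(-1)^{|c(A',x)|}[c(A',x),c(B'\cup j,y)]$, which is the summand for $A=A'$, $B=B'\cup\{j\}$.
\end{itemize}
The decompositions $P=A\sqcup B$ correspond bijectively to pairs consisting of a decomposition of $P'$ and a choice of which side receives $j$. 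So the whole proof comes down to matching signs.

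The sign bookkeeping is the main obstacle, and I expect it to be routine.
\begin{itemize}
  \item Since $j$ is minimal, $\theta(A'\cup j,B')=\theta(A',B')+|B'|$ and $\theta(A',B'\cup j)=\theta(A',B')+0$.
  \item $|B|$ increases by one exactly when $j\in B$, which changes $(-1)^{|x||B|}$ by $(-1)^{|x|}$.
  \item $|c(A',x)|=|x|+|A'|$.
  \item The overall sign $-1$ comes from $(-1)^{|P|}=-(-1)^{|P'|}$.
\end{itemize}
Combining these, I must check that both pieces reproduce $(-1)^{\theta(A,B)+|x||B|}$. The consistency checks are that $|A'|$ and $|B'|$ have parities summing to $|P'|$, and that $t_j$ has even degree so no extra Koszul sign arises from passing it past anything. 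If the conventions for $\theta$ or the derivation sign turn out to be off by a global factor, I will fix them by testing the smallest case $P=\{j\}$: there the claim reads $-[t_j,[x,y]]=[[t_j,x],y]+(-1)^{|x|}[x,[t_j,y]]$ up to the stated signs, and this pins down the correct form of the derivation identity.

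The special case in the statement follows by taking $x=t_i$, which has $|x|=2$, so $(-1)^{|x||B|}=1$.
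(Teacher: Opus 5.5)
Your strategy is the paper's: induction on $|P|$, splitting off $j=\min P$ so that $[t_j,c(A',x)]=c(A'\sqcup\{j\},x)$. You then apply the Leibniz form of the Jacobi identity to each summand of the inductive formula, and sort the two resulting families by whether $j\in A$ or $j\in B$.

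The lemma is nothing more than a sign identity, however, and two of the sign inputs you list are wrong.

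\begin{enumerate}
\item For $|t_j|=2$, Proposition~\ref{prp:whitehead_properties} gives $-[t_j,[u,v]]=[[t_j,u],v]+(-1)^{|u|}[u,[t_j,v]]$, with a minus sign on the left. This is the form you write for $P=\{j\}$, not the one you state at the start. This minus sign absorbs the factor $-1$ from $(-1)^{|P|}=-(-1)^{|P'|}$, so no global sign is left over.
\item More seriously, your $\theta$ increments are reversed. Since $j$ is smaller than every element of $P'$, we have $\theta(A'\sqcup\{j\},B')=\theta(A',B')$ and $\theta(A',B'\sqcup\{j\})=\theta(A',B')+|A'|$. Your formulas are the ones that would hold for $j=\max P$. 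With your increments the two families come out off by $(-1)^{|B'|}$ and $(-1)^{|A'|}$ respectively. That is not a global factor, so calibrating against the case $P=\{j\}$ would not repair it.
\end{enumerate}

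With both corrections the induction closes immediately. Using $|c(A',x)|=|x|+|A'|$, the first family carries the sign $(-1)^{\theta(A',B')+|x||B'|}=(-1)^{\theta(A'\sqcup\{j\},B')+|x||B'|}$. The second carries $(-1)^{\theta(A',B')+|x||B'|+|x|+|A'|}=(-1)^{\theta(A',B'\sqcup\{j\})+|x|(|B'|+1)}$. These are exactly the required signs $(-1)^{\theta(A,B)+|x||B|}$, as in the paper's computation. The special case $x=t_i$ then follows as you say, since $|t_i|=2$.
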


\begin{proof}
A similar identity in a quasi-Lie algebra was obtained in~\cite[Lemma~C.3]{vylegzhanin}; we reproduce the argument taking into account the sign difference between the Whitehead product and the quasi-Lie bracket. We use the identity $-[t_i,[\beta,\gamma]]=[[t_i,\beta],\gamma]+(-1)^{|\beta|}[\beta,[t_i,\gamma]]$, which follows from Proposition \ref{prp:whitehead_properties}, and argue by induction on $|P|$. The base case $P=\varnothing$ is trivial. For the inductive step, denote $P=\{i\}\sqcup P'$, $i=\min(P)$.  Then
\begin{align*}
(-1)^{|P|}c(P,[x,y])&=(-1)^{|P'|+1}[t_i,c(P',[x,y])]\\
&=\sum_{P'=A'\sqcup B'}(-1)^{\theta(A',B')+|x|\cdot |B'|}\cdot (-[t_i,[c(A',x),c(B',y)]])\\
&=\sum_{P'=A'\sqcup B'}(-1)^{\theta(\{i\}\sqcup A',B')+|x|\cdot |B'|}[c(\{i\}\sqcup A',x),c(B',y)]\\
&+\sum_{P'=A'\sqcup B'}(-1)^{\theta(A',\{i\}\sqcup B')+|x|\cdot(1+|B'|)}[c(A',x),c(\{i\}\sqcup B',y)]\\
&=\sum_{\begin{smallmatrix}
P=A\sqcup B,~i\in A
\end{smallmatrix}}(-1)^{\theta(A,B)+|x|\cdot |B|}[c(A,x),c(B,y)]\\&+\sum_{\begin{smallmatrix}
P=A\sqcup B,~i\in B
\end{smallmatrix}}(-1)^{\theta(A,B)+|x|\cdot |B|}[c(A,x),c(B,y)].\qedhere
\end{align*}
\end{proof}

\begin{lmm}
\label{lmm:[j,c(Q,j)]}
Let $j\in [m],$ $Q\subset[m]$, $Q\neq\varnothing$. Then
\[
\bigl[t_j,c(Q,t_j)\bigr]=c(Q,t_j)\circ\eta-\sum_{\begin{smallmatrix}
  Q=A\sqcup B,\\
  A\neq\varnothing,\\
  \max(Q)\in B
  \end{smallmatrix}}
  (-1)^{\theta(A,B)}\bigl[c(A,t_j),c(B,t_j)\bigr].
\]
\end{lmm}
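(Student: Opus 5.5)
The natural idea is to move $t_j$ past the whole string $c(Q,-)$ using Lemma~\ref{lmm:c(A,[x,y])}, and then absorb the one term carrying the $\eta$ using Lemma~\ref{lmm:adding_letter}. Write $q=\max(Q)$ and $Q'=Q\setminus\{q\}$, so that $c(Q,t_j)=c(Q',[t_q,t_j])$. First use antisymmetry (Proposition~\ref{prp:whitehead_properties}(1)) to rewrite $[t_j,c(Q,t_j)]$ up to a sign as a bracket with the innermost $t_j$ moved. Then apply Lemma~\ref{lmm:c(A,[x,y])} with $P=Q$, $x=t_j$, $y=t_j$, that is, to $c(Q,[t_j,t_j])$, or better with $x=t_q$, $y=t_j$. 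Instead of using $[t_j,t_j]=0$ as a trivial relation, we exploit it to produce the desired identity:
\[
0=(-1)^{|Q|}c(Q,[t_j,t_j])=\sum_{Q=A\sqcup B}(-1)^{\theta(A,B)+|B|}\bigl[c(A,t_j),c(B,t_j)\bigr].
\]
The terms with $A=\varnothing$ and with $B=\varnothing$ are $[t_j,c(Q,t_j)]$ and $[c(Q,t_j),t_j]$. By antisymmetry they are related, but since $|t_j|=2$ they add rather than cancel. The remaining terms pair up under $A\leftrightarrow B$, and again the two members of each pair add.

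A cleaner route splits the sum according to whether $q\in A$ or $q\in B$, which matches the constraint $\max(Q)\in B$ in the statement. So I will argue as follows.

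\textbf{Step 1.} Expand $[t_j,c(Q,t_j)]$ directly by Jacobi. Write $c(Q,t_j)=[t_{i},c(Q\setminus i,t_j)]$ with $i=\min Q$, and induct on $|Q|$ in the same way as in the proof of Lemma~\ref{lmm:c(A,[x,y])}. The derivation identity $-[t_j,[\beta,\gamma]]=[[t_j,\beta],\gamma]+(-1)^{|\beta|}[\beta,[t_j,\gamma]]$ pushes $t_j$ inward. The base case $Q=\{q\}$ is Proposition~\ref{prp:iij}, since $[t_j,[t_q,t_j]]=-[t_j,[t_j,t_q]]$ becomes $[t_q,t_j]\circ\eta$ up to sign and $2$-torsion. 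Here we use that $[t_j,t_q]\circ\eta=-[t_q,t_j]\circ\eta$, and that the correction term $[[t_j,t_q],[t_j,t_q]]$ from Proposition~\ref{prp:properties of hopf}($3'$) vanishes because the degree is $3$, so $\eta$ here is a suspension.

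\textbf{Step 2.} At the inductive step, $[t_j,[t_i,z]]$ with $z=c(Q\setminus i,t_j)$ equals $\pm[[t_j,t_i],z]\pm[t_i,[t_j,z]]$. In the second summand, substitute the inductive formula for $[t_j,z]$. The $\eta$-part $[t_i,z\circ\eta]$ becomes $[t_i,z]\circ\eta=c(Q,t_j)\circ\eta$ by Proposition~\ref{prp:properties of hopf}(1), which applies because $|z|\ge3$. The bracket terms give $[t_i,[c(A,t_j),c(B,t_j)]]$, which expands into terms in which $i$ joins $A$ or joins $B$. The first summand, $[[t_j,t_i],z]$, is rewritten by the graded Jacobi identity as the term with $A=\{i\}$, $B=Q\setminus i$. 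This term is allowed, since $q\in B$ provided $i\ne q$, i.e. $|Q|\ge2$. The constraint $\max(Q)\in B$ is preserved throughout, because $q$ is never moved out of the innermost block.

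\textbf{Step 3.} Collect terms, checking that each decomposition $Q=A\sqcup B$ with $A\ne\varnothing$ and $q\in B$ appears exactly once with sign $-(-1)^{\theta(A,B)}$. This is the same bookkeeping as in the proof of Lemma~\ref{lmm:c(A,[x,y])}, with $i=\min Q$ contributing $0$ to $\theta$ when it lies in $A$ and $|B'|$ when it lies in $B$.

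The main obstacle is the sign bookkeeping in Step 3. The $\eta$-terms are insensitive to sign, since $2(c\circ\eta)=0$ in degree $\ge3$, but the bracket terms are not. I would first verify the case $|Q|=2$ by hand to fix the conventions, and then run the induction.
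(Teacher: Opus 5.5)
Your Steps 1--3 give a correct proof, but by a different route from the paper. The paper avoids induction. With $n=\max(Q)$ and $P=Q\setminus\{n\}$, it writes $c(Q,t_j)\circ\eta=(-1)^{|P|}c(P,[t_n,t_j]\circ\eta)=(-1)^{|P|}c\bigl(P,[t_j,[t_n,t_j]]\bigr)$. This uses Proposition~\ref{prp:properties of hopf}(1) to pull $\eta$ out through $c(P,-)$, and Proposition~\ref{prp:iij} once, at the innermost level. It then applies Lemma~\ref{lmm:c(A,[x,y])} a single time, with $x=t_j$ and $y=[t_n,t_j]$. The summand with $A=\varnothing$ is $[t_j,c(Q,t_j)]$. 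The remaining summands are exactly the required terms, because $c(B',[t_n,t_j])=c(B'\sqcup\{n\},t_j)$ and $\theta(A,B')=\theta(A,B'\sqcup\{n\})$ since $n$ is maximal. So all the sign bookkeeping is inherited from Lemma~\ref{lmm:c(A,[x,y])}.

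Your induction on $|Q|$ peels off $i=\min(Q)$ and pulls $\eta$ outward one level at a time, using Proposition~\ref{prp:iij} only in the base case. It is self-contained, but in effect it re-runs the induction behind Lemma~\ref{lmm:c(A,[x,y])} in this special case, so the signs have to be rechecked. They do match. Start from $[t_j,[t_i,z]]=-[[t_j,t_i],z]-[t_i,[t_j,z]]$.
\begin{itemize}
\item The first term is $-[c(\{i\},t_j),c(Q\setminus i,t_j)]$ with $\theta=0$. This needs only $[t_j,t_i]=[t_i,t_j]$, not Jacobi.
\item Terms where $i$ joins $A'$ keep $\theta(A',B')$.
\item Terms where $i$ joins $B'$ pick up $(-1)^{|c(A',t_j)|}=(-1)^{|A'|}$ from the derivation identity. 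This equals the change $\theta(A',\{i\}\sqcup B')-\theta(A',B')=|A'|$; you wrote $|B'|$, but it is $|A'|$.
\item Finally, $-c(Q,t_j)\circ\eta=c(Q,t_j)\circ\eta$ by Proposition~\ref{prp:properties of hopf}(4).
\end{itemize}

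There are two small slips in your base case. First, $[t_j,t_q]=+[t_q,t_j]$, since both have degree $2$. Second, ($3'$) plays no role there, because $|[t_j,t_q]|=3$ and (3)--(4) apply directly.

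Your discarded first idea, expanding $c(Q,[t_j,t_j])=0$, has the wrong exponent: it should be $\theta(A,B)$, not $\theta(A,B)+|B|$. That expansion also only yields twice the desired identity, so it cannot detect the $\eta$-term; you were right to drop it.
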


\begin{proof}
Denote $n=\max(Q)$ and $P=Q\setminus\{n\}$. We have
\begin{align*}
    c(Q,t_j)\circ\eta &=
  (-1)^{|P|}c(Q,t_j)\circ\eta = 
  (-1)^{|P|}c(P,[t_n,t_j]\circ\eta)
  =(-1)^{|P|}c\bigl(P,[t_j,[t_n,t_j]]\bigr)
  \\&=\sum_{P=A\sqcup B'}\!\!\!(-1)^{\theta(A,B')}
  \bigl[c(A,t_j),c(B',[t_n,t_j])\bigr]
  \\ &=\bigl[t_j,c(P,[t_n,t_j])\bigr]+
  \sum_{\begin{smallmatrix} P=A\sqcup B',\\
  A\neq\varnothing\end{smallmatrix}}
  (-1)^{\theta(A,B')}\bigl[c(A,t_j),c(B'\sqcup\{n\},t_j)\bigr]
  \\ &=[t_j,c(Q,t_j)]+\sum_{\begin{smallmatrix} Q=A\sqcup B,\\
  A\neq\varnothing,~n\in B\end{smallmatrix}}
  (-1)^{\theta(A,B)}\bigl[c(A,t_j),c(B,t_j)\bigr],
\end{align*}
where the first identity follows since $\eta$ is of order two, the second is by Proposition~\ref{prp:properties of hopf}~(1), the third is by Proposition~\ref{prp:iij} and the fourth is by Lemma~\ref{lmm:c(A,[x,y])}.
\end{proof}

We use the notation
\[
  I_{<i}:=\{k\in I:k<i\},\quad I_{\geq i}=\{k\in I:k\geq i\}.
\]

\begin{lmm}
\label{prp:comm_with_gptw_formula}
Suppose $I\subset[m]$, $i,j\in[m]$, $j\notin I$. Then
\[\begin{split}
  [t_i,c(I,t_j)]= & -\sum_{I_{<i}=A\sqcup B,\:
  A\neq\varnothing}
  (-1)^{\theta(A,B)}\bigl[c(A,t_i),c(I\setminus A,t_j)\bigr] \\ 
  & \qquad +\begin{cases}
(-1)^{|I_{<i}|}c(I\sqcup\{t_i\},t_j)& \text{if}\quad i\notin I;\\
c(I,t_j)\circ\eta& \text{if}\quad i\in I.
\end{cases}
\end{split}\]
\end{lmm}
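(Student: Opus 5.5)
Write $P:=I_{<i}$ and $R:=I_{\geq i}$, so $I=P\sqcup R$ and every element of $P$ is smaller than every element of $R$. Then $c(I,t_j)=c(P,c(R,t_j))$, because $c(-,-)$ applies the brackets $[t_k,-]$ in increasing order of $k$, innermost being the largest. The plan is to move $t_i$ past the outer block $c(P,-)$ using Lemma~\ref{lmm:c(A,[x,y])}, and then deal with $[t_i,c(R,t_j)]$ directly.

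\textbf{Step 1: the core term.} We identify $[t_i,c(R,t_j)]$ in the two cases.
\begin{itemize}
\item If $i\notin I$, then $i<\min R$. Hence $[t_i,c(R,t_j)]=c(R\sqcup\{i\},t_j)$ by the definition of $c$.
\item If $i\in I$, then $i=\min R$ and $c(R,t_j)=[t_i,c(R\setminus i,t_j)]$. Lemma~\ref{lmm:adding_letter} gives $[t_i,c(R,t_j)]=c(R,t_j)\circ\eta$ when $R\setminus i\neq\varnothing$, and Proposition~\ref{prp:iij} gives the same formula when $R=\{i\}$.
\end{itemize}

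\textbf{Step 2: applying Lemma~\ref{lmm:c(A,[x,y])}.} Take $x=t_i$ and $y=c(R,t_j)$, and use the second form of the lemma:
\[
(-1)^{|P|}c\bigl(P,[t_i,c(R,t_j)]\bigr)=\sum_{P=A\sqcup B}(-1)^{\theta(A,B)}\bigl[c(A,t_i),c(B,c(R,t_j))\bigr].
\]
The term with $A=\varnothing$ is $[t_i,c(B\sqcup R,t_j)]=[t_i,c(I,t_j)]$; here $\theta(\varnothing,P)=0$, and $B\sqcup R=I$ in increasing order. Every term with $A\neq\varnothing$ has $c(B,c(R,t_j))=c(I\setminus A,t_j)$, again because $B\subset P$ lies entirely below $R$. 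Solving for $[t_i,c(I,t_j)]$ gives the stated sum with a minus sign, plus the correction term $(-1)^{|P|}c(P,[t_i,c(R,t_j)])$.

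\textbf{Step 3: evaluating the correction term.}
\begin{itemize}
\item If $i\notin I$, Step 1 gives $c(P,c(R\sqcup i,t_j))=c(I\sqcup\{i\},t_j)$, since $P<i<R$. This yields $(-1)^{|I_{<i}|}c(I\sqcup\{i\},t_j)$, as stated.
\item If $i\in I$, the correction term is $(-1)^{|P|}c(P,c(R,t_j)\circ\eta)$. Since $|c(R,t_j)|\ge3$, iterating Proposition~\ref{prp:properties of hopf}(1) (in the form $[t_k,\alpha\circ\eta]=[t_k,\alpha]\circ\eta$) moves $\eta$ outside, giving $(-1)^{|P|}c(I,t_j)\circ\eta$. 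By Proposition~\ref{prp:properties of hopf}(4) this element has order two, so the sign disappears and we get $c(I,t_j)\circ\eta$. This is exactly the argument used in the proof of Lemma~\ref{lmm:[j,c(Q,j)]}.
\end{itemize}

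\textbf{Main obstacle.} The step needing most care is the sign bookkeeping in Step 2. One has to check that splitting $c(I,t_j)$ as $c(P,c(R,t_j))$ agrees with the ordering convention in Lemma~\ref{lmm:c(A,[x,y])}. One also has to check that no extra Koszul sign $\theta$ appears between $B$ and $R$; this holds because $B<R$ elementwise.

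We do not need the Whitehead product sign identity $[t_i,y]$ versus $[y,t_i]$ anywhere, because $x=t_i$ is already the first argument of the bracket.
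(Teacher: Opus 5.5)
Your proposal is correct and follows the paper's proof essentially step for step. Like the paper, you split $I=I_{<i}\sqcup I_{\geq i}$ and apply Lemma~\ref{lmm:c(A,[x,y])} with $x=t_i$ and $y=c(I_{\geq i},t_j)$, isolating the $A=\varnothing$ term. You then evaluate $c(I_{<i},[t_i,c(I_{\geq i},t_j)])$ in the two cases: directly from the definition of $c$ when $i\notin I$, and by Lemma~\ref{lmm:adding_letter} together with Proposition~\ref{prp:properties of hopf}(1) when $i\in I$. You also state explicitly that the sign $(-1)^{|I_{<i}|}$ drops out in the case $i\in I$ because $c(I,t_j)\circ\eta$ has order two; the paper leaves this implicit.
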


\begin{proof}
Let $P=I_{<i}$, $Q=I_{\geq i}$. By Lemma~\ref{lmm:c(A,[x,y])},
\begin{align*}
 &(-1)^{|P|}c\bigl(P,[t_i,c(Q,t_j)]\bigr) =
  \sum_{P=A\sqcup B}(-1)^{\theta(A,B)}\bigl[c(A,t_i),c(B,c(Q,t_j))\bigr]
  \\ &=\bigl[t_i,c(P,c(Q,t_j))\bigr]  \qquad +\sum_{P=A\sqcup B,\:A\neq\varnothing}(-1)^{\theta(A,B)}\bigl[c(A,t_i),c(B,c(Q,t_j))\bigr]
  \\ &=[t_i,c(I,t_j)]+\!\!\!\!\sum_{A\subset P,\:A\neq\varnothing}(-1)^{\theta(A,B)}\bigl[c(A,t_i),c(I\setminus A,t_j)\bigr].
\end{align*}
Now, if $i\notin I$, then $c(P,[t_i,c(Q,t_j)])=c(I\sqcup\{i\},t_j)$. If  $i\in I$, then
\[
\begin{split}
  c\bigl(P,[t_i,c(Q,t_j)]\bigr) & 
  =c\bigl(P,[t_i,[t_i,c(Q\setminus i,t_j)]]\bigr) \\ 
  & =c\bigl(P,[t_i,c(Q\setminus i,t_j)]\circ\eta\bigr)
  =c(I,t_j)\circ\eta.\qedhere
\end{split}
\]
\end{proof}

\begin{proof}[Proof of Theorem \ref{thm:iterated whitehead product to GPTW}]
We argue by induction on $|w|$. The base case is $|w|=3$, in which case $w=[t_i,t_j]=[t_j,t_i]$. Then $w=0$ if $\{i,j\}\in\K$ and $w\in GPTW$ if $\{i>j\}\notin\K$. The induction step has several cases.

Case 1: $w$ is an iterated commutator on $t_1,\dots,t_m$ without repeating indices. This case is done in~\cite[Theorem~4.3]{gptw}. Namely, by repeated use of the Jacobi identity, we can assume that $w$ is a Lie polynomial on nested commutators, i.\,e. commutators of the form $[t_{i_1},[\dots[t_{i_s},t_j]\dots]]$, $s\geq 1$. We then use the Jacobi identity to reorder $i_1,\dots,i_s,j$ so that $i_s=\max\{i_1,\dots,i_s,j\}$ (see the proof of \cite[Lemma 3.1]{vv} for an explicit algorithm). Finally, we express every such nested commutator as a Lie polynomial in GPTW elements using \cite[Algorithm~5.4]{vylegzhanin}.

Case 2: $w=[t_i,y]$, where $y$ is a GPTW element. Then $y=c(I,t_j)$ for some $I\subset[m]$ and $j\notin I$. If $i= j$, we use Lemma~\ref{lmm:[j,c(Q,j)]} to express $[t_i,y]=[t_j,c(I,t_j)]$ through iterated commutators without repeating indices, to which Case 1 applies.  If $i\ne j$, then we use Lemma~\ref{prp:comm_with_gptw_formula}. Whenever a Hopf element appears inside an iterated commutator, we use identities (1)--(5) from Proposition~\ref{prp:properties of hopf} to reduce it to the form $p\circ\eta^k$ where $k\le 3$.

Case 3: $w=[t_i,y]$, where $y$ is a Lie polynomial on GPTW elements. If $y$ is a GPTW element, Case 2 applies; otherwise $y$ is decomposable, so we use the Jacobi identity $[t_i,[y_1,y_2]]=[[t_i,y_1],y_2]\pm [y_1,[t_i,y_2]]$ to reduce the computation to Case~2.

The general case: $w=[w_1,w_2]$, where $w_1$ and $w_2$ are iterated Whitehead products of smaller length. Applying the Jacobi identity iteratively, we express $w$ as a linear combination of elements of the form $[t_i,y]$, where $y$ is an iterated Whitehead product of $t_1,\dots,t_m$. By the inductive assumption applied to $y$, we can assume that $y$ is a Lie polynomial in GPTW elements, and Case~3 applies.
\end{proof}

The following expressions for iterated Whitehead products with repeating indices will be used in Section~\ref{sec:homotopy groups of dj}.

\begin{lmm}
\label{lmm:repeats in a nested commutator}
Let $J\subset[m]$ and $j\in J\setminus\max(J)$. Then
\[
  c(J,t_j)=c(J\setminus j,t_j)\circ\eta-\sum_{\begin{smallmatrix}
  J\setminus j=A\sqcup B,\\\max(A)>j,\\ \max(J)\in B\end{smallmatrix}}
  (-1)^{\theta(A,B)+|A_{<j}|}\bigl[c(A,t_j),c(B,t_j)\bigr].
\]  
\end{lmm}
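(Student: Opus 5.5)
The plan is to reduce to Lemma~\ref{lmm:[j,c(Q,j)]} by peeling off the indices of $J$ smaller than $j$, and then to redistribute them over the bracket with Lemma~\ref{lmm:c(A,[x,y])}.

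Write $P:=J_{<j}$ and $Q:=J_{>j}$. Since $j\neq\max(J)$, we have $Q\neq\varnothing$ and $\max(Q)=\max(J)$. Because the indices in $c(J,\cdot)$ are applied in increasing order,
\[
  c(J,t_j)=c\bigl(P,[t_j,c(Q,t_j)]\bigr).
\]
By Lemma~\ref{lmm:[j,c(Q,j)]},
\[
  [t_j,c(Q,t_j)]=c(Q,t_j)\circ\eta-\sum_{\begin{smallmatrix}Q=A'\sqcup B',\ A'\neq\varnothing,\\ \max(Q)\in B'\end{smallmatrix}}(-1)^{\theta(A',B')}\bigl[c(A',t_j),c(B',t_j)\bigr].
\]
We then apply the operator $c(P,-)$ to both sides, using bilinearity of the Whitehead product.

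\emph{The $\eta$ term.} The element $c(Q,t_j)$ has degree at least $3$, so $\eta$ here is a suspension of order $2$. By repeated use of Proposition~\ref{prp:properties of hopf}~(1),
\[
  c\bigl(P,c(Q,t_j)\circ\eta\bigr)=\pm c(P\sqcup Q,t_j)\circ\eta=c(J\setminus j,t_j)\circ\eta,
\]
and the sign is irrelevant.

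\emph{The bracket terms.} We apply Lemma~\ref{lmm:c(A,[x,y])} with $x=c(A',t_j)$ and $y=c(B',t_j)$:
\[
  c\bigl(P,[x,y]\bigr)=(-1)^{|P|}\sum_{P=A''\sqcup B''}(-1)^{\theta(A'',B'')+|x|\cdot|B''|}\bigl[c(A'',x),c(B'',y)\bigr].
\]
Every element of $P$ is smaller than every element of $Q$, so $c(A'',c(A',t_j))=c(A''\sqcup A',t_j)$, and similarly for $B$. Set $A=A''\sqcup A'$ and $B=B''\sqcup B'$. The pairs $(A',B')$ with $A'\neq\varnothing$ and $\max(Q)\in B'$, combined with the splittings $P=A''\sqcup B''$, correspond bijectively to splittings $J\setminus j=A\sqcup B$ with $\max(J)\in B$ and $A\cap Q\neq\varnothing$. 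The last condition is exactly $\max(A)>j$, so the index set of the target formula is recovered. Note also that $A''=A_{<j}$.

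\emph{Sign bookkeeping.} This is the step I expect to be the main obstacle. Since $A''<B'$ elementwise and $A'>B''$ elementwise,
\[
  \theta(A,B)=\theta(A'',B'')+\theta(A',B')+|A'|\,|B''|.
\]
Moreover $|x|=|A'|+2\equiv|A'|\pmod 2$, so the factor $(-1)^{|x|\cdot|B''|}$ cancels the cross term $(-1)^{|A'||B''|}$. What remains is $(-1)^{\theta(A,B)+|P|}$, to be compared with the claimed $(-1)^{\theta(A,B)+|A''|}$. These differ by $(-1)^{|B''|}$. I would resolve the discrepancy by rechecking carefully:
\begin{itemize}
\item the exact sign convention in Lemma~\ref{lmm:c(A,[x,y])}, which may carry a $(-1)^{|B|}$ factor absorbed elsewhere;
\item the contribution of moving $t_{b}$, for $b\in B''$, past the first argument of the bracket.
\end{itemize}
If the discrepancy persists, I would test the formula directly in the smallest case $P=\{p\}$, $Q=\{q\}$ with $A'=\varnothing$ excluded, using antisymmetry of the bracket, in order to fix the correct sign.
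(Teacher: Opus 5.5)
Your argument is the paper's own proof. You split $J=J_{<j}\sqcup\{j\}\sqcup J_{>j}$, apply Lemma~\ref{lmm:[j,c(Q,j)]} to $[t_j,c(Q,t_j)]$, and distribute $c(P,-)$ over the brackets with Lemma~\ref{lmm:c(A,[x,y])}. Your index-set bijection and your sign bookkeeping are both correct. The discrepancy you found is real: it is the statement, not your computation, that is off, by the factor $(-1)^{|B\cap J_{<j}|}$.

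Lemma~\ref{lmm:c(A,[x,y])} holds exactly as stated. Its factor $(-1)^{|x|\cdot|B|}$ is precisely the cost of moving the $t_b$, $b\in B''$, past $x$. So neither of your proposed rechecks will produce the missing sign. The paper's proof reaches $|A_{<j}|$ only by writing the exponent $|x|\cdot|B''|$ as $(|A'|+1)|B''|$, whereas $|x|=|c(A',t_j)|=|A'|+2$; Lemma~\ref{lmm:cJx2} uses $|c(J_3,t_i)|=|J_3|+2$ correctly. What your argument actually proves is
\[
  c(J,t_j)=c(J\setminus j,t_j)\circ\eta-\sum_{\begin{smallmatrix}J\setminus j=A\sqcup B,\\ \max(A)>j,\\ \max(J)\in B\end{smallmatrix}}(-1)^{\theta(A,B)+|J_{<j}|}\bigl[c(A,t_j),c(B,t_j)\bigr].
\]

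Your proposed test case would not detect this. With $|Q|=1$ the conditions $A'\neq\varnothing$ and $\max(Q)\in B'$ cannot both hold, so the sum is empty. The first informative case is $J=\{p<j<q_1<q_2\}$. Put $x=[t_{q_1},t_j]$ and $y=[t_{q_2},t_j]$. Lemma~\ref{lmm:[j,c(Q,j)]} gives $[t_j,c(Q,t_j)]=c(Q,t_j)\circ\eta-[x,y]$. Since $|x|=3$, the Jacobi identity gives $-[t_p,[x,y]]=[[t_p,x],y]-[x,[t_p,y]]$. Together these yield
\[
  c(J,t_j)=c(J\setminus j,t_j)\circ\eta+\bigl[c(\{p,q_1\},t_j),y\bigr]-\bigl[x,c(\{p,q_2\},t_j)\bigr].
\]
This matches your sign. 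The stated formula instead puts $+$ on the last term, since there $A=\{q_1\}$, $B=\{p,q_2\}$, $\theta(A,B)=1$ and $A_{<j}=\varnothing$.

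The difference $2[x,c(\{p,q_2\},t_j)]$ is nonzero in $\pi_6(\DJ(\L))$. Both entries are distinct GPTW elements, i.e. distinct wedge summands of $\mathcal Z_\L$ by Proposition~\ref{prp:chordal+flag=homotopy type of ZK}. By Hilton--Milnor their Whitehead product therefore has infinite order.

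So you should state and prove the lemma with the corrected sign $(-1)^{\theta(A,B)+|J_{<j}|}$. Nothing downstream is affected, because Lemma~\ref{lmm:computing c(alpha-j,j)} and Proposition~\ref{diagDJL} use these bracket terms only up to $\pm$.
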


\begin{proof}
We write $I=P\sqcup\{j\}\sqcup Q$, where $\max(P)<j<\min(Q)$. Then
\[\begin{split}
  c(J,t_j) & =c(P,[t_j,c(Q,t_j)]) \\ 
  & =c(P,c(Q,t_j)\circ\eta)-\!\!\!\sum_{\begin{smallmatrix}
  Q=A'\sqcup B',\\A'\neq\varnothing,\\\max(Q)\in B'\end{smallmatrix}}
  \!\!\!(-1)^{\theta(A',B')} c(P,[c(A',t_j),c(B',t_j)])
  \\
  &=c(J\setminus j,t_j)\circ\eta-
  \sum_{\begin{smallmatrix}
  Q=A'\sqcup B',\\A'\neq\varnothing,\\\max(Q)\in B'\end{smallmatrix}}
  \sum_{P=A''\sqcup B''}
(-1)^{\epsilon} [c(A''\sqcup A',t_j),c(B''\sqcup B',t_j)],
\end{split}\]
where the second identity is by Lemma~\ref{lmm:[j,c(Q,j)]} and the third is by Lemma~\ref{lmm:c(A,[x,y])}. The sign is given by
\[
  \epsilon=\theta(A',B')+|P|+\theta(A'',B'')+(|A'|+1)\cdot|B''|
  %=\theta(A'\sqcup A'',B'\sqcup B'')+|A''|
  =\theta(A,B)+|A_{<j}|,
\]
where $A:=A'\sqcup A''$, $B:=B'\sqcup B''$, so that $A_{<j}=A''$.
\end{proof}

For a multi-index $\alpha\in\Zm$ and an index $j\in[m]$ we use the notation $c(\alpha,t_j)=c(\alpha,j;\underline t)\colon S^{|\alpha|+2}\to\DJ(\K)$, see~\eqref{cprod}.

\begin{lmm}
\label{lmm:computing c(alpha-j,j)}
Let $\alpha\in\Zm$, $J=\supp\alpha$ and $j\in J\setminus\max(J)$. Then 
\[
  c(\alpha-j,t_j)=c(J\setminus j,t_j)\circ\eta^{|\alpha|-|J|}+
  \sum_{\begin{smallmatrix}J\setminus j=A\sqcup B,\\
          \max(A)>j,\\
          \max(J)\in B
        \end{smallmatrix}}
  \pm [c(A,t_j),c(B,t_j)]\circ\eta^{|\alpha|-|J|-1},
\]
where the second summand on the right hand side is zero if $\alpha_j=1$.
\end{lmm}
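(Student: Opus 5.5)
The idea is to strip off all repeated letters one at a time using Lemma~\ref{lmm:adding_letter}, and then treat the single remaining copy of $t_j$ with Lemma~\ref{lmm:repeats in a nested commutator}.

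First reduce to a squarefree nested commutator. Write $c(\alpha-j,t_j)$ explicitly as $[t_1,[\dots[t_1,[t_2,\dots[t_m,\dots[t_m,t_j]\dots]]]]]$. Here the letters $t_i$ appear $\alpha_i$ times for $i\neq j$, and $\alpha_j-1$ times for $i=j$, in non-decreasing order of index. Whenever a letter $t_i$ appears at least twice, its occurrences are adjacent. So the innermost-but-one occurrence has the form $[t_i,x]$, and the next occurrence gives $[t_i,[t_i,x]]$ with $x$ a nested commutator of length $\geq 2$.

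Lemma~\ref{lmm:adding_letter} gives $[t_i,[t_i,x]]=[t_i,x]\circ\eta$. Then Proposition~\ref{prp:properties of hopf}~(1) lets us pull $\circ\eta$ out through all the outer brackets $[t_k,-]$. This is valid because $|[t_i,x]|\geq 4\geq 3$, and all the elements we meet later have degree $\geq3$ as well. Signs are irrelevant, since $\eta$ there has order $2$ by Proposition~\ref{prp:properties of hopf}~(4).

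The one subtlety is the innermost letter. When $\alpha_j\ge 2$, the copies of $t_j$ in the prefix sit just outside their nested neighbours. If $j<\max J$, some $t_n$ with $n>j$ sits between them and the final $t_j$, so the block $[t_j,[t_j,\dots]]$ again has a nested commutator of length $\geq2$ inside. Lemma~\ref{lmm:adding_letter} therefore applies to remove all but one prefix copy of $t_j$.

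Iterating this removes exactly $|\alpha|-|J|$ letters. We obtain
\[
c(\alpha-j,t_j)=c(J,t_j)\circ\eta^{|\alpha|-|J|}\quad\text{if }\alpha_j\ge2,\qquad
c(\alpha-j,t_j)=c(J\setminus j,t_j)\circ\eta^{|\alpha|-|J|}\quad\text{if }\alpha_j=1,
\]
where in the first case one prefix copy of $t_j$ survives. The case $\alpha_j=1$ is already the claim, with the second summand equal to zero.

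For $\alpha_j\ge2$ we have $j\in J\setminus\max(J)$, so we apply Lemma~\ref{lmm:repeats in a nested commutator} to $c(J,t_j)$. Composing with $\eta^{|\alpha|-|J|-1}$ rather than $\eta^{|\alpha|-|J|}$ (one fewer letter was removed) gives
\[
c(J\setminus j,t_j)\circ\eta^{|\alpha|-|J|}\pm\sum[c(A,t_j),c(B,t_j)]\circ\eta^{|\alpha|-|J|-1}.
\]

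The main obstacle is bookkeeping for composing differences with $\eta$. Proposition~\ref{prp:properties of hopf}~(2) requires degree $\geq3$, and every element here has degree at least $4$. Since we only need the result up to signs, sign tracking can be dropped.

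I also need to double-check the exponents. In the case $\alpha_j\geq 2$, $c(J,t_j)$ has $|J|+1$ letters while $c(\alpha-j,t_j)$ has $|\alpha|$ letters, so exactly $|\alpha|-|J|-1$ applications of $\eta$ occur, as claimed. In the case $\alpha_j=1$, the count is $|\alpha|-|J|$.
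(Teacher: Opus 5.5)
Your proposal is correct and is essentially the paper's proof. You strip the repeated letters using Lemma~\ref{lmm:adding_letter} and Proposition~\ref{prp:properties of hopf}~(1) to reach $c(\supp(\alpha-j),t_j)\circ\eta^{|\alpha-j|-|\supp(\alpha-j)|}$, and when $\alpha_j\ge2$ you substitute Lemma~\ref{lmm:repeats in a nested commutator} for $c(J,t_j)$. One fix: in your intermediate display the case $\alpha_j\ge 2$ should read $c(J,t_j)\circ\eta^{|\alpha|-|J|-1}$, since only $|\alpha|-|J|-1$ letters are removed there, as your own final count already shows.
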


\begin{proof}
Denote $\beta=\alpha-j$. By repeated use of Lemma \ref{lmm:adding_letter} and Proposition \ref{prp:whitehead_properties}, 
\[
  c(\beta,t_j)=c(\supp\beta,t_j)\circ\eta^{|\beta|-|\supp\beta|}=\begin{cases}
  c(J\setminus j,t_j)\circ \eta^{|\alpha|-|J|} &\text{if }\alpha_j=1,\\
  c(J,t_j)\circ\eta^{|\alpha|-|J|-1} &\text{if }\alpha_j>1.
  \end{cases}
\]
In the first case, we are done. In the second case we plug in the expression for $c(J,t_j)$ from Lemma \ref{lmm:repeats in a nested commutator} to obtain the required formula. 
\end{proof}

\begin{rmk}
The nested commutators $c(A,t_j)$ and $c(B,t_j)$ in Lemma~\ref{lmm:computing c(alpha-j,j)} satisfy $\max(A),\max(B)>j$ and hence are GPTW elements if $\K=\L$ is the disjoint union of $m$ points. In general, $c(A,t_j)$ and $c(B,t_j)$ are not necessarily GPTW elements, and expressing $c(\alpha-j,t_j)$ through GPTW elements requires additional calculations, see~\cite[Algorithm~5.4]{vylegzhanin}.
\end{rmk}

\section{Relations between iterated Whitehead products}
\label{sec:identities}

Iterated Whitehead products and, in particular, GPTW elements in $\pi_*(\DJ(\K))$ are subject to a set of Lie relations, which are described in this section. 

When $\K$ is a flag complex, the loop homology algebra $H_*(\Omega\ZK;\k)$ is minimally generated by the adjoints of GPTW elements~\cite[Theorem~4.3]{gptw}. A minimal set of relations between the adjoints of GPTW elements in $H_*(\Omega\ZK;\k)$ was described in \cite[Theorem~1.1]{vylegzhanin}. These relations correspond to generators $\kappa\in H_1(\K_J;\k)$ for $J\subset[m]$ and have the form $r_\kappa=0$, where $r_\kappa$ is a quadratic Lie polynomial in iterated Whitehead products $c(A,t_i)$.
Here we prove the stronger result that the relations $r_\kappa=0$ hold already in $\pi_*(\DJ(\K))$. 

For ordered subsets $A=\{a_1<\cdots<a_k\}$ an $B=\{b_1<\cdots<b_l\}$ of $[m]$ and $x\in\pi_*(\DJ(\K))$, we recall the notation from the previous section:
\[
  c(A,x)=[t_{a_1},[\dots[t_{a_k},x]\dots]],\qquad  
  \theta(A,B)=\#\{(a,b)\in A\times B\colon a>b\}.
\]  
 
\begin{thm}\label{thmminrel}
Let $\K$ be a simplicial complex on $[m]$. For any $J\subset[m]$ and any simplicial cycle $\kappa=\sum_{\{i<j\}\in\K_J}\lambda_{ij}\{i,j\}\in C_1(\K_J;\ZZ)$, there is the following relation in $\pi_*(\DJ(\K))$:
\begin{equation}\label{minrel}
  \sum_{\{i<j\}\in\K_J}\lambda_{ij}\sum_{\begin{smallmatrix}
  J\setminus\{i,j\}=A\sqcup B:\\
  \max(A)>i,\;\max(B)>j \end{smallmatrix}}
  (-1)^{\theta(A,B)}\bigl[c(A,t_i),c(B,t_j)\bigr]=0
\end{equation}
\end{thm}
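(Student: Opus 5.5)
The approach is to express each edge term of \eqref{minrel} as a boundary-type expression built from Lemma~\ref{lmm:c(A,[x,y])}, and then let the cycle condition $\partial\kappa=0$ cancel everything. The key input is that for an edge $\{i<j\}\in\K$ we have $[t_i,t_j]=0$ in $\pi_3(\DJ(\K))$, since $t_i,t_j$ factor through $\CC P^\infty\times\CC P^\infty\subset\DJ(\K)$ and Whitehead products vanish in an H-space. Hence for $P=J\setminus\{i,j\}$ we get
\[
0=(-1)^{|P|}c\bigl(P,[t_i,t_j]\bigr)=\sum_{P=A\sqcup B}(-1)^{\theta(A,B)+|B|}\bigl[c(A,t_i),c(B,t_j)\bigr],
\]
using Lemma~\ref{lmm:c(A,[x,y])} with $x=t_i$, $|x|=2$. (I would double-check the parity contribution $|x|\cdot|B|$ here, which is even.) Call this identity $R_{ij}$.

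Next I compare $R_{ij}$ with the inner sum $S_{ij}$ of \eqref{minrel}. The difference consists of the terms with $\max(A)<i$ (including $A=\varnothing$) or $\max(B)<j$ (including $B=\varnothing$). I will show that these discrepancy terms depend only on the individual vertices $i$ and $j$, in a form that telescopes. Concretely, consider the terms in $S_{ij}-R_{ij}$ with $\max(A)<i$. Writing $c(A,t_i)$ with all indices of $A$ below $i$, I will use Lemma~\ref{lmm:c(A,[x,y])} in reverse, together with the Jacobi identity, to regroup $\sum_{A\subset J_{<i}}\pm[c(A,t_i),c(J\setminus(A\cup\{i,j\}),t_j)]$ into an expression of the form $\pm c(J_{<i}\setminus\{j\},[t_i,c(\dots,t_j)])$. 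By antisymmetry this becomes an expression $E(i;J)$ that is attached to the vertex $i$ with $j$ treated as the "last letter". Rather than doing this regrouping directly, the cleaner route is to define for each vertex $v\in J$ the element
\[
F_v:=c\bigl(J\setminus v,t_v\bigr)
\]
and to check that $S_{ij}=\pm F_j\mp F_i+(\text{terms antisymmetric in }i,j\text{ that sum to zero})$. This is exactly the structure in the proof of \cite[Theorem~1.1]{vylegzhanin}, where the relation $r_\kappa$ arises from $\sum\lambda_{ij}([u_i,u_j]\text{-expansion})$. Summing over the cycle, $\sum\lambda_{ij}(F_j-F_i)=\sum_v(\partial\kappa)_vF_v=0$, and \eqref{minrel} follows.

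The main obstacle is this bookkeeping step: showing that the "bad" terms of $R_{ij}$ (those with $\max(A)<i$ or $\max(B)<j$) assemble into differences $F_j-F_i$ with the correct signs. I would first try to transport the algebraic identity of \cite[Lemma~C.3 and Theorem~1.1]{vylegzhanin} verbatim. All manipulations used there (the Jacobi identity, antisymmetry, and Lemma~\ref{lmm:c(A,[x,y])}) are valid quasi-Lie identities in $\pi_*(\DJ(\K))$ by Proposition~\ref{crl:homotopy_quasilie_ring}, and no Hopf elements arise because no index repeats within any term. The only care needed is the sign translation between the Whitehead bracket and the quasi-Lie bracket, where $|t_i|=2$ gives $\deg=1$. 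If a direct translation fails, the fallback is to prove the telescoping identity by induction on $|J|$, peeling off $\min(J)$ as in the proof of Lemma~\ref{lmm:c(A,[x,y])}.
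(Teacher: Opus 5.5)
Your strategy is the same as the paper's. The full sum $R_{ij}$ vanishes by Lemma~\ref{lmm:c(A,[x,y])} because $[t_i,t_j]=0$; its sign is just $(-1)^{\theta(A,B)}$, so the extra $|B|$ in your display should be dropped. The leftover partitions are single nested commutators $F_v=c(J\setminus v,t_v)$, and one then wants $\partial\kappa=0$ to cancel them.

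\textbf{The gap is the last step.} You assert it rather than check it, and it fails. Write $S_{ij}$ for the inner sum in \eqref{minrel} and $n=\max(J)$. By Lemma~\ref{lmm:cJx2}:
\begin{itemize}
\item the partitions with $\max(A)<i$ sum to $(-1)^{|J_{<i}|}F_j$;
\item those with $\max(B)<j$ sum to $-(-1)^{|J_{<j}|}F_i$;
\item partitions satisfying both conditions exist only if $j=n$, and then $S_{ij}$ is empty anyway.
\end{itemize}
Hence, for $j\neq n$,
\[
  S_{ij}=(-1)^{|J_{<j}|}F_i-(-1)^{|J_{<i}|}F_j=(-1)^{|J_{<i}|+|J_{<j}|}(G_i-G_j),\qquad G_v:=(-1)^{|J_{<v}|}F_v .
\]
The sign in front of $F_i$ depends on the other endpoint $j$. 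The factor $(-1)^{|J_{<i}|+|J_{<j}|}$ changes from edge to edge, so $\sum\lambda_{ij}S_{ij}$ is not of the form $\sum_v(\partial\kappa)_v(\cdots)$. The paper's own proof makes the same slip when it pulls $(-1)^{|J_{<j}|}$ outside the sum over $j$.

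\textbf{The relation as printed is false.} Let $\K$ on $[5]$ have edges $\{1,2\},\{1,3\},\{2,4\},\{3,4\}$ and isolated vertex $5$. Take $J=[5]$ and $\kappa=\{1,2\}+\{2,4\}-\{3,4\}-\{1,3\}$.
\begin{itemize}
\item The formula above gives $\sum\lambda_{ij}S_{ij}=-2F_1-2F_2+2F_3+2F_4$.
\item $F_3=-F_4$ because $[t_3,t_4]=0$ (equivalently $S_{34}=0$). So the left side of \eqref{minrel} is $-2(F_1+F_2)$, with $F_1=[t_2,[t_3,[t_4,[t_5,t_1]]]]$ and $F_2=[t_1,[t_3,[t_4,[t_5,t_2]]]]$. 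Expanding $S_{12},S_{13},S_{24}$ directly gives the same result.
\item Here $\DJ(\K)=\bigl((\CC P^\infty)^{\vee 2}\times(\CC P^\infty)^{\vee 2}\bigr)\vee\CC P^\infty$, the two inner wedges being on $\{1,4\}$ and $\{2,3\}$.
\item Rationally the part of $\pi_*(\DJ(\K))$ linear in $t_5$ is therefore free on $t_5$ over $U=T(u_1,\dots,u_4)/(u_a^2,\ u_au_b+u_bu_a \text{ for edges } \{a,b\})$. Here $u_a$ acts as $[t_a,-]$ up to a sign depending only on degree.
\item Since $[t_5,t_v]=[t_v,t_5]$, $F_1$ and $F_2$ correspond, up to a common sign, to $u_2u_3u_4u_1=u_4u_1u_2u_3$ and $u_1u_3u_4u_2=-u_1u_4u_3u_2$.
\item These are independent, because $U$ is the graded tensor product of $T(u_1,u_4)/(u_1^2,u_4^2)$ and $T(u_2,u_3)/(u_2^2,u_3^2)$. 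So $F_1+F_2\neq 0$ in $\pi_6(\DJ(\K))\otimes\QQ$, and the left side of \eqref{minrel} is nonzero.
\end{itemize}

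\textbf{What your argument does prove} is the corrected relation
\[
  \sum\lambda_{ij}(-1)^{|J_{<i}|+|J_{<j}|}S_{ij}=0,
\]
that is, \eqref{minrel} for $\kappa$ a cycle of the twisted differential $\{i,j\}\mapsto(-1)^{|J_{<i}|}\{j\}-(-1)^{|J_{<j}|}\{i\}$. Indeed this sum equals $\sum_{j\neq n}\lambda_{ij}(G_i-G_j)$, and:
\begin{itemize}
\item if $\{v,n\}\notin\K$, the coefficient of $G_v$ is $-(\partial\kappa)_v=0$;
\item if $\{v,n\}\in\K$, then $G_v=\pm c(J\setminus\{v,n\},[t_n,t_v])=0$;
\item $G_n$ never occurs.
\end{itemize}
In the paper's pentagon example every edge with $j\neq n$ joins consecutive elements of $J$. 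The twist is then a constant sign, which is why that example is unaffected.
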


\begin{rmk}
For any topological space~$X$, the same relations hold between elements $t_1,\dots,t_m\in \pi_*(X)$ of even dimension that satisfy $[t_i,t_j]=0$ for any $\{i,j\}\in\K$. Algebraically, these relations are corollaries of the relations $[t_i,t_j]=0$ for $\{i,j\}\in\K$ in a quasi-Lie algebra generated by  elements $t_1,\ldots,t_m$ of odd degree. 

It is shown in~\cite[Theorem~1.1]{vylegzhanin} that the relations~\eqref{minrel} form a minimal set of relations between the adjoints of GPTW elements in the loop homology algebra $H_*(\Omega\ZK;\k)$, for any principal ideal domain~$\k$. This gives a minimal presentation for $H_*(\Omega\ZK;\k)$ when $\K$ is a flag complex: it is generated by the GPTW elements modulo relations~\eqref{minrel} corresponding to additive generators of $H_1(\K_J;\k)$, $J\subset[m]$. To express each $c(A,t_j)$ in~\eqref{minrel} through GPTW elements one needs to use \cite[Algorithm~5.4]{vylegzhanin}.
\end{rmk}

We shall need the following version of identities from Lemma~\ref{lmm:c(A,[x,y])}.

\begin{lmm}\label{lmm:cJx2} 
Let $J\subset[m]$, $i,j\in J$ and $i<j$. Then
\begin{gather*}
  c(J\setminus i,t_i)=
  (-1)^{|J_{<j}|+1}
  \sum_{\begin{smallmatrix} J\setminus\{i,j\}=A\sqcup B:\\
  \max(B)<j\end{smallmatrix}}(-1)^{\theta(A,B)}\bigl[c(A,t_i),c(B,t_j)\bigr],\\
  c(J\setminus j,t_j)=
  (-1)^{|J_{<i}|}
  \sum_{\begin{smallmatrix}
  J\setminus\{i,j\}=A\sqcup B:\\
  \max(A)<i\end{smallmatrix}}(-1)^{\theta(A,B)}\bigl[c(A,t_i),c(B,t_j)\bigr].
\end{gather*}
\end{lmm}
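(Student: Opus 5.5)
Both identities come from Lemma~\ref{lmm:c(A,[x,y])} applied to a suitable splitting of the nested commutator. For the first identity, write $J\setminus i=P\sqcup\{j\}\sqcup Q$ with $P=(J\setminus\{i\})_{<j}$ and $Q=J_{>j}$. By definition,
\[
  c(J\setminus i,t_i)=c\bigl(P,[t_j,c(Q,t_i)]\bigr).
\]
The inner bracket $[t_j,c(Q,t_i)]=(-1)^{|t_j|\,|c(Q,t_i)|}[c(Q,t_i),t_j]$ is, up to a harmless sign (all degrees involved are even plus one, so the sign can be tracked), equal to $-[c(Q,t_i),t_j]$. Now apply Lemma~\ref{lmm:c(A,[x,y])} with $x=c(Q,t_i)$ and $y=t_j$ to expand $c(P,[x,y])$ as a sum over $P=A''\sqcup B$. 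This gives
\[
  \bigl[c(A'',c(Q,t_i)),c(B,t_j)\bigr]=\bigl[c(A''\sqcup Q,t_i),c(B,t_j)\bigr].
\]
The equality holds because every element of $A''\subset P$ is smaller than every element of $Q$, so the nested commutators concatenate in increasing order. Setting $A=A''\sqcup Q$, the splittings $J\setminus\{i,j\}=A\sqcup B$ that arise are exactly those with $B\subset P$, i.e.\ $\max(B)<j$ (with $\max\varnothing$ read as $-\infty$). This is precisely the index set in the first formula.

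The second identity is symmetric. Write $J\setminus j=P\sqcup\{i\}\sqcup Q$ with $P=J_{<i}$ and $Q=(J\setminus j)_{>i}$. Then
\[
  c(J\setminus j,t_j)=c\bigl(P,[t_i,c(Q,t_j)]\bigr),
\]
and Lemma~\ref{lmm:c(A,[x,y])} applies directly with $x=t_i$ and $y=c(Q,t_j)$; this is the ``in particular'' form, with no commutation of the inner bracket needed. The resulting terms are $[c(A,t_i),c(B''\sqcup Q,t_j)]$ with $A\subset P$, i.e.\ $\max(A)<i$. Again $B=B''\sqcup Q$ is an increasing concatenation because $P<Q$.

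The only real work is the sign bookkeeping, which I expect to be the main obstacle. In the expression $(-1)^{|P|}(-1)^{\theta(A'',B'')+|x|\,|B''|}$, one must rewrite $\theta(A'',B'')$ as $\theta(A,B)$, using that $Q$ is larger than everything in $P$; this adds $\theta(Q,B)=|Q|\cdot|B|$ in the first case and $\theta(A,Q)=0$ in the second. The term $|x|\,|B''|$ is even or odd according to the parity of $|x|=|Q|+3$ (first case) or $|x|=2$ (second case). In the second case the exponent is simply $|P|+\theta(A,B)=|J_{<i}|+\theta(A,B)$, as claimed. In the first case, the extra sign from swapping $[t_j,x]$ into $[x,t_j]$ combines with $|Q|\cdot|B|$ and $(|Q|+1)|B|$; these should cancel to leave $(-1)^{|P|+1}=(-1)^{|J_{<j}|+1}$, since $|J_{<j}|=|P|+1$ as $i\in J_{<j}$. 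This should be checked mod $2$, using Proposition~\ref{prp:whitehead_properties}(1) with $|t_j|=2$ for the swap.
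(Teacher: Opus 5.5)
Your route is the paper's own: the same splitting $J=J_1\sqcup\{i\}\sqcup J_2\sqcup\{j\}\sqcup J_3$, the same swap of the inner bracket, and the same application of Lemma~\ref{lmm:c(A,[x,y])} followed by concatenating the nested commutators. Your index sets are right, and so is your whole treatment of the second identity. For the first identity, however, the signs are essentially the whole content, and your bookkeeping is wrong at three points.

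\begin{itemize}
\item Since $|t_j|=2$ is even, Proposition~\ref{prp:whitehead_properties}(1) gives $[t_j,c(Q,t_i)]=[c(Q,t_i),t_j]$ with no sign. It is not $-[c(Q,t_i),t_j]$.
\item The degree is $|c(Q,t_i)|=|Q|+2$, not $|Q|+3$: each bracket with some $t_k$ raises the degree by one, starting from $|t_i|=2$.
\item From $|J_{<j}|=|P|+1$ you get $(-1)^{|J_{<j}|+1}=(-1)^{|P|}$, not $(-1)^{|P|+1}$.
\end{itemize}

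With your stated values, the per-term exponent would be $\theta(A'',B)+(|Q|+1)|B|\equiv\theta(A,B)+|B|$, and the global sign would be $(-1)^{|P|+1}$. So the cancellation you assert does not happen, and you would be off by a factor $-(-1)^{|B|}$. The slip in the last equality happens to mask the global part of that discrepancy.

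The correct accounting is short. Write $c(J\setminus i,t_i)=c(P,[c(Q,t_i),t_j])$. Lemma~\ref{lmm:c(A,[x,y])} with $x=c(Q,t_i)$ and $y=t_j$ gives
\[
  (-1)^{|P|}c(J\setminus i,t_i)=\sum_{P=A''\sqcup B}(-1)^{\theta(A'',B)+(|Q|+2)|B|}\bigl[c(A''\sqcup Q,t_i),c(B,t_j)\bigr].
\]
Every element of $Q$ exceeds every element of $B\subset P$, so $\theta(A''\sqcup Q,B)=\theta(A'',B)+|Q|\cdot|B|$. The extra term $(|Q|+2)|B|\equiv|Q|\cdot|B|$ therefore cancels exactly, leaving $(-1)^{\theta(A,B)}$ with $A=A''\sqcup Q$. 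Multiplying through by $(-1)^{|P|}=(-1)^{|J_{<j}|+1}$ gives the first identity. This is precisely the paper's computation.
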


\begin{proof}
Write $J=J_1\sqcup\{i\}\sqcup J_2\sqcup \{j\}\sqcup J_3$, where we let $J_1=\{t\in J\colon t<i\}$, $J_2=\{t\in J\colon i<t<j\}$ and $J_3=\{t\in J\colon t>j\}$. Then by Lemma~\ref{lmm:c(A,[x,y])} we have 
\[
\begin{split} 
  (-1)^{|J_{<j}|+1}
  c(J\setminus i,t_i) & =
  (-1)^{|J_1\sqcup J_2|}
  c\bigl(J_1\sqcup J_2,[t_j,c(J_3,t_i)]\bigr) \\ 
  & =(-1)^{|J_1\sqcup J_2|}c\bigl(J_1\sqcup J_2,[c(J_3,t_i),t_j]\bigr) \\
  & =\sum_{J_1\sqcup J_2=A'\sqcup B}
  (-1)^{\theta(A',B)+(|J_3|+2)|B|}
  \bigl[c(A'\sqcup J_3,t_i),c(B,t_j)\bigr] \\
  & =\sum_{J_1\sqcup J_2=A'\sqcup B}(-1)^{\theta(A'\sqcup J_3,B)}\bigl[c(A'\sqcup J_3,t_i),c(B,t_j)\bigr] \\ 
  & =\sum_{\begin{smallmatrix}
  J_1\sqcup J_2\sqcup J_3=A\sqcup B:\\
  J_3\subset A
  \end{smallmatrix}}(-1)^{\theta(A,B)}\bigl[c(A,t_i),c(B,t_j)\bigr],
\end{split}
\] 
\[
\begin{split}
  (-1)^{|J_{<i}|}c(J\setminus j,t_j) & =(-1)^{|J_1|}
  c\bigl(J_1,[t_i,c(J_2\sqcup J_3,t_j)]\bigr) \\ 
  & =\sum_{J_1=A\sqcup B'}(-1)^{\theta(A,B')}
  \bigl[c(A,t_i),c(B'\sqcup J_2\sqcup J_3,t_j)\bigr] \\
  & =\sum_{J_1=A\sqcup B'}(-1)^{\theta(A,B'\sqcup J_2\sqcup J_3)}
  \bigl[c(A,t_i),c(B'\sqcup J_2\sqcup J_3,t_j)\bigr] \\
  & =\sum_{\begin{smallmatrix} J_1\sqcup J_2\sqcup J_3=A\sqcup B:\\
  J_2\sqcup J_3\subset B \end{smallmatrix}}(-1)^{\theta(A,B)}\bigl[c(A,t_i),c(B,t_j)\bigr].
  \qedhere
\end{split}
\] 
\end{proof}

\begin{proof}[Proof of Theorem~\ref{thmminrel}]
We write the second sum in~\eqref{minrel} using the inclusion-exclusion formula:
\[
  \sum_{\begin{smallmatrix}
J\setminus\{i,j\}= A\sqcup B:\\
\max(A)>i,~\max(B)>j
\end{smallmatrix}}\!\!=\!\!\sum_{J\setminus\{i,j\}=A\sqcup B}\!-\!\sum_{\begin{smallmatrix}
J\setminus\{i,j\}=A\sqcup B:\\
\max(B)<j
\end{smallmatrix}}\!-\!\sum_{\begin{smallmatrix}
J\setminus\{i,j\}=A\sqcup B:\\
\max(A)<i
\end{smallmatrix}}\!+\!\sum_{\begin{smallmatrix}
J\setminus\{i,j\}=A\sqcup B:\\
\max(A)<i,~\max(B)<j
  \end{smallmatrix}}
\]
where the summands are $(-1)^{\theta(A,B)}[c(A,t_i),c(B,t_j)]$ and are omitted. The first sum on the right hand side equals $(-1)^{|J|}c(J\setminus\{i,j\},[t_i,t_j])$ by Lemma~\ref{lmm:c(A,[x,y])}, which is zero as $\{i<j\}\in\K$ in~\eqref{minrel}. The fourth sum is empty unless $j=\max(J)$, in which case the sum on the left hand side is empty. Hence, the fourth sum above can be omitted. We denote $n=\max(J)$ and plug in the expressions for the second and third sum from Lemma~\ref{lmm:cJx2} to obtain
\[\begin{split}
  \sum_{\{i<j\}\in\K_J}&\lambda_{ij}
  \sum_{\begin{smallmatrix}J\setminus\{i,j\}=A\sqcup B:\\
  \max(A)>i,~\max(B)>j\end{smallmatrix}}(-1)^{\theta(A,B)}\bigl[c(A,t_i),c(B,t_j)\bigr]\\
  &=\sum_{\{i<j\}\in\K_J,~j\ne n} \lambda_{ij} 
  \Bigl((-1)^{|J_{<j}|}c(J\setminus i,t_i)
  -(-1)^{|J_{<i}|}c(J\setminus j,t_j)\Bigr)\\
  & =\sum_{i\in J\setminus\{n\}}
  \Bigl(\sum_{j\colon \{i<j\}\in\K_J,~j\neq n}\lambda_{ij}
  -\sum_{j\colon \{j<i\}\in\K_J}\lambda_{ji}\Bigr)(-1)^{|J_{<j}|}c(J\setminus i,t_i)\\
  & =\sum_{i\in J\setminus\{n\}}
  \Bigl(\sum_{j\colon \{i<j\}\in\K_J}\lambda_{ij}
  -\sum_{j\colon \{j<i\}\in\K_J}\lambda_{ji}\Bigr)(-1)^{|J_{<j}|}c(J\setminus i,t_i)\\
  & \hspace{0.1\textwidth} 
  -\sum_{i\in J\setminus\{n\},~\{i<n\}\in\K_J}
  \lambda_{in}(-1)^{|J_{<n}|}c(J\setminus i,t_i).
\end{split}
\] 
The last sum is zero since $c(J\setminus i,t_i)=c(J\setminus\{i,n\},[t_{n},t_i])=0$ whenever $\{i<n\}\in\K$. The sum in brackets is also equal to zero, since $\sum_{\{i<j\}\in\K_J}\lambda_{ij}\{i,j\}$ is a simplicial cycle. Relation~\eqref{minrel} follows.
\end{proof}

\begin{exm}
Let $\K$ be a $5$-cycle (the boundary of a pentagon). The generator of $H_1(\K;\ZZ)$ is represented by the cycle
\[
  \kappa=\{1,2\}+\{2,3\}+\{3,4\}+\{4,5\}-\{1,5\}.
\]

For $\{i,j\}=\{4,5\}$ or $\{1,5\}$, the second sum in~\eqref{minrel} is empty, as $\max(B)>5$ cannot be satisfied.

For $\{i,j\}=\{3,4\}$, we must have $\max(B)=5$. Then $c(B,t_j)=0$ as $[t_5,t_4]=0$, and the second sum in~~\eqref{minrel} is zero.

For $\{i,j\}=\{2,3\}$, the only partitions of $[5]\setminus\{i,j\}=\{1,4,5\}$ giving nonzero summands are
$\{1,4,5\}=\{4\}\sqcup\{1,5\}$ and $\{1,4,5\}=\{1,4\}\sqcup\{5\}$. (For other partitions, at least one of the commutators $c(A,t_i)$ and $c(B,t_j)$ is zero. For instance, for $\{5\}\sqcup\{1,4\}$ we have $c(B,t_j)=c(\{1,4\},t_3)=0$.) 

For $\{i,j\}=\{1,2\}$, the only partitions giving nonzero summands are
$\{3,4,5\}=\{3\}\sqcup\{4,5\}$, $\{3,4,5\}=\{4\}\sqcup\{3,5\}$ and $\{3,4,5\}=\{3,4\}\sqcup\{5\}$.

The resulting relation~\eqref{minrel} has the form
\begin{multline*}
  (-1)^{\theta(\{3\},\{4,5\})}\bigl[c\bigl(\{3\},t_1\bigr),
  c\bigl(\{4,5\},t_2\bigr)\bigr]
  +(-1)^{\theta(\{4\},\{3,5\})}\bigl[c\bigl(\{4\},t_1\bigr),
  c\bigl(\{3,5\},t_2\bigr)\bigr]\\
  +(-1)^{\theta(\{3,4\},\{5\})}\bigl[c\bigl(\{3,4\},t_1\bigr),
  c\bigl(\{5\},t_2\bigr)\bigr]
  +(-1)^{\theta(\{4\},\{1,5\})}\bigl[c\bigl(\{4\},t_2\bigr),
  c\bigl(\{1,5\},t_3\bigr)\bigr]\\
  +(-1)^{\theta(\{1,4\},\{5\})}\bigl[c\bigl(\{1,4\},t_2\bigr),
  c\bigl(\{5\},t_3\bigr)\bigr]=0.
\end{multline*}
Here all commutators $c(A,t_i)$ and $c(B,t_j)$ are GPTW elements, with the exception of $c(\{1,4\},t_2)=[t_1,[t_4,t_2]]=-[t_2,[t_4,t_1]]$, where the latter is a GPTW element. Calculating the signs, we obtain the following single relation on the $10$ GPTW elements generating $\pi_*(\mathcal Z_\mathcal K)$:
\begin{multline*}
  \bigl[[t_3,t_1],[t_4,[t_5,t_2]]\bigr]
  -\bigl[[t_4,t_1],[t_3,[t_5,t_2]]\bigr]
  +\bigl[[t_3,[t_4,t_1]],[t_5,t_2]\bigr]\\
  -\bigl[[t_4,t_2],[t_1,[t_5,t_3]]\bigr]
  -\bigl[[t_2,[t_4,t_1]],[t_5,t_3]]\bigr]=0.
\end{multline*}
This implies that $\mathcal Z_\mathcal K$ is obtained from the wedge of $10$ spheres $\bigvee_{x\in GTPW} S^{|x|}=(S^3\vee S^4)^{\vee 5}$ corresponding to GTPW elements by attaching a single $7$-cell by the relation above. The result is a connected sum of products of spheres, $\mathcal Z_\mathcal K\cong (S^3\times S^4)^{\#5}$, in accordance with the description of~\cite{MacGavran}.
See~\cite[\S5.4]{vylegzhanin} for the discussion of the relation in $\pi_*(\mathcal Z_{\mathcal K})$ in the case when $\K$ is an $m$-cycle. 
\end{exm}

\section{A criterion for when $\pi_{\ast}(\DJ(\K))$ is generated by iterated Whitehead products}
\label{sec:homotopy groups of dj}

We consider the map given by the iterated Whitehead products corresponding to GPTW elements and its lift to $\ZK$:
\[
  g_\K\colon\bigvee_{x\in GPTW}S^{|x|}\to \DJ(\K),\qquad
  \widehat g_\K\colon \bigvee_{x\in GPTW}S^{|x|}\to\ZK.
\]  
Define the homomorphism
\begin{equation}\label{varPhi}
  \varPhi\colon \ZZ\langle t_1,\ldots,t_m\rangle \oplus \pi_*\Bigl(\bigvee_{x\in GPTW}S^{|x|}\Bigr)\to \pi_*(\DJ(\K))
\end{equation}
by mapping each $t_i$ to the corresponding canonical element of $\pi_2(\DJ(\K))$ and using the map induced by $g_\K$ on the wedge of spheres. 

We start with the following equivalent description of the $\Pi$-subalgebra $S(\K)$ of $\pi_*(\DJ(\K))$ featuring in  Problem~\ref{prb:main2}. 

\begin{prp}\label{SKequiv}
The following subgroups of $\pi_*(\DJ(\K))$ coincide:
\begin{itemize}
\item[(a)]
the subgroup $S(\K)$ generated by all elements of the form $w\circ \alpha$, where $w\in \pi_*(\DJ(\K))$ is an iterated commutator of $t_1,\dots,t_m$ and $\alpha\in\pi_*(S^{|w|})$; 

\item[(b)]
the image of $\pi_*((S^2)^{\vee m})$ under the canonical inclusion $(S^2)^{\vee m}\to\DJ(\K)$;

\item[(c)]
the image of the map $\varPhi$ given by~\eqref{varPhi}.
\end{itemize}
\end{prp}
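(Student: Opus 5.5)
We will prove the three inclusions (a)$\subset$(b)$\subset$(a) and (c)$\subset$(a)$\subset$(c), writing $\iota\colon(S^2)^{\vee m}\to\DJ(\K)$ for the canonical inclusion and $s_1,\dots,s_m\in\pi_2((S^2)^{\vee m})$ for the inclusions of the wedge summands, so that $\iota_*(s_i)=t_i$.

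\emph{Step 1: (a) equals (b).} Whitehead products and compositions are natural: $\iota_*[x,y]=[\iota_*x,\iota_*y]$ and $\iota_*(x\circ\alpha)=(\iota_*x)\circ\alpha$. Hence every generator $w\circ\alpha$ of $S(\K)$, with $w$ an iterated commutator of the $t_i$, is $\iota_*(w(\underline s)\circ\alpha)$. Since $\iota_*$ is a homomorphism, this gives (a)$\subset$(b). Conversely, by the Hilton--Milnor theorem, in the form~\eqref{piwesp}, $\pi_*((S^2)^{\vee m})$ is additively generated by the elements $w_b(\underline s)\circ\alpha$ with $b\in\BC(s_1,\dots,s_m)$ and $\alpha\in\pi_*(S^{|b|})$. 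Applying $\iota_*$ sends these to $w_b(\underline t)\circ\alpha$, so (b)$\subset$(a).

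\emph{Step 2: (c)$\subset$(a).} The elements $t_i$ lie in $S(\K)$, since $t_i=t_i\circ\iota_2$. The wedge $\bigvee_{x\in GPTW}S^{|x|}$ is a wedge of simply connected spheres, so by Hilton--Milnor its homotopy is generated by $w_b\circ\alpha$, where $b$ runs over basic commutators in the GPTW summands. The map $g_\K$ sends each such generator to $w_b(GPTW)\circ\alpha$, and $w_b(GPTW)$ is itself an iterated Whitehead product of the $t_i$, because GPTW elements are. So the image lies in $S(\K)$.

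\emph{Step 3: (a)$\subset$(c).} This is the main step. Let $w$ be an iterated commutator with $|w|>2$; otherwise $w=t_i$ and $t_i\circ\alpha$ is a multiple of $t_i$, since $\pi_2(S^2)=\ZZ$. Theorem~\ref{thm:iterated whitehead product to GPTW} gives $w=\sum_{k=0}^3 p_k\circ\eta^k$, where each $p_k$ is a Lie polynomial in GPTW elements. Such a polynomial is $g_{\K*}$ of an element $\widetilde p_k$ of $\pi_*(\bigvee S^{|x|})$. The subtle point is that $(-)\circ\alpha$ is not additive in general, so we must check that $w\circ\alpha$ still lands in $\Img\varPhi$.

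Since $|w|\ge3$, write $w=g_{\K*}(\widetilde w)$ with
\[
  \widetilde w=\sum_k\widetilde p_k\circ\eta^k\in\pi_{|w|}\Bigl(\bigvee S^{|x|}\Bigr).
\]
This lift can be chosen directly, because composition with the suspension element $\eta^k$ (in degrees $\ge3$) commutes with $g_{\K*}$. By naturality,
\[
  w\circ\alpha=g_{\K*}(\widetilde w)\circ\alpha=g_{\K*}(\widetilde w\circ\alpha),
\]
which lies in $\Img\varPhi$. The point is that we never need $\alpha$ to distribute over the sum: we only need a single representative $\widetilde w$ with $g_\K\circ\widetilde w\simeq w$, which is an element-level statement. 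The care needed lies in checking that each term in Theorem~\ref{thm:iterated whitehead product to GPTW} is realised by an honest map in the wedge, and this holds because every operation used there is a Whitehead product or a composition.

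Combining Steps 1--3, the three subgroups (a), (b) and (c) coincide.
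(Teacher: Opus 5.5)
Your route is the same as the paper's. You get (a)$=$(b) from Hilton--Milnor and naturality. You get $\Img\varPhi\subset S(\K)$ because GPTW elements are iterated Whitehead products of the $t_i$. For $|w|>2$ you lift $w=\sum_k p_k\circ\eta^k$ to $\widetilde w=\sum_k\widetilde p_k\circ\eta^k$ and write $w\circ\alpha=g_\K\circ(\widetilde w\circ\alpha)$, exactly as the paper does.

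One correction to the justification of the lift: it rests on associativity of composition and on left distributivity $g_\K\circ(\beta+\beta')=g_\K\circ\beta+g_\K\circ\beta'$ (Proposition~\ref{prp:composition_properties}(3)). The fact that $\eta$ is a suspension plays no role there.

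There is a gap in the case $|w|=2$, i.e.\ $w=t_i$. Here $\alpha$ ranges over all of $\pi_*(S^2)$, not only $\pi_2(S^2)$. Your remark that $t_i\circ\alpha$ is a multiple of $t_i$ covers only $\alpha=k\iota_2$. Elements such as $t_i\circ\eta_2\in\pi_3(\DJ(\K))$ are left unhandled, and $\pi_3(\DJ(\K))\cong\pi_3(\ZK)$ can be nonzero. Your Step~3 argument cannot absorb them either: $\bigvee_{x\in GPTW}S^{|x|}$ is $2$-connected, so $t_i$ is not in the image of $g_{\K*}$.

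The missing observation, which the paper uses, is that $t_i$ factors as $S^2\to\CC P^\infty\to\DJ(\K)$. Since $\pi_n(\CC P^\infty)=0$ for $n\ge 3$, this gives $t_i\circ\alpha=0$ whenever $|\alpha|>2$. Adding this line completes your proof.
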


\begin{proof}
(a) coincides with (b) by the Hilton--Milnor Theorem, see~\eqref{piwesp}.

\smallskip

We prove that (a) coincides with (c). Since each $x\in GPTW$ is an iterated Whitehead product of $t_1,\dots,t_m$, it is clear that $\Img \varPhi\subset S(\K)$. We now prove the opposite inclusion. 

Take $w\circ \alpha\in S(\K)$, where $w$ is an iterated Whitehead product of $t_1,\dots,t_m$ and $\alpha\in\pi_*(S^{|w|})$. If $|w|=2$, then $w$ is a linear combination of $t_1,\ldots,t_m$. Since $t_i\circ\alpha=0$ for $|\alpha|>2$ and $t_i\circ\id =t_i$, we obtain that $w$ is in the image of $\varPhi$.

So we assume that $|w|>2$ and use Theorem \ref{thm:iterated whitehead product to GPTW} to write $w=\sum_{i=0}^3 p_i\circ \eta^i$, where each $p_i\in\pi_*(\DJ(\K))$ is a Lie polynomial in GPTW elements. By definition, $p_i=g_\K\circ\overline{p}_i$ for some $\overline{p}_i\in\pi_*\bigl(\bigvee_{x\in GPTW}S^{|x|}\bigr)$. By part (3) of Proposition \ref{prp:composition_properties}, it follows that
\[
  w\circ\alpha
  =\bigl(\sum_i g_\K\circ \overline{p}_i\circ\eta^i\bigr)\circ \alpha
  = g_\K\circ \bigl(\sum_i \overline{p}_i\circ\eta^i\bigr)\circ\alpha,
\]  
so that $w\circ\alpha\in\Img \varPhi$, as claimed.
\end{proof}

A simplicial complex $K$ is \emph{flag} if any set of vertices of $\K$ which are pairwise connected by edges spans a simplex. Any flag complex $\K$ is the clique complex of its 1-skeleton (graph) $\K^1$. The \emph{flagification} of $\K$ is the minimal flag complex $\K^{f}$ on the same vertex set $[m]$ that contains $\K$. 

A graph is called \emph{chordal} if each of its cycles with $\ge 4$ vertices has a chord (an edge joining two vertices that are not adjacent in the cycle). Equivalently, a graph is chordal if it has no induced cycles of length more
than three. 

\begin{prp}
\label{prp:chordal+flag=homotopy type of ZK}
Let $\K$ be a flag complex with chordal $1$-skeleton. Then the map
$\widehat g_\K\colon \bigvee_{x\in GPTW}S^{|x|} \to \ZK$ given by the iterated Whitehead products is a homotopy equivalence.
\end{prp}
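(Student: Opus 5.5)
The plan is to use Whitehead's theorem: both spaces are simply connected CW complexes, so it suffices to show that $\widehat g_\K$ induces an isomorphism in integral homology.

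First, the homology of $\ZK$ is known from the Hochster-type decomposition $H_*(\ZK)\cong\bigoplus_{J\subset[m]}\widetilde H_{*-|J|-1}(\K_J)$. For a flag complex with chordal $1$-skeleton, every full subcomplex $\K_J$ is again flag with chordal $1$-skeleton. Such a complex is a disjoint union of contractible complexes: each connected component is the clique complex of a connected chordal graph, and it collapses by repeatedly removing simplicial vertices. Therefore $\widetilde H_*(\K_J)$ is concentrated in degree $0$ with rank $\widetilde b_0(\K_J)$, and $H_n(\ZK)$ is free abelian of rank $\sum_{|J|=n-1}\widetilde b_0(\K_J)$. This matches the homology of $\bigvee_{x\in GPTW}S^{|x|}$, since there are exactly $\widetilde b_0(\K_J)$ GPTW elements of degree $|J|+1$ for each $J$. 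So the homology groups are abstractly isomorphic and free, and it remains to show that $(\widehat g_\K)_*$ is surjective, or equivalently that the Hurewicz images of the GPTW lifts form a basis.

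Second, I would pass to loop spaces. By Theorem~\ref{gptwvylegzhanin}(2), $H_*(\Omega\ZK;\k)$ is generated as an algebra by the adjoints of the GPTW elements, and this holds for every field $\k$. Consider the loop map $\Omega\widehat g_\K\colon\Omega\bigl(\bigvee S^{|x|}\bigr)\to\Omega\ZK$. Its homology is the algebra map $T(\text{GPTW adjoints})\to H_*(\Omega\ZK;\k)$, which is therefore surjective for every field $\k$. Both sides are of finite type, and the Poincaré series agree: the left side has series $1/(1-\sum_x t^{|x|-1})$. For the right side, Theorem~\ref{gptwvylegzhanin}(1) together with the loop splitting $\Omega\DJ(\K)\simeq\Omega\ZK\times T^m$ (Proposition~\ref{loopsec}) shows that the series of $H_*(\Omega\ZK)$ is the known Hilbert series of $T(u)/(\ldots)$ divided by $(1+t)^m$. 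For chordal $\K^1$ this quotient equals $1/(1-\sum_x t^{|x|-1})$; equivalently, the relations \eqref{minrel} are absent because $H_1(\K_J)=0$ for all $J$. A surjection between finite-type graded vector spaces with equal dimensions is an isomorphism, so $\Omega\widehat g_\K$ is a homology isomorphism over every field, hence over $\ZZ$ because both sides are free of finite type. Since the spaces are simply connected, $\widehat g_\K$ itself is then a homology isomorphism, for instance by comparing Serre spectral sequences of the path fibrations or by applying Whitehead's theorem to $\Omega\widehat g_\K$ and delooping. Whitehead's theorem then finishes the proof.

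The main obstacle is the Poincaré series identity. Rather than compute it directly, I would argue that the quadratic algebra $T(u)/(u_i^2,\,u_iu_j+u_ju_i)$ has series $\prod(1+t)^m\cdot 1/(1-\sum_J\widetilde b_0(\K_J)t^{|J|})$ exactly when $\K^1$ is chordal, using the minimal presentation of $H_*(\Omega\ZK)$ from \cite{vylegzhanin}: the generators are the GPTW adjoints and the relations are indexed by $H_1(\K_J)$, which vanish here. This shows that $H_*(\Omega\ZK;\k)$ is free on the GPTW adjoints. If that citation is not directly usable, I would fall back on induction over a perfect elimination ordering, gluing $\K=\K_1\cup_\sigma\K_2$ along a simplex and using the corresponding pushout decomposition of $\ZK$.
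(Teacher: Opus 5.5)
Your argument is correct, but it follows a different route from the paper's.

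\textbf{The paper's route.} The paper works entirely in $H_*(\ZK)$. It uses the cellular formula of \cite[Lemma~3.2]{abramyan_panov} for the Hurewicz image of a GPTW lift, namely $S_{i_1}\cdots S_{i_{k-1}}(S_{i_k}D_j+D_{i_k}S_j)$. It then checks that, for fixed $J$, these classes span the summand $\widetilde H^0(\K_J)$ of $H_{|J|+1}(\ZK)$ in the Hochster decomposition. So $\widehat g_\K$ is a homology isomorphism, and Whitehead's theorem applies.

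\textbf{Your route.} You never identify Hurewicz images in $\ZK$. Instead you detect $\widehat g_\K$ after looping. There the source becomes a tensor algebra, and Theorem~\ref{gptwvylegzhanin}(2) gives surjectivity. So everything reduces to showing there are no relations among the GPTW adjoints in $H_*(\Omega\ZK;\k)$.

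\textbf{What each buys.} If you obtain freeness from the minimal presentation in \cite{vylegzhanin}, you are importing what is essentially the loop-homology version of the statement. In that case your Hochster computation and the Poincar\'e series comparison become redundant. The paper's argument is more elementary by comparison: it needs only the chain-level Hurewicz formula and Hochster's decomposition.

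\textbf{The flagged obstacle is elementary.} The step you called the main obstacle has a short proof, which makes your route self-contained modulo parts (1) and (2) of Theorem~\ref{gptwvylegzhanin}. For flag $\K$ the face ring $\k[\K]$ is Koszul (Fr\"oberg), and the algebra in Theorem~\ref{gptwvylegzhanin}(1) is exactly its quadratic dual. Hence $P(H_*(\Omega\DJ(\K)),t)\,P(\k[\K],-t)=1$. Dividing by $(1+t)^m$ gives
\[
  P(H_*(\Omega\ZK),t)^{-1}=\sum_{\sigma\in\K}(-t)^{|\sigma|}(1+t)^{m-|\sigma|}
  =\sum_{J\subset[m]}\Bigl(\sum_{\sigma\in\K_J}(-1)^{|\sigma|}\Bigr)t^{|J|}
  =1-\sum_{J\neq\varnothing}\widetilde b_0(\K_J)\,t^{|J|}.
\]
The last equality uses your own observation that each $\K_J$ is a disjoint union of contractible complexes. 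The right-hand side is precisely $1-\sum_{x\in GPTW}t^{|x|-1}$, so surjectivity plus equal series gives the isomorphism on loop homology.

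\textbf{A minor point.} Your claim that $H_*(\Omega\ZK;\ZZ)$ is free is not known in advance. It is also not needed: a map that is a homology isomorphism with all prime-field coefficients is an integral one, for example by looking at its homotopy cofibre.
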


\begin{proof}
This follows from~\cite[Theorem~3.8]{abramyan_panov}, as any flag complex with chordal $1$-skeleton is totally fillable. We include a proof that is more elementary in this case.

Take a GPTW element $x=c(I,t_j)=[t_{i_1},[t_{i_2},\dots[t_{i_k},t_j]\dots]]\colon S^{k+2}\to\ZK$, where $i_1<\cdots<i_k>j$, $j\notin I=\{i_1,\ldots,i_k\}$ and $j$ is the smallest vertex in a connected component not containing~$i_k$ of the subcomplex~$\mathcal K_{\{i_1,\ldots,i_k,j\}}$. By~\cite[Lemma~3.2]{abramyan_panov}, the Hurewicz image $h(x)\in H_{k+2}(\ZK)$ is represented by the cellular chain
\begin{equation}\label{cellchain}
  S_{i_1}\cdots S_{i_{k-1}}(S_{i_k}D_j+D_{i_k}S_j),
\end{equation}
where $S_i$ is the one-dimensional cell and $D_i$ is the two-dimensional cell in the standard cell decomposition of the $i$th factor $D^2$ in $\ZK=(D^2,S^1)^{\mathcal K}$.

For a flag complex $\mathcal K$ with chordal $1$-skeleton, there is an isomorphism 
\[
  H_{k+2}(\ZK)\cong\bigoplus_{J\subset[m],\,|J|=k+1}\widetilde H^0(\mathcal K_J),
\]
\sloppy
and the cellular chains~\eqref{cellchain} corresponding to different choices of $j\in J=\{i_1,\ldots,i_k,j\}$ generate the part of $H_{k+2}(\ZK)$ corresponding to $\widetilde H^0(\mathcal K_J)$ in the decomposition above. It follows that the map $\widehat g_\K\colon \bigvee_{x\in GPTW}S^{|x|} \to \ZK$
induces an isomorphism in homology groups, so it is a homotopy equivalence by Whitehead's Theorem, as both spaces are simply connected.
\end{proof}

\begin{lmm}
\label{lmm:pt_imply_surjectivity}
Let $\mathcal K^f$ be the flagification of $\K$, and let $\L$ be the simplicial complex consisting of $m$ disjoint points. Then the natural maps $\pi_*(\DJ(\L))\to\pi_*(\DJ(\Kf))$ and $\pi_*(\DJ(\K))\to\pi_*(\DJ(\Kf))$ are split surjections.
\end{lmm}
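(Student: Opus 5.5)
The plan is to build explicit right homotopy inverses, or at least maps inducing sections on $\pi_*$, using the fact that the maps in question are induced by simplicial inclusions $\L\subset\K\subset\Kf$ on the same vertex set. First I reduce the question to the map into $\DJ(\Kf)$. The composite $\DJ(\L)\to\DJ(\K)\to\DJ(\Kf)$ is the map induced by $\L\subset\Kf$. So if $\pi_*(\DJ(\L))\to\pi_*(\DJ(\Kf))$ is a split surjection with splitting $\sigma$, then $\pi_*(\DJ(\K))\to\pi_*(\DJ(\Kf))$ is also split surjective, with splitting given by the image of $\sigma$ in $\pi_*(\DJ(\K))$. It therefore suffices to treat $\K=\L$.

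For $\L\subset\Kf$ I use the fibration diagram of \eqref{eq:zk-dj fibration} for both complexes. The fibrations are compatible over $(\CC P^\infty)^m$, and the base map is the identity. Proposition~\ref{loopsec} gives splittings $\Omega\DJ(\K')\simeq\prod\Omega\CC P^\infty\times\Omega\Z_{\K'}$. These are compatible, because the section $\prod_i\Omega\CC P^\infty\to\Omega\DJ(\K')$ factors through $\Omega\DJ(\L)$ for every $\K'$. Hence the question reduces to showing that $\pi_*(\Z_\L)\to\pi_*(\Z_{\Kf})$ is split surjective.

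The splitting on fibres comes from Theorem~\ref{gptwvylegzhanin}(2), applied to the flag complex $\Kf$. It says that $H_*(\Omega\Z_{\Kf})$ is generated by the adjoints of the GPTW elements of $\Kf$. For splitting in homotopy I would rather use a known retraction-type statement: $\Omega\Z_{\Kf}$ is a retract of $\Omega\Z_\L$. The GPTW elements of $\Kf$ are the elements $c(J\setminus j,t_j)$ with $j\in\Theta_{\Kf}(J)\subset\Theta_\L(J)$, so each one lifts to the corresponding GPTW element of $\L$.

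The main obstacle is producing an honest homotopy retraction. I would try the standard trick for flag complexes. Because $\Kf$ is flag, the space $\DJ(\Kf)$ is a homotopy colimit of products of $\CC P^\infty$ over its simplices. After looping, \cite{vylegzhanin} (Theorem~1.2, which says $\Omega\Z_{\Kf}$ is a product of loops on spheres indexed by generators coming from GPTW elements) gives a decomposition $\Omega\Z_{\Kf}\simeq\prod\Omega S^{n}$. Each sphere factor maps into $\Omega\DJ(\Kf)$ through an iterated Whitehead product of the $t_i$, hence it lifts to $\Omega\DJ(\L)=\Omega(\CC P^\infty)^{\vee m}$. Multiplying these lifts with the loop multiplication gives a map $\Omega\Z_{\Kf}\to\Omega\DJ(\L)$ that is a right homotopy inverse of $\Omega\DJ(\L)\to\Omega\DJ(\Kf)$ restricted to fibres.

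Combined with the compatible sections on the base, this gives a right homotopy inverse of $\Omega\DJ(\L)\to\Omega\DJ(\Kf)$. Applying $\pi_{*-1}$ then yields the split surjection. The delicate points are two: that the product-of-loops decomposition of \cite{vylegzhanin} really consists of maps built from Whitehead products of the $t_i$, and that the lifted product composes to a self-equivalence rather than the identity. A self-equivalence is enough after precomposing with its inverse.
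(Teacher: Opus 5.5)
Your reduction to $\L$ is correct and is exactly the last step of the paper's proof. The map $\DJ(\L)\to\DJ(\Kf)$ factors through $\DJ(\K)$, so a section for $\L$, pushed into $\pi_*(\DJ(\K))$, is a section for $\K$. The passage to fibres via Proposition~\ref{loopsec} is also fine, though not needed.

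\textbf{The gap.} It lies in the one substantive step, the right homotopy inverse for $\Omega\Z_\L\to\Omega\Z_{\Kf}$. That step assumes the factor maps $S^{n_k}\to\Z_{\Kf}$ of a decomposition $\Omega\Z_{\Kf}\simeq\prod_k\Omega S^{n_k}$ are iterated Whitehead products of the $t_i$, or at least lie in the image of $\pi_*(\Z_\L)$. Nothing you cite gives this.
\begin{itemize}
\item The equivalence of \cite[Theorem~1.2]{vylegzhanin} is abstract and says nothing about which homotopy classes the factors represent.
\item Theorem~\ref{gptwvylegzhanin}(2) is a statement about $H_*(\Omega\Z_{\Kf};\ZZ)$. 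Homotopy classes are not determined by their Hurewicz images.
\item Spherical classes need not be integral Lie polynomials in the GPTW adjoints. Already in $\Omega S^4\simeq S^3\times\Omega S^7$ the second factor is $\Omega\nu_4$, whose Hurewicz image is $\pm u^2$ with $2u^2=[u,u]$. So the most you can hope for is that the factors are sums of compositions $w\circ\alpha$.
\end{itemize}
Showing that such classes of $\Z_{\Kf}$ lift to $\Z_\L$ is precisely the content of the lemma. Worse, the paper deduces that $\pi_*(\DJ(\Kf))$ is generated by elements $w\circ\alpha$ (Theorem~\ref{thm:flagness is surjectivity}) \emph{from} this lemma. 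Your key claim therefore cannot be justified from the paper's own results without circularity.

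\textbf{The missing ingredient.} It is the ``retraction-type statement'' you mention in passing and then set aside. By the theorem of Panov and Theriault \cite[Theorem~1.1(b)]{pt}, the map $\Omega\DJ(\L)\to\Omega\DJ(\Kf)$ induced by $\L\subset\Kf$ has a right homotopy inverse. The paper's proof simply cites this. Applying $\pi_{*-1}$ gives a section of $\pi_*(\DJ(\L))\to\pi_*(\DJ(\Kf))$, and your factorisation argument then handles $\K$. If you want a self-contained proof, it is that theorem you would have to reprove; lifting generators of an abstract product decomposition will not do it.

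Two smaller remarks. Your second ``delicate point'' is harmless: a self-equivalence can be inverted, as you say. The remark that $\DJ(\Kf)$ is a homotopy colimit of products holds for every complex and plays no role.
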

\begin{proof}
By \cite[Theorem 1.1(b)]{pt}, the natural map $\Omega\DJ(\L)\to\Omega\DJ(\Kf)$ admits a homotopy section. It follows that the map $\pi_*(\DJ(\L))\to\pi_*(\DJ(\Kf))$ admits a section, and hence it is split surjective. Since it factors through $\pi_*(\DJ(\K))$, the map $\pi_*(\DJ(\K))\to\pi_*(\DJ(\Kf))$ is also split surjective.
\end{proof}

Proposition~\ref{prp:chordal+flag=homotopy type of ZK} implies that the map $\varPhi$ in~\eqref{varPhi} is an isomorphism if $\K$ is a flag complex with chordal $1$-skeleton. The main result of this section is the following.

\begin{thm}
\label{thm:Phi inj surj criterion}
Consider the map 
\[
  \varPhi\colon \ZZ\langle t_1,\ldots,t_m\rangle \oplus \pi_*\Bigl(\bigvee_{x\in GPTW}S^{|x|}\Bigr)\to \pi_*(\DJ(\K))
\]
given by iterated Whitehead products corresponding to GPTW elements. Then
\begin{enumerate}
    \item $\varPhi$ is surjective if and only if $\K$ is flag;
    \item $\varPhi$ is injective if and only if $\K^1$ is chordal.
\end{enumerate}
\end{thm}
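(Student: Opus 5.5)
For part (1) I would work through the characterisation $\Img\varPhi=S(\K)$ from Proposition~\ref{SKequiv}, i.e.\ the image of $\pi_*((S^2)^{\vee m})\to\pi_*(\DJ(\K))$.

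\emph{If $\K$ is flag:} apply Lemma~\ref{lmm:pt_imply_surjectivity} with $\K=\Kf$. It says that $\pi_*(\DJ(\L))\to\pi_*(\DJ(\K))$ is surjective. By Proposition~\ref{SKequiv} for $\L$, the group $\pi_*(\DJ(\L))=\pi_*((\CC P^\infty)^{\vee m})$ is generated by the $t_i$ together with the compositions $w\circ\alpha$ for $w$ an iterated Whitehead product of the $t_i$. Here I use that Proposition~\ref{prp:chordal+flag=homotopy type of ZK} applies to $\L$, whose $1$-skeleton is trivially chordal, so $\varPhi_\L$ is an isomorphism. The map $\DJ(\L)\to\DJ(\K)$ sends $t_i\mapsto t_i$ and commutes with Whitehead products and compositions. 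Hence its image lies in $S(\K)=\Img\varPhi$, and surjectivity follows.

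\emph{Conversely, if $\K$ is not flag:} take a minimal missing face $J$ with $|J|\ge 3$, and pass to the full subcomplex $\K_J=\partial\Delta^{J}$. Then $\DJ(\K_J)$ is a retract of $\DJ(\K)$, and the retraction is compatible with the $t_i$ and with $\varPhi$; it sends $t_i\mapsto 0$ for $i\notin J$. So surjectivity for $\K$ would imply surjectivity for $\K_J$. For $\partial\Delta^J$ there are no GPTW elements, since every full subcomplex is connected. Thus $\Img\varPhi=\ZZ\langle t_i\rangle$, while $\pi_{2|J|-1}(\DJ(\K_J))\supset\pi_{2|J|-1}(\mathcal Z_{\K_J})=\pi_{2|J|-1}(S^{2|J|-1})=\ZZ\ne0$, because $\mathcal Z_{\partial\Delta^J}\simeq S^{2|J|-1}$. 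This gives non-surjectivity.

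For part (2), \emph{if $\K^1$ is chordal:} $\varPhi$ factors through $\pi_*(\DJ(\K))\to\pi_*(\DJ(\Kf))$. The two complexes have the same $1$-skeleton, hence the same GPTW set, since $\Theta$ depends only on connected components. For $\Kf$, which is flag with chordal $1$-skeleton, Proposition~\ref{prp:chordal+flag=homotopy type of ZK} together with~\eqref{eq:pi dj = pi bt + pi zk} shows $\varPhi_{\Kf}$ is an isomorphism. Since $\varPhi_{\Kf}$ is the composite of $\varPhi_\K$ with this map, $\varPhi_\K$ is injective.

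\emph{Conversely, if $\K^1$ is not chordal:} choose an induced chordless cycle on a vertex set $J$ with $|J|\ge4$. Again restrict to the full subcomplex $\K_J$, which is this cycle (possibly with no triangles; it is a cycle graph, hence flag). Naturality of the retraction $\DJ(\K)\to\DJ(\K_J)$ together with the inclusion of GPTW elements for $J$-subsets shows that a kernel element for $\K_J$ gives one for $\K$. This needs the restriction of $\varPhi$ to the GPTW summands indexed by subsets of $J$ to be compatible with the retraction. For the cycle, Theorem~\ref{thmminrel} with $\kappa$ the fundamental cycle gives a nontrivial relation among Whitehead products of GPTW elements: a quadratic Lie polynomial $r_\kappa$ that vanishes in $\pi_*(\DJ(\K_J))$. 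It is nonzero in $\pi_*(\bigvee S^{|x|})$, because by Hilton--Milnor the brackets of distinct wedge generators are independent.

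The main obstacle is this last step. The relation~\eqref{minrel} is written in terms of $c(A,t_i)$, which are not all GPTW elements, so one must show that, after rewriting via Case~1 of Theorem~\ref{thm:iterated whitehead product to GPTW}, the element of $\pi_*(\bigvee S^{|x|})$ is nonzero. I would argue this by showing that its image in the free Lie ring on GPTW generators is nonzero modulo $2$. Alternatively, I would cite the minimality in~\cite[Theorem~1.1]{vylegzhanin}, which says $r_\kappa\neq0$ in the free algebra on the GPTW adjoints, and use the Hurewicz map to $H_*(\Omega\bigvee S^{|x|})$ to detect it.
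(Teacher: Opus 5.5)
Your proofs of both directions of (1) and of the ``if'' direction of (2) are essentially the paper's. For non-surjectivity the paper observes that the retraction onto $\DJ(\K_J)$ kills $g_\K$, and you observe that $\partial\Delta^J$ has no GPTW elements; this is the same point.

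The gap is in the ``only if'' direction of (2). The whole content there is the claim that your lift $\widetilde r_\kappa\in\pi_*(\bigvee_{x\in GPTW}S^{|x|})$ of relation~\eqref{minrel} is nonzero, and the proposal does not establish it. Hilton--Milnor only tells you that brackets of distinct wedge generators are independent. But $r_\kappa$ is quadratic in the $c(A,t_i)$, and these are in general Lie polynomials in GPTW elements, not single generators. After expanding, you must rule out cancellation, and your mod~$2$ free-Lie-ring argument is exactly this computation, which is not carried out.

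The citation route can be made to work, but it needs more than you state. You must know that $H_*(\Omega\bigvee S^{|x|})\to H_*(\Omega\mathcal Z_{\K_J})$ is injective below degree $|J|$. Then the lifts of the $c(A,t_i)$, which live in lower degrees, are forced, and the Hurewicz image of $\widetilde r_\kappa$ agrees up to sign with the relator of \cite[Theorem~1.1]{vylegzhanin}. Only then does minimality of that presentation give $\widetilde r_\kappa\neq0$. As written, this step is deferred rather than proved.

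The paper sidesteps all of this with a non-constructive argument that uses only what you already have. A chordless cycle $\K_J$ with $|J|\ge4$ is flag, so $\varPhi_{\K_J}$ is surjective by part~(1). If it were also injective, $\widehat g_{\K_J}$ would induce isomorphisms on all homotopy groups, hence be a homotopy equivalence by Whitehead's theorem. But $\mathcal Z_{\K_J}$ is a connected sum of sphere products~\cite{MacGavran}, not a wedge of spheres. Concretely, $H_{|J|+2}(\mathcal Z_{\K_J})\cong\ZZ$, while every GPTW sphere for a $|J|$-cycle has dimension at most $|J|-1$.

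Your route, if completed, would exhibit an explicit element of $\ker\varPhi$, which is more information. For the statement itself, though, the paper's argument is much shorter and needs no control over how the $c(A,t_i)$ expand in GPTW elements.
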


\begin{proof}
\emph{``If'' part of (1).} Assume that $\K$ is flag. Consider the diagram
\[
  \xymatrix{
  S(\L)
  \ar[r]
  \ar[d]^-{\cong}
  &
  S(\K)
  \ar[d]
  \\
  \pi_*(\DJ(\L))
  \ar@{->>}[r]
  &
  \pi_*(\DJ(\K)).
  }
\] 
It commutes since the definition of the subgroup $S(\K)\subset\pi_{\ast}(DJ(\K))$ is natural for simplicial maps.
The left arrow is an isomorphism by Proposition \ref{prp:chordal+flag=homotopy type of ZK}, and the bottom arrow is surjective by Lemma \ref{lmm:pt_imply_surjectivity}. The commutativity of the diagram implies that the right arrow is surjective. Since $S(\K)=\Img \varPhi$ by Proposition~\ref{SKequiv}, the map $\varPhi$ is surjective.

\smallskip

\emph{``Only if'' part of (1).} Assume that $\K$ is not flag; we are to prove that $\varPhi$ is not surjective. Let $J\subset[m]$ be a missing face with $|J|>2$. Then there is a retraction $r\colon\DJ(\K)\to \DJ(\K_J)$, where $\DJ(\K_J)$ is a fat wedge of $|J|$ copies of~$\mathbb C P^\infty$ and $\pi_{2|J|-1}(\DJ(\K))$ has a direct summand $\pi_{2|J|-1}(\DJ(\K_J))\cong\ZZ$ generated by the higher Whitehead product of $t_j$, $j\in J$. On the other hand, the ordinary Whitehead products of $t_j$ vanish in $\pi_*(\DJ(\K_J))$, so the composite map $r\circ g_\K\colon\bigvee_{x\in GPTW}S^{|x|}\to\DJ(\K_J)$ is null homotopic. It follows that $\varPhi$ does not hit the generator of $\pi_{2|J|-1}(\DJ(\K_J))$, so $\varPhi$ is not surjective.

\smallskip

\emph{``If'' part of (2).} 
Assume that $\K^1$ is a chordal graph. The composite map
\begin{equation}\label{injcomp}
  \ZZ^m\oplus\pi_*\Bigl(\bigvee_{x\in GPTW}S^{|x|}\Bigr)
  \xrightarrow{\varPhi} S(\K)\to S(\Kf)\to \pi_*(\DJ(\Kf))
\end{equation}
is an isomorphism by Proposition~\ref{prp:chordal+flag=homotopy type of ZK} (applied to~$\Kf$). It follows that $\varPhi$ is injective.

\smallskip

\emph{``Only if'' part of (2).}
We show that $\varPhi$ is not injective if $\K^1$ is not chordal. Suppose there exists a chordless cycle $\K_J$ with $|J|\ge4$ vertices. By naturality (and by the fact that $\DJ(\K_J)$ is a retract of $\DJ(\K)$), we can assume that $\K=\K_J$. Then $\K$ is flag, so the map $\varPhi$ is surjective by the first statement. If $\varPhi$ is also injective, then $g\colon\bigvee_{x\in GPTW}S^{|x|}\to\ZK$ induces an isomorphism of homotopy groups, and hence is a homotopy equivalence by Whitehead's Theorem. But by~\cite{MacGavran}, $\ZK$ is homotopy equivalent to a connected sum of sphere products, a contradiction.
\end{proof}

\begin{rmk}
The injectivity of the map $\varPhi$ implies that in the chordal case
the GPTW elements in $\pi_*(\DJ(\K))$ are not subject to any algebraic relations, except for the universal relations that hold in the homotopy groups of any topological space. In particular, Theorem~\ref{thmminrel} does not produce nontrivial relations when $\K^1$ is chordal.
The reason is that the first homology groups $H_1(\K^f_J)$ vanish if the one-skeleton of $\K^f$ is a chordal graph (see e.\,g. \cite[Theorem 1.1]{higher-chordality}).
\end{rmk}

When $\K$ is flag, Theorem~\ref{thm:Phi inj surj criterion} together with Proposition~\ref{SKequiv} imply the following.

\begin{thm}
\label{thm:flagness is surjectivity}
Let $\K$ be a simplicial complex on $[m]$. The following conditions are equivalent:
\begin{enumerate}
\item[(a)] $\K$ is a flag complex;
\item[(b)] the inclusion $(S^2)^{\vee m}\to\DJ(\K)$ induces a surjection on homotopy groups;
\item[(c)] the group $\pi_*(\DJ(\K))$ is generated by elements of the form $x\circ\alpha$, where $x\in\pi_{|x|}(\DJ(\K))$ is an iterated Whitehead product of $t_1,\dots,t_m$, and $\alpha\in\pi_*(S^{|x|})$. 
\end{enumerate} 
In particular, $\K$ is a flag complex if and only if there is an isomorphism $S(\K)\cong\pi_{\ast}(\DJ(\K))$.\qed
\end{thm}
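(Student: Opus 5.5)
The plan is to assemble the equivalences from Proposition~\ref{SKequiv} and Theorem~\ref{thm:Phi inj surj criterion}(1), with no new homotopy-theoretic input needed. The first step is to note that conditions (b) and (c) each say that a particular subgroup of $\pi_*(\DJ(\K))$ is the whole group. Condition (c) says that the subgroup $S(\K)$ of Proposition~\ref{SKequiv}(a), generated by the elements $x\circ\alpha$, equals $\pi_*(\DJ(\K))$. Condition (b) says that the image of $\pi_*((S^2)^{\vee m})\to\pi_*(\DJ(\K))$ equals $\pi_*(\DJ(\K))$, and this image is subgroup (b) of Proposition~\ref{SKequiv}. Since Proposition~\ref{SKequiv} identifies both subgroups with $\Img\varPhi$, each of (b) and (c) is equivalent to surjectivity of $\varPhi$. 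This gives (b)$\Leftrightarrow$(c).

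Next, Theorem~\ref{thm:Phi inj surj criterion}(1) states that $\varPhi$ is surjective exactly when $\K$ is flag. This gives (a)$\Leftrightarrow$(b)$\Leftrightarrow$(c).

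For the final sentence, I read ``isomorphism $S(\K)\cong\pi_*(\DJ(\K))$'' as saying that the inclusion $S(\K)\subset\pi_*(\DJ(\K))$ is an equality, which is condition (c). Under that reading the sentence is just a restatement of (a)$\Leftrightarrow$(c).

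The only point needing care is this reading of the last sentence. An abstract isomorphism between $S(\K)$ and $\pi_*(\DJ(\K))$ that is not induced by the inclusion would not obviously force equality, so the statement should be interpreted as being about the inclusion. Everything else is bookkeeping: one just has to confirm that the subgroups in (b) and (c) are literally the ones appearing in Proposition~\ref{SKequiv}. For (c) this uses the observation in that proof that generators with $|x|=2$ are linear combinations of the $t_i$, so they lie in $\Img\varPhi$.
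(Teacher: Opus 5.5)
Your proposal is correct and matches the paper's proof: the paper gives no separate argument and derives the theorem directly from Proposition~\ref{SKequiv} (the subgroups in (b) and (c) both equal $\Img\varPhi$) together with Theorem~\ref{thm:Phi inj surj criterion}(1) ($\varPhi$ is surjective if and only if $\K$ is flag). Your reading of the final sentence is also the intended one: the equality of $S(\K)$ and $\pi_*(\DJ(\K))$ is via the inclusion, not an abstract isomorphism.
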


Theorem~\ref{thm:flagness is surjectivity} solves Problem~\ref{prb:main2} in the case of flag complexes. We now consider an approach to the case of a general simplicial complex by comparing it to its flagification.

\begin{prp}
\label{prp:S(K)->S(Kf) is surjective}
The natural map $S(\K)\to S(\Kf)$ is surjective, and is an isomorphism if $\K^1$ is chordal.
\end{prp}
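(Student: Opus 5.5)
Surjectivity follows from the commutative square of maps induced by the simplicial inclusions $\L\subset\K\subset\Kf$:
\[
  \xymatrix{
  S(\L)\ar[r]\ar[d] & S(\K)\ar[r] & S(\Kf)\ar@{=}[d]\\
  \pi_*(\DJ(\L))\ar@{->>}[rr] && \pi_*(\DJ(\Kf)).
  }
\]
Since $\L$ is flag with chordal (edgeless) $1$-skeleton, Proposition~\ref{prp:chordal+flag=homotopy type of ZK} together with Proposition~\ref{SKequiv} gives $S(\L)=\pi_*(\DJ(\L))$. Since $\Kf$ is flag, Theorem~\ref{thm:flagness is surjectivity} gives $S(\Kf)=\pi_*(\DJ(\Kf))$. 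By Lemma~\ref{lmm:pt_imply_surjectivity}, the bottom map $\pi_*(\DJ(\L))\to\pi_*(\DJ(\Kf))$ is surjective. Hence the composite $S(\L)\to S(\K)\to S(\Kf)$ is surjective, and so is $S(\K)\to S(\Kf)$. Naturality of $S(-)$ under simplicial maps holds because these maps send canonical generators to canonical generators and commute with Whitehead products and compositions.

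For the isomorphism when $\K^1$ is chordal, note first that $\K$ and $\Kf$ have the same $1$-skeleton. The GPTW elements depend only on the connected components of full subcomplexes, that is, only on the $1$-skeleton, so the indexing set $GPTW$ is the same for $\K$ and $\Kf$. The map $\DJ(\K)\to\DJ(\Kf)$ carries $g_\K$ to $g_{\Kf}$, so $\varPhi_{\Kf}=\pi_*(\DJ(\K)\to\DJ(\Kf))\circ\varPhi_\K$ on the common source $\ZZ^m\oplus\pi_*(\bigvee_{x\in GPTW}S^{|x|})$. By Proposition~\ref{SKequiv}(c), $S(\K)=\Img\varPhi_\K$. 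By Proposition~\ref{prp:chordal+flag=homotopy type of ZK} applied to $\Kf$, together with the splitting~\eqref{eq:pi dj = pi bt + pi zk}, $\varPhi_{\Kf}$ is an isomorphism onto $\pi_*(\DJ(\Kf))=S(\Kf)$; this is exactly the composite~\eqref{injcomp}.

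It follows that $\varPhi_\K$ is injective, hence an isomorphism onto $S(\K)$. Then $S(\K)\to S(\Kf)$ equals $\varPhi_{\Kf}\circ\varPhi_\K^{-1}$, a composite of isomorphisms. The one step needing care is checking that $\varPhi_\K$ and $\varPhi_{\Kf}$ really share the same source and that naturality holds on the nose, which reduces to the $1$-skeleton dependence of $\Theta_\K(J)$ noted above.
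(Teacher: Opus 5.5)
Your proof is correct and follows the paper's argument essentially step for step. Surjectivity comes from the square through $S(\L)\to S(\K)\to S(\Kf)$, using Lemma~\ref{lmm:pt_imply_surjectivity} and the identifications $S(\L)=\pi_*(\DJ(\L))$ and $S(\Kf)=\pi_*(\DJ(\Kf))$; the chordal case comes from the composite~\eqref{injcomp} being an isomorphism while $\varPhi_\K$ surjects onto $S(\K)$. Your remark that $\Theta_\K(J)=\Theta_{\Kf}(J)$, because $\K$ and $\Kf$ share a $1$-skeleton, is left implicit in the paper and is exactly what makes~\eqref{injcomp} well defined.
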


\begin{proof}
Consider the commutative diagram
\[
\xymatrix{
S(\L)
\ar[d]^-{\cong}
\ar[r]
&
S(\K)
\ar[r]
&
S(\Kf)
\ar[d]^-{\cong}
\\
\pi_*(\DJ(\L))
\ar@{->>}[rr]
&&
\pi_*(\DJ(\Kf)).
}
\]
The bottom arrow is surjective by Lemma~\ref{lmm:pt_imply_surjectivity}, and the vertical arrows are isomorphisms by Theorem~\ref{thm:flagness is surjectivity}. Hence $S(\L)\to S(\Kf)$ is surjective, so $S(\K)\to S(\Kf)$ is also surjective.

If $\K^1$ is chordal, then the composite map~\eqref{injcomp} is an isomorphism, in which the left map is injective and the right map is an isomorphism by~Theorem~\ref{thm:Phi inj surj criterion}. It follows that $S(\K)\to S(\Kf)$ is also an isomorphism.
\end{proof}

\begin{question}
\label{cnj:PTinverse-weaker}
For a simplicial complex $\K$, consider $g_\K\colon\bigvee_{x\in GPTW}S^{|x|}\to\DJ(\K)$ and the homomorphism $\sigma\colon\pi_*(\DJ(\Kf))\to\pi_*(\DJ(\K))$ induced by the section constructed in \cite{pt} (see Lemma \ref{lmm:pt_imply_surjectivity}). Is it true that $\Img (g_\K)_*\subset\Img\sigma$?
\end{question}

If the answer to the question above is positive, then $S(\K)\to S(\Kf)$ is an isomorphism, and
$  \Img\bigl(\pi_*((S^2)^{\vee m})\to\pi_*(\DJ(\K))\bigr)
  \cong \pi_*(\DJ(\Kf))$.

\subsection*{Geometric interpretation}
The Whitehead products generating the subgroup $S(\K)$ in the homotopy groups of $\DJ(\K)$ are interpreted geometrically as a map $g_\K$ from a wedge of spheres corresponding to GPTW elements. Other related maps are shown in the following diagram: 
\begin{equation}
\label{s2djfib}
\xymatrix{
\bigvee\limits_{\alpha\in\Zm}(S^{|\alpha|+1})^{\vee \b_0(\K_\alpha)}
\ar[rrd]^-{f_\K}
\ar[rd]^-{\widehat f_\K}
\ar[d]^-{r'_{\K}}\\
\bigvee\limits_{J\subset[m]}(S^{|J|+1})^{\vee \b_0(\K_J)}
\ar[rd]^-{\widehat g_\K}
\ar@<1ex>@/^/[u]^-{q'_{\K}}
&
(C\Omega S^2,\Omega S^2)^\K
\ar[d]^-{r_{\K}}
\ar[r]
&
(S^2,\ast)^\K
\ar[r]
\ar[d]^-{t_\K}
&
(S^2)^{\times m}
\ar[d]\\
&
(CS^1,S^1)^\K
\ar@<1ex>@/^/[u]^-{q_{\K}}
\ar[r]^-{}
&
(\CC P^\infty,\ast)^\K
\ar[r]
&
(\CC P^\infty)^{\times m}
}
\end{equation} 
Here, the horizontal lines are homotopy fibrations of polyhedral products (see Proposition~\ref{loopsec}).
The map $t_\K$ is induced by the inclusion $S^2\to\CC P^\infty$. The map $r_{\K}$ has a right homotopy inverse $q_{\K}$ and both are induced by the natural retraction $S^1\to\Omega S^2\to S^1$. 
The map $\widehat g_\K$ is the lift to $\ZK=(CS^1,S^1)^\K$ of the map $g_\K$ given by the wedge of GPTW elements. In the notation~\eqref{GPTWTheta} the latter is
\[
  g_\K=\bigvee_{J\subset[m]}
  \bigvee_{j\in\Theta_\K(J)}c(J\setminus j,j;\underline t)\colon \bigvee_{J\subset[m]}\bigvee_{j\in\Theta_\K(J)}S^{|J|+1}\to
  (\CC P^\infty,\ast)^\K
\]
where each GPTW element $c(J\setminus j,j;\underline t)$ is an iterated Whitehead product of the maps $t_i$ with no repeating indices. The map $f_\K$ is  defined as a similar wedge of iterated Whitehead products $c(\alpha-j,j;\underline{s})\colon S^{|\alpha|+1}\to (S^2,\ast)^\K$, this time with repeating indices:
\[
  f_\K=\bigvee_{\alpha\in\Zm}
  \bigvee_{j\in\Theta_\K(\alpha)}c(\alpha-j,j;\underline{s})\colon 
  \bigvee_{\alpha\in\Zm}
  \bigvee_{j\in\Theta_\K(\alpha)} S^{|\alpha|+1}\to (S^2,\ast)^\K,
\]
and the map $\widehat f_\K$ is the lift of $f_\K$ to $(C\Omega S^2,\Omega S^2)^\K$.
The map $q'_\K$ is the inclusion of wedge summands corresponding to squarefree multi-indices. Finally, $r'_\K$ is the map expressing each iterated Whitehead product $c(\alpha-j,j;\underline{s})$ via GPTW elements. Namely, we express the iterated Whitehead product 
\begin{equation}\label{cts}
  c(\alpha-j,j;\underline{t})=t_\K\circ c(\alpha-j,j;\underline{s})\colon S^{|\alpha|+1}\to(\CC P^\infty,\ast)^\K 
\end{equation}
as a sum of polynomials in GPTW elements $c(I\setminus i,i;\underline t)$ composed with $\eta,\eta^2,\eta^3$, using Theorem~\ref{thm:iterated whitehead product to GPTW} (an explicit expression can be obtained using Lemma~\ref{lmm:computing c(alpha-j,j)}  and \cite[Algorithm 5.4]{vylegzhanin}). This expression for $c(\alpha-j,j;\underline{t})$ defines a map $S^{|\alpha|+1}\to \bigvee\limits_{J\subset[m]}(S^{|J|+1})^{\vee \b_0(\K_J)}$, and $r'_\K$ is the wedge of these maps over $\alpha\in\ZZ^m_{\ge0}$ and $j\in\Theta_\K(\alpha)$. It is clear from this definition that $r'_\K\circ q'_\K=\id$, and $r'_\K$ is nontrivial only on a finite number of wedge summands. Furthermore, \eqref{cts} together with the functoriality of Whitehead products implies that $g_\K\circ r'_\K=t_\K\circ f_\K$. The lifts $\widehat f_\K$ and $\widehat g_\K$ are unique up to homotopy, which implies that $\widehat g_\K\circ r'_\K\simeq r_\K\circ \widehat f_\K$. By~\cite[Lemma 5.4]{vv} or Proposition~\ref{prp:f and s commute with lifts} below, $\widehat{f}_\K\circ q'_\K\simeq q_\K\circ \widehat{g}_\K$. Thus  diagram~\eqref{s2djfib} is homotopy commutative.

The map $\widehat g_\K$ in~\eqref{s2djfib} is surjective in homotopy groups when $\K$ is flag, and it is a homotopy equivalence when $\K$ is flag with chordal $\K^1$ (Proposition~\ref{prp:chordal+flag=homotopy type of ZK}). Furthermore, for the disjoint $m$ point complex $\L$, the map $\widehat f_\L$ is also a homotopy equivalence by \cite[Theorem~7.4]{theriault_dual}. (Conjecturally, the map $\widehat f_\K$ is a homotopy equivalence when $\K$ is flag with chordal $\K^1$.) Here is the explicit description of the maps in diagram~\eqref{s2djfib} for the case $\K=\L$ that uses the identity from Lemma~\ref{lmm:computing c(alpha-j,j)}.

\begin{prp}\label{diagDJL}
There is a commutative diagram of homotopy fibrations    
\[
\xymatrix{
\bigvee_{\alpha\in\Zm}\bigvee_{j\in \supp\alpha\setminus\max(\supp\alpha)}
S^{|\alpha|+1}
\ar[rrr]^-{\vee c(\alpha-j,j;\underline s)}
\ar[d]^-{r'_{\L}}
&&&
(S^2)^{\vee m}
\ar[r]
\ar[d]
&
(S^2)^m\ar[d]\\
\bigvee_{J\subset[m]}\bigvee_{j\in J\setminus\max(J)}
S^{|J|+1}
\ar[rrr]^-{\vee c(J\setminus j,j;\underline t)}
\ar@<1ex>@/^/[u]^-{q'_{\L}}
&&&
(\CC P^\infty)^{\vee m}
\ar[r]
&
(\CC P^\infty)^m
}
\]
Here, $q'_\L$ is the inclusion of wedge summands and $r'_\L$ is the wedge of maps
\[
  s_{J,j}\circ \eta^{|\alpha|-|J|}+
  \sum_{\begin{smallmatrix}
    J\setminus j=A\sqcup B:\\
    \max(A)>j,\\
    \max(J)\in B
  \end{smallmatrix}}\pm [s_{A\sqcup j,j},s_{B\sqcup j,j}]\circ\eta^{|\alpha|-|J|-1}\colon
  S^{|\alpha|+1}\to
  \bigvee_{J,j}S^{|J|+1},
\]  
where $J=\supp\alpha$ and $s_{J,j}\colon S^{|J|+1}\to \bigvee_{J,j}S^{|J|+1}$ is the inclusion of a single sphere in the wedge, and the second summand above is zero if $\alpha_j=1$.
In particular, $r'_\L\colon \bigvee_{\alpha,j}S^{|\alpha|+1}\to\bigvee_{J,j}S^{|J|+1}$ vanishes on spheres with $|\alpha|-|\supp\alpha|\geq 4$, i.\,e. vanishes on all but finitely many spheres in the wedge.\qed
\end{prp}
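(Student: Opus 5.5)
The plan is to specialise the general diagram~\eqref{s2djfib} to $\K=\L$ and make each map explicit. First, since $\L$ is discrete, $\L_J$ has $|J|$ components, so $\Theta_\L(J)=J\setminus\max(J)$ and $\widetilde b_0(\L_J)=|J|-1$. The same holds for $\Theta_\L(\alpha)=\supp\alpha\setminus\max(\supp\alpha)$, which yields the indexing of the wedges. The horizontal rows are the homotopy fibrations of Proposition~\ref{loopsec} for $(S^2,\ast)^\L=(S^2)^{\vee m}$ and $\DJ(\L)=(\CC P^\infty)^{\vee m}$. I will identify their fibres with the wedges of spheres via the lifts $\widehat f_\L$ and $\widehat g_\L$. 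For $\widehat g_\L$ this is Proposition~\ref{prp:chordal+flag=homotopy type of ZK}, since $\L$ is flag and its $1$-skeleton has no edges and is therefore chordal. For $\widehat f_\L$ I will cite \cite[Theorem~7.4]{theriault_dual}. Commutativity of the right-hand square is clear, because $t_\L$ is the wedge of the inclusions $S^2\to\CC P^\infty$.

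Next, I will define $r'_\L$ summand by summand. On the sphere indexed by $(\alpha,j)$ it must represent $c(\alpha-j,j;\underline t)=t_\L\circ c(\alpha-j,j;\underline s)$ pulled back through $g_\L$. Lemma~\ref{lmm:computing c(alpha-j,j)} gives
\[
c(\alpha-j,t_j)=c(J\setminus j,t_j)\circ\eta^{|\alpha|-|J|}+\sum\pm[c(A,t_j),c(B,t_j)]\circ\eta^{|\alpha|-|J|-1},
\]
with $J=\supp\alpha$. By the remark following that lemma, $\max(A)>j$ and $\max(B)>j$, so for $\K=\L$ each nested commutator $c(A,t_j)=c((A\sqcup j)\setminus j,j;\underline t)$ is a GPTW element. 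Here $j\in(A\sqcup j)\setminus\max(A\sqcup j)$, so it is exactly $g_\L\circ s_{A\sqcup j,j}$, and similarly for $B$ and for $c(J\setminus j,t_j)=g_\L\circ s_{J,j}$. Naturality of Whitehead products and composition then gives $g_\L\circ(\text{stated map})=t_\L\circ c(\alpha-j,j;\underline s)$, which is the commutativity of the left square at the level of base spaces. Homotopy commutativity at the level of fibres follows, as in the discussion after~\eqref{s2djfib}, from uniqueness of lifts through the fibration: the map $\Omega$ of the base projection has a section, so maps from spheres into $\DJ(\L)$ that vanish in $\prod\CC P^\infty$ lift uniquely up to homotopy.

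For $q'_\L$, I will note that the summands with $\alpha=J$ squarefree satisfy $|\alpha|-|J|=0$ and $\alpha_j=1$. Hence $r'_\L$ restricted to them is $s_{J,j}$, and $r'_\L\circ q'_\L=\id$. The compatibility $\widehat f_\L\circ q'_\L\simeq q_\L\circ\widehat g_\L$ I will take from \cite[Lemma 5.4]{vv}, as already indicated before the statement. Finally, the vanishing claim follows from Proposition~\ref{prp:properties of hopf}~(5). The spheres $S^{|J|+1}$ have dimension at least $3$ and $[s',s'']$ has dimension at least $4$, so $\eta^k$ kills them once $k\ge4$. This gives vanishing when $|\alpha|-|J|-1\ge4$ for the bracket terms and when $|\alpha|-|J|\ge4$ for the first term. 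I still need to handle the bracket term at exponent $3$, i.e.\ $|\alpha|-|J|=4$. I expect this to be the main obstacle. There I would use that $\alpha_j>1$, so the actual exponent on $c(J,t_j)$ in the proof of Lemma~\ref{lmm:computing c(alpha-j,j)} is $|\alpha|-|J|-1$. I would then push $\eta$ inside the bracket via Proposition~\ref{prp:properties of hopf}~(1) and count the total $\eta$-power on each sphere. If that fails, I would weaken the claim to vanishing for $|\alpha|-|\supp\alpha|\ge5$, which still gives finiteness.
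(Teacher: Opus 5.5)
Apart from the final vanishing claim, your argument is the paper's own. The paper gives no separate proof: the statement is read off from the discussion of~\eqref{s2djfib} and Lemma~\ref{lmm:computing c(alpha-j,j)}, with the GPTW identification for $\L$, fibre-level commutativity by uniqueness of lifts, and the compatibility of $q'_\L$ with the lifts, exactly as you set it up.

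The step you flagged cannot be completed, because the claim is false in that case, and no choice of $r'_\L$ can rescue it. Since $\widehat g_\L$ is an equivalence and the fibre inclusion is injective on homotopy, the restriction of $r'_\L$ to each sphere is determined up to homotopy by $g_\L\circ r'_\L\simeq t_\L\circ f_\L$.

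When $\alpha_j\ge2$ there is no hidden extra $\eta$. In the proof of Lemma~\ref{lmm:computing c(alpha-j,j)} one has exactly $c(\alpha-j,t_j)=c(J,t_j)\circ\eta^{|\alpha|-|J|-1}$, so the bracket terms carry $\eta^3$ when $|\alpha|-|J|=4$. Pushing $\eta$ inside via Proposition~\ref{prp:properties of hopf}~(1) only rewrites $[x,y]\circ\eta^3$ as $[x,y\circ\eta^3]$, which is the same element.

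Concretely, take $m=3$, $\alpha=(5,1,1)$, $j=1$, so $|\alpha|-|\supp\alpha|=4$. Lemma~\ref{lmm:adding_letter} and the Jacobi identity give
\[
c((4,1,1),t_1)=[t_1,[t_2,[t_3,t_1]]]\circ\eta^3=\pm[[t_2,t_1],[t_3,t_1]]\circ\eta^3 ,
\]
because $[t_2,[t_3,t_1]]\circ\eta^4=0$. Hence $r'_\L$ on this $S^8$ is $\pm[s_{\{1,2\},1},s_{\{1,3\},1}]\circ\eta^3$. Under the Hilton--Milnor decomposition of $\bigvee_{J,j}S^{|J|+1}$, this corresponds to $\pm\eta_5^3=\pm4\nu_5\neq0$ in $\pi_8(S^5)$, so it is essential. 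The paper's own $m=3$ example, $s_{\{123\},1}\circ\eta^{|\alpha|-3}\pm[s_{\{12\},1},s_{\{13\},1}]\circ\eta^{|\alpha|-4}$ at $|\alpha|=7$, shows the same thing.

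What your argument, and the paper's, actually proves is the following. $r'_\L$ vanishes on summands with $|\alpha|-|\supp\alpha|\ge5$. At $|\alpha|-|\supp\alpha|=4$ it vanishes only if $\alpha_j=1$, or if no element of $\supp\alpha$ lies strictly between $j$ and $\max(\supp\alpha)$, since then the bracket sum is empty. So your fallback is the correct formulation of the statement. The conclusion that $r'_\L$ is nontrivial on only finitely many wedge summands is unaffected.
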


\begin{exm}\label{lm2}
Let $\K=\L$ with $m=2$, so that $\alpha=(k,\ell)$. We obtain the diagram
\[
\xymatrix{
\bigvee\limits_{k,l\geq 1}S^{k+\ell+1}
\ar[d]^{r'}
\ar[r]^-{f}
&
S^2\vee S^2
\ar[r]
\ar[d]
&
S^2\times S^2
\ar[d]\\
S^3
\ar@/^/[u]^-{q'}
\ar[r]^-{[t_2,t_1]}
&
\CC P^\infty\vee\CC P^\infty
\ar[r]
&
\CC P^\infty\times\CC P^\infty.
}
\]
where $f$ is the wedge of the maps
\[
  c((k-1,\ell),1;\underline s)=[\underbrace{s_1,[\dots,[s_1}_{k-1\text{ times}},[\underbrace{s_2,[\dots,[s_2}_{\ell\text{ times}},s_1]\dots]]]\dots]]\colon S^{k+\ell+1}\to S^2\vee S^2,
\]
the map $q'$ is the inclusion of a wedge summand, and $r'$ is the wedge of the identity map when $k+\ell=2$ and iterated Hopf maps $\eta^{k+\ell-2}=\eta\circ\dots\circ \eta\colon S^{k+\ell+1}\to S^3$ when $k+\ell>2$. Notice that $\eta^{k+\ell-2}$ is trivial for $k+\ell\geq 6$. 
\end{exm}

\begin{exm}
Let $\K=\L$ on $m=3$ vertices. Then 
\[
  \Z_\L\simeq \bigvee_{J\subset[3]}\bigvee_{j\in J\setminus\max(J)}
  S^{|J|+1}= (S^3)^{\vee 3}\vee (S^4)^{\vee 2}
\]
is mapped to $(\CC P^\infty)^{\vee 3}$ as the wedge of GPTW elements
\[
  [t_2,t_1],\; [t_3,t_1],\; [t_3,t_2],\; [t_2,[t_3,t_1]]\text{ and }
  [t_1,[t_3,t_2]]. 
\]  
Here $[t_3,t_1]=c(\{13\}\!\setminus\!1,1;\underline t)$ and $[t_1,[t_3,t_2]]=c(\{123\}\!\setminus\!2,2;\underline t)$, for example. 

We index the spheres in the wedge by their corresponding brackets and consider the map 
\[
  r'\colon \bigvee_{\alpha\in\ZZ^3}\bigvee_{j\in \supp\alpha\setminus\max(\supp\alpha)}S^{|\alpha|+1}
  \to S^3_{[t_2,t_1]}\vee S^3_{[t_3,t_1]}\vee S^3_{[t_3,t_1]}
  \vee S^4_{[t_2,[t_3,t_1]]} \vee S^4_{[t_1,[t_3,t_2]]}.
\]  
The inclusion $s_{J,j}\colon S^{|J|+1}\to \bigvee_{J,j}S^{|J|+1}$ 
takes $S^3$ or $S^4$ to the corresponding wedge summand; for instance, $s_{\{123\},1}$ takes $S^4$ to $S^4_{[t_2,[t_3,t_1]]}$.

When $|\supp\alpha|=2$, the map $r'$ takes the wedge summand $S^{|\alpha|+1}$ to the corresponding wedge summand $S^3$ as described in Example~\ref{lm2}. Namely, $r'|_{S^{|\alpha|+1}}=S_{J,j}\circ\eta^{|\alpha|-J}$ according to the formula in Proposition~\ref{diagDJL}. For example, for $\alpha=(\alpha_1,0,\alpha_3)$, the map $r'$ takes $S^{|\alpha|+1}$ to $S^3_{[t_3,t_1]}$ by the power $\eta^{\alpha_1+\alpha_3-2}$ of the Hopf map (or identity when $\alpha_1=\alpha_3=1$).

Now consider $\alpha=(\alpha_1,\alpha_2,\alpha_3)$ with $\alpha_1\ge2$, $\alpha_2\ge1$, $\alpha_3\ge1$, so that $J=\supp\alpha=\{123\}$ and take $j=1$. Then the formula in Proposition~\ref{diagDJL} gives
\[
  r'|_{S^{|\alpha|+1}}=s_{\{123\},1}\circ\eta^{|\alpha|-3}\pm
  [s_{\{12\},1},s_{\{13\},1}]\circ\eta^{|\alpha|-4}.
\]
For instance, when $\alpha=(2,1,1)$, the map $r'|_{S^5}\colon S^5\to S^3\vee S^3\vee S^4$ is given by $\pm[\iota_3,\iota_3]+\iota_4\circ\eta$, and its composite with the map to $(\CC P^\infty)^{\vee 3}$ is given by
\[
  [[t_2,t_1],[t_3,t_1]]+[t_2,[t_3,t_1]]\circ\eta.
\]
\end{exm}

\section{Quasi-Lie algebras and Problem~\ref{prb:main1}}
\label{sec:ql(K)}

In this section we address Problem~\ref{prb:main1} by studying the quasi-Lie subring
\[
  QL(\K)\subset \pi_*(\DJ(\K))
\]  
generated by the standard elements $t_1,\dots,t_m\in\pi_2(\DJ(\K))$.

Consider the graded abelian group
\[
  A_n=\ZZ\bigl\langle\iota,\eta^k,[\iota,\iota]\circ\eta^k,[\iota,[\iota,\iota]]\circ\eta^k,\;0\leq k\leq 3\bigr\rangle\subset\pi_*(S^n)
\]
described in Appendix~\ref{sec:groups A_n}.  Let 
\[
  Q(\K)=\ZZ\langle t_1,\ldots,t_m\rangle
  \oplus\bigoplus_{b\in\BC(GPTW)}A_{|b|}
\]  
be the subgroup of the group
\[
  \ZZ\langle t_1,\ldots,t_m\rangle
  \oplus\bigoplus_{b\in\BC(GPTW)}\pi_*(S^{|b|})\cong \ZZ^m\oplus\pi_*\Big(\bigvee_{x\in GPTW}S^{|x|}\Big),
\]
and let 
\[
  \phi\colon Q(\K)\to\pi_*(\DJ(\K))
\]
be the restriction of the map $\varPhi\colon \ZZ^m\oplus\pi_*(\bigvee_{x\in GPTW} S^{|x|})\to\pi_*(\DJ(\K))$.

\begin{thm}
\label{thm:QL(K) description}
Let $\K$ be a simplicial complex and $\K^f$ its flagification. Then: 
\begin{enumerate}
  \item $QL(\K)=\Img\phi$;
  \item the natural homomorphism of quasi-Lie algebras $QL(\K)\to QL(\K^f)$ is surjective;
  \item if $\K^1$ is a chordal graph, then $Q(\K)\cong QL(\K)\cong QL(\K^f)$.
\end{enumerate}
\end{thm}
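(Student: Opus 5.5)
For (1) I will prove both inclusions. For $\Img\phi\subset QL(\K)$: each GPTW element is an iterated Whitehead product of $t_1,\dots,t_m$. Hence each $w_b$, $b\in\BC(GPTW)$, lies in $QL(\K)$, and so do $[w_b,w_b]$ and $[w_b,[w_b,w_b]]$. I then need $w_b\circ\alpha\in QL(\K)$ for every generator $\alpha=\eta^k,[\iota,\iota]\circ\eta^k,[\iota,[\iota,\iota]]\circ\eta^k$ of $A_{|b|}$. By Proposition~\ref{prp:composition_properties}(5), $w_b\circ[\iota,\iota]\circ\eta^k=[w_b,w_b]\circ\eta^k$, and similarly for the triple bracket, so it suffices to show that $QL(\K)$ is closed under $y\mapsto y\circ\eta$ for $y\in QL(\K)$ with $|y|\ge3$.

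The key trick for this closure is Lemma~\ref{lmm:adding_letter} and its consequences. Write $y$, via Corollary~\ref{crl:lie_polynomial_normal_form}, as a combination of basic-commutator-type brackets in the $t_i$. If $|y|\ge3$, each summand is a bracket $[u,v]$ with $u,v\in QL(\K)$. Applying the Jacobi identity reduces $y$ to a sum of nested commutators $c=[t_{j_1},c']$, for which $c\circ\eta=[t_{j_1},c]\in QL(\K)$. Additivity of $\circ\eta$ (Proposition~\ref{prp:properties of hopf}(2), valid since $|y|\ge3$) then gives $y\circ\eta\in QL(\K)$. Iterating this yields $\eta^k$ for all $k$. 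Linearity in $\alpha$ (Proposition~\ref{prp:composition_properties}(3)) then gives $w_b\circ A_{|b|}\subset QL(\K)$.

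For the converse $QL(\K)\subset\Img\phi$, take an iterated Whitehead product $w$ with $|w|>2$. By Theorem~\ref{thm:iterated whitehead product to GPTW}, $w=\sum_{k\le3}p_k\circ\eta^k$ with each $p_k$ a Lie polynomial in GPTW elements. Inside the free quasi-Lie ring generated by the wedge summands, $p_k$ is the image of an element of $\FQL(GPTW)\subset\pi_*(\bigvee S^{|x|})$. By Theorem~\ref{thm:free_quasilie_basis}, this element is a combination of $w_b$, $w_b\circ[\iota,\iota]$ and $w_b\circ[\iota,[\iota,\iota]]$. Composing with $\eta^k$, and using additivity of $\circ\eta$ in degrees $\ge3$, lands in $\bigoplus_b w_b\circ A_{|b|}$. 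Since $QL(\K)$ is additively spanned by such $w$ and the $t_i$, this proves (1).

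For (2), I use naturality: $\phi$ for $\K$ and for $\Kf$ are compatible with the map $\DJ(\K)\to\DJ(\Kf)$, and $QL$ is generated by the images of the $t_i$. Since $QL(\Kf)$ is generated as a quasi-Lie ring by the images of $t_1,\dots,t_m$, and those images are the images of $t_i\in QL(\K)$, surjectivity is immediate; (1) is not even needed here.

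For (3), assume $\K^1$ chordal. Then $\Kf$ is flag with chordal 1-skeleton, and $GPTW(\K)=GPTW(\Kf)$ because both are defined through connected components of full subcomplexes, which depend only on the 1-skeleton. Consider the composite $Q(\K)\xrightarrow{\phi}QL(\K)\to QL(\Kf)\subset\pi_*(\DJ(\Kf))$. This composite is the restriction of the composite \eqref{injcomp}, which is an isomorphism by Proposition~\ref{prp:chordal+flag=homotopy type of ZK}, so it is injective. Together with the surjectivity of $\phi$ from (1) and of the second map from (2), both maps become isomorphisms. The torsion claim then follows from the Appendix description of $A_n$ as a sum of copies of $\ZZ,\ZZ_2,\ZZ_3$.

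The main obstacle I expect is the degree-two subtlety in the closure under $\circ\eta$: elements $p_k$ involving $[w_b,w_b]$ with $|w_b|$ small, and the correct identification of $\FQL$-normal forms with elements of $A_{|b|}$. I must also make sure the Hopf-composition rules are only applied in degrees $\ge3$, where $\circ\eta$ is additive.
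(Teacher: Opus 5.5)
Your proof is correct. The reverse inclusion in (1) and part (3) follow the paper essentially verbatim; for (3) the paper cites Theorem~\ref{thm:Phi inj surj criterion}(2) for $\Kf$, whose proof is exactly the composite \eqref{injcomp} you use. Two steps take a different route.

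For $\Img\phi\subset QL(\K)$ you prove a global closure property. Every element of $QL(\K)$ in degree $\geq 3$ is a sum of right-normed commutators $[t_{j_1},c']$, so its composite with $\eta$ lies in $QL(\K)$ by Lemma~\ref{lmm:adding_letter} and additivity of $-\circ\eta$. The paper instead inducts over $b\in\BC(GPTW)$. It treats $b\in GPTW$ by Lemma~\ref{lmm:adding_letter}, and $b=[b',b'']$ by $b\circ\eta^k=[b'\circ\eta^k,b'']$ (Proposition~\ref{prp:properties of hopf}(1)). Your version gives the slightly stronger fact that the part of $QL(\K)$ in degrees $\geq 3$ is closed under $-\circ\eta$. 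The cost is the right-normed reduction, which needs only the Jacobi identity and antisymmetry, so these hold in any quasi-Lie ring. Your appeal to Corollary~\ref{crl:lie_polynomial_normal_form} at that point is therefore superfluous.

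For (2), your functoriality argument is more direct than the paper's: the image of the quasi-Lie subring generated by the $t_i$ is the subring generated by their images. The paper instead deduces surjectivity from (1) together with $Q(\K)\cong Q(\Kf)$.

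Two small corrections. First, $\FQL(GPTW)\to\pi_*(\bigvee_{x\in GPTW} S^{|x|})$ is a homomorphism, not an inclusion. For example, GPTW elements of degree $3$ occur, and $[\iota_3,\iota_3]=0$, whereas $[x,x]\neq 0$ in the free quasi-Lie ring. You only use this map as a homomorphism, so the argument is unaffected. Second, the ``degree-two subtlety'' you anticipate does not arise. All GPTW elements, and all brackets of length $\geq 2$ in the $t_i$, have degree $\geq 3$, and the degree-$2$ part is just $\ZZ\langle t_1,\dots,t_m\rangle$.
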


\begin{proof}
We first prove the inclusion $\Img\phi\subset QL(\K)$.
For each $b\in\BC(GPTW)$, the map $\phi$ takes the identity $\iota\in A_{|b|}\subset\pi_*(S^{|b|})$ to the basic commutator $b$ of GPTW elements in $\pi_*(\DJ(\K))$. Similarly, 
\begin{equation}\label{phib}
  \phi(\iota\circ\eta^k)=b\circ\eta^k,\quad \phi([\iota,\iota]\circ\eta^k)=[b,b]\circ\eta^k,\quad
  \phi([\iota,[\iota,\iota]]\circ\eta^k)=[b,[b,b]]\circ\eta^k.
\end{equation}
By definition, $t_1,\dots,t_m\in QL(\K)$, and each GPTW element is a nested commutator in $t_1,\dots,t_m$. It follows that $\phi(\iota)=b\in QL(\K)$. 

Now we prove that $\phi(\iota\circ\eta^k)=b\circ\eta^k\in QL(\K)$ for all $b\in\BC(GPTW)$ and $k\geq 0$.
Note that $|b|\geq 3$, so $\eta$ must be a suspension. We proceed by induction on the number of nested commutators in $b$. If $b\in GPTW$, then we can write $b=[t_{j_1},c]$. By Lemma \ref{lmm:adding_letter}, $b\circ\eta^k=[t_{j_1},[t_{j_1},\dots[t_{j_1},c]\dots]]$, so $b\circ\eta^k\in QL(\K)$. Now suppose $b=[b',b'']$ for $b',b''\in\BC(GPTW)$. Then
$b\circ\eta^k=[b',b'']\circ\eta^k=[b'\circ\eta^k,b'']$ by Proposition~\ref{prp:properties of hopf}~(1).
Here $b'\circ\eta^k\in QL(\K)$ by induction, and $b''\in QL(\K)$. It follows that $b\circ\eta^k\in QL(\K)$.

It remains to prove that $[b,b]\circ\eta^k$ and $[b,[b,b]]\circ\eta^k$ belong to $QL(\K)$. We have
$
  [b,b]\circ\eta^k=  [b\circ\eta^k,b]\in QL(\K),
$
since $\eta$ is a suspension and $b,b\circ\eta^k\in QL(\K)$. Hence, $[b\circ\eta^k,b]\in QL(\K)$. Similarly, $[b,[b,b]]\circ\eta^k=[b\circ\eta^k,[b,b]]\in QL(\K)$.

\smallskip

Now we prove the inclusion $QL(\K)\subset\Img\phi$.
By Theorem \ref{thm:iterated whitehead product to GPTW}, $QL(\K)$ is additively generated by $t_1,\dots,t_m$ and elements of the form $y\circ\eta^k$, where $y$ is an iterated Whitehead product on $GPTW$, and $0\leq k\leq 3$. By  Corollary~\ref{crl:lie_polynomial_normal_form}, any such $y$ is a linear combination of elements $b$, $[b,b]$ and $[b,[b,b]]$, where $b\in\BC(GPTW)$. It follows that $QL(\K)$ is additively generated by elements $t_1,\dots,t_m$, $b\circ\eta^k$, $[b,b]\circ\eta^k$ and $[b,[b,b]]\circ\eta^k$, which belong to $\Img\phi$ by~\eqref{phib}. This proves statement~(1).

\smallskip

Now we prove (2). Since $Q(\K)$ is defined in terms of $\K^1$, the natural map $Q(\K)\to Q(\Kf)$ is an isomorphism. The surjectivity of $QL(\K)\to QL(\K^f)$ follows by considering the commutative diagram
\[
  \xymatrix{
  Q(\K) \ar[r]^\phi\ar[d]^\cong & QL(\K) \ar[d] \\
  Q(\K^f) \ar[r]^\phi & QL(\K^f)
  }
\]
in which the horizontal arrows are surjective by statement~(1).

\smallskip

Finally, suppose that $\K^1$ is a chordal graph. Then $\varPhi\colon \ZZ^m\oplus\pi_*(\bigvee_{x\in GPTW} S^{|x|})\to\pi_*(\DJ(\K^f))$  is injective by Theorem~\ref{thm:Phi inj surj criterion}~(2), so its restriction $\phi\colon Q(\Kf)\to QL(\Kf)$ is also injective. Since $\phi\colon Q(\Kf)\to QL(\Kf)$ is both surjective and injective, it is an isomorphism. Now statement~(3) follows from the commutative diagram above.
\end{proof}

\begin{crl}
If $\K^1$ is chordal, then the graded abelian group $QL(\K)$ has no $p$-torsion for $p>3$.
\end{crl}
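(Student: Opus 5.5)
By Theorem~\ref{thm:QL(K) description}~(3), when $\K^1$ is chordal there is an isomorphism of graded abelian groups
\[
  QL(\K)\cong Q(\K)=\ZZ\langle t_1,\ldots,t_m\rangle\oplus\bigoplus_{b\in\BC(GPTW)}A_{|b|}.
\]
So it suffices to show that no summand has $p$-torsion for $p>3$. The summand $\ZZ\langle t_1,\dots,t_m\rangle$ is free. Since torsion in a direct sum is the direct sum of the torsion in the summands, the problem reduces to showing that each group $A_n\subset\pi_*(S^n)$, $n\ge 3$, has no $p$-torsion for $p>3$.

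Recall that $A_n$ is generated by the elements $\iota$, $\eta^k$, $[\iota,\iota]\circ\eta^k$ and $[\iota,[\iota,\iota]]\circ\eta^k$ with $0\le k\le 3$. Each such generator $x$ has a small annihilating integer $N_x$ with $N_x x=0$, or else generates an infinite cyclic summand. Concretely:
\begin{itemize}
\item $\iota$ and $[\iota,\iota]$ (when $n$ is even) have infinite order.
\item For $k\ge1$, the element $\eta^k$ is a suspension in $\pi_{n+k}(S^n)$ with $n\ge3$, and $2\eta=0$; hence $2y\circ\eta^k=y\circ(2\eta^k)$ vanishes, using right distributivity, since $\eta$ is a suspension (Proposition~\ref{prp:composition_properties}~(3)).
\item By Lemma~\ref{lmm:quasilie_torsion}, $[\iota,\iota]$ has order dividing $2$ when it lies in $\pi_*(S^n)_{odd\text{-}|\,|}$, i.e.\ when $\deg\iota=n-1$ is even.
\item By Lemma~\ref{lmm:quasilie_torsion}, $[\iota,[\iota,\iota]]$ has order dividing $3$ or $2$ in the respective parity cases, and vanishes when $\deg\iota$ is even.
\end{itemize}

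It follows that $A_n$ is a finitely generated abelian group whose generators are each of infinite order or of order dividing $2$ or $3$. The main subtlety is that a group generated by elements of orders dividing $2$ or $3$, together with elements of infinite order, could in principle still have $p$-torsion through relations among the infinite-order generators. To avoid this, invoke the explicit additive structure of $A_n$ recorded in Appendix~\ref{sec:groups A_n}: $A_n$ is a finite direct sum of copies of $\ZZ$, $\ZZ_2$ and $\ZZ_3$, as stated in the introduction. This rules out $p$-torsion for $p>3$.

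If one prefers an argument independent of the appendix, use the following. The infinite-order part of $\pi_*(S^n)$ is only $\ZZ\langle\iota\rangle$, together with $\pi_{2n-1}(S^n)$ for even $n$. In $\pi_{2n-1}(S^n)\cong\ZZ\oplus\text{finite}$, the element $[\iota,\iota]$ has Hopf invariant $\pm2$. The subgroup of $\pi_{2n-1}(S^n)$ generated by $[\iota,\iota]$ and the $2$-torsion elements $\iota\circ\eta^k$ of this degree is then torsion-free modulo $2$-primary torsion. Hence $A_n$ has only $2$- and $3$-primary torsion. Combining this with the isomorphism $QL(\K)\cong Q(\K)$ proves the corollary.
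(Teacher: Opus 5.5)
Your proposal is correct and is the paper's argument: Theorem~\ref{thm:QL(K) description}~(3) gives $QL(\K)\cong Q(\K)=\ZZ^m\oplus\bigoplus_{b}A_{|b|}$, and the appendix shows that each $A_n$ is a finite direct sum of copies of $\ZZ$, $\ZZ_2$ and $\ZZ_3$. Your appendix-free alternative also works, and the Hopf-invariant and Serre-finiteness remarks are unnecessary for it. What is needed is that in each degree $A_n$ has at most one generator of infinite order, while all other generators have order dividing $2$ or $3$. Hence the torsion of $A_n$ in each degree is the subgroup generated by those finite-order generators, which has exponent dividing $6$.
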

\begin{proof}
By Theorem \ref{thm:QL(K) description}, $QL(\K)\cong Q(\K)$ is a direct sum of $\ZZ^m$ and groups $A_n$. The latter groups are described in Appendix~\ref{sec:groups A_n} and have no $p$-torsion for $p>3$.
\end{proof}

\begin{exm}
Let $\K$ be a chordal flag complex, and let
$[t_i,t_j],[t_k,t_\ell]\in GPTW$, where $(i,j)>(k,\ell)$ lexicographically. Then $b=[[t_i,t_j],[t_k,t_\ell]]\in\BC(GPTW)$ is a basic commutator of degree $5$. For each $\alpha\in\pi_n(S^5)$, there is the corresponding element $b\circ\alpha\in\pi_n(\DJ(\K))$. By Theorem~\ref{thm:QL(K) description}, $b\circ\alpha\in QL(\K)$ if and only if $\alpha\in A_5$.
\end{exm}

\begin{question}
Is the surjective map $QL(\K)\to QL(\Kf)$ an isomorphism? (This is true if $\K^1$ is chordal.)
\end{question}

If $\K^1$ is a chordal graph, Theorem \ref{thm:QL(K) description} provides an additive description of $QL(\K)=QL(\K^f)$, and the proof gives an algorithm that computes the bracket of any two elements.
However, we do not have an explicit description of the multiplicative structure of $QL(\K)$, e.g., by generators and relations, except for the simplest case considered in the next example.

\begin{exm}[$\K=\bullet\,\bullet$] 
Let $\K=\{\varnothing,\{1\},\{2\}\}$ be a two point complex. Then 
$\BC(GPTW)=GPTW=\{[t_2,t_1]\}$, and hence
\[ 
  QL(\K)=\ZZ\langle t_1, t_2\rangle\oplus[t_1,t_2]\circ A_3.
\]
We have $A_3=\ZZ\langle\iota\rangle\oplus\ZZ_2\langle\eta,
\eta^2,\eta^3\rangle$. Hence, $QL(\K)\cong\ZZ^2\oplus\ZZ\oplus\ZZ_2\oplus\ZZ_2\oplus\ZZ_2$ is a finitely generated graded abelian group with the following Lie relations, which follow from Proposition~\ref{qla2p}:
\begin{gather*}
  [t_1,t_1]=[t_2,t_2]=0,\quad 
  [[t_1,t_2]\circ\eta^k,[t_1,t_2]\circ\eta^\ell]=0,\\
  [t_1,[t_1,t_2]\circ\eta^k]=[t_2,[t_1,t_2]\circ\eta^k]=[t_1,t_2]\circ\eta^{k+1},\quad 0\le k\le 3,\;\eta^4=0.
\end{gather*}
Note that in this case $[a,a]=0$ for all $a\in QL(\K),$ so $QL(\K)$ is actually a graded Lie ring, and $QL(\K)\otimes_\ZZ\ZZ_2$ is a graded Lie algebra over $\ZZ_2$.
Using a result of Veryovkin~\cite{veryovkin}, it can be described by generators and relations as follows:
\[QL(\K)\otimes_\ZZ\ZZ_2=\FL_{\ZZ_2}(t_1,t_2)/\mathcal{R}\] 
where $\mathcal{R}$ is the Lie ideal generated by the relations 
\begin{gather*} 
[t_1,[t_1,t_2]]=[t_2,[t_1,t_2]],\quad
[[t_1,t_2],[t_1,[t_1,t_2]]]=0,\\
[t_1,[t_1,[t_1,[t_1,[t_1,t_2]]]]]=0.
\end{gather*}
\end{exm}

\begin{exm}[$\K=\bullet\bullet\!\!\!-\!\!\bullet$]
Let $\K=\{\varnothing,\{1\},\{2\},\{3\},\{2,3\}\}$. The homotopy fibration~\eqref{eq:zk-dj fibration} becomes
\[
  S^3\vee S^3\vee S^4\to 
  \CC P^\infty\vee(\CC P^\infty\times\CC P^\infty)\to
  \CC P^\infty\times\CC P^\infty\times\CC P^\infty
\]  
and $\ZK\to\DJ(\K)$ is the inclusion of a wedge of three spheres corresponding to the GPTW elements
\[GPTW=\{a=[t_2,t_1],\;b=[t_3,t_1],\;c=[t_2,[t_3,t_1]]\}.\] 
Hence, $\BC(GPTW)=BC(a,b,c)$ is an infinite set. The first few graded components of $\BC(GPTW)$ are:
\begin{gather*}
\BC(GPTW)_3=\{a,b\},\quad \BC(GPTW)_4=\{c\},\quad \BC(GPTW)_5=\{[b,a]\},\\
\BC(GPTW)_6=\{[c,a],[c,b]\},\quad \BC(GPTW)_7=\{[a,[b,a]],[b,[b,a]]\},\\
\BC(GPTW)_8=\{[a,[c,a]],[b,[c,a]],[[c,b],b]\},\\
\BC(GPTW)_9=\{c,[[c,a]],[c,[c,b]],[a,[a,[b,a]]],[a,[b,[b,a]]],[b,[b,[b,a]]]\}.
\end{gather*}
In topological degree $10$, non-nested basic commutators $[[c,a],[b,a]]$ and $[[c,b],[b,a]]$ appear.
It follows that additively 
\begin{align*}
\pi_*(\DJ(\K))&=\ZZ\langle t_1,t_2,t_3\rangle
\oplus\{a,b\}\circ \pi_*(S^3)
\oplus\{c\}\circ\pi_*(S^4)
\oplus\{[b,a]\}\circ\pi_*(S^5)
\oplus\cdots\\
QL(\K)&=\ZZ\langle t_1,t_2,t_3\rangle
\oplus\{a,b\}\circ A_3
\hspace{6.1mm}\oplus\{c\}\circ\ A_4
\hspace{5.2mm}\oplus\{[b,a]\}\circ A_5
\hspace{5.8mm}\oplus\cdots
\end{align*}
For example, $\pi_5(S^3)=\ZZ_2\langle\eta^2\rangle\subset A_3$, $\pi_5(S^4)=\ZZ_2\langle\eta\rangle\subset A_4$, $\pi_5(S^5)=\ZZ\langle\iota\rangle$, so
\[
  QL(\K)_5= \ZZ_2\langle a\circ\eta^2, b\circ\eta^2\rangle \oplus \ZZ_2\langle c\circ\eta\rangle \oplus \ZZ\langle[b,a]\rangle   
  \cong\ZZ_2^{\oplus 3}\oplus\ZZ=\pi_5(\DJ(\K)).
\]
  
As explained in \S \ref{subsec:mac-wedge-of-spheres}, the most nontrivial part in the calculation of Whitehead products is the computation of $[t_i,x],$ where $x\in\BC(GPTW).$ We first compute the action on the GPTW generators: 
\begin{align*} 
 [t_1,a] &=a\circ\eta,~[t_2,a]=a\circ\eta,~[t_3,a]=[t_3,[t_2,t_1]]=-c; \\
 [t_1,b] &=b\circ\eta,~[t_2,b]=c,~[t_3,b]=b\circ\eta; \\
 [t_1,c] &=[t_1,[t_2,[t_3,t_1]]]=-[[t_1,t_2],[t_3,t_1]]-[t_2,[t_1,[t_3,t_1]]] \\ 
 & \hspace{2.68cm}=-[a,b]-[t_2,[t_3,t_1]\circ\eta]=
 [b,a] +c\circ\eta; \\ 
 [t_2,c] &=c\circ\eta; \\ 
 [t_3,c] &=[t_3,[t_2,[t_3,t_1]]]=-[[t_3,t_2],[t_3,t_1]]-[t_2,[t_3,[t_3,t_1]]] \\ 
& \hspace{2.67cm}=0-[t_2,[t_3,t_1]\circ\eta]=c\circ\eta, 
\end{align*} 
where $[t_2,t_3]=0$ since vertices $2$ and $3$ are joined by an edge.
This allows for a recursive computation of $[t_i,x]$, $x\in QL(\K)$. For example,
\[
  [t_1,[c,a]]=-[[t_1,c],a]-[c,[t_1,a]]=-[[b,a],a] +[c,a]\circ\eta+[c,a]\circ\eta=[a,[b,a]].
\]  
\end{exm}

We now collect some general phenomena seen in this case.

\begin{prp}
Let $\K=\bullet\bullet\!\!\!-\!\!\bullet$. Then
\begin{enumerate}
    \item The subgroup $QL(\K)\subset\pi_*(\DJ(\K))$ is not a direct summand.
    \item $QL(\K)$ contains $3$-torsion.
    \item $[c,c]\neq 0$ and $[[b,a],[b,a]]\neq 0$ in $QL(\K)\otimes_\ZZ \ZZ_2$. In particular, $QL(\K)\otimes_\ZZ \ZZ_2$ is not a graded Lie algebra over~$\ZZ_2$.
\end{enumerate}
\end{prp}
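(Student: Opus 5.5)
Since $\K=\bullet\bullet\!\!\!-\!\!\bullet$ is flag with chordal $1$-skeleton, Proposition~\ref{prp:chordal+flag=homotopy type of ZK} gives $\ZK\simeq S^3_a\vee S^3_b\vee S^4_c$. By Theorem~\ref{thm:Phi inj surj criterion} the map $\varPhi$ is an isomorphism onto $\pi_*(\DJ(\K))$, and by Theorem~\ref{thm:QL(K) description} $QL(\K)$ is identified with $Q(\K)=\ZZ^3\oplus\bigoplus_{b\in\BC(a,b,c)}A_{|b|}$ inside $\ZZ^3\oplus\bigoplus_b\pi_*(S^{|b|})$. All three statements therefore reduce to facts about the inclusion $A_n\subset\pi_*(S^n)$ and about Hilton--Milnor coordinates, using the description of $A_n$ in Appendix~\ref{sec:groups A_n}.

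For (1), it suffices to find a single $n$ such that $A_n$ is not a direct summand of $\pi_*(S^n)$, together with a basic commutator $b$ of degree $n$. Then the $b$-summand $A_{|b|}\subset\pi_*(S^{|b|})$ fails to split, and hence so does $Q(\K)\subset\ZZ^3\oplus\pi_*(\bigvee S)$. The first candidate I would try is $n=4$ with $b=c$. In $\pi_7(S^4)\cong\ZZ\oplus\ZZ_{12}$ the subgroup $A_4$ contains $[\iota_4,\iota_4]=2\nu_4-E\nu'$. This element has infinite order, but it is not a generator of a $\ZZ$-summand: its Hopf invariant is $2$, and $\pi_7(S^4)/\langle[\iota,\iota]\rangle$ has an element of order $2\cdot 12$-type that obstructs splitting. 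More cleanly, $A_4\cap\pi_7(S^4)=\ZZ\langle[\iota,\iota]\rangle$, and a cyclic infinite subgroup generated by an element that is divisible modulo torsion cannot be a summand. I would verify this using Toda's tables. Alternatively, $n=3$ with $b=a$: here $\pi_6(S^3)=\ZZ_{12}$ and $\eta^3=6\nu'\in A_3$ generate a $\ZZ_2$ that is not a summand of $\ZZ_{12}$. This second route is cleaner, and it is the one I would use.

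For (2), I would take an odd-degree basic commutator $b$ in the quasi-Lie grading, i.e.\ with $|b|$ even; the natural choice is $b=c$ with $|c|=4$. Theorem~\ref{thm:free_quasilie_basis} gives a $\ZZ_3\langle[b,[b,b]]\rangle$, and in homotopy $[c,[c,c]]=c\circ[\iota_4,[\iota_4,\iota_4]]$ with $[\iota_4,[\iota_4,\iota_4]]\in\pi_{10}(S^4)$. I would check from Toda's tables (or Appendix~\ref{sec:groups A_n}) that this element is nonzero of order $3$. Injectivity of $\varPhi$ then gives nonzero $3$-torsion in $QL(\K)$.

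For (3), $[c,c]=c\circ[\iota_4,\iota_4]$, which has infinite order and is not divisible by $2$ in $QL(\K)$. To see this, note that $A_4$ in degree $7$ is $\ZZ\langle[\iota,\iota]\rangle$ alone, and apply the injectivity of $\phi$ together with the direct sum decomposition $Q(\K)=\ZZ^3\oplus\bigoplus_b A_{|b|}$. Hence $[c,c]\neq0$ in $QL(\K)\otimes\ZZ_2$. The same argument works for $[[b,a],[b,a]]=[b,a]\circ[\iota_5,\iota_5]$: in $\pi_9(S^5)=\ZZ_2$, generated by $[\iota_5,\iota_5]=\nu_5\eta_8$, the element is nonzero, and $A_5$ in degree $9$ is just this $\ZZ_2$, so it survives tensoring with $\ZZ_2$. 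Since $[x,x]\neq0$ for these even-degree elements, $QL(\K)\otimes\ZZ_2$ is not a Lie algebra.

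The main obstacle is correctly identifying the degree-wise structure of $A_n$ and its embedding in $\pi_*(S^n)$, which requires the appendix and Toda's computations.
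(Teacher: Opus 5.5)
Your proposal is correct and follows the paper's proof: $(A_3)_6=\ZZ_2\langle\eta^3\rangle$ sits in $\pi_6(S^3)\cong\ZZ_{12}$ as $\eta^3=6\omega$ for a generator $\omega$ (in the paper's appendix notation, $\nu'$ has order $4$ and $\eta_3^3=2\nu'$, so your ``$6\nu'$'' is a notational mismatch), and $[c,[c,c]]=c\circ[\iota_4,[\iota_4,\iota_4]]$ gives the order-$3$ element, while $[c,c]$ and $[[b,a],[b,a]]$ are detected on the direct summands $c\circ\pi_*(S^4)$ and $[b,a]\circ\pi_*(S^5)$. One factual slip, also present in your discarded $n=4$ aside for (1): $(A_4)_7$ is not $\ZZ\langle[\iota,\iota]\rangle$ alone, because $\eta_4^3=2E\nu'\in\pi_7(S^4)$, so $(A_4)_7=\ZZ\langle[\iota,\iota]\rangle\oplus\ZZ_2\langle\eta^3\rangle$; your conclusion that $[c,c]$ is not divisible by $2$ in $QL(\K)$ still holds because $[\iota,\iota]$ generates the free summand, and the paper proves more, namely that $[\iota_4,\iota_4]=\pm(2\nu_4-E\nu')$ is not divisible by $2$ in all of $\pi_7(S^4)$, so $[c,c]$ is already nonzero in $\pi_7(\DJ(\K))\otimes\ZZ_2$.
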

\begin{proof}
(1) Consider degree $6$. The group $\pi_6(S^3)$ has order $12$, with a generator that we temporarily denote by~$\omega$ (see Proposition~\ref{pin+k}). We have $\eta^3=6\omega$, since $\eta^3\in\pi_6(S^3)$ has order~$2$. It follows that
the group $QL(\K)_6$ has a direct summand $\ZZ_2\langle a\circ\eta^3,b\circ\eta^3\rangle$ included in the corresponding direct summand $\ZZ_{12}\langle a\circ\omega, b\circ\omega\rangle$ of $\pi_6(\DJ(\K))$. 

(2)
A nontrivial $3$-torsion component in $QL(\K)$ appears as the direct summand $\ZZ_3\langle[c,[c,c]]\rangle\subset\pi_{10}(\DJ(\K))$, since $[c,[c,c]]=c\circ [\iota_{4},[\iota_{4},\iota_{4}]]$ and $[\iota_4,[\iota_4,\iota_4]]\in\pi_{10}(S^4)$ is a nontrivial element of order $3$. 

(3) By \cite[(5.8)]{toda}, the relation $[\iota_4,\iota_4]=\pm(2\nu-E\nu')$ holds in $\pi_7(S^4)$. 
It follows that $[\iota_4,\iota_4]$ is nontrivial and not divisible by two. As there is an inclusion of a direct summand $\{c\}\circ\pi_{\ast}(S^{4})\subset\pi_{\ast}(DJ(\K))$, it follows that the element $[c,c]=c\circ[\iota_4,\iota_4]\in QL(\K)$ is nontrivial 
in $\pi_7(\DJ(\K))\otimes_\ZZ\ZZ_2$.

Similarly, by \cite[(5.10)]{toda}, the group $\pi_9(S^5)\cong\ZZ_2$ is generated by $[\iota_5,\iota_5]$. As there is an inclusion of a direct summand $\{[b,a]\}\circ\pi_{\ast}(S^{5})\subset\pi_{\ast}(DJ(\K))$, it follows that the element $[[b,a],[b,a]]=[b,a]\circ [\iota_{5},\iota_{5}]\in QL(\K)_9$ is nontrivial.
\end{proof}

\section{Generalisation to polyhedral products}
\label{sec:spheres and dj} 

This section describes how the results on iterated Whitehead products in the homotopy groups of $\DJ(\K)=(\mathbb C P^\infty,\ast)^\K$ can be generalised to polyhedral products of the form $(\X,\ast)^{\K}$, and in a refined way to polyhedral products of the form $(\Sigma\Y,\ast)^{\K}$. 

\begin{lmm}\label{exfib2}
Let $F\to E\overset{p}\longrightarrow B$ be a fibration such that $\Omega p$ has a right homotopy inverse. Then, for any based space $A$, there is a short exact sequence of groups
$$1\to [\Sigma A,F]\to [\Sigma A,E]\to [\Sigma A,B]\to 1.$$
\end{lmm}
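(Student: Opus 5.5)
The plan is to apply the functor $[\Sigma A,-]$ to the fibration and pass to loop spaces. By adjunction, $[\Sigma A,Z]\cong[A,\Omega Z]$ for every pointed space $Z$. Since $\Omega F\to\Omega E\xrightarrow{\Omega p}\Omega B$ is a fibration of loop spaces and loop maps, these bijections are group isomorphisms for the groups induced by loop multiplication. These coincide with the suspension co-H group structures on $[\Sigma A,-]$, so every map in the sequence is a group homomorphism.

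Next, take the long exact sequence of groups and pointed sets obtained by mapping $A$ (equivalently $\Sigma A$) into the fibration sequence
\[
  \cdots\to[A,\Omega^2 B]\xrightarrow{\partial}[A,\Omega F]\to[A,\Omega E]\xrightarrow{(\Omega p)_*}[A,\Omega B]\to[A,F]\to\cdots.
\]
Exactness at $[A,\Omega E]=[\Sigma A,E]$ is the standard exactness of the Puppe sequence. What remains is surjectivity on the right and injectivity on the left.

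For surjectivity, let $s\colon\Omega B\to\Omega E$ be the right homotopy inverse of $\Omega p$. Then $s_*$ is a set-theoretic section of $(\Omega p)_*\colon[A,\Omega E]\to[A,\Omega B]$, because $(\Omega p)_*s_*=(\Omega p\circ s)_*=\id$. So $(\Omega p)_*$ is surjective; no homomorphism property of $s$ is needed.

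For injectivity of $[A,\Omega F]\to[A,\Omega E]$, exactness says the kernel is the image of $\partial\colon[A,\Omega^2B]\to[A,\Omega F]$. The same section, looped once more, gives a right inverse $\Omega s$ of $\Omega^2 p$. Hence $(\Omega^2p)_*\colon[A,\Omega^2E]\to[A,\Omega^2B]$ is surjective, and exactness at $[A,\Omega^2 B]$ then shows that $\partial$ is trivial. This step needs the most care, since it requires the Puppe sequence to be exact at the relevant spot as a sequence of pointed sets or groups. Because all terms here are groups of maps into loop spaces, standard exactness applies, and the argument goes through.
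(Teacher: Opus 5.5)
Your proof is correct and follows essentially the same route as the paper: both combine the right homotopy inverse of $\Omega p$ with exactness of $[A,\Omega^2B]\to[A,\Omega F]\to[A,\Omega E]\to[A,\Omega B]\to[A,F]$ to show that the two end maps are trivial. The only difference is bookkeeping. The paper first shows once that the connecting map $\delta\colon\Omega B\to F$ is null homotopic, using exactness at $[\Omega B,\Omega B]$ and the fact that $\id_{\Omega B}$ lies in the image of $(\Omega p)_*$; it then reads off $\delta_*=0$ and $(\Omega\delta)_*=0$ for every $A$. You instead argue for each $A$ separately, via surjectivity of $(\Omega p)_*$ and $(\Omega^2 p)_*$.
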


\begin{proof}
Consider the exact sequence of pointed sets 
\[
  [\Omega B,\Omega E]\xrightarrow{(\Omega p)_*} [\Omega B,\Omega B]\xrightarrow{\delta_*} [\Omega B,F],
\]  
where $\delta\colon\Omega B\to F$ is the connecting map of the fibration. Since $\id_{\Omega B}$ is in the image of $(\Omega p)_*$ by assumption, the map $\delta$ is null homotopic. It follows that the left and right maps in the exact sequence
\[
  [A,\Omega^2 B]\overset{(\Omega\delta)_*}\longrightarrow [A,\Omega F]\to [A,\Omega E]\to [A,\Omega B]\overset{\delta_*}\longrightarrow [A,F]
\]  
are trivial, resulting in a short exact sequence of groups $1\to [A,\Omega F]\to [A,\Omega E]\to [A,\Omega B]\to 1$. Taking adjoints, we obtain the required short exact sequence.
\end{proof}

We use notation~\eqref{cprod} for iterated Whitehead products in a polyhedral product.

\begin{prp}
Let $\alpha\in\Zm$, $|\alpha|\geq 2$, $j\in \supp\alpha$. 

The iterated Whitehead product $c(\alpha-j,j,t_\X)\colon\Sigma(\Omega \X)^{\wedge\alpha}\to (\X,\ast)^\K$ lifts uniquely through the homotopy fibration
\[
  (C\Omega\X,\Omega\X)^\K\to(\X,\ast)^\K\to \prod_{i=1}^m X_i
\]
to a map
\[
  \widehat{c}(\alpha-j,j;t_\X)\colon \Sigma(\Omega \X)^{\wedge\alpha}\to 
  (C\Omega\X,\Omega\X)^\K.
\]

Similarly, the iterated Whitehead product $c(\alpha-j,j;\incl_{\Sigma\Y})\colon \Sigma\Y^{\wedge\alpha}
\to(\Sigma\Y,\ast)^\K$
lifts uniquely through the homotopy fibration
\[
  (C\Omega\Sigma\Y,\Omega\Sigma\Y)^\K\to(\Sigma\Y,\ast)^\K\to \prod_{i=1}^m \Sigma Y_i
\] to a map
\[
  \widehat{c}(\alpha-j,j;\incl_{\Sigma\Y})\colon
  \Sigma \Y^{\wedge\alpha}\to 
  (C\Omega\Sigma\Y,\Omega\Sigma\Y)^\K.
\]
\end{prp}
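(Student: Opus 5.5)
The plan is to apply Lemma~\ref{exfib2} to the fibration of Proposition~\ref{loopsec}. Since $\Omega$ of the projection $(\X,\ast)^\K\to\prod X_i$ has a right homotopy inverse, Lemma~\ref{exfib2} with $A=(\Omega\X)^{\wedge(\alpha-j)}\wedge\Omega X_j=(\Omega\X)^{\wedge\alpha}$ gives a short exact sequence of groups
\[
  1\to[\Sigma A,(C\Omega\X,\Omega\X)^\K]\to[\Sigma A,(\X,\ast)^\K]\xrightarrow{p_*}[\Sigma A,\textstyle\prod_i X_i]\to 1.
\]
Injectivity of the first map gives uniqueness of a lift, once one exists. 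Exactness in the middle reduces existence to showing that $p\circ c(\alpha-j,j;t_\X)$ is null homotopic.

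For this I will use naturality of generalised Whitehead products under composition with a map: for maps $f\colon\Sigma X\to Z$, $g\colon\Sigma Y\to Z$ and $h\colon Z\to W$ we have $h\circ[f,g]\simeq[h\circ f,h\circ g]$. This holds because the adjoint of $h\circ[f,g]$ is $\Omega h\circ c\circ(f'\wedge g')$, and $\Omega h$ is a loop map, so it commutes with the commutator. Iterating, $p\circ c(\alpha-j,j;t_\X)\simeq c(\alpha-j,j;p\circ t_\X)$, where $p\circ t_{\X,i}$ is $\ev_{X_i}$ followed by inclusion of the $i$th factor of the product.

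Since $|\alpha|\ge2$, the iterated product has at least one bracket. Its innermost bracket is $[p\circ t_{\X,k},p\circ t_{\X,j}]$ for some $k$. If $k\neq j$, this bracket is null because maps into different factors of a product have null Whitehead product: the adjoints land in commuting factors $\Omega X_k$ and $\Omega X_j$ of $\prod\Omega X_i$, so the commutator map is null. If $k=j$, I will instead project onto each factor $X_i$, since a map into a product is null iff each coordinate is null. The $i$th coordinate is an iterated Whitehead product of the maps $\pi_i p\,t_{\X,l}$, which are trivial for $l\neq i$. Because $|\alpha|\ge2$, every nesting includes at least two letters, so this coordinate contains a bracket with a trivial entry (then it is null by bilinearity/naturality), unless all letters equal $i$.

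\textbf{Main obstacle.} The remaining case is $\alpha=ne_j$, where the coordinate is an iterated self-bracket $[\ev,[\dots[\ev,\ev]]]$ in $X_j$. That coordinate need not be null in general. But in this case $c(\alpha-j,j;t_\X)$ itself already factors through $X_j\hookrightarrow(\X,\ast)^\K$, so the claimed lift is not automatic there. I expect the intended reading is that $\alpha$ has $|\supp\alpha|\ge2$ (as in the wedge decompositions, where $|\supp\alpha|-1$ copies occur), and I will assume this. Under it, the argument reduces to ordering the brackets so that some bracket pairs two distinct indices, which then kills each coordinate.

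The $\Sigma\Y$ case is identical, using $\incl_{\Sigma\Y,i}$ in place of $t_{\X,i}$ and the fibration for $(\Sigma\Y,\ast)^\K$.
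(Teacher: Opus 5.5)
Your argument is correct and is essentially the paper's: the composite with $\prod_i X_i$ is null by naturality of generalised Whitehead products, so a lift exists, and uniqueness comes from Lemma~\ref{exfib2} applied through the looped section of Proposition~\ref{loopsec}. Your caveat is justified and can be stated more strongly: the paper's assertion that the composite is null tacitly needs $|\supp\alpha|\ge2$ (or $X_j$ an H-space, as for $\CC P^\infty$). Indeed, for $Y_j=S^1$ and $\alpha=2e_j$ the $j$th coordinate of the composite is $[\iota_2,\iota_2]=2\eta_2\neq0$ in $\pi_3(S^2)$, so no lift exists at all; this does no harm, because in the paper's applications the lifts are only needed for $j\in\Theta_\K(\alpha)$, which forces $|\supp\alpha|\ge2$.
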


\begin{proof}
The definition of the Whitehead product implies that the composition of $c(\alpha-j,j,t_\X)\colon\allowbreak\Sigma(\Omega \X)^{\wedge\alpha}\to (\X,\ast)^\K$ with the map $(\X,\ast)^\K\to \prod_{i=1}^m X_i$ is null homotopic, and therefore $c(\alpha-j,j,t_\X)$ lifts to a map to $(C\Omega\X,\Omega\X)^\K$. The uniqueness of the lift follows from Lemma~\ref{exfib2}, since the fibration in question has a section after looping by Proposition~\ref{loopsec}. The second statement is proved similarly.
\end{proof}

We now define the map
\[
  g_\K=\bigvee c(J\setminus j,j;t_\X)\colon
  \bigvee_{J\subset[m]}\bigvee_{j\in\Theta_\K(J)}
  \Sigma(\Omega\X)^{\wedge J} \to (\X,\ast)^\K
\]
given by the wedge of GPTW elements (see \S\ref{notiwp}), and its lift
\[
  \widehat g_\K=\bigvee \widehat{c}(J\setminus j,j;t_\X)\colon
  \bigvee_{J\subset[m]}\bigvee_{j\in\Theta_\K(J)}
  \Sigma(\Omega\X)^{\wedge J} \to (C\Omega\X,\Omega\X)^\K.
\]  
These maps appear in a natural generalisation of diagram~\eqref{s2djfib}.
We have the unit and counit maps $\ev_X\colon\Sigma\Omega X\to X$ and $E_Y\colon Y\to\Omega\Sigma Y$ for the pair of adjoint functors $\Sigma$ and $\Omega$. 
The maps $r_\K:=(C\Omega \ev_\X,\Omega\ev_\X)^\K$ and $t_\K:=(\ev_{\X},\ast)^{\K}$ define a map between the homotopy fibrations in the rows of the following diagram:
\begin{equation}
\label{phpfib}
\xymatrix@R=0.8pc{
  & (C\Omega\Sigma\Omega\X,\Omega\Sigma\Omega\X)^\K
  \ar[dd]^-{r_{\K}}  \ar[r]^-{\iota}
  & (\Sigma\Omega\X,\ast)^\K
  \ar[dd]^-{t_\K} \ar[r]
  & \prod_{i=1}^m\Sigma\Omega X_i
  \ar[dd]^-{\prod\ev_{X_i}} \\
  \Sigma(\Omega\X)^{\wedge\alpha}
  \ar@/^0.8pc/[ur]^-{\widehat{c}(\alpha-j,j;\incl_{\Sigma\Omega\X})
  \ \ \ }
  \ar@/_0.8pc/[dr]_-{\widehat{c}(\alpha-j,j;t_\X)\ }\\
  &(C\Omega\X,\Omega\X)^\K
  \ar@<1ex>@/^/[uu]^-{q_{\K}}
  \ar[r]^-{}
  & (\X,\ast)^\K
  \ar[r]
  & \prod_{i=1}^m X_i.
}
\end{equation} 
The map $r_\K$ has a right homotopy inverse $q_\K:=(C E_{\Omega\X},E_{\Omega\X})^\K$, since the composite $\Omega\ev_X\circ E_{\Omega X}\colon\Omega X\to\Omega\Sigma\Omega X\to\Omega X$ is the identity map. 

The next proposition, generalising~\cite[Lemma 5.4]{vv}, shows that the left triangles in~\eqref{phpfib} are also commutative.

\begin{prp}
\label{prp:f and s commute with lifts}
For $\alpha\in\Zm$, $|\alpha|\geq 2$ and $j\in\supp\alpha$, the following identities hold up to a homotopy:
\begin{enumerate}
    \item $r_\K\circ\widehat{c}(\alpha-j,j;\incl_{\Sigma\Omega\X})=\widehat{c}(\alpha-j,j;t_\X)\colon\Sigma(\Omega\X)^{\wedge\alpha}\to (C\Omega\X,\Omega\X)^\K$;
    \item $q_\K\circ \widehat{c}(\alpha-j,j;t_\X)=\widehat{c}(\alpha-j,j;\incl_{\Sigma\Omega\X})\colon\Sigma(\Omega\X)^{\wedge\alpha}\to (C\Omega\Sigma\Omega\X,\Omega\Sigma\Omega\X)^\K$.
\end{enumerate}
\end{prp}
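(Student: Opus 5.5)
The plan is to reduce both identities to the uniqueness of lifts, via Lemma~\ref{exfib2} and the preceding proposition. Both sides of (1) are maps $\Sigma(\Omega\X)^{\wedge\alpha}\to(C\Omega\X,\Omega\X)^\K$. By Lemma~\ref{exfib2}, applied with $A=(\Omega\X)^{\wedge\alpha}$ (the source is a suspension), composition with the fibre inclusion $(C\Omega\X,\Omega\X)^\K\to(\X,\ast)^\K$ is injective on $[\Sigma(\Omega\X)^{\wedge\alpha},-]$. This uses Proposition~\ref{loopsec}, which gives a right homotopy inverse of $\Omega$ of the projection. So it suffices to check that both sides agree after composing into $(\X,\ast)^\K$.

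For (1), composing the left side with the fibre inclusion and using homotopy commutativity of the left square in~\eqref{phpfib} gives $t_\K\circ\iota\circ\widehat c(\alpha-j,j;\incl_{\Sigma\Omega\X})=t_\K\circ c(\alpha-j,j;\incl_{\Sigma\Omega\X})$. Commutativity of that square holds because $r_\K$ and $t_\K$ are induced by the same map of pairs: $\ev$ on the $X$ level and $\Omega\ev$ on the fibre, compatibly with the Denham--Suciu fibration, which is natural in $\X$. By naturality of generalised Whitehead products under post-composition, this equals $c(\alpha-j,j;t_\K\circ\incl_{\Sigma\Omega\X})$. Since $t_\K\circ\incl_{\Sigma\Omega\X,i}=\incl_{\X,i}\circ\ev_{X_i}=t_{\X,i}$, we obtain $c(\alpha-j,j;t_\X)$. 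This is the composite of the right-hand side with the fibre inclusion, so uniqueness of lifts gives (1).

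For (2), I would use the same reduction in the top fibration $(C\Omega\Sigma\Omega\X,\Omega\Sigma\Omega\X)^\K\to(\Sigma\Omega\X,\ast)^\K\to\prod\Sigma\Omega X_i$, which also has a looped section. It then suffices to show that $\iota\circ q_\K\circ\widehat c(\alpha-j,j;t_\X)\simeq c(\alpha-j,j;\incl_{\Sigma\Omega\X})$. The main obstacle is here, because $q_\K$ is not induced by a map of the total spaces $(\X,\ast)^\K\to(\Sigma\Omega\X,\ast)^\K$; there is no such map covering $E$. I would therefore work at the loop level. The adjoint of a generalised Whitehead product is the iterated commutator of adjoints in $\Omega(\text{total space})$, and by Proposition~\ref{loopsec} $\Omega(\text{fibre})\to\Omega(\text{total})$ is injective on homotopy classes from any space. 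Thus the adjoint of $\widehat c(\alpha-j,j;t_\X)$ is the unique factorisation of the iterated commutator of the maps $\Omega X_i\to\Omega(\X,\ast)^\K$ through $\Omega(C\Omega\X,\Omega\X)^\K$.

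The key step is then to use the explicit structure: commutators of the vertex inclusions land in the fibre, and $E_{\Omega X_i}$ followed by $\Omega\incl_{\Sigma\Omega\X,i}$ gives the adjoint of $\incl_{\Sigma\Omega\X,i}$. I would try to build a map $\Omega(\X,\ast)^\K\supset(\text{commutator subspace})\to\Omega(\Sigma\Omega\X,\ast)^\K$ compatible with $q_\K$, for instance by using the Porter-type splitting $\Omega(\X,\ast)^\K\simeq\prod\Omega X_i\times\Omega(C\Omega\X,\Omega\X)^\K$ and its natural analogue for $\Sigma\Omega\X$. Alternatively, and more simply, I would argue that $\widehat c(\alpha-j,j;t_\X)$ factors through the wedge case $\K=\L$ by naturality in $\K$. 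There one can use the explicit identification of the fibre of $\bigvee X_i\to\prod X_i$ as a natural functor of $\Omega\X$ (Ganea/Gray: $\Sigma$ of smashes of $\Omega X_i$). That fibre is then natural in the loop spaces, so $q_\L$ is the map induced by $E$ on that functor. On this description the lift of the iterated Whitehead product of the $t_{\X,i}$ is visibly the same natural construction applied to $\Omega\X$, and it commutes with $E$. Finally, I would transport the result to general $\K$ via the map of fibrations induced by $\L\subset\K$, which commutes with $q$.
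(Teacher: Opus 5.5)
Your proof of (1) is correct and is the paper's argument: uniqueness of lifts via Lemma~\ref{exfib2}, naturality of Whitehead products, and $t_\K\circ\incl_{\Sigma\Omega\X,i}=t_{\X,i}$. For (2), your reduction to checking $\iota\circ q_\K\circ\widehat c(\alpha-j,j;t_\X)\simeq c(\alpha-j,j;\incl_{\Sigma\Omega\X})$, and your reduction from $\K$ to $\L$, are also valid.

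The gap is the decisive step: that the lift $\widehat c(\alpha-j,j;t_\X)$ is ``visibly the same natural construction applied to $\Omega\X$'' and therefore commutes with $E$. The fibre $(C\Omega\X,\Omega\X)^\L$ and the map $q_\L=(CE,E)^\L$ are functorial in the underlying spaces $\Omega X_i$, but the lift is not. When $\alpha$ has a repeated index, the lift involves the loop multiplication of $\Omega X_i$; compare Proposition~\ref{combaues}, where $H\mu_{X_j}$ appears. The map $E_{\Omega X}\colon\Omega X\to\Omega\Sigma\Omega X$ is not an H-map. Your loop-level alternative meets the same obstruction: the natural candidate $\prod E\times\Omega q_\K$ is not an H-map, so it does not send commutators to commutators.

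In fact no argument can close this step, because (2) is false for non-squarefree $\alpha$. Take $m=2$, $\K=\L$, $X_1=X_2=\CC P^\infty$, $\alpha=(2,1)$ and $j=1$.
\begin{itemize}
\item We have $(C\Omega\X,\Omega\X)^\L\simeq S^1*S^1=S^3$ and $\Sigma(\Omega\X)^{\wedge\alpha}\simeq S^4$. So $\widehat c(\alpha-j,j;t_\X)$ lies in $\pi_4(S^3)\cong\ZZ_2$; it is $\eta$, by Proposition~\ref{prp:iij}.
\item Post-composition is a homomorphism, so $\iota\circ q_\L\circ\widehat c(\alpha-j,j;t_\X)$ has order at most $2$.
\item On the other hand, $c(\alpha-j,j;\incl_{\Sigma\Omega\X})=[s_1,[s_2,s_1]]\in\pi_4(S^2\vee S^2)$ is a basic Hilton--Milnor product, so it has infinite order.
\end{itemize}
More globally, (2) for all $\alpha$ would make the equivalence of Theorem~\ref{thm:fiber of X^L}(2) factor through $q_\L$, hence through the finite complex $\mathcal Z_\L$, which is impossible.

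The paper's own proof has the same gap. It asserts the square $q_\K\circ\widehat c(\alpha-j,j;t_\X)\simeq\widehat c(\alpha-j,j;t_{\Sigma\Omega\X})\circ\Sigma(E_{\Omega\X})^{\wedge\alpha}$, justified only by $q_\K=(CE_{\Omega\X},E_{\Omega\X})^\K$. This square fails in the example above, since its lower-left composite is $\widehat c(\alpha-j,j;\incl_{\Sigma\Omega\X})$.

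Statement (2) can hold at most for squarefree $\alpha=\supp\alpha$, and that is the only case used later (for $q'_\K$ in~\eqref{s2djfib}). In that case your naturality idea is the right one. You would still have to actually exhibit the lift as a construction natural in the spaces $\Omega X_i$, rather than assert that it is.
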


\begin{proof}
Since lifts of Whitehead products through the fibration in the bottom row of~(\ref{phpfib}) are unique up to homotopy, statement (1) is equivalent to the identity $t_\K\circ c(\alpha-j,j;\incl_{\Sigma\Omega\X})=c(\alpha-j,j;t_\X)$. This follows by naturality of Whitehead products, since
$t_\K\circ \incl_{\Sigma\Omega\X,i}=t_{\X,i}$.

To prove (2), it is sufficient to show that $\iota\circ q_\K\circ \widehat{c}(\alpha-j,j;t_\X)=c(\alpha-j,j;\incl_{\Sigma\X})$, where $\iota$ is the fibre inclusion in~\eqref{phpfib}.
Consider the diagram
\[
\xymatrix{
\Sigma(\Omega\X)^{\wedge\alpha}
\ar[d]^-{\Sigma(E_{\Omega \X})^{\wedge\alpha}}
\ar[rrr]^-{\widehat{c}(\alpha-j,j;t_{\X})}
&&&
(C\Omega\X,\Omega\X)^\K\ar[d]^-{q_\K}
\\
\Sigma(\Omega\Sigma\Omega\X)^{\wedge\alpha}
\ar[rrr]^-{\widehat{c}(\alpha-j,j;t_{\Sigma\Omega\X})}
\ar[rrrd]_-{c(\alpha-j,j;t_{\Sigma\Omega\X})}
&&&
(C\Omega\Sigma\Omega\X,\Omega\Sigma\Omega\X)^\K
\ar[d]^-{\iota}\\
&&&
(\Sigma\Omega\X,\ast)^\K,
}
\]
which is commutative since $q_\K=(CE_{\Omega\X},E_{\Omega \X})^\K$. 
The composition along the left side of the diagram is an iterated Whitehead product of the maps 
\[
  t_{\Sigma\Omega\X,i}\circ \Sigma E_{\Omega X_i}=
  \incl_{\Sigma\Omega\X,i}\circ\ev_{\Sigma\Omega X_i}
  \circ \Sigma E_{\Omega X_i}=\incl_{\Sigma\Omega\X,i}
  \colon\Sigma\Omega X_i\to (\Sigma\Omega\X,\ast)^\K.
\]
Hence, this composition is $c(\alpha-j,j;\incl_{\Sigma\Omega\X})$,
as required. 
\end{proof}

\begin{thm}
\label{thm:fiber of X^L}
Let $\L$ be the simplicial complex consisting of $m$ disjoint points.
\begin{enumerate}
    \item Let $X_1,\dots,X_m$ be simply connected CW-complexes. Then the map
\[
  \widehat g_\L=\bigvee \widehat{c}(J\setminus j,j;t_\X)\colon
  \bigvee_{J\subset[m]}\bigvee_{j\in J\setminus\max(J)}
  \Sigma(\Omega\X)^{\wedge J} \to (C\Omega\X,\Omega\X)^\L
\]
is a homotopy equivalence. 

\item Let $Y_1,\dots,Y_m$ be connected CW-complexes. Then the map
\[
  \bigvee_{\alpha,j}\widehat{c}(\alpha-j,j;\incl_{\Sigma\Y})
  \colon\bigvee_{\alpha\in\Zm}
  \bigvee_{j\in\supp\alpha\setminus\max(\supp\alpha)}
  \Sigma\Y^{\wedge \alpha} \to (C\Omega\Sigma\Y,\Omega\Sigma\Y)^\L
\]  
is a homotopy equivalence.
\end{enumerate}
\end{thm}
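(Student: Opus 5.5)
The plan is to compare both maps with the classical decompositions of the fibre of the wedge-into-product map, and then check that the Whitehead product maps realise these decompositions. For $\K=\L$ the fibre $(C\Omega\X,\Omega\X)^\L$ of $\bigvee X_i\to\prod X_i$ is known (Ganea for $m=2$, Gray and Porter in general, and \cite[Theorem~7.4]{theriault_dual}) to have the homotopy type
\[
  \bigvee_{J\subset[m]}\bigl(\Sigma(\Omega\X)^{\wedge J}\bigr)^{\vee(|J|-1)},
\]
so the sources and targets in (1) have the same homotopy type. Since both spaces are simply connected (in (1) because each $X_i$ is simply connected, so $\Omega X_i$ is connected; in (2) because each $Y_i$ is connected), it suffices by Whitehead's theorem to show that $\widehat g_\L$ induces an isomorphism in integral homology. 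The natural route is through the loop spaces. By Proposition~\ref{loopsec}, $\Omega(\X,\ast)^\L\simeq\prod\Omega X_i\times\Omega(C\Omega\X,\Omega\X)^\L$. Moreover, $H_*(\Omega(X_1\vee\cdots\vee X_m))$ is the free product (coproduct) of the algebras $H_*(\Omega X_i)$, at least over a field, or integrally when these homologies are torsion-free; in general one argues with chains or localises at each prime.

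For (1), I would use the algebraic model: the kernel of the map from the coproduct $\coprod_i H_*(\Omega X_i)$ to $\bigotimes_i H_*(\Omega X_i)$ is a free associative algebra. It is generated by the iterated commutators $[a_{i_1},[\dots[a_{i_k},a_j]\dots]]$ with $a_i\in\widetilde H_*(\Omega X_i)$, taken over $J=\{i_1<\dots<i_k\}\sqcup\{j\}$ with $j\in J\setminus\max(J)$, and these generators are exactly $\widetilde H_*((\Omega\X)^{\wedge J})$ for each such pair $(J,j)$. This is the loop-homology form of Ganea/Gray, in the spirit of \cite[Theorem~4.3]{gptw} but with generalised coefficients. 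The adjoint of $\widehat c(J\setminus j,j;t_\X)$ is the iterated Samelson (commutator) product of the maps $\Omega X_i\to\Omega(\X,\ast)^\L$, so in homology its restriction to the bottom part realises precisely these commutators. Hence $\Omega\widehat g_\L$ induces an isomorphism of the Pontryagin algebras $T\bigl(\bigoplus\widetilde H_*((\Omega\X)^{\wedge J})\bigr)\to H_*(\Omega(C\Omega\X,\Omega\X)^\L)$ by the Bott--Samelson theorem. Consequently $\widehat g_\L$ is a homology isomorphism after one delooping, which is enough because both spaces are simply connected suspensions or wedges of suspensions. To avoid coefficient issues, I would instead first do the argument with field coefficients for each prime and for $\QQ$, and then conclude integrally.

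For (2), I would write $\Sigma\Y=\X$ and apply (1), noting that $(C\Omega\Sigma\Y,\Omega\Sigma\Y)^\L$ is exactly the fibre in question. By the James splitting $\Sigma\Omega\Sigma Y_i\simeq\bigvee_{k\ge1}\Sigma Y_i^{\wedge k}$, each summand $\Sigma(\Omega\Sigma\Y)^{\wedge J}$ splits as $\bigvee_{\alpha:\supp\alpha=J}\Sigma\Y^{\wedge\alpha}$. The remaining task is to show that, under this splitting, the Whitehead product $c(J\setminus j,j;t_{\Sigma\Y})$ restricted to the $\alpha$-summand agrees with $c(\alpha-j,j;\incl_{\Sigma\Y})$, modulo terms that are triangular with respect to the filtration by $|\alpha|$. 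Alternatively, I would repeat the homology argument directly. Here $H_*(\Omega\Sigma Y_i)=T(\widetilde H_*(Y_i))$, so the coproduct is $T(\bigoplus\widetilde H_*Y_i)$, and the kernel to the tensor product is free on the commutators $[y_1,\dots,[y_k,y_j]]$, ordered so that the last generator is not maximal. These commutators are in bijection with the pairs $(\alpha,j)$, and the adjoints of $\widehat c(\alpha-j,j;\incl_{\Sigma\Y})$ hit them exactly.

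I expect the main obstacle to be the algebraic step: identifying an explicit free generating set of the kernel Hopf subalgebra as nested commutators with the prescribed index pattern (with $j$ not the maximum, and with repeated letters allowed in (2)), integrally and with the correct signs. I would first attempt this by a PBW / Lazard-elimination argument: eliminate the generators one at a time, starting from the largest index, and induct on $m$ using the split fibration for $X_m\vee(\text{rest})$.
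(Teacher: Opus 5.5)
Your framework is sound, and it would give a homological proof rather different from the paper, which argues nothing itself and simply quotes \cite[Theorems~7.2, 7.4]{theriault_dual}. The framework runs as follows: work over each field $\k$; use Proposition~\ref{loopsec} together with $H_*(\Omega\bigvee_iX_i;\k)\cong\coprod_iA_i$, where $A_i=H_*(\Omega X_i;\k)$, to identify $H_*(\Omega(C\Omega\X,\Omega\X)^\L;\k)$ with the Hopf kernel $\mathcal N$ of $\coprod_iA_i\to\bigotimes_iA_i$; a homology isomorphism of loop spaces of simply connected spaces is a weak equivalence; and a mapping-cone argument passes from all $\mathbb F_p$ and $\QQ$ to $\ZZ$.

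The gap is the step that carries all the content. It is left unproved, and as formulated it is false. The homology of a Samelson product is $a\otimes b\mapsto\sum\pm\chi(a')\chi(b')a''b''$. This equals the graded commutator only when $a,b$ are primitive, and graded commutators of non-primitive classes need not lie in $\mathcal N$.

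Take $m=2$, $X_1=X_2=S^3$ and $\k=\QQ$, so $A_1=\QQ[x]$ and $A_2=\QQ[y]$ with $x,y$ primitive of degree $2$. Take $J=\{1,2\}$, $j=1$.
\begin{itemize}
\item Your generator $[a_2,a_1]$ with $a_2=y$, $a_1=x^2$ is $yx^2-x^2y$. Its coproduct contains the term $2(yx-xy)\otimes x$, whose right factor survives in $A_1\otimes A_2$, so it is not in $\mathcal N$.
\item The adjoint of $\widehat c(\{2\},1;t_\X)$ actually sends $y\otimes x^2$ to $\pm(x^2y-2xyx+yx^2)=\pm[x,[x,y]]$.
\end{itemize}
Since $H_*(\Omega X_i;\k)$ is in general not primitively generated (and you let $a_i$ range over all of it anyway), both your generating set for $\mathcal N$ and the claim that $\Omega\widehat g_\L$ hits ``precisely these commutators'' fail. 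Your direct route to (2) has the same problem: $\widetilde H_*(Y_i)\subset T(\widetilde H_*(Y_i))$ is primitive only when $Y_i$ has trivial reduced diagonal in homology, which fails already for $Y_i=\CC P^2$.

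What actually has to be proved is that the iterated \emph{homology Samelson products} of classes in $\bigotimes_{i\in J}\widetilde H_*(\Omega X_i)$ (respectively in $\widetilde H_*(\Y^{\wedge\alpha})$) freely generate $\mathcal N$. This is the algebraic shadow of the theorem itself. Lazard elimination only handles the primitively generated model. So you also need a multiplicative filtration in which Samelson products have commutators as leading terms and $\mathcal N$ has the expected associated graded. You further need a comparison that does not assume finite type, since the $X_i$ are arbitrary.

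For (2) this is feasible. The tensor-length grading of $T(\bigoplus_i\widetilde H_*(Y_i))$ pushes every correction term to strictly higher length, and in each degree only finitely many lengths occur.

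For (1) your sketch gives no such tool. On $\coprod_iA_i$ the word-length filtration puts the correction terms $\chi(a')\chi(b')a''b''$ in \emph{higher} filtration than $ab$, so it does not reduce to the primitive case. You would need a genuinely different argument, for instance an augmentation-ideal filtration analysis or a topological induction on $m$ via $X_m\vee X'\to X_m\times X'$, and your proposal does not supply one.

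Your first route to (2) has a further unproved step. It asserts that the restriction of $c(J\setminus j,j;t_{\Sigma\Y})$ to the James summand $\Sigma\Y^{\wedge\alpha}$ agrees with $c(\alpha-j,j;\incl_{\Sigma\Y})$ up to triangular terms. Relating Whitehead products with repeated entries to ones without brings in Hopf-type maps (compare Proposition~\ref{combaues}), so this needs its own proof.
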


\begin{proof}
(1) is \cite[Theorem~7.2]{theriault_dual} (see \cite[Remark~7.7]{theriault_dual} for the description of the explicit indexing set), and (2) is proved in \cite[Theorem~7.4]{theriault_dual}.
\end{proof}

Here is an analogue of Proposition~\ref{diagDJL} for more general polyhedral products: 

\begin{prp}
\label{prp:f and s for X^L}
For simply connected CW-complexes $X_1,\dots,X_m$ and $\K=\L$, the diagram of homotopy fibrations~\eqref{phpfib} takes the form
\[
  \xymatrix@C=1.5em{
  \bigvee_{\alpha\in\Zm}\bigvee_{j\in \Theta_\L(\alpha)}
  \Sigma(\Omega\X)^{\wedge \alpha}
  \ar[rrr]^-{\vee c(\alpha-j,j;\incl)}
  \ar[d]^{r'_\L}
  &&&
  \bigvee_{i=1}^m \Sigma\Omega X_i
  \ar[r]
  \ar[d] 
  &
  \prod_{i=1}^m\Sigma\Omega X_i\ar[d]\\
  \bigvee_{J\subset[m]}\bigvee_{j\in\Theta_\L(J)}
  \Sigma(\Omega\X)^{\wedge J}
  \ar[rrr]^-{\vee c(J\setminus j,j;t)}
  \ar@<1ex>@/^/[u]^-{q'_{\L}} 
  &&&
  \bigvee_{i=1}^m X_i\ar[r]
  &
  \prod_{i=1}^m X_i,
  }
\]
where $q'_\L$ is the inclusion of a subwedge.
\end{prp}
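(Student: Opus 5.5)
To prove the proposition, I specialise diagram~\eqref{phpfib} to $\K=\L$ and identify each corner using Theorem~\ref{thm:fiber of X^L}.

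For the right-hand part of the diagram, note that $(\Sigma\Omega\X,\ast)^\L=\bigvee_i\Sigma\Omega X_i$ and $(\X,\ast)^\L=\bigvee_i X_i$. The middle vertical map $t_\L=(\ev_\X,\ast)^\L$ is then the wedge $\bigvee\ev_{X_i}$. The right-hand square is the naturality square for the inclusion of a wedge into a product, so it needs no argument.

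For the fibres, I apply Theorem~\ref{thm:fiber of X^L}(2) with $Y_i=\Omega X_i$. These are connected because the $X_i$ are simply connected. This identifies $(C\Omega\Sigma\Omega\X,\Omega\Sigma\Omega\X)^\L$ with $\bigvee_{\alpha,j}\Sigma(\Omega\X)^{\wedge\alpha}$ via the lifts $\widehat c(\alpha-j,j;\incl_{\Sigma\Omega\X})$, with $j\in\Theta_\L(\alpha)=\supp\alpha\setminus\max(\supp\alpha)$. Theorem~\ref{thm:fiber of X^L}(1) identifies $(C\Omega\X,\Omega\X)^\L$ with $\bigvee_{J,j}\Sigma(\Omega\X)^{\wedge J}$ via $\widehat g_\L$. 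Under these equivalences the fibre inclusions become the wedges of Whitehead products $\vee c(\alpha-j,j;\incl)$ and $\vee c(J\setminus j,j;t)$, because each $\widehat c$ is by definition a lift of the corresponding $c$.

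I then define $r'_\L$ and $q'_\L$ as the conjugates of $r_\L$ and $q_\L$ by these equivalences, and verify that $q'_\L$ is homotopic to the subwedge inclusion. For this I restrict $q_\L\circ\widehat g_\L$ to the summand indexed by $(J,j)$ and apply Proposition~\ref{prp:f and s commute with lifts}(2) with $\alpha=J$. This gives $q_\L\circ\widehat c(J\setminus j,j;t_\X)\simeq\widehat c(J\setminus j,j;\incl_{\Sigma\Omega\X})$, which is exactly the wedge summand of the top-left wedge indexed by the squarefree multi-index $\alpha=J$ with the same $j$. Since $\Theta_\L(J)=J\setminus\max J$ agrees in both indexings, the summands match. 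So $q'_\L$ is the inclusion of the subwedge of squarefree summands, and $r'_\L\circ q'_\L\simeq\id$ follows from $r_\L\circ q_\L\simeq\id$.

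Commutativity of the left square with $r'_\L$ comes from Proposition~\ref{prp:f and s commute with lifts}(1). That result gives $r_\L\circ\widehat c(\alpha-j,j;\incl)\simeq\widehat c(\alpha-j,j;t_\X)$, and composing with the fibre inclusion gives $t_\L\circ c(\alpha-j,j;\incl)=c(\alpha-j,j;t_\X)$. To make $r'_\L$ explicit on each summand, one writes $\widehat c(\alpha-j,j;t_\X)$ in terms of the wedge decomposition of the bottom fibre.

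The main subtlety is bookkeeping rather than anything deep. One must check that the homotopy equivalences of Theorem~\ref{thm:fiber of X^L} are exactly the wedges of the lifts $\widehat c$, so that the compatibility statements of Proposition~\ref{prp:f and s commute with lifts} transfer summand by summand. One must also use the uniqueness of lifts (Lemma~\ref{exfib2} together with Proposition~\ref{loopsec}) to pass from equalities after composing with the fibre inclusion back to homotopies into the fibres. Wedges of these homotopies are then homotopies of wedge maps, since each summand is a suspension and hence a co-H-space.
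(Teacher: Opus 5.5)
Your proposal is correct and follows the paper's own argument. The paper proves this in one line as a combination of Theorem~\ref{thm:fiber of X^L} (both fibres are wedges via the lifts $\widehat c$, applied with $Y_i=\Omega X_i$) and Proposition~\ref{prp:f and s commute with lifts} (part (2) sends the $(J,j)$ summand to the squarefree summand $\alpha=J$, and part (1) gives compatibility with $r_\L$); you have written out that bookkeeping in detail, and the closing appeal to co-H-structures is unnecessary, since a wedge of based homotopies is always a homotopy of the wedge maps.
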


\begin{proof}
This is a combination of Theorem \ref{thm:fiber of X^L} and Proposition \ref{prp:f and s commute with lifts}.
\end{proof}

The description of the map $r'_\L$ (or map $r_\L$ in~\eqref{phpfib}) in terms of the wedge summands reduces to the following question. 

\begin{prb}
\label{prb:problem-of-identifying-widehat-c}
For $\alpha\in\Zm$ and $j\in\supp\alpha$, describe the image of the map \begin{equation}
\label{eqn:problem-of-identifying-widehat-c}    
  \widehat{c}(\alpha-j,j;t_\X)\colon
  \Sigma(\Omega\X)^{\wedge\alpha}\to (C\Omega\X,\Omega\X)^\L\overset\simeq\longrightarrow
  \bigvee_{J\subset[m]}
  \bigvee_{j\in\Theta_\L(J)}\Sigma(\Omega\X)^{\wedge J}
\end{equation}
under the homotopy equivalence of Theorem \ref{thm:fiber of X^L}.
\end{prb}

\begin{rmk}
For the case $X_i=\CC P^\infty$, the expression of the map $r'_\L$ in terms of the wedge summands is given in Proposition~\ref{diagDJL}. The map~\eqref{eqn:problem-of-identifying-widehat-c} is expressed in terms of Whitehead products and compositions with the iterated Hopf element using Lemma~\ref{lmm:computing c(alpha-j,j)}. For these computations, it is crucial that $\Omega \CC P^\infty\simeq S^1$ is a suspension. There is a similar description in the case $X_i=\HH P^\infty$, with $\eta\in\pi_{n+1}(S^n)$ replaced by the quaternionic Hopf element $\nu\in\pi_{n+3}(S^n)$.

Using a more general distributivity formula \cite[II.(3.5)]{baues}, it might be possible to extend our approach to the case when the spaces $\Omega X_i$ are homotopy equivalent to finite dimensional CW-complexes (e.g. $X_i=BG_i$ for connected Lie groups $G_i$).
\end{rmk}

By Proposition~\ref{prp:f and s for X^L}, the map in~\eqref{eqn:problem-of-identifying-widehat-c} is the inclusion of a wedge summand if $\alpha=\supp\alpha$, i.\,e., if the iterated commutator $c(\alpha-j,j;t_\X)$ has no repeating indices. For the simplest iterated commutator with repeating indices, we have the following description that uses a formula of Baues (see Appendix~\ref{sec:baues_formula}).

\begin{prp}\label{combaues}
Let $i,j\in [m]$ be distinct, and consider the Whitehead product
\[
  [[t_{\X,i},t_{\X,j}],t_{\X,j}]\colon
  \Sigma(\Omega X_i\wedge\Omega X_j\wedge \Omega X_j)\to 
  \bigvee_{i=1}^m X_i
\]  
and its unique lift 
\[
  \widehat{c}\colon
  \Sigma(\Omega X_i\wedge\Omega X_j\wedge \Omega X_j)
  \to(C\Omega\X,\Omega\X)^\L
  \simeq\bigvee_{J,j}\Sigma(\Omega\X)^{\wedge J}.
\] 
Then 
\[
  [[t_{\X,i},t_{\X,j}],t_{\X,j}]\simeq[t_{\X,i},t_{\X,j}]\circ (\id_{\Omega X_i}\wedge H\mu_{X_j}),
\]  
where $H\mu_{X_j}\colon\Sigma(\Omega X_j\wedge\Omega X_j)\to \Sigma\Omega X_j$ is the Hopf construction for the loop space multiplication.
In particular, this identifies the map $\widehat{c}$ with the composition of $\id_{\Omega X_i} \wedge H\mu_{X_j}\colon\Sigma(\Omega X_i\wedge\Omega X_j\wedge\Omega X_j)\to\Sigma(\Omega X_i\wedge\Omega X_j)$ and the inclusion map.
\end{prp}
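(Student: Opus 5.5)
The plan is to derive the identity from Baues' formula (A.1.23), proved in Appendix~\ref{sec:baues_formula}, and then transport it through the homotopy equivalence of Theorem~\ref{thm:fiber of X^L}. Baues' formula is the generalised Barratt--Hilton identity. For maps $f\colon\Sigma A\to Z$ and $g\colon\Sigma\Omega B\to Z$ with $g=h\circ\ev_B$ for some $h\colon B\to Z$, it gives
\[
  [[f,g],g]\simeq[f,g]\circ(\id_A\wedge H\mu_B).
\]
I would prove this at the level of adjoints in $\Omega Z$. The adjoint of $[[f,g],g]$ is the iterated commutator $(a,x,y)\mapsto c(c(f'(a),\Omega h(x)),\Omega h(y))$. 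Since $\Omega h$ is an H-map, the problem reduces to the universal case $Z=\Sigma A\vee B$.

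In the universal case I would apply the commutator identities in the group-like space $\Omega Z$ (Hall--Witt type identities). They let me rewrite $c(c(u,v),w)$, with $v=\Omega h(x)$ and $w=\Omega h(y)$, in terms of $c(u,vw)$, $c(u,v)$ and $c(u,w)$. The difference $c(u,vw)\cdot c(u,w)^{-1}\cdot c(u,v)^{-1}$, suitably ordered and conjugated, is exactly the commutator of $u$ with the Hopf construction $H\mu(x,y)=\mu(x,y)\mu(y,\cdot)^{-1}\ldots$, realised through the adjoint of $\ev$. More concretely, the generalised distributivity formula \cite[II.(3.5)]{baues} specialises to
\[
  [f,g\circ\zeta]=[f,g]\circ(\id\wedge\zeta)+[[f,g],g]\circ(\id\wedge\gamma_2\zeta)+\cdots
\]
Take $\zeta=H\mu_{X_j}$, viewed as a map to $\Sigma\Omega X_j$, and compose with $\ev$. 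Because $\ev_{X_j}\circ H\mu_{X_j}\simeq *$, the left side vanishes. The James--Hopf invariant $\gamma_2(H\mu)$ is identified with the inclusion of the bottom cell $\Sigma(\Omega X_j\wedge\Omega X_j)$, and the higher terms vanish for connectivity/weight reasons. This yields the identity up to sign, in parallel with the proof of Corollary~\ref{crl:BB-corollary}. The sign does not matter for the homotopy class up to the stated convention, or it can be fixed as in that corollary.

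Next I apply naturality. The identity takes place in $(\Sigma\Omega X_i\vee\Sigma\Omega X_j)$-type universal spaces, and it maps to $\bigvee X_i$ via $t_{\X,i}$ and $t_{\X,j}$, where the map is induced by including the vertices $i,j$ into $\L$. For the last statement, note that $[t_{\X,i},t_{\X,j}]=c(\{i,j\}\setminus j',j';t_\X)$, up to the reordering $[a,b]=\pm[b,a]\circ\tau$, is a GPTW element. Its lift $\widehat c$ is therefore, by Theorem~\ref{thm:fiber of X^L}, the inclusion of the wedge summand $\Sigma(\Omega X_i\wedge\Omega X_j)$. Composing with $\id\wedge H\mu_{X_j}$ gives a lift of $[[t_{\X,i},t_{\X,j}],t_{\X,j}]$. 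By the uniqueness of lifts, which follows from Lemma~\ref{exfib2} and Proposition~\ref{loopsec}, this lift is $\widehat c$.

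The main obstacle is the verification of Baues' formula itself. The difficulty is the identification of $\gamma_2(H\mu)$ in the distributivity formula, together with the control of the remaining terms when $\Omega X_j$ is not a suspension. If that route stalls, I would instead do a direct commutator-calculus homotopy in $\Omega(\Sigma A\vee\Sigma\Omega B)$. That computation uses only the group identities and the definition of the Hopf construction $H\mu(x,y)=\Sigma$ of $(x,y)\mapsto\mu(x,y)$ modulo the axes.
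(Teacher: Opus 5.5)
Your top-level argument is the paper's proof. Apply Theorem~\ref{thm:baues_formula} in the universal case $\Sigma\Omega X_i\vee X_j$ (so $K=\Omega X_i$, $L=X_j$), then push forward along $t_{\X,i}\vee\incl_{\X,j}$, which amounts to the paper's composition with $\ev_{X_i}\vee\id_{X_j}$. Your identification of $\widehat c$ is correct and more explicit than the paper's: lift $[t_{\X,i},t_{\X,j}]$ to the wedge-summand inclusion (up to the twist when $i<j$), precompose with $\id\wedge H\mu_{X_j}$, and use uniqueness of lifts from Lemma~\ref{exfib2} and Proposition~\ref{loopsec}. Since Theorem~\ref{thm:baues_formula} is proved in Appendix~\ref{sec:baues_formula}, citing it finishes the argument. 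The problems are only in the verification of that formula that you go on to sketch.

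The ``more concrete'' distributivity route does not work in this generality.
\begin{itemize}
\item When $\Omega X_j$ is not a sphere, and in particular when it is infinite-dimensional (e.g.\ $X_j=S^n$), there is no connectivity or dimension reason for the terms with $\gamma_n(H\mu)$, $n\ge 3$, to vanish.
\item You would also need an \emph{unstable} identification of $\gamma_2(H\mu)$ with $\pm\id$. Your phrase ``inclusion of the bottom cell'' only makes sense when $\Omega X_j=S^1$.
\item The sign is not negligible. For $X_i=X_j=\HH P^\infty$, the map $\id\wedge H\mu\colon S^{10}\to S^7$ is the suspended quaternionic Hopf map, of order $24$, and it is detected in the fibre $S^7$. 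The sign trick of Corollary~\ref{crl:BB-corollary} used $2\eta_3=0$ and $[\iota_2,\iota_2]=2\eta_2$, which have no analogue here.
\item Your fallback, a commutator computation in $\Omega(\Sigma A\vee\Sigma\Omega B)$, is set in the wrong space, because the identity is false there. For $A=\Omega B=S^1$, Hilton--Milnor shows that $[[\iota_1,\iota_2],\iota_2]$ spans a $\ZZ$-summand of $\pi_4(S^2\vee S^2)$, while $[\iota_1,\iota_2]\circ\eta$ has order $2$. The reason is that the adjoint of the inclusion of $\Sigma\Omega B$ is $E$, which is not multiplicative; the adjoint of $h\circ\ev_B$ is the H-map $\Omega h$.
\end{itemize}

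Your first, adjoint sketch is the right one. It is exactly the appendix proof rewritten in $\Omega Z$, but its last step is misstated. Put $u=f'(a)$, $v=\Omega h(x)$, $w=\Omega h(y)$. Let $\rho_1,\rho_2,\rho_\mu\colon A\times\Omega B\times\Omega B\to A\wedge\Omega B$ be $(a,x,y)\mapsto a\wedge x$, $a\wedge y$, $a\wedge xy$, respectively. The argument then runs as follows.
\begin{itemize}
\item The group identity $c(c(u,v),w)=c(u,v)^{-1}c(u,w)^{-1}c(u,vw)$ holds in $[A\times\Omega B\times\Omega B,\Omega Z]$. This is the Witt--Hall identity at the end of the appendix proof.
\item The relation $vw=\Omega h(xy)$ is the adjoint form of Lemma~\ref{lmm:ev distributes multiplication}.
\item Together these give $[[f,g],g]\circ\Sigma q=-[f,g]\circ\Sigma\rho_1-[f,g]\circ\Sigma\rho_2+[f,g]\circ\Sigma\rho_\mu$.
\item Smashing \eqref{Hopf} for $\mu$ with $A$ gives $(\id_A\wedge H\mu)\circ\Sigma q=-\Sigma\rho_1-\Sigma\rho_2+\Sigma\rho_\mu$. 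Composing on the left with $[f,g]$ preserves sums.
\item Hence both sides agree after precomposition with $\Sigma q$, which has a right homotopy inverse.
\end{itemize}
No ``commutator of $u$ with the Hopf construction'' appears; $H\mu$ enters only through \eqref{Hopf}. The argument works for arbitrary $A$ and $B$, with no James--Hopf invariants and no sign ambiguity.
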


\begin{proof}
We set $K=\Omega X_i$ and $L=X_j$ in Theorem~\ref{thm:baues_formula} to obtain
\[
  [[i,r],r]=[i,r]\circ(\id_{\Omega X_i}\wedge H\mu_{X_j})\colon
  \Sigma\Omega X_i\wedge\Omega X_i\wedge\Omega X_i\to
  \Sigma\Omega X_i\vee X_j,
\]
where $i\colon\Sigma\Omega X_i\hookrightarrow\Sigma\Omega X_i\vee X_j$ and $r\colon\Sigma\Omega X_j\xrightarrow{\ev_{X_j}} X_j\hookrightarrow\Sigma\Omega X_i\vee X_j$. The required identity is obtained by composing the above with $\ev_{X_i}\!\vee\,\id_{X_j}\colon \Sigma\Omega X_i\vee X_j\to X_i\vee X_j$ and observing that $(\ev_{X_i}\!\vee\,\id_{X_j})\circ i=t_{\X,i}$ and
$(\ev_{X_i}\!\vee\,\id_{X_j})\circ r=t_{\X,j}$.
\end{proof}

Proposition~\ref{combaues} is stated for the left nested commutator $[[t_{\X,i},t_{\X,j}],t_{\X,j}]$, whereas the right nested commutators $[t_i,[t_i,t_j]]$ are used in Proposition~\ref{prp:iij} and throughout the paper. A version of the Baues formula for the right nested commutators would involve a variant of the Hopf construction that is conjugate to the standard one.

\appendix

\section{The groups $A_n$}
\label{sec:groups A_n}
Here we describe the graded abelian groups $A_n\subset\pi_*(S^n)$ generated by the identity $\iota\colon S^n\to S^n$, its compositions with the (suspended) Hopf elements $\eta$, and Whitehead products. 
Note that iterated Whitehead products of $\iota$ with itself of length $\geq 4$ vanish (see \cite[XI, (8.8)]{whitehead}).
Together with the identities from Proposition~\ref{prp:properties of hopf}, this implies that $A_n$ is spanned by the finite set of elements
\[\left\{\iota,\eta^k,[\iota,\iota]\circ\eta^k,[\iota,[\iota,\iota]]\circ\eta^k,\;0\leq k\leq 3\right\}\subset\pi_*(S^n).
\]

\begin{thm}
\label{thm:[i,[i,i]]_in_spheres}
Let $I_n\subset\pi_*(S^n)$ be the quasi-Lie subring generated by $\iota_n\in\pi_n(S^n).$ Then
\[
  I_n=
\begin{cases}
\ZZ\langle\iota_n\rangle&\text{for}\quad n=1,3,7;\\
\ZZ\langle\iota_n\rangle\oplus\ZZ_2\langle[\iota_n,\iota_n]\rangle
&\text{for}\quad n=2k+1,~n\neq 1,3,7;\\
\ZZ\langle\iota_n\rangle\oplus\ZZ\langle[\iota_n,\iota_n]\rangle
&\text{for}\quad n=2;\\
\ZZ\langle\iota_n\rangle\oplus\ZZ\langle[\iota_n,\iota_n]\rangle
\oplus\ZZ_3\langle[\iota_n,[\iota_n,\iota_n]]\rangle
&\text{for}\quad n=2k,~n\neq 2.
\end{cases}
\]
\end{thm}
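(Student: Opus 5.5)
By Corollary~\ref{crl:lie_polynomial_normal_form}, applied with the single generator $\iota_n$, the ring $I_n$ is additively spanned by $\iota_n$, $[\iota_n,\iota_n]$ and $[\iota_n,[\iota_n,\iota_n]]$. Longer iterated brackets vanish (\cite[XI, (8.8)]{whitehead}). So the plan is to determine, for each $n$, the order of $[\iota_n,\iota_n]\in\pi_{2n-1}(S^n)$ and of $[\iota_n,[\iota_n,\iota_n]]\in\pi_{3n-2}(S^n)$, and to check that they generate independent cyclic summands. The degrees $n$, $2n-1$, $3n-2$ are distinct for $n\geq 2$, so independence is automatic; for $n=1$ everything beyond $\iota_1$ vanishes because $\pi_{\ge2}(S^1)=0$. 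In the quasi-Lie grading $\deg\iota_n=n-1$, Lemma~\ref{lmm:quasilie_torsion} already gives the upper bounds. For $n$ odd, $\iota_n$ has even degree, so $2[\iota_n,\iota_n]=0$ and $[\iota_n,[\iota_n,\iota_n]]=0$. For $n$ even, $3[\iota_n,[\iota_n,\iota_n]]=0$.

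\emph{Odd $n$.} Here $[\iota_n,\iota_n]=0$ exactly when $S^n$ is an H-space, that is, for $n=1,3,7$ by Adams' Hopf invariant one theorem. In every other odd case $[\iota_n,\iota_n]$ is a nonzero element of order $2$, giving the summand $\ZZ_2$.

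\emph{Even $n$.} The classical Hopf invariant gives $H_0([\iota_n,\iota_n])=\pm2$, so $[\iota_n,\iota_n]$ has infinite order. It remains to decide whether $w=[\iota_n,[\iota_n,\iota_n]]$ vanishes; by the bound above it is $3$-torsion. For $n=2$ we have $[\iota_2,\iota_2]=2\eta_2$, and Proposition~\ref{prp:properties of hopf}(1) applies since $|\eta_2|=3$, giving $[\iota_2,\eta_2]=[\iota_2,\iota_2]\circ\eta_3=2\eta_2\circ\eta_3=0$. Hence $w=2[\iota_2,\eta_2]=0$. This matches the absence of $3$-torsion in $\pi_4(S^2)=\ZZ_2$.

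For even $n\ge4$ the main obstacle is showing $w\neq0$. I would argue at the prime $3$ through the James--Hopf invariant $\gamma_3$, or equivalently the Hilton--Hopf invariant. Write $S^n=\Sigma S^{n-1}$. Then $w$ is the image of the basic product $[x,[x,x]]$ under the fold map $S^n\vee S^n\vee S^n\to S^n$. Using the Hilton--Milnor theorem (Theorem~\ref{thm:hilton-milnor}) together with the combinatorics of $g_3$, the relevant Hopf invariant $\gamma_3(w)\in\pi_{3n-2}(S^{3n-2})\cong\ZZ$ should come out to be a nonzero multiple of $\iota$ whose value is not divisible by $3$, heuristically $\pm 6$ or a unit multiple of it. Because the map $\gamma_3$ is a homomorphism on the relevant group, this would show that $w$ is nonzero and so has order exactly $3$. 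As a fallback I would cite Toda's computation of the unstable $3$-primary $\alpha_1$-family, where $[\iota_{2k},[\iota_{2k},\iota_{2k}]]$ is a known nonzero multiple of $\alpha$-type elements for $k\ge2$. Putting the cases together yields the four rows of the statement.
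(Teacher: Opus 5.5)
\textbf{The gap is the case $n=2k\ge4$.} Your skeleton matches the paper's: the spanning set $\iota_n,[\iota_n,\iota_n],[\iota_n,[\iota_n,\iota_n]]$, the torsion bounds from Lemma~\ref{lmm:quasilie_torsion}, Adams for odd $n$, and $H_0([\iota_n,\iota_n])=\pm2$ for even $n$. The paper gets the same upper bound from Hilton's description of $\FQL(\iota_n)$. The one step with real content is showing $w=[\iota_n,[\iota_n,\iota_n]]\neq0$ for even $n\ge4$, and your detection argument for it cannot work.

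As you note, $\gamma_3\colon\pi_{3n-2}(S^n)\to\pi_{3n-2}(S^{3n-2})\cong\ZZ$ is a homomorphism. You have already shown $3w=0$, so $3\gamma_3(w)=0$ in $\ZZ$, which forces $\gamma_3(w)=0$. A value like $\pm6$ (which is divisible by $3$ anyway) would make $w$ of infinite order, contradicting your own bound.

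Conceptually, the Hurewicz image of the adjoint of $w$ in $H_*(\Omega S^n;\ZZ)=T(x)$, with $|x|=n-1$ odd, is $\pm[x,[x,x]]=\pm[x,2x^2]=0$. So no invariant read off from the integral homology of $\Omega S^n$ can see $w$; this includes James--Hopf or Hilton--Hopf invariants with target $\pi_{3n-2}(S^{3n-2})$. Detecting $w$ is a genuinely unstable $3$-primary problem, tied to the odd-primary Hopf invariant one problem.

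The paper does not prove it; it imports Liulevicius's theorem that $[\iota_{2k},[\iota_{2k},\iota_{2k}]]=0$ if and only if $k=1$. Your fallback (``Toda's computation of the unstable $3$-primary $\alpha_1$-family'') does not name any result covering all $k\ge2$. As written, the case $n=2k\ge4$ is therefore unproved. You need to cite that theorem, or an equivalent one, explicitly.

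\textbf{A smaller slip at $n=2$.} Proposition~\ref{prp:properties of hopf}(1) does not apply to $[\iota_2,\eta_2]$. Since $\eta_2=\iota_2\circ\eta_2$, the element being composed with $\eta$ has dimension $2$, so you must use ($1'$), i.e.\ Corollary~\ref{crl:BB-corollary}. That gives $[\iota_2,\eta_2]=[\iota_2,\iota_2]\circ\eta_3-[[\iota_2,\iota_2],\iota_2]=-w$. Then $w=2[\iota_2,\eta_2]=-2w$ merely reproduces $3w=0$, so the argument is circular. Your closing remark, that $\pi_4(S^2)\cong\ZZ_2$ has no $3$-torsion, is the correct and complete argument in that case.
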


\begin{proof}
Theorem \ref{thm:free_quasilie_basis} gives the following free quasi-Lie rings generated by~$\iota_n$:
\[
  \FQL(\iota_n)=\begin{cases}
  \ZZ\langle\iota_n\rangle\oplus\ZZ_2\langle[\iota_n,\iota_n]
  \rangle
  &\text{for}\quad n=2k+1;\\
  \ZZ\langle\iota_n\rangle\oplus\ZZ\langle[\iota_n,\iota_n]\rangle
  \oplus\ZZ_3\langle[\iota_n,[\iota_n,\iota_n]]\rangle
  &\text{for}\quad n=2k.
  \end{cases}
\]
Note that here we use the topological grading instead of the algebraic one, so the parity is reversed.
Now the result follows from the following identities in $\pi_*(S^n$):
\begin{enumerate}
\item $[\iota_n,\iota_n]=0$ if and only if $n\in\{1,3,7\};$
\item $[\iota_{2k},[\iota_{2k},\iota_{2k}]]=0$ if and only if $k=1.$
\end{enumerate}
Here (1) follows from Adams' solution of Hopf invariant one problem, and (2) is proved by Liulevicius~\cite[Theorem 6]{liulevicius}.
\end{proof}

\begin{prp}\label{pin+k}
The groups $\pi_{n+k}(S^n)$ for $k=0,\dots,4$ are as follows.
\begin{enumerate}
\item $\pi_n(S^n)=\ZZ\langle\iota_n\rangle$ for $n\geq 1$;

\item $\pi_3(S^2)=\ZZ\langle\eta_2\rangle$ and $\pi_{n+1}(S^n)=\ZZ_2\langle\eta_n\rangle$ for $n\geq3$;

\item $\pi_{n+2}(S^n)=\ZZ_2\langle\eta_n^2\rangle$ for $n\geq 2$;

\item $\pi_5(S^2)=\ZZ_2\langle\eta_2^3\rangle$,
$\pi_6(S^3)=\ZZ_4\langle\nu'\rangle\oplus
\ZZ_3\langle\alpha_1(3)\rangle$,\newline 
$\pi_7(S^4)=\ZZ\langle\nu_4\rangle
\oplus\ZZ_4\langle E\nu'\rangle
\oplus\ZZ_3\langle E\alpha_1(3)\rangle$ and\newline 
$\pi_{n+3}(S^n)=\ZZ_8\langle\nu_n\rangle
\oplus\ZZ_3\langle E^{n-3}\alpha_1(3)\rangle$ 
for $n\geq 5$ \newline
with relations $\eta_3^3=2\nu'$, $\eta_4^3=2E\nu'$ and  $\eta_n^3=4\nu_n$ for $n\geq 5$;

\item 
$\pi_6(S^2)=\ZZ_4\langle\eta_2\circ\nu'\rangle
\oplus\ZZ_3\langle\eta_2\circ\alpha_1(3)\rangle$ and $\pi_7(S^3)=\ZZ_2\langle\nu'\circ\eta_6\rangle$ \newline 
with relations $\eta_2^4=2(\eta_2\circ\nu')$ and $\eta_3^4=0$. 
\end{enumerate}
\end{prp}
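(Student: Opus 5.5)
This statement collects standard computations of the low stems, and I would assemble them from Toda's tables \cite{toda} rather than rederive them. The plan is to treat $n=2$ through the Hopf fibration, then to handle $S^3$ as the base case for the stable range, and finally to identify generators and relations.

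The first step is to note that (1) is the Hurewicz theorem. For $n=2$, the Hopf fibration $S^1\to S^3\to S^2$ gives $\eta_{2*}\colon\pi_k(S^3)\cong\pi_k(S^2)$ for $k\ge3$. This yields $\pi_3(S^2)=\ZZ\langle\eta_2\rangle$, $\pi_4(S^2)=\ZZ_2\langle\eta_2\circ\eta_3\rangle$, $\pi_5(S^2)=\ZZ_2\langle\eta_2^3\rangle$ and $\pi_6(S^2)=\eta_2\circ\pi_6(S^3)$, provided the groups $\pi_{3+k}(S^3)$ are known.

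For $S^3$, I would quote Toda: $\pi_4(S^3)=\ZZ_2\langle\eta_3\rangle$, $\pi_5(S^3)=\ZZ_2\langle\eta_3^2\rangle$, $\pi_6(S^3)=\ZZ_{12}=\ZZ_4\langle\nu'\rangle\oplus\ZZ_3\langle\alpha_1(3)\rangle$ and $\pi_7(S^3)=\ZZ_2\langle\nu'\circ\eta_6\rangle$. The relation $\eta_3^3=2\nu'$ is Toda's (5.3)/(5.5). The Freudenthal theorem then gives $\pi_{n+1}(S^n)$ and $\pi_{n+2}(S^n)$ stably for $n\ge3$, with $E$ surjective onto the stable range at the edge. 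For $S^4$, the James splitting $\pi_7(S^4)\cong E\pi_6(S^3)\oplus\ZZ\langle\nu_4\rangle$ coming from the quaternionic Hopf map gives the stated decomposition, and $\eta_4^3=E(\eta_3^3)=2E\nu'$. For $n\ge5$ the group is stable, $\ZZ_{24}=\ZZ_8\langle\nu_n\rangle\oplus\ZZ_3$. Here the relation $E^{n-4}(E\nu')=2\nu_n$ gives $\eta_n^3=E^{n-3}(2\nu')=4\nu_n$.

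For (5), $\eta_2^4=\eta_2\circ\eta_3^3=\eta_2\circ(2\nu')$. The point that needs care is that left composition with $\eta_2$ is additive here: $\eta_3^3$ and $\nu'$ live in $\pi_6(S^3)$, and composing with $\eta_2$ on the left is the isomorphism $\eta_{2*}$, which is a homomorphism. Hence $\eta_2\circ(2\nu')=2(\eta_2\circ\nu')$. Finally, $\eta_3^4=\eta_3^3\circ\eta_6=2\nu'\circ\eta_6=\nu'\circ(2\eta_6)=0$. This uses right distributivity over a suspension, Proposition~\ref{prp:composition_properties}(4), applied with $\eta_6$ a suspension.

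I expect the main obstacle to be the delicate relations $\eta_3^3=2\nu'$ and $E^2\nu'=2\nu_5$, including sign and generator choices. These I would cite directly from Toda (Proposition~5.3, (5.5), Lemma~5.14) rather than reprove.
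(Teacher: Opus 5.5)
Your proposal is correct and follows the paper's proof: both take the additive structure from Toda and obtain the relations from $\eta_3^3=2\nu'$ and $E^2\nu'=2\nu_5$ by suspension, with $\eta_2^4=2(\eta_2\circ\nu')$ from left distributivity and $\eta_3^4=(2\nu')\circ\eta_6=0$ from right distributivity over the suspension $\eta_6$. Your extra scaffolding (the Hopf fibration, Freudenthal, the splitting of $\Omega S^4$) just unpacks the cited results of Toda. Also, left composition is always additive, so only the $\eta_3^4$ step actually needs a suspension hypothesis.
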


\begin{proof}
For the additive description see \cite[Propositions 5.1, 5.3, 5.5, 5.6, 5.8; Theorem 13.4, Proposition 13.6]{toda}. We also have $\eta_3^3=2\nu'$ by \cite[(5.3)]{toda} and $E^2\nu'=2\nu_5$ by \cite[(5.5)]{toda}. It follows that $\eta_4^3=E(\eta_3^3)=2E\nu'$ and $\eta_n^3=E^{n-5}\eta_5^3=4\nu_n$ for $n\geq 5$. Finally, we have $\eta_2^4=\eta_2\circ\eta_3^3=\eta_2\circ (2\nu')=2(\eta_2\circ\nu')$ by Proposition \eqref{prp:composition_properties}(3) and $\eta_3^4=\eta_3^3\circ\eta_6=2\nu'\circ\eta_6=0$ since $\nu'\circ\eta_6$ has order two.
\end{proof}
\begin{crl}
For $n\geq 3,$ the elements $\eta_n,\eta_n^2,\eta_n^3$ are nonzero of order $2$, and $\eta_n^4=0$.\qed
\end{crl}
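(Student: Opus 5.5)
The plan is to read everything off Proposition~\ref{pin+k}, treating the three powers $\eta_n,\eta_n^2,\eta_n^3$ separately and then deducing $\eta_n^4=0$ by suspension. Throughout I use that $\eta_n=E^{n-2}\eta_2$. By Proposition~\ref{prp:composition_properties}~(2), $E(\alpha\circ\beta)=E\alpha\circ E\beta$, so $\eta_n^k=E^{n-m}\eta_m^k$ whenever $n\ge m$.

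For $\eta_n$ and $\eta_n^2$ with $n\ge3$, Proposition~\ref{pin+k}~(2),(3) gives $\pi_{n+1}(S^n)=\ZZ_2\langle\eta_n\rangle$ and $\pi_{n+2}(S^n)=\ZZ_2\langle\eta_n^2\rangle$. Each element is therefore a generator of a cyclic group of order $2$, so it is nonzero of order exactly $2$.

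For $\eta_n^3$ I split into three cases using the relations in Proposition~\ref{pin+k}~(4).
\begin{itemize}
\item For $n=3$: $\eta_3^3=2\nu'$, and $\nu'$ generates a $\ZZ_4$ summand of $\pi_6(S^3)$. Hence $2\nu'$ is nonzero of order $2$.
\item For $n=4$: $\eta_4^3=2E\nu'$, and $E\nu'$ generates a $\ZZ_4$ summand of $\pi_7(S^4)$. The same argument applies.
\item For $n\ge5$: $\eta_n^3=4\nu_n$, and $\nu_n$ generates a $\ZZ_8$ summand. So $4\nu_n$ is nonzero of order $2$.
\end{itemize}
In each case the element is a nonzero multiple sitting inside a cyclic direct summand, so its order can be computed inside that summand.

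Finally, Proposition~\ref{pin+k}~(5) gives $\eta_3^4=0$. For $n\ge4$ I then write $\eta_n^4=E^{n-3}(\eta_3^4)=E^{n-3}(0)=0$, since suspension is a homomorphism. No step looks like a genuine obstacle. The only point needing care is the identification $\eta_n^k=E^{n-3}\eta_3^k$, which follows from iterating Proposition~\ref{prp:composition_properties}~(2) on the composites $\eta\circ\eta\circ\cdots$.
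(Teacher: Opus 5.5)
Your proposal is correct and matches the paper's intent: the corollary carries a \qed precisely because it is read off from Proposition~\ref{pin+k}. You use the cyclic generators in parts (2)--(4) for $\eta_n,\eta_n^2,\eta_n^3$, and the relation $\eta_3^4=0$ from part (5) together with suspension for $\eta_n^4=0$ when $n\ge 4$.
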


The following theorem follows from results of Hilton--Whitehead, Kristensen--Madsen, Mahowald, Oshima, Thomeier and Toda. We rely on the exposition by Golasinski and Mukai \cite{golasinski_mukai}.

\begin{thm}
\label{thm:[i,eta^k]} 
The following hold for any $k\geq 0$:
\begin{enumerate}
\item $[\iota_n,\eta_n]=0$ if and only if $n\in\{2,6,4k+3\}$;
\item $[\iota_n,\eta_n^2]=0$ if and only if $n\in\{5,4k+2,4k+3\}$;
\item $[\iota_n,\eta_n^3]=0$ if and only if 
  $n\in\{4,12,4k+1,4k+2,4k+3\}$.
\end{enumerate}
\end{thm}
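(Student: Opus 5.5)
The statement is attributed to Hilton--Whitehead, Kristensen--Madsen, Mahowald, Oshima, Thomeier and Toda, via the exposition of Golasinski and Mukai. I would not reprove it from scratch. Instead I would organise the known computations into a uniform argument, reducing each case to the vanishing of $[\iota_n,\iota_n]\circ\eta^k$ or to stable facts.

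The first step is to rewrite the products as compositions. For $n\ge3$ the element $\eta_n^k$ is a suspension, so Proposition~\ref{prp:composition_properties}(6) gives
\[
  [\iota_n,\eta_n^k]=[\iota_n,\iota_n]\circ\eta_{2n-1}^k .
\]
Each item therefore becomes the question of whether $P_n\circ\eta^k=0$ in $\pi_{2n-1+k}(S^n)$, where $P_n=[\iota_n,\iota_n]$. For $n=2$, $\eta_2$ is not a suspension, so I would use Corollary~\ref{crl:BB-corollary} with $\alpha=\beta=\iota_2$. This gives $[\iota_2,\eta_2]=[\iota_2,\iota_2]\circ\eta_3-[[\iota_2,\iota_2],\iota_2]$. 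Here $[\iota_2,[\iota_2,\iota_2]]=0$ by Theorem~\ref{thm:[i,[i,i]]_in_spheres}, and $[\iota_2,\iota_2]\circ\eta_3=2\eta_2\circ\eta_3=0$. The same formula handles $\eta_2^2$ and $\eta_2^3$, since these are also compositions starting with $\eta_2$.

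The second step handles the easy vanishing cases.
\begin{itemize}
  \item For $n$ odd, $P_n$ is divisible by the Whitehead square, which is of order $2$ or zero. For $n=4k+3$ one has $P_n=\pm E^{?}$ of an element hit by a suspension of $\eta$-compatible classes. More concretely, $P_{4k+3}$ lies in the image of $J$-type elements annihilated by $\eta$ under composition, via the EHP sequence: $P_n$ is in the kernel of $E$, and $P_n\circ\eta=H$-preimage considerations.
  \item For $n=1,3,7$, $P_n=0$, so all three items vanish.
\end{itemize}
The non-vanishing cases would be detected with the EHP sequence. One has $H(P_n\circ\eta^k)=\pm(1+(-1)^n)\eta^k$ when $n$ is even, and this is nonzero for $n$ even with $k\ge1$ only modulo $2$. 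So for even $n$ I would instead use the fact that $P_n\circ\eta^k$ is nonzero if and only if $\eta^k_{2n-1}$ does not desuspend appropriately, namely is not in the image of $P(\pi_{*}(S^{2n+1}))$.

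The main obstacle is the exceptional even and small cases: $n=6$ and $n=2\bmod 4$ for $\eta^2$, and $n=4,12,5$ for $\eta^3$. These depend on whether $\eta^k_{2n+1}$ lies in the image of $P$, equivalently on Hopf-invariant and vector-field data: the span of spheres and the order of $P_n$ in the $J$-image. For these I would invoke the explicit tables (Toda, Mahowald, Oshima) as presented by Golasinski--Mukai rather than derive them. The plan is to cite \cite{golasinski_mukai} for each item after the reductions above, checking the boundary values $n\le 12$ directly against Toda's tables.
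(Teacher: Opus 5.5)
The gap is in item~(3). Your reduction $[\iota_n,\eta_n^k]=[\iota_n,\iota_n]\circ\eta^k_{2n-1}$ for $n\ge3$, the case $n=2$, and quoting Golasinski--Mukai for items (1) and (2) all agree with the paper. For $n=2$ the step you need is $[\iota_2,\eta_2\circ\eta^{k-1}]=[\iota_2,\eta_2]\circ\eta^{k-1}$, by Proposition~\ref{prp:composition_properties}(6); Corollary~\ref{crl:BB-corollary} only covers $\beta\circ\eta_2$ with $\beta\in\pi_2$.

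The paper cites Golasinski--Mukai only for (1), (2) and for the orders of $[\iota_n,\nu_n]$. Item (3) has to be derived. Your proposal has no argument that settles it for the infinite families: vanishing for $n\equiv1\pmod 4$, and non-vanishing for $n\equiv0\pmod4$, $n\ne4,12$. Checking $n\le12$ in Toda's tables cannot do this.

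Your EHP paragraph does not help either. For even $n$, $H([\iota_n,\iota_n])=\pm2\iota_{2n-1}$, so the Hopf invariant of $[\iota_n,\iota_n]\circ\eta^k$ is $2\eta^k=0$ and detects nothing. The criterion is also misstated. Since $[\iota_n,\iota_n]\circ\eta^k_{2n-1}=\pm P(\eta^k_{2n+1})$, it vanishes iff $\eta^k_{2n+1}$ lies in the image of $H\colon\pi_{2n+1+k}(S^{n+1})\to\pi_{2n+1+k}(S^{2n+1})$. That is exactly the hard question you are deferring. The $4k+3$ bullet is likewise not an argument, though for (1) the citation covers you.

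The missing idea is Toda's relation $\eta_n^3=4\nu_n$ for $n\ge5$ (Proposition~\ref{pin+k}(4)). By bilinearity it gives $[\iota_n,\eta_n^3]=4[\iota_n,\nu_n]$. So for $n\ge5$, item (3) asks whether the order of $[\iota_n,\nu_n]$ divides $4$, which is what Golasinski--Mukai record in (2.11).

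The case $n=4$ must be done separately, because there $\eta_4^3=2E\nu'$ instead. From your own reduction, $[\iota_4,\eta_4^3]=[\iota_4,\iota_4]\circ\eta_7^3$. Using $[\iota_4,\iota_4]=\pm(2\nu_4-E\nu')$, $\eta_7^3=4\nu_7$ and $E\nu'\circ\nu_7=0$, this equals $\pm8\nu_4^2$. That is $0$ because $\pi_{10}(S^4)_{(2)}\cong\ZZ_8\langle\nu_4^2\rangle$. The case $n=3$ is trivial since $S^3$ is an H-space.

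It also helps to note that $[\iota_n,\eta_n^3]=[\iota_n,\eta_n^2]\circ\eta$. So (2) already gives vanishing for $n\in\{2,5,6,4k+2,4k+3\}$, and the $\nu$-input is needed exactly for the remaining cases.
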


\begin{proof}
For the first two statements, see \cite[(2.1) and (2.2)]{golasinski_mukai}. We prove (3).

For $n=3$, we have $[\iota_3,\eta_3^3]=0$ since all Whitehead products in $\pi_*(S^3)$ vanish.

For $n=4$, we have 
\[
  [\iota_4,\eta_4^3]=[\iota_4,2E\nu']=2[\iota_4,\iota_4]\circ E^4\nu'=2(2\nu_4-E\nu')\circ 2\nu_7=8\nu_4^2=0,
\]  
where the first identity is by Proposition~\ref{pin+k}~(4), the second by Proposition~\ref{prp:composition_properties}~(6) and the third by~\cite[(5.5),(5.8)]{toda}. The fourth identity uses the fact that  $E\nu'\circ\nu_7=E(\nu'\circ\nu_6)=0$, since $\nu'\circ\nu_6\in\pi_9(S^3)_{(2)}=0$. The last identity follows from $\pi_{10}(S^4)_{(2)}\cong\ZZ_8\langle\nu_4^2\rangle$~\cite[Proposition~5.11]{toda}.

For $n\geq 5$ we have $[\iota_n,\eta_n^3]=4[\iota_n,\nu_n]$ by Proposition~\ref{pin+k}~(4). The orders of elements $[\iota_n,\nu_n]$ are given in \cite[(2.11)]{golasinski_mukai}, whence the result follows. 
\end{proof}

Combining Theorems \ref{thm:[i,[i,i]]_in_spheres} and \ref{thm:[i,eta^k]}, we obtain an additive description of the groups $A_n$.
\begin{thm}
For $n\geq 3$, the graded abelian group $A_n$ has the following structure.
\begin{enumerate}
\item For $n=4k$,
\[
\begin{split}
  A_n=&\ \ZZ\langle\iota\rangle\oplus
  \ZZ_2\langle\eta,\eta^2,\eta^3\rangle\\
  &\oplus
  \ZZ\langle[\iota,\iota]\rangle\oplus
  \ZZ_2\langle[\iota,\iota]\circ\eta,[\iota,\iota]\circ\eta^2,
  [\iota,\iota]\circ\eta^3\rangle\oplus\ZZ_3\langle[\iota,[\iota,\iota]]\rangle,
\end{split}
\]
where $[\iota,\iota]\circ\eta^3=0$ for $n=4,12$;

\smallskip

\item For $n=4k+1$,
\[
  A_n=\ZZ\langle\iota\rangle\oplus
  \ZZ_2\langle\eta,\eta^2,\eta^3\rangle
  \oplus
  \ZZ_2\langle[\iota,\iota],[\iota,\iota]\circ\eta,[\iota,\iota]\circ\eta^2\rangle,\hspace*{0.105\textwidth}
\]
where $[\iota,\iota]\circ\eta^2=0$ for $n=5$;

\smallskip

\item For $n=4k+2$,
\[
  \ \:A_n= \ZZ\langle\iota\rangle\oplus
  \ZZ_2\langle\eta,\eta^2,\eta^3\rangle\oplus
  \ZZ\langle[\iota,\iota]\rangle\oplus
  \ZZ_2\langle[\iota,\iota]\circ\eta\rangle\oplus 
  \ZZ_3\langle[\iota,[\iota,\iota]]\rangle, 
\] 
where $[\iota,\iota]\circ\eta=0$ for $n=6$;

\smallskip

\item For $n=4k+3$,
\[ 
 A_n= \ZZ\langle\iota\rangle\oplus
 \ZZ_2\langle\eta,\eta^2,\eta^3\rangle\oplus 
 \ZZ_2\langle[\iota,\iota]\rangle,\hspace*{0.31\textwidth} 
\]
where $[\iota,\iota]=0$ for $n=3,7$.
\end{enumerate} 
\vspace{-0.8cm}~\qed
\bigskip 
\end{thm}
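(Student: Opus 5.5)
The statement is an additive description of $A_n$ for $n\ge 3$. The plan is to start from the spanning set already recorded at the beginning of the appendix, namely
\[
  \{\iota,\eta^k,[\iota,\iota]\circ\eta^k,[\iota,[\iota,\iota]]\circ\eta^k,\ 0\le k\le 3\},
\]
and then decide which of these elements vanish, what their orders are, and which linear relations hold among them. For $n\ge3$, $\eta$ is a suspension, so by Proposition~\ref{prp:properties of hopf} composition with $\eta^k$ is additive and $\eta^4=0$. Since $[\iota,\iota]\circ\eta^k=[\iota,\eta^k]$ by Proposition~\ref{prp:composition_properties}~(6), each element $[\iota,\iota]\circ\eta^k$ is zero or nonzero exactly as Theorem~\ref{thm:[i,eta^k]} dictates, and it has order $2$ when $k\ge1$. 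The elements $[\iota,[\iota,\iota]]\circ\eta^k$ with $k\ge1$ vanish, because $[\iota,[\iota,\iota]]$ has order $3$ (or is zero) while $\eta^k$ has order $2$: composing $3[\iota,[\iota,\iota]]=0$ with the suspension $\eta^k$ gives an element killed by both $2$ and $3$. The summands $\ZZ\langle\iota\rangle\oplus\ZZ_2\langle\eta,\eta^2,\eta^3\rangle$ come from Proposition~\ref{pin+k} and its corollary, and the $\iota$, $[\iota,\iota]$, $[\iota,[\iota,\iota]]$ part is read off from Theorem~\ref{thm:[i,[i,i]]_in_spheres}.

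Next I would split by $n \bmod 4$ and check each case against Theorem~\ref{thm:[i,eta^k]}. For $n=4k$, all three products $[\iota,\eta^j]$ are nonzero except $[\iota,\eta^3]$ when $n=4,12$. For $n=4k+1$, $[\iota,\eta]$ and $[\iota,\eta^2]$ are nonzero except $[\iota,\eta^2]$ at $n=5$, and $[\iota,\eta^3]=0$. For $n=4k+2$, only $[\iota,\eta]$ survives, and it vanishes when $n=6$. For $n=4k+3$, all vanish. The orders of $[\iota,\iota]$ ($\ZZ$ for even $n$, $\ZZ_2$ for odd $n\ne3,7$) and of $[\iota,[\iota,\iota]]$ ($\ZZ_3$ for even $n\ge4$, $0$ for odd $n$ by Lemma~\ref{lmm:quasilie_torsion}) come from Theorem~\ref{thm:[i,[i,i]]_in_spheres}.

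The main obstacle is showing that the surviving elements are linearly independent, so that the sum is genuinely direct. Elements in different stems are automatically independent, so I would compare stem by stem. In stem $n-1$ the only candidates are $\iota$ alone, or $[\iota,\iota]$ when $2n-1$ coincides. In stems $n+1,n+2,n+3$ the element $\eta^j$ has to be separated from $[\iota,\iota]\circ\eta^{i}$ with $2n-1+i=n+j$; this forces $n\le4$, and for $n=3,4$ it can be checked by hand using Proposition~\ref{pin+k}. For $n=4$ and $i=0$, $[\iota_4,\iota_4]=\pm(2\nu_4-E\nu')$ is independent of $\eta_4^3=2E\nu'$. For general $n$ I would use that $\eta^k$ lies in the image of suspension and is stably nonzero, while $[\iota,\iota]\circ\eta^k$ lies in the kernel of $E$. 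A nonzero element of the kernel of $E$ cannot equal a nonzero element that survives suspension, so the two can never coincide or cancel. This gives directness in every case.
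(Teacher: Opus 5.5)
Your proposal is correct and follows the paper's route. The paper's proof consists only of combining the spanning set with Theorem~\ref{thm:[i,[i,i]]_in_spheres} and Theorem~\ref{thm:[i,eta^k]} (via $[\iota,\iota]\circ\eta^k=[\iota,\eta^k]$); you supply the details it leaves implicit, namely that $[\iota,[\iota,\iota]]\circ\eta^k=0$ for $k\ge1$ and that the sum is direct.

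For directness, the only nonvacuous coincidence of degrees is $\eta_4^3=2E\nu'$ versus $[\iota_4,\iota_4]=\pm(2\nu_4-E\nu')$ in $\pi_7(S^4)$, and your explicit check (or the kernel-of-$E$ argument) handles it correctly. The other overlaps, including those with $[\iota,[\iota,\iota]]$ in degree $3n-2$ for $n=3,4$, always involve an element that is already zero.
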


\section{A formula of Baues}
\label{sec:baues_formula} 
For based CW-complexes $X$ and $Y$, the suspended projection $\Sigma q\colon\Sigma(X\times Y)\to \Sigma X\wedge Y$ has a right homotopy inverse $\sigma\colon\Sigma X\wedge Y\to\Sigma(X\times Y)$, which comes from the homotopy decomposition
\[
  \Sigma(X\times Y)\simeq \Sigma X\vee \Sigma Y\vee (\Sigma X\wedge Y).
\]
Equivalently, the identity map $\id$ on $\Sigma(X\times Y)$ decomposes as 
\begin{equation}\label{idsusp}
  \id\simeq (\Sigma i_X\circ\Sigma \pr_X)
  +(\Sigma i_Y\circ\Sigma \pr_Y)+(\sigma\circ\Sigma q)
\end{equation}
where $i_X\colon X\to X\times Y$ is the inclusion $x\mapsto(x,\pt)$ and similarly for $i_Y\colon Y\to X\times Y$.

The Whitehead product of based maps $f\colon\Sigma X\to Z$ and $g\colon\Sigma Y\to Z$ is the adjoint $[f,g]\colon \Sigma X\wedge Y\to Z$ of the composite $X\wedge Y\xrightarrow{f'\wedge g'}
\Omega Z\wedge\Omega Z\xrightarrow{c}\Omega Z$. Since $\Sigma q\colon\Sigma(X\times Y)\to \Sigma X\wedge Y$ has a right homotopy inverse, the Whitehead product $[\alpha,\beta]\in[\Sigma (X\wedge Y),Z]$ of homotopy classes $\alpha\in[\Sigma X,Z]$ and $\beta\in[\Sigma X,Z]$ is uniquely determined by the identity
\begin{equation}\label{Whcom}
  [\alpha,\beta]\circ\Sigma q=
  \lbr\alpha\circ\Sigma\pr_X,\beta\circ\Sigma\pr_Y\rbr\in [\Sigma(X\times Y), Z].
\end{equation}
Here $\pr_X\colon X\times Y\to X$ and $\pr_Y\colon X\times Y\to Y$ are the projections, and 
\[
  \lbr\varphi,\psi\rbr:=-\varphi-\psi+\varphi+\psi
\]  
is the commutator of elements $\varphi$, $\psi$ in the group $[\Sigma (X\times Y),Z]$. It is more convenient to use additive notation for the group operation, although the group $[\Sigma (X\times Y),Z]$ is noncommutative.

The \emph{Hopf invariant} $Hf\in [\Sigma X\wedge Y,\Sigma Z]$ of a based map $f\colon X\times Y\to Z$ is defined as follows. Let $f_X\colon X\to Z$ and $f_Y\colon Y\to Z$ be the restrictions of~$f$. Then $Hf$ is uniquely determined by the equation
\begin{equation}\label{Hopf}
  Hf\circ\Sigma q=-\Sigma f_X\circ\Sigma\pr_X
  -\Sigma f_Y\circ\Sigma\pr_Y+\Sigma f\in [\Sigma(X\times Y), \Sigma Z].
\end{equation}

As a special case, let $\mu\colon \Omega L\times\Omega L\to\Omega L$ be the loop multiplication, and $\pr_{\Omega L,1}$, $\pr_{\Omega L,2}\colon \Omega L\times\Omega L\to\Omega L$ be the projections. Then $H\mu\in[\Sigma(\Omega L\wedge\Omega L),\Sigma\Omega L]$ is uniquely determined by the identity
\[
  H\mu\circ\Sigma q=-\Sigma\pr_{\Omega L,1}-\Sigma \pr_{\Omega L,2}
  +\Sigma\mu\in[\Sigma(\Omega L\times\Omega L),\Sigma\Omega L].
\]  
Composing with $\sigma$ and noting that $\Sigma\pr_{\Omega L,1}\circ\sigma\simeq
\Sigma\pr_{\Omega L,2}\circ\sigma\simeq\ast$, we obtain
\[
  H\mu=\Sigma\mu\circ\sigma\colon
  \Sigma\Omega L\wedge\Omega L\stackrel{\sigma}{\longrightarrow} 
  \Sigma(\Omega L\times\Omega L)\xrightarrow{\Sigma\mu}{\Sigma\Omega L}.
\]

\begin{lmm}
\label{lmm:ev distributes multiplication}
Let $\ev_L\colon\Sigma\Omega L\to L$ be the evaluation map. Then
\[
  \ev_L\circ\Sigma\mu = \ev_L\circ \Sigma\pr_{\Omega L,1}+ \ev_L\circ\Sigma\pr_{\Omega L,2}
  \;\in [\Sigma(\Omega L\times\Omega L),L].
\]  
\end{lmm}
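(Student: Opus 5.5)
The idea is to pass to adjoints. Under the adjunction isomorphism $[\Sigma W,L]\cong[W,\Omega L]$, a map $\varphi\colon\Sigma W\to L$ corresponds to $\varphi'=\Omega\varphi\circ E_W$. Addition in the co-H-group $[\Sigma W,L]$, defined via the suspension coordinate, corresponds to pointwise loop multiplication in $[W,\Omega L]$, i.e.\ $(\varphi+\psi)'\simeq\mu\circ(\varphi',\psi')$. This is the standard fact that the two group structures on $[\Sigma W,L]\cong[W,\Omega L]$ agree, with the same ordering convention as used for the commutator in \eqref{Whcom}. So the lemma reduces to a statement about maps $\Omega L\times\Omega L\to\Omega L$.

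Take $W=\Omega L\times\Omega L$ and compute the three adjoints.
\begin{itemize}
\item The adjoint of $\ev_L$ is the identity of $\Omega L$, since $\ev_L$ is the counit.
\item For any $f\colon W\to\Omega L$, naturality of the adjunction gives $(\ev_L\circ\Sigma f)'=\id_{\Omega L}\circ f=f$.
\end{itemize}
Hence the adjoint of $\ev_L\circ\Sigma\mu$ is $\mu$. Likewise the adjoints of $\ev_L\circ\Sigma\pr_{\Omega L,1}$ and $\ev_L\circ\Sigma\pr_{\Omega L,2}$ are $\pr_{\Omega L,1}$ and $\pr_{\Omega L,2}$.

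By the first paragraph, the adjoint of the right-hand side is $\mu\circ(\pr_{\Omega L,1},\pr_{\Omega L,2})=\mu$. This agrees with the adjoint of the left-hand side, which proves the identity on the nose after taking adjoints.

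The only point needing care is the compatibility of the two group structures, including the order of summation. Concretely, $(\varphi+\psi)'(w)$ is the loop $\varphi'(w)$ followed by $\psi'(w)$. With the convention that $\varphi+\psi$ first traverses $\varphi$ in the suspension coordinate, this is exactly $\mu(\varphi'(w),\psi'(w))$. That gives the order $\pr_1$ then $\pr_2$ as stated. I expect this bookkeeping to be the only obstacle; everything else is formal naturality of the $\Sigma\dashv\Omega$ adjunction.
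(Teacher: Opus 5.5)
Your proof is correct, and it takes a genuinely different and more elementary route than the paper.

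The paper stays on the suspension side. It composes $\Sigma\mu$ with the splitting \eqref{idsusp} of the identity of $\Sigma(\Omega L\times\Omega L)$, giving $\Sigma\mu\simeq\Sigma\pr_{\Omega L,1}+\Sigma\pr_{\Omega L,2}+H\mu\circ\Sigma q$ with $H\mu=\Sigma\mu\circ\sigma$. It then removes the cross term after composing with $\ev_L$, by quoting Ganea's theorem that $\Sigma\Omega L\wedge\Omega L\xrightarrow{H\mu}\Sigma\Omega L\xrightarrow{\ev_L}L$ is a homotopy fibration, so that $\ev_L\circ H\mu\simeq\ast$.

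You instead use only two facts: the adjunction bijection $[\Sigma W,L]\cong[W,\Omega L]$ is a group isomorphism (co-H sum in the suspension coordinate corresponds to pointwise loop concatenation), and $(\ev_L\circ\Sigma f)'=f$. After adjunction both sides are literally $\mu$. The order of the two summands in this nonabelian group is then fixed transparently by the order of concatenation in $\mu$.

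This makes Ganea's theorem unnecessary. In fact, combining your identity with the paper's expansion of $\Sigma\mu$ and cancelling in the group gives $\ev_L\circ H\mu\circ\Sigma q\simeq\ast$. Precomposing with $\sigma$ then gives $\ev_L\circ H\mu\simeq\ast$, which is the only consequence of Ganea's fibration the paper actually uses. The paper's version mainly buys uniformity of language with the proof of Theorem~\ref{thm:baues_formula}, which is written entirely in terms of \eqref{idsusp}, \eqref{Whcom} and \eqref{Hopf}.
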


\begin{proof} 
To start, compose $\Sigma\mu$ with the decomposition~\eqref{idsusp} of the identity map on $\Sigma(\Omega L\times\Omega L)$ to obtain
\begin{align*} 
  \Sigma\mu\simeq\Sigma\mu\circ \id &  
  \simeq \Sigma\mu\circ\bigl(
  (\Sigma i_{\Omega L,1}\circ \Sigma\pr_{\Omega L,1})
  +(\Sigma i_{\Omega L,2}\circ\Sigma\pr_{\Omega L,2})
  +(\sigma\circ\Sigma q)\bigr) \\ 
   & \simeq\Sigma\pr_{\Omega L,1}
   +\Sigma\pr_{\Omega L,2}+(\Sigma\mu\circ\sigma\circ\Sigma q). 
\end{align*} 
where the last identity uses the fact that $\mu\circ i_{\Omega L,1}\simeq\id$ and $\mu\circ i_{\Omega L,1}\simeq\id$.

Ganea~\cite{ganea} proved that there is a homotopy fibration 
\[
  \Sigma\Omega L\wedge\Omega L\xrightarrow{H\mu} 
  \Sigma\Omega L\xrightarrow{\ev_{L}}  L.
\] 
It follows that $\ev_{L}\circ\Sigma\mu\circ\sigma$ is null homotopic, 
since $\ev_{L}$ and $H \mu=\Sigma\mu\circ\sigma$ are consecutive 
maps in a homotopy fibration. Thus, 
\begin{align*} 
  \ev_{L}\circ\Sigma\mu  
  &\simeq \ev_{L}\circ(\Sigma\pr_{\Omega L,1}+\Sigma\pr_{\Omega L,2}+(\Sigma\mu\circ\sigma\circ\Sigma q))\\  
  &\simeq\ev_{L}\circ\Sigma\pr_{\Omega L,1}+
  \ev_{L}\circ\Sigma \pr_{\Omega L,2}.\qedhere  
\end{align*}
\end{proof}

The following formula is given in \cite{baues_homotopy} with a note that its proof is similar to \cite[(3.1.22)]{baues_obstruction}.
We provide a full proof below.

\begin{thm}[{\cite[(A.1.23)]{baues_homotopy}}]
\label{thm:baues_formula}
For based CW-complexes $K$ and $L$, let $Z=\Sigma K\vee L$, let $i\colon\Sigma K\hookrightarrow \Sigma K\vee L$ and $r\colon\Sigma\Omega L\xrightarrow{\ev_L} L\hookrightarrow \Sigma K\vee L$. Then 
\[
  [[i,r],r]=[i,r]\circ (\id_K\wedge H\mu)\;
  \in[\Sigma K\wedge\Omega L\wedge \Omega L,Z].
\]
More precisely, the following diagram is homotopy commutative:
\[
  \xymatrix{
  \Sigma((K\wedge\Omega L)\wedge\Omega L)
  \ar[d]^-\simeq
  \ar[rr]^-{[[i,r],r]}
  &&
  Z\\
  K\wedge \Sigma(\Omega L\wedge\Omega L)
  \ar[r]^-{\id_K\wedge H\mu}
  &
  K\wedge\Sigma\Omega L
  \ar[r]^-\simeq
  &
  \Sigma(K\wedge\Omega L).
  \ar[u]_{[i,r]}
}
\]
\end{thm}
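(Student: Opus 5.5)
We work at the level of adjoints and of the group $[\Sigma(X\times Y),Z]$, using the characterisation \eqref{Whcom} of the Whitehead product by commutators. Set $X=K\wedge\Omega L$, $Y=\Omega L$ for the outer product, and consider the space $K\times\Omega L\times\Omega L$. Since $\Sigma$ of a product splits, it suffices to compare both sides after precomposition with the suspended quotient $\Sigma(K\times\Omega L\times\Omega L)\to\Sigma K\wedge\Omega L\wedge\Omega L$, which has a right homotopy inverse. Writing $a=i\circ\Sigma\pr_K$, $b=r\circ\Sigma\pr_{2}$, $c=r\circ\Sigma\pr_{3}$ in the group $G=[\Sigma(K\times\Omega L\times\Omega L),Z]$, the left side $[[i,r],r]$ pulls back to the iterated commutator $\lbr\lbr a,b\rbr,c\rbr$ (via \eqref{Whcom} applied twice, with naturality of $\Sigma q$ under products). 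The right side is $[i,r]$ composed with $\id_K\wedge H\mu$; by \eqref{Whcom} it is determined by $\lbr a, d\rbr$-type expressions where $d$ is $r$ composed with $\Sigma$ of the Hopf construction.

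The key step is to express $r\circ\Sigma\mu(\pr_2,\pr_3)$ in $G$. By definition \eqref{Hopf} of $H\mu$, $\Sigma\mu=\Sigma\pr_1+\Sigma\pr_2+H\mu\circ\Sigma q$ in $[\Sigma(\Omega L\times\Omega L),\Sigma\Omega L]$ (using the decomposition \eqref{idsusp}). Lemma~\ref{lmm:ev distributes multiplication} shows $\ev_L\circ(H\mu\circ\Sigma q)=0$ while $\ev_L\circ\Sigma\mu=b+c$ (after composing with $L\hookrightarrow Z$). Now compute the commutator of $a$ with $e:=r\circ\Sigma\mu(\pr_2,\pr_3)$ in two ways: first, by Lemma~\ref{lmm:ev distributes multiplication}, $e=b+c$, so $\lbr a,b+c\rbr=\lbr a,c\rbr+(-c)+\lbr a,b\rbr+c$ expands via the standard commutator identity $\lbr a,b+c\rbr=\lbr a,c\rbr+\lbr a,b\rbr+\lbr\lbr a,b\rbr,c\rbr$ (in the appropriate convention). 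Second, viewing $\Sigma\mu$ before applying $\ev$ as a map into $\Sigma\Omega L$ and using that $[i,-]$ on $\Sigma\Omega L$ is computed through \eqref{Whcom} with $Y=\Omega L$ replaced via $\mu$, one gets $\lbr a,e\rbr=\lbr a,b\rbr+\lbr a,c\rbr+\bigl([i,r]\circ(\id_K\wedge H\mu)\bigr)\circ\Sigma q'$, since the generalised Whitehead product $[i,r\circ\phi]$ with $\phi=\Sigma\mu$ distributes over the decomposition $\Sigma\mu=\Sigma\pr_1+\Sigma\pr_2+H\mu\circ\Sigma q$ only up to terms killed because $\Sigma q$-summands are pulled back along the split quotient; here naturality $[f,g\circ\Sigma\psi]=[f,g]\circ\Sigma(\id\wedge\psi)$ for suspensions is used.

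Comparing the two expansions, the linear terms $\lbr a,b\rbr,\lbr a,c\rbr$ cancel (they lie in the abelian part detected after projecting, and commute modulo higher commutators which vanish for connectivity/weight reasons in the free-group-like filtration of $G$), leaving $\lbr\lbr a,b\rbr,c\rbr=[i,r]\circ(\id_K\wedge H\mu)\circ\Sigma q$, and precomposing with the section gives the theorem. The main obstacle is the bookkeeping in the nonabelian group $G$: making the commutator identity, the signs/conjugations, and the order of $\pr_2,\pr_3$ match the left-nested bracket (the remark after Proposition~\ref{combaues} warns that the right-nested version uses a conjugate Hopf construction). I would handle this by working with the Hall–Witt/distributivity identity for $\lbr a,b+c\rbr$ explicitly and checking that every extra conjugation term is a commutator of weight $\ge 3$ involving $a$ twice, hence factors through a diagonal of $K$ and vanishes on the $K\wedge\Omega L\wedge\Omega L$ summand.
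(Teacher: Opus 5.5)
Your reduction to $G=[\Sigma(K\times\Omega L\times\Omega L),Z]$, the identification of the left side with $\lbr\lbr a,b\rbr,c\rbr$, and your first expansion $\lbr a,e\rbr=\lbr a,b+c\rbr=\lbr a,c\rbr+\lbr a,b\rbr+\lbr\lbr a,b\rbr,c\rbr$ via Lemma~\ref{lmm:ev distributes multiplication} are exactly the paper's ingredients. The gap is in the second expansion and the comparison step.

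Solving \eqref{Hopf} for $\Sigma\mu$ in the nonabelian group $[\Sigma(\Omega L\times\Omega L),\Sigma\Omega L]$ gives $\Sigma\mu=\Sigma\pr_2+\Sigma\pr_1+H\mu\circ\Sigma q$, not $\Sigma\pr_1+\Sigma\pr_2+H\mu\circ\Sigma q$. The two differ by the commutator $\lbr\Sigma\pr_1,\Sigma\pr_2\rbr=[\iota,\iota]\circ\Sigma q$, where $\iota=\id_{\Sigma\Omega L}$. This is nonzero in general: for $L=\CC P^\infty$ it is $[\iota_2,\iota_2]=\pm2\eta$. Your ordering is what \eqref{idsusp} gives if one takes $H\mu=\Sigma\mu\circ\sigma$ literally, but those two candidates for $H\mu$ differ by $[\iota,\iota]$. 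The paper's proof of this theorem works with \eqref{Hopf} exactly as written.

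Since $\id_K\wedge(-)$, precomposition with a suspension, and postcomposition with $[i,r]$ are all homomorphisms, \eqref{Hopf} gives the exact identity
\[
  X:=[i,r]\circ(\id_K\wedge H\mu)\circ\Sigma q=-\lbr a,b\rbr-\lbr a,c\rbr+\lbr a,e\rbr .
\]
So $\lbr a,e\rbr=\lbr a,c\rbr+\lbr a,b\rbr+X$, and left cancellation against your first expansion gives $X=\lbr\lbr a,b\rbr,c\rbr$ on the nose, with nothing to discard. This is the paper's argument, written there as the group identity $\lbr b,a\rbr+\lbr c,a\rbr+\lbr a,b+c\rbr=\lbr\lbr a,b\rbr,c\rbr$.

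With your ordering, the second expansion becomes $\lbr a,b\rbr+\lbr a,c\rbr+X$, and the comparison yields $X=\lbr\lbr a,c\rbr,\lbr a,b\rbr\rbr+\lbr\lbr a,b\rbr,c\rbr$. Your reason for discarding the first term is false. That term equals $[[i,r],[i,r]]\circ\Sigma\psi$ with $\psi(k,\omega_1,\omega_2)=(k\wedge\omega_2)\wedge(k\wedge\omega_1)$. It does factor through $\Sigma q$, via the reduced diagonal $K\to K\wedge K$, but that factorization is exactly why it need \emph{not} vanish on the summand $\Sigma K\wedge\Omega L\wedge\Omega L$.

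The term does vanish when $K$ is a co-H-space, e.g.\ $K=S^1$, so the problem is invisible in the $\DJ(\K)$ case. It does not vanish in general. Take $K=\CC P^2$ and $L=S^3\vee S^3$, and write $x\in H_2(\CC P^2)$, $z=[\CC P^2]$, and $y_1,y_2\in H_2(\Omega L)$ for the two loop classes. Because the diagonal of $\CC P^2$ has middle component $x\otimes x$, the adjoint of the term sends $z\times y_1\times y_2$ to $\pm[[x,y_2],[x,y_1]]$. This is nonzero in $H_*(\Omega Z;\QQ)\cong T(x,z,y_1,y_2)$.

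No filtration or weight argument can remove an element that is actually nonzero. For the Hopf construction ordered your way, the identity you set out to prove is false in general.
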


\begin{proof}
Since $\Sigma q\colon \Sigma(K\times\Omega L\times\Omega L)\to
\Sigma K\wedge\Omega L\wedge\Omega L$ has a right inverse, we can instead prove the identity obtained by composing with $\Sigma q$. Composing the left hand side and using~\eqref{Whcom} gives
\begin{equation}\label{irr}
  [[i,r],r]\circ\Sigma q=\lbr\lbr i\circ\Sigma\pr_K,
  r\circ\Sigma\pr_{\Omega L,1} \rbr,
  r\circ\Sigma\pr_{\Omega L,2} \rbr.
\end{equation}
The composition of the right hand side with $\Sigma q$ is described by the homotopy commutative diagram, which follows from~\eqref{Whcom} and~\eqref{Hopf}: 
\[
  \xymatrix{
  \Sigma(K\times\Omega L\times\Omega L)
  \ar[d]^-{\Sigma q}
  \ar[rrrr]^-{\Sigma(\id_K\times(-\pr_{\Omega L,1}-
  \pr_{\Omega L,2}+\mu))}
  &&&&
  \Sigma(K\times\Omega L)
  \ar[d]^-{\Sigma q}
  \ar[rrr]^-{\lbr i\circ\Sigma\pr_K,
  r\circ\Sigma\pr_{\Omega L}\rbr}
  &&&
  Z %\Sigma K\vee L
  \ar@{=}[d]\\
  \Sigma K\wedge \Omega L\wedge\Omega L
  \ar[rrrr]^-{\id_K\wedge H\mu}
  &&&&
  \Sigma K\wedge\Omega L
  \ar[rrr]^{[i,r]}
  &&&
  Z. %\Sigma K\vee L
}
\]
We therefore have
\begin{align*}
  [i,r]&\circ (\id_K\wedge H\mu)\circ\Sigma q =
  \lbr i\circ\Sigma\pr_K,
  r\circ\Sigma\pr_{\Omega L}\rbr \\
  &\ \ \ \circ (-\Sigma(\id_K\times\pr_{\Omega L,1})
  -\Sigma(\id_K\times\pr_{\Omega L,2})
  +\Sigma(\id_K\times\mu) )\\
  &=\lbr r\circ\Sigma\pr_{\Omega L}
   \circ \Sigma(\id_K\times\pr_{\Omega L,1}),
   i\circ \Sigma\pr_K\circ\Sigma(\id_K\times\pr_{\Omega L,1})\rbr\\
  &\ \ \ +\lbr r\circ\Sigma\pr_{\Omega L}
   \circ \Sigma(\id_K\times\pr_{\Omega L,2}),
   i\circ \Sigma\pr_K\circ\Sigma(\id_K\times\pr_{\Omega L,2})\rbr\\
  &\ \ \ +\lbr i\circ \Sigma\pr_K \circ\Sigma(\id_K\times\mu),
  r\circ \Sigma\pr_{\Omega L} \circ\Sigma(\id_K\times \mu)\rbr\\
  &=\lbr r\circ \Sigma\pr_{\Omega L,1},i\circ \Sigma\pr_K\rbr
   +\lbr r\circ \Sigma\pr_{\Omega L,2},i\circ \Sigma\pr_K\rbr\\
  &\ \ \ +\lbr i\circ \Sigma\pr_K, r\circ \Sigma\mu\circ 
   \Sigma\pr_{\Omega L\times\Omega L}\rbr\\
   &=\lbr r\circ \Sigma\pr_{\Omega L,1},i\circ \Sigma\pr_K\rbr
   +\lbr r\circ \Sigma\pr_{\Omega L,2},i\circ \Sigma\pr_K\rbr\\
  &\ \ \ +\lbr i\circ \Sigma\pr_K, (r\circ \Sigma\pr_{\Omega L,1}+ r\circ\Sigma\pr_{\Omega L,2})
   \circ \Sigma\pr_{\Omega L\times\Omega L}\rbr\\
  &=\lbr r\circ \Sigma\pr_{\Omega L,1},i\circ \Sigma\pr_K\rbr
   +\lbr r\circ \Sigma\pr_{\Omega L,2},i\circ \Sigma\pr_K\rbr\\
  &\ \ \  +\lbr i\circ \Sigma\pr_K, r\circ \Sigma\pr_{\Omega L,1}+ r\circ\Sigma\pr_{\Omega L,2}\rbr\\
  &=\lbr\lbr i\circ\Sigma\pr_K,
   r\circ\Sigma\pr_{\Omega L,1} \rbr,
   r\circ\Sigma\pr_{\Omega L,2} \rbr.
\end{align*}
Here the second identity uses the distributivity of the commutator with respect to suspended maps, the third and fifth identities follow by composing the projections, the fourth identity is by Lemma~\ref{lmm:ev distributes multiplication} and the definition of~$r$, and the last is the Witt--Hall identity 
$\lbr b,a\rbr+\lbr c,a\rbr+\lbr a,b+c\rbr=\lbr \lbr a,b\rbr,c\rbr$. The required formula follows by comparing the identity above with~\eqref{irr}.
\end{proof}

\end{document}